\documentclass[11pt, reqno]{amsart}
   \usepackage{amsmath,amssymb,amsthm}
   \usepackage{mathtools}
  \usepackage{array}
 2

\usepackage{tikz-cd}
\usepackage{graphicx}
\usepackage{xcolor}
\usepackage[colorlinks=true,linkcolor=blue,citecolor=blue,pdfpagelabels=false]{hyperref}
\usepackage{cleveref}
\usepackage{comment}
\newtheorem{theorem}{Theorem}[section]
\newtheorem{lemma}[theorem]{Lemma}
\newtheorem{prop}[theorem]{Proposition}
\newtheorem{cor}[theorem]{Corollary}

\theoremstyle{remark}
\newtheorem{rmk}{Remark}[section]

\renewcommand{\geq}{\geqslant}
\renewcommand{\leq}{\leqslant}

\begin{document}
% Remove page number
\thispagestyle{empty}

\title{REDUCTIONS OF GALOIS REPRESENTATIONS OF SLOPE 2}

\author{EKNATH GHATE \and SUDIPTA MAJUMDER  }

\begin{abstract}
Let $p$ be a prime $\ge 5$. We determine the
semisimplifications of the mod \(p\) reductions of irreducible two-dimensional
crystalline representations of the Galois group of ${\mathbb Q}_p$ of slope \(2\) and weights $k\ge3p+4$ with $k \not \equiv 6 \bmod (p-1)$.  The proof relies on the compatibility between the \(p\)-adic and the mod \(p\)
Local Langlands Correspondences for \(\mathrm{GL}_2(\mathbb{Q}_p)\).

\end{abstract}
\maketitle
\section{Introduction}

Let $p$ be an odd prime. This paper is devoted to computing the mod $p$ reductions of certain two-dimensional crystalline representations of the local Galois group $G_{\mathbb{Q}_p}$.

Following the work of Edixhoven~\cite{Edi92}, the first computations of reductions in positive slope were carried out by Breuil~\cite{Bre03b} for weights up to $2p+1$. The reductions are also known in the case where the slope is large relative to the weight, see~\cite{BLZ04, BL22}.

 In another direction, reductions have been determined for small fractional slopes: for slopes in $(0,1)$ by Buzzard--Gee~\cite{BG09, BG13}, for slopes in $(1,2)$ in~\cite{GG15,BG15, GR25}, and for slopes in $(2,3)$ partially by Nagel-Pande~\cite{NP25}. The case of slope $1$ was treated by Bhattacharya-Ghate-Rozensztajn~\cite{BGR18}, providing important insight into integral slope phenomena. Some of these papers assume $p \geq 5$.

In this paper, we study the reduction in the important missing case of integral slope $2$. The slope $2$ case is of particular interest due to the plethora of examples of cuspidal newforms whose associated Hecke eigenvalues have $p$-adic valuation equal to $2$. Let \(v\) denote the $p$-adic valuation on
\(\overline{\mathbb{Q}}_p\), normalized by \(v(p)=1\). For instance, the Ramanujan $\Delta$-function satisfies $v(\tau(3)) = 2$. Similarly, the modular form $
\Delta_{26} := \Delta \cdot E_{14}$, where $E_{14}$ denotes the Eisenstein series of weight $14$, provides an example for which $v(a_7) = 2$, with $a_7$ denoting the seventh Fourier coefficient. These examples demonstrate that slope $2$ naturally arises in significant arithmetic situations, thereby motivating a detailed study of the reductions in this case. It also seems natural to treat
the case of slope $2$ completely as a final test case before launching on a study of all integral slopes. 

We  introduce some notation used throughout the paper. Let \(E\) be a
finite extension of \(\mathbb{Q}_p\). Let \(a_p \in E\)
satisfy \(v(a_p) > 0\), and let \(k \ge 2\). Let \(V_{k,a_p}\) be the
irreducible crystalline representation of \(G_{\mathbb{Q}_p}\) with
Hodge--Tate weights \((0,k-1)\) and slope \(v(a_p)\), defined by the
condition that
\[
D_{\mathrm{cris}}(V_{k,a_p}^*) = D_{k,a_p},
\]
where \(D_{k,a_p} = E e_1 \oplus E e_2\) is the filtered \(\varphi\)-module
defined in \cite[\S2.3]{Ber11}. The importance
of the representation $V_{k,a_p}$ arises from the fact that
if $f$ is a normalized cuspidal newform of 
weight $k \geq 2$ and level prime to $p$ (and trivial character at $p$) with
$p$-th Fourier coefficient equal to $a_p$,
then the associated $p$-adic
Galois representation
$\rho_f$ satisfied
$\rho_f |_{G_{{\mathbb Q}_p}} \cong V_{k,a_p}$.

The semisimplification \(\overline{V}^{\,{ss}}_{k,a_p}\) of the
reduction \(\overline{V}_{k,a_p}\) %taken with respect to a lattice in
is independent of the choice of lattice in \(V_{k,a_p}\).  Let
\(\omega = \omega_1\) and \(\omega_2\) denote the fundamental characters of the inertia subgroup $I_{{\mathbb{Q}_p}}$ of
levels \(1\) and \(2\), respectively. Note that
$\omega$ has a canonical extension to ${\mathbb{Q}_p}$. For \(a \in \mathbb{Z}\), let $\omega_2^a$ continue to denote a specific extension of the same character to $G_{\mathbb{Q}_{p^2}}$. Let 
\(\operatorname{ind}(\omega_2^a)\) denote the representation of
\(G_{\mathbb{Q}_p}\) obtained by inducing the extended character 
%character \(\omega_2^a\) 
from
\(G_{\mathbb{Q}_{p^2}}\) to \(G_{\mathbb{Q}_p}\); this representation is irreducible if $p+1 \nmid a$. The extension of $\omega_2^a$ from $I_{{\mathbb Q}_p}$ to 
\(G_{\mathbb{Q}_{p^2}}\) 
is chosen so that  \(\det(\operatorname{ind}(\omega_2^a))=\omega^a\). Finally, let \(\mu_x\) denote the unramified character of \(G_{\mathbb{Q}_p}\) sending geometric Frobenius at \(p\) to
\(x \in \overline{\mathbb{F}}_p^\times\).

With this notation, we have the following theorem for the reduction ${\overline{V}^{ss}_{k,a_p}}$ when the slope $v(a_p)$ is equal to $2$, for all prime $p>3$. We defer the treatment of one tricky
congruence class of weights mod $(p-1)$ since a complete treatment would require a new paper.

\begin{theorem}\label{main theorem}
    Let $p>3$, let $k\geq 3p+4 $ and let $r=k-2 \equiv a \bmod(p-1)$, with $1\leq a \leq p-1$ and $a \neq4$. Suppose that the slope $v(a_p)=2$. Then the semi-simpliflication ${\overline{V}^{ss}_{k,a_p}}$ of the reduction of $V_{k,a_p}$ is as follows:

\medskip
\medskip
\noindent\textbf{Case $a = 1$:}
\[
\overline{V}_{k,a_p}^{\,ss} \cong
\begin{cases}
\mu_\lambda \cdot \omega^{p-1}
\;\oplus\;
\mu_{\lambda^{-1}} \cdot \omega^2,
& \text{if } p \nmid r(r+1),
\,\,\,\text{with} \,\, 
\displaystyle
\lambda = 
\overline{{\frac{2}{r(r+1)}}.\frac{a_p}{p^2}} \in \overline{\mathbb{F}}_p^\times,\\[1em]
\mathrm{ind}\!\left(\omega_2^{2p}\right),
& \text{if } p \mid r+1, \\[1em]
\mathrm{ind}\!\left(\omega_2^{2p}\right), 
& \text{if } p\mid r-p \,\,  \text{and}\,\,  p^2 \nmid r-p,\\[1em]
\mu_\lambda \cdot \omega
\;\oplus\;
\mu_{\lambda^{-1}} \cdot \omega,
& \text{if } p^2 \mid r-p,\,\,\,\text{with }
\displaystyle
\lambda + \frac{1}{\lambda}
=\overline{\frac{p-r}{a_p}}
\in \overline{\mathbb{F}}_p.
\end{cases}
\]

\medskip
\noindent\textbf{Case $a = 2$:}
\[
\overline{V}_{k,a_p}^{\,ss} \cong
\begin{cases}
\mathrm{ind}\!\left(\omega_2^{3p}\right),
& \text{if } p \nmid r(r-1),\\[1em]
\mu_\lambda \cdot \omega
\;\oplus\; 
\mu_{\lambda^{-1}} \cdot \omega^2,
& \text{if } p \mid r \,\, and \,\,p^2\nmid r-2p,
\,\,\,\text{with} \,\, 
\displaystyle
\lambda = 
\overline{\frac{2p}{r-2p}\cdot\frac{a_p}{p^2}} \in \overline{\mathbb{F}}_p^\times,\\[1em]
\mathrm{ind}\!\left(\omega_2^{2p+1}\right),
& \text{if } p^2 \mid r-2p,\\[1em]
\mu_\lambda \cdot \omega
\;\oplus\;
\mu_{\lambda^{-1}} \cdot \omega^2,
& \text{if } p \mid r-1 \,\, and \,\,p^2 \nmid r-p-1,
\,\,\,\text{with} \,\, 
\displaystyle
\lambda = 
\overline{\frac{p}{p+1-r}\cdot \frac{a_p}{p^2}} \in \overline{\mathbb{F}}_p^\times,
\\[1em]
\mathrm{ind}\!\left(\omega_2^{2p+1}\right), 
& \text{if }  p^2\mid r-p-1.
\end{cases}
\]
\medskip

\noindent\textbf{Case $a = 3$:}
\[
\overline{V}_{k,a_p}^{\,ss} \cong
\begin{cases}
\mu_\lambda \cdot \omega^{p}
\;\oplus\;
\mu_{{\lambda}^{-1}} \cdot \omega^3,
& \text{if } p \nmid \binom{r}{4},
\,\,\,\text{with} \,\, 
\displaystyle
\lambda = 
\overline{\frac{r-3}{3\binom{r}{3}}\cdot\frac{a_p}{p^2}} \in \overline{\mathbb{F}}_p^\times,\\[1em]

\mathrm{ind}\!\left(\omega_2^{p+3}\right),
& \text{if } p \mid r,\\[1em]
\mu_\lambda \cdot \omega^{2}
\;\oplus\; 
\mu_{\lambda^{-1}} \cdot \omega^2,
& \text{if } p \mid r-1,
\,\,\,\text{with} \,\, 
\displaystyle
\lambda +\lambda^{-1}= 
\overline{\frac{a_p^2-p^3(\frac{r-2p-1}{2})}{p^2a_p}} \in \overline{\mathbb{F}}_p,\\[1em]
\mathrm{ind}\!\left(\omega_2^{3p+1}\right),
& \text{if } p \mid r-2,\\[1em]
\mathrm{ind}\!\left(\omega_2^{4p}\right),
& \text{if } p \mid r-3.
\end{cases}
\]

\noindent\textbf{Case $5 \le a \le p-1$:}
\[
\overline{V}_{k,a_p}^{\,ss} \cong
\begin{cases}
\mu_\lambda \cdot \omega^{a-1}
\;\oplus\;
\mu_{\lambda^{-1}} \cdot \omega^2,
& \text{if } p \nmid (r-a)(r-a+1),\,\,\text{with }
\displaystyle
\lambda =
\overline{\frac{(a-1)(a-2)}{(r-a)(r-a+1)}\cdot \frac{a_p}{p^2}}
\in \overline{\mathbb{F}}_p^{\times}, \\[1em]
\mathrm{ind}\!\left(\omega_2^{{a+p}}\right),
& \text{if } p \mid r-a+1,\\[1em]
\mu_\lambda \cdot \omega^{a}
\;\oplus\;
\mu_{\lambda^{-1}} \cdot \omega, 
& \text{if } p \mid r-a\,\,  \text{and}\,\,  p^2 \nmid (r-a),\,\, \text{with}\,\,
 \displaystyle 
 \lambda = 
\overline{ \frac{a  p}{a-r} \cdot \frac{a_p}{p^2}}\in \overline{\mathbb{F}}_p^{\times}, \\[1em]
 \mathrm{ind} \left(\omega_2^{a+1}\right),
 & \text{if } p^2 \mid r-a.
\end{cases}
\]

\end{theorem}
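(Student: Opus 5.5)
We will follow the method of Buzzard--Gee and Bhattacharya--Ghate--Rozensztajn, using the compatibility of the $p$-adic and mod $p$ Local Langlands Correspondences (Berger--Breuil, Colmez). By this compatibility, $\overline{V}_{k,a_p}^{\,ss}$ is determined by the semisimplified reduction $\overline{\Theta}_{k,a_p}^{\,ss}$ of the lattice
\[
\Theta_{k,a_p}=\mathrm{image}\bigl(\operatorname{ind}_{KZ}^{G}\mathrm{Sym}^r\overline{\mathbb{Z}}_p^2\to \Pi_{k,a_p}\bigr),
\]
where $G=\mathrm{GL}_2(\mathbb{Q}_p)$ and $K=\mathrm{GL}_2(\mathbb{Z}_p)$. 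There is a surjection $\operatorname{ind}_{KZ}^{G}\mathrm{Sym}^r\overline{\mathbb{F}}_p^2\twoheadrightarrow \overline{\Theta}_{k,a_p}$. Its kernel contains $\mathrm{ind}(X_{r+1})$, where $X_{r+1}$ is the $G$-stable submodule generated by $X^r$, together with the reductions of $(T-a_p)$ applied to suitable integral functions.

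Since $v(a_p)=2$, the first step is to show that the reduction factors through $\operatorname{ind}_{KZ}^{G}(\mathrm{Sym}^r\overline{\mathbb{F}}_p^2/(X_{r+1}+V_{r}^{***}))$. Here $V_r^{***}$ denotes the third power of the $\theta$-filtration, consisting of polynomials divisible by $\theta^3$ with $\theta=X^pY-XY^p$. This uses the standard lemma that $T$ maps functions divisible by $\theta^3$, after division by $p^2$-type factors, into the image. The quotient $Q=\mathrm{Sym}^r/(X_{r+1}+V_r^{***})$ then has a filtration by subquotients of $V_r/V_r^*$, $V_r^*/V_r^{**}$ and $V_r^{**}/V_r^{***}$. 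These are explicitly $V_a\otimes$, $V_{p-1-a+\dots}\otimes D$ and $V_{\cdot}\otimes D^2$-type Jordan--Hölder factors, whose shape depends on $a$ and on $r$ modulo $p$ and $p^2$. The dimension of $X_{r+1}$ (Glover / Ghate--Rai type results) decides which factors survive, and this accounts for the conditions $p\mid r-a$, $p^2\mid r-a$, $p\mid r-a+1$ and so on. The hypothesis $k\ge 3p+4$ ensures that all three graded pieces are present and that the generic formulas hold.

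Next, for each case we will construct explicit functions $f\in\operatorname{ind}_{KZ}^G\mathrm{Sym}^r\overline{\mathbb{Q}}_p^2$, supported on $KZ$ and $\alpha KZ$ with $\alpha=\begin{pmatrix}1&0\\0&p\end{pmatrix}$. They will be built from polynomials such as $\sum_\lambda [\lambda]^{j}(X-[\lambda]Y)^r$, $X^{r-i}Y^i$, and combinations involving $\theta,\theta^2$, chosen so that $(T-a_p)f$ is integral with a computable reduction. The binomial sums that arise are evaluated mod $p$ via Lucas' theorem and the lemmas on $\sum_{j\equiv c}\binom{r}{j}$ (congruences mod $p^2$ are needed in the $p^2$ cases). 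These computations show that all but one (or two) Jordan--Hölder factors of $Q$ die. The surviving factor is then either:
\begin{itemize}
\item a quotient of $\operatorname{ind}_{KZ}^G\sigma/(T)$, giving a supersingular $\pi(\cdot,0)$ and hence an $\mathrm{ind}(\omega_2^{\cdot})$; or
\item a quotient of $\operatorname{ind}_{KZ}^G\sigma/(T-\lambda)$ with $\lambda$ read off from the leading coefficient, e.g. $\frac{(a-1)(a-2)}{(r-a)(r-a+1)}\frac{a_p}{p^2}$, giving $\mu_\lambda\omega^{\cdot}\oplus\mu_{\lambda^{-1}}\omega^{\cdot}$ via the mod $p$ correspondence.
\end{itemize}
In the cases where two factors with the same $\sigma$ survive, e.g. $a=1$ with $p^2\mid r-p$ and $a=3$ with $p\mid r-1$, we instead get a relation $T^2-cT+1$, which yields $\lambda+\lambda^{-1}=c$. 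Since $\overline{\Theta}$ is nonzero of finite length, exhibiting such quotients pins down $\overline{V}^{ss}$ using the Breuil--Berger classification.

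The main obstacle will be the boundary cases: $p^2\mid r-a$, $p^2\mid r-2p$, $p^2\mid r-p-1$, and $a=3$ with $p\mid r-1$. In these, the natural function $f$ has $(T-a_p)f$ with leading term of valuation exactly cancelling, so one must add correction terms involving $\theta$-multiples of lower-weight polynomials and track terms mod $p^3$. That bookkeeping produces, for instance, the constant $\frac{a_p^2-p^3(r-2p-1)/2}{p^2a_p}$. We would first handle the generic case of each $a$ to fix the shape of $f$. We would then perturb $f$ by $p^{-1}$ or $p^{-2}$ multiples of $\theta$-divisible terms, solving for the coefficients so that all non-target factors cancel integrally. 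The excluded class $a=4$ is exactly where the $\theta^2$ piece collides with $V_0$-type factors, and we do not attempt it.
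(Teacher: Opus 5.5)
Your outline follows the paper's strategy: reduce to $\overline{\Theta}_{k,a_p}$, filter by $\theta$ down to $V_r^{***}$, kill subquotients $F_i$ with explicit $(T-a_p)f$, and read off the answer via mod $p$ LLC, with $T^2-cT+1$ in the two $V_{p-2}$ cases. However, the quotient you start from is too coarse, and the mechanism you then use to decide which factors survive does not work as stated.

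Since $v(a_p)=2>1$, the surjection $\mathrm{ind}_{KZ}^G V_r\twoheadrightarrow\overline{\Theta}_{k,a_p}$ kills $\mathrm{ind}_{KZ}^G X_{r-1}$ \cite[Rem.~4.4]{BG09}. Here $X_{r-1}$ is the $\Gamma$-submodule generated by $X^{r-1}Y$, not merely the one generated by $X^r$. The paper therefore works in $P=V_r/(X_{r-1}+V_r^{***})$, and this is used in two essential ways.

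First, Ghate--Vangala's description of $X_{r-1}\cap V_r^{(i)}$ modulo $V_r^{(i+1)}$ \cite{GV22} kills the following factors with no Hecke computation:
\begin{itemize}
\item $F_2$ for $5\le a\le p-1$ with $p\nmid r-a$;
\item $F_1$ for $a=1$, and also $F_2$ there if $p\nmid r$;
\item $F_2,F_3$ for $a=2$ with $p\nmid r(r-1)$, and $F_2$, resp.\ $F_3$, when $p\mid r$, resp.\ $p\mid r-1$;
\item $F_4$, resp.\ $F_2$, for $a=3$ with $p\nmid(r-1)(r-2)(r-3)$, resp.\ $p\nmid r-3$.
\end{itemize}
The module generated by $X^r$ has dimension at most $p+1$ and already maps onto $J_0\cong V_a$. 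So in general it does not kill these factors. For $a=2,3$ it cannot even reach $J_2$, resp.\ $J_4$, for dimension reasons. Your plan supplies no argument for them.

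Second, the generic cases rest on the congruence $X^{r-2}Y^2\equiv\frac{\binom{r-a+1}{2}}{\binom{a-1}{2}}\,\theta^2X^{r-2p-2}$ modulo $X_{r-1}+V_r^{(3)}$ (Lemma~\ref{lem:3.3} and its analogues for $a=1,2,3$). It turns $T^+[\mathrm{Id},\theta^2X^{r-2p-2}/p^2]=\sum_\lambda[g^0_{1,[\lambda]},X^{r-2}Y^2]$ into a multiple of $T$ on $J_4$. This produces both $\lambda=\frac{(a-1)(a-2)}{(r-a)(r-a+1)}\frac{a_p}{p^2}$ and the dichotomy $p\mid(r-a)(r-a+1)$. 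Its proof uses $\sum_{j\equiv 2}\binom{r-1}{j}X^{r-j}Y^j\in X_{r-1}$, which in general is not in the module generated by $X^r$. Relatedly, the $p^2$-conditions are not explained by any submodule dimension. They come only from mod $p^2$ binomial congruences inside the Hecke computations.

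Beyond this, the proposal omits what actually determines the answer, namely which $F_i$ survive in each case:
\begin{itemize}
\item $F_3,F_4$ versus $F_1,F_2$ for $a\ge5$;
\item only $F_5$ for generic $a=2$;
\item only $F_3$ for $a=1$ with $p\mid r-p$, $p^2\nmid r-p$;
\item for generic $a=3$, the non-adjacent pair $F_1,F_5$.
\end{itemize}
It also omits the device that makes the computations integral. This is a choice of integers $\alpha_j\equiv\binom rj$ modulo $p$ or $p^2$ with $\sum_j\binom jn\alpha_j\equiv0$ modulo $p^{4-n}$ (or $p^{5-n}$). They are obtained from root-of-unity filters, $(1+\xi)^{p-1}=1+pz_\xi$, and a $3\times3$ linear system. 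With them, $f_0=[\mathrm{Id},\frac{p-1}{pa_p}\sum_j\alpha_jX^{r-j}Y^j]$ has $T^\pm f_0\equiv0$, while $-a_pf_0$ cancels the non-integral part of $T^-f_1$.

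Finally, functions supported only on $KZ$ and $\alpha KZ$ are too few. The paper's functions live on all the neighbours $g^0_{1,[\lambda]}$. For the $T^2-cT+1$ relations and several vanishing results (e.g.\ Propositions~\ref{prop:B_4} and~\ref{prop:B_5}) they also live on vertices $g^0_{2,\mu}$, $\mu\in I_2$, at distance two. In those two $V_{p-2}$ cases, it is a single subquotient ($F_2$, resp.\ $F_4$) that survives and carries both constituents, not two surviving factors.
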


\medskip

For instance, returning to $\Delta_{26}$ and $p = 7$,
we have that $r= 24$, $a= 6$ and
 $7 \nmid 18 \cdot 19$, so by the first part of the last case of the theorem, we have 
$\overline{V}_{k,a_p}^{\,ss} \cong \omega^2 \oplus \omega^5$, as it should be since 
it is well-known that even the global mod $7$ Galois representation attached to $\Delta_{26}$ satisfies $\bar\rho^{\,ss}_{\Delta_{26}}  \cong \omega^5 \oplus \omega^2$
(see Example~\ref{example Delta 26}).

The theorem is proved using the usual compatibility between the $p$-adic and mod $p$ local Langlands correspondences with respect to the process of reduction. 
Indeed, the compatibility implies that it suffices to compute the reduction $\overline{\Theta}_{k,a_p}$ of the standard lattice $\Theta_{k,a_p}$
in the corresponding locally algebraic representation of $\mathrm{GL}_2(\mathbb{Q}_p)$.
Now, $\overline{\Theta}_{k,a_p}$ is uniformized by the induction of 
the symmetric $r$-th power ($r = k-2$)
of the standard mod $p$ representation of 
$\mathrm{GL}_2(\mathbb{F}_p)$, which is equipped with the so called theta filtration. As such,  $\overline{\Theta}_{k,a_p}$ is built out of six subquotients $F_0, F_1, \ldots, F_5$. 

The goal is to show that all but one or two of these subquotients survive, and then to use the mod $p$ local Langlands correspondence to read of ${\overline{V}^{ss}_{k,a_p}}$.
In the former case ${\overline{V}^{ss}_{k,a_p}}$ is irreducible, and in the latter case it is reducible, except if the underlying weight involved is of dimension $p-1$
in which case in the former case a reducible possibility also arises (this subtlety is dealt with completely in this paper).

Some of the subquotients $F_i$ can be killed immediately using results going back to \cite{GV22}. However, most of the surviving ones must be killed using delicate computations with the Hecke operator $T$.
These are carried out in Theorems~\ref{thm:3.1}, \ref{thm: con 1}, \ref{thm:a=2} and \ref{thm:a=3} 
in Sections~\ref{sec: 5 to p-1}, \ref{sec: a=1}, \ref{sec: a=2} and \ref{sec: a=3}, respectively. These results correspond to the four cases in the theorem (the last case in the theorem is the easiest to treat, so is treated first). 
Some preliminary material is covered in Section~\ref{sec: basics}.

While the reducibility of 
${\overline{V}^{ss}_{k,a_p}}$ 
%reductions of mod $p$ Galois representations 
has traditionally been obtained for small slopes by showing that two adjacent subquotients $F_i$ and $F_{i+1}$ survive,
%contributing Jordan–Hölder factors,
the slope $2$ setting displays more complicated behavior. For instance, we show that for weights $k$ for which $r = k-2
$ belongs to the  congruence class $a = 3 \pmod{p-1}$ treated in this paper, the proof unexpectedly involves contributions 
%from
%$F_2$ and $F_4$, and even more surprisingly, 
from $F_1$ and $F_5$. This phenomenon highlights a qualitative shift in the complexity of the computation of the reduction of crystalline representations as the slope increases to 2.

\section{Basics}
\label{sec: basics}
We begin by recalling some notation and some standard results that will be used throughout; see \cite{Bre03b} and \cite{BG15} for further details.
\subsection{Hecke operator $T$}
Let $G=\mathrm{GL}_2(\mathbb{Q}_p)$ and $K=\mathrm{GL}_2(\mathbb{Z}_p)$ be the maximal compact open
subgroup of $G$, and let $Z=\mathbb{Q}_p^{\times}$ be the center of $G$.
Let $R$ be a $\mathbb{Z}_p$-algebra and let
\[
V=\mathrm{Sym}^r R^2 \otimes D^s
\]
be the usual symmetric power representation of $KZ$ twisted by a power of the determinant character
$D$, modeled on homogeneous polynomials of degree $r$ in the variables $X,Y$ over $R$.

We write $\mathrm{ind}_{KZ}^G$ for compact induction.
Thus $\mathrm{ind}_{KZ}^G V$ consists of functions $f:G\to V$ such that
\[
f(hg)=h\cdot f(g)\quad \text{for all } h\in KZ,\ g\in G,
\]
and such that $f$ is compactly supported modulo $KZ$.

For $g\in G$ and $v\in V$, let $[g,v]\in \mathrm{ind}_{KZ}^G V$ denote the function supported on
$KZg^{-1}$ defined by
\[
g'\longmapsto
\begin{cases}
g'g\cdot v, & \text{if } g'\in KZg^{-1},\\
0, & \text{otherwise}.
\end{cases}
\]
Any element of $\mathrm{ind}_{KZ}^G V$ is a finite $R$-linear combination of functions of the form
$[g,v]$ with $g\in G$ and $v\in V$.

The Hecke operator $T$ is defined by its action on these functions as follows:
\begin{equation}\label{eq:hecke}
T([g,v(X,Y)]) =
\sum_{\lambda\in\mathbb{F}_p}
\left[
g
\begin{pmatrix}
p & [\lambda]\\
0 & 1
\end{pmatrix},
\, v(X,-[\lambda]X+pY)
\right]
+
\left[
g
\begin{pmatrix}
1 & 0\\
0 & p
\end{pmatrix},
\, v(pX,Y)
\right],
\end{equation}
where $[\lambda]$ denotes the Teichm\"uller representative of $\lambda\in\mathbb{F}_p$.

For $m=0$, we set $I_0=\{0\}$, and for $m>0$ let
\[
I_m=\left\{
[\lambda_0]+[\lambda_1]p+\cdots+[\lambda_{m-1}]p^{m-1}
:\lambda_i\in\mathbb{F}_p
\right\}\subset\mathbb{Z}_p.
\]
For $m\ge1$, define the truncation map
\[
[\cdot]_{m-1}:I_m\to I_{m-1}
\]
by retaining the first $m-1$ terms in the above $p$-adic expansion; for $m=1$ this is the zero map.

Let
\[
\alpha=
\begin{pmatrix}
1 & 0\\
0 & p
\end{pmatrix}.
\]
For $m\ge0$ and $\lambda\in I_m$, define
\[
g^0_{m,\lambda}=
\begin{pmatrix}
p^m & \lambda\\
0 & 1
\end{pmatrix},
\qquad
g^1_{m,\lambda}=
\begin{pmatrix}
1 & 0\\
p\lambda & p^{m+1}
\end{pmatrix}.
\]
Note that $g^0_{0,0}=\mathrm{Id}$ and $g^1_{0,0}=\alpha$.

Recall the decomposition
\[
G=\bigsqcup_{\substack{m\ge0,\ \lambda\in I_m,\\ i\in\{0,1\}}}
KZ\,(g^i_{m,\lambda})^{-1}.
\]
Thus any element of $\mathrm{ind}_{KZ}^G V$ can be written as a finite sum of functions of the form
$[g,v]$, where $g=g^0_{m,\lambda}$ or $g=g^1_{m,\lambda}$ for some $\lambda\in I_m$ and $v\in V$.

Let
\[
v=\sum_{i=0}^r c_i X^{r-i}Y^i \in V=\mathrm{Sym}^r R^2\otimes D^s.
\]
Expanding \eqref{eq:hecke}, we may write $T=T^+ + T^-$, where
\[
T^+\bigl([g^0_{n,\mu},v]\bigr)
=
\sum_{\lambda\in I_1}
\left[
g^0_{n+1,\mu+p^n\lambda},
\,
\sum_{j=0}^r
\left(
\sum_{i=j}^r
p^j c_i \binom{i}{j}(-\lambda)^{i-j}
\right)
X^{r-j}Y^j
\right],
\]
\[
T^-\bigl([g^0_{n,\mu},v]\bigr)
=
\left[
g^0_{n-1,[\mu]_{n-1}},
\,
\sum_{j=0}^r
\left(
\sum_{i=j}^r
p^{r-i} c_i \binom{i}{j}
\left(\frac{\mu-[\mu]_{n-1}}{p^{n-1}}\right)^{i-j}
\right)
X^{r-j}Y^j
\right],
\qquad (n>0),
\]
and
\[
T^-\bigl([g^0_{0,\mu},v]\bigr)
=
\left[
\alpha,
\,
\sum_{j=0}^r p^{r-j} c_j X^{r-j}Y^j
\right].
\]
We will use these explicit formulas for $T^+$ and $T^-$ to compute $(T-a_p)f$ for
$f\in \mathrm{ind}_{KZ}^G \mathrm{Sym}^r \overline{\mathbb{Q}}_p^{\,2}$.
\subsection{The mod $p$ Local Langlands Correspondence}
For $0 \le r \le p-1$, $\lambda \in \overline{\mathbb{F}}_p$, and a smooth character
\[
\eta : \mathbb{Q}_p^\times \to \overline{\mathbb{F}}_p^{\times},
\]
let
\[
\pi(r,\lambda,\eta)
:=
\frac{\mathrm{ind}_{KZ}^{G} \mathrm{Sym}^r \overline{\mathbb{F}}_p^{\,2}}{(T-\lambda)}
\;\otimes\; (\eta \circ \det)
\]
be the smooth admissible representation of $G$.

This representation is known to be irreducible unless
\[
(r,\lambda) = (0,\pm 1) \quad \text{or} \quad (p-1,\pm 1),
\]
by the classification of irreducible representations of $G$ in characteristic $p$
\cite{BL94,BL95,Bre03a}.

With this notation, Breuil’s semisimple mod $p$ Local Langlands Correspondence
\cite[Def.~1.1]{Bre03b} is given by:
\begin{itemize}
\item $\lambda = 0$:
\[
\mathrm{ind}(\omega_2^{r+1}) \otimes \eta
\;\overset{LL}{\longmapsto}\;
\pi(r,0,\eta),
\]

\item $\lambda \neq 0$:
\[
\left(\mu_\lambda \cdot \omega^{r+1} \oplus \mu_{\lambda^{-1}}\right) \otimes \eta
\;\overset{LL}{\longmapsto}\,\,
\pi(r,\lambda,\eta)^{{ss}}
\oplus
\pi([p-3-r],\lambda^{-1},\eta\omega^{r+1})^{{ss}},
\]
\end{itemize}
where
\[
[p-3-r] \in \{0,1,\dots,p-2\}
\quad\text{is defined by}\quad
[p-3-r] \equiv p-3-r \pmod{p-1}.
\]

Now consider the locally algebraic representation of $G$ given by
\[
\Pi_{k,a_p}
=
\frac{\mathrm{ind}_{KZ}^{G} \mathrm{Sym}^r \overline{\mathbb{Q}}_p^{\,2}}{(T-a_p)},
\]
where $r = k-2 \ge 0$ and $T$ denotes the Hecke operator.

Let $\Theta_{k,a_p}$ be the standard lattice in $\Pi_{k,a_p}$, defined by
\begin{equation}\label{eq:lattice}
\Theta := \Theta_{k,a_p}
:=
\mathrm{image}\!\left(
\mathrm{ind}_G^{KZ} \mathrm{Sym}^r \overline{\mathbb{Z}}_p^{\,2}
\longrightarrow
\Pi_{k,a_p}
\right).
\end{equation}
Equivalently,
\[
\Theta
\simeq
\frac{\mathrm{ind}_{KZ}^{G} \mathrm{Sym}^r \overline{\mathbb{Z}}_p^{\,2}}
{(T-a_p)\bigl(\mathrm{ind}_{KZ}^{G} \mathrm{Sym}^r \overline{\mathbb{Q}}_p^{\,2}\bigr)
\cap \mathrm{ind}_{KZ}^{G} \mathrm{Sym}^r \overline{\mathbb{Z}}_p^{\,2}}.
\]

It is known that the semisimplification of the reduction of this lattice satisfies
\[
\overline{\Theta}^{\,{ss}}_{k,a_p}
\simeq {LL}\!\left(\overline{V}^{\,{ss}}_{k,a_p}\right),
\]
where $\mathrm{LL}$ denotes the (semisimple) mod $p$ Local Langlands Correspondence
\cite{Ber10}.

Since the map ${LL}$ is injective, it suffices to determine
\[
{LL}\!\left(\overline{V}^{\,{ss}}_{k,a_p}\right)
\]
in order to determine $\overline{V}^{\,{ss}}_{k,a_p}$.
\subsection{Useful lemmas}\label{roots of 1}
We begin by proving some combinatorial results which will be useful in computing the mod $p$ reductions.

The following lemma is an analog of \cite[Lemma $2.5$]{BG15} and
\cite[Proposition $2.8$]{BGR18}.
\begin{lemma}\label{lem:2.1}
Let \(p > 2\), \(i = 0,1,\dots,p-1\) and
$p \le r \equiv a \pmod{p-1}$. If
$a \in \{i+1,\dots,p-1+i\},$
then we have
\[
\sum_{\substack{j \equiv a \!\!\!\pmod{p-1} \\ i<j<r}}
\binom{j}{i}\binom{r}{j}
\equiv
p \binom{r}{i}\frac{a-r}{a-i}
\pmod{p^2}.
\]
\end{lemma}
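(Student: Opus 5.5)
We first reduce to a single binomial sum, using the identity $\binom{j}{i}\binom{r}{j}=\binom{r}{i}\binom{r-i}{j-i}$. Put $n=r-i$, $m=j-i$ and $b=a-i$. The hypothesis $a\in\{i+1,\dots,p-1+i\}$ says exactly that $1\le b\le p-1$. Since $r\ge p>i$, we have $n\ge1$ and $n\equiv b \pmod{p-1}$. The left side of the lemma then equals
\[
\binom{r}{i}\,S,\qquad S:=\sum_{\substack{0<m<n\\ m\equiv b \!\!\!\pmod{p-1}}}\binom{n}{m}.
\]
Since $(a-r)/(a-i)=(b-n)/b$ and $b$ is a $p$-adic unit, it suffices to prove $S\equiv p\,(b-n)/b \pmod{p^2}$. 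This is the analogue of \cite[Lemma 2.5]{BG15}. To prove it, we work with the full sum $S_{\mathrm{full}}$ over all $0\le m\le n$ with $m\equiv b$. We have $S_{\mathrm{full}}=S+1+\delta$, where the $1$ comes from $m=n$ and $\delta=1$ if $b=p-1$ (from $m=0$) and $\delta=0$ otherwise.

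We compute $S_{\mathrm{full}}$ with the Teichm\"uller roots-of-unity filter:
\[
(p-1)S_{\mathrm{full}}=\sum_{\lambda\in\mathbb{F}_p^\times}[\lambda]^{-b}(1+[\lambda])^n .
\]
The term $\lambda=-1$ vanishes. For $\lambda\ne-1$, the element $1+[\lambda]$ is a unit, and $u_\lambda:=(1+[\lambda])^{p-1}=1+pt_\lambda$ is a principal unit. Write $n=b+e(p-1)$ with $e\ge0$ an integer. Then $(1+[\lambda])^n=(1+[\lambda])^b u_\lambda^e\equiv(1+[\lambda])^b\bigl(1+e(u_\lambda-1)\bigr)\pmod{p^2}$. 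Hence
\[
(p-1)S_{\mathrm{full}}\equiv A+e\,(B-A)\pmod{p^2},
\]
where $A=\sum_\lambda[\lambda]^{-b}(1+[\lambda])^b$ and $B=\sum_\lambda[\lambda]^{-b}(1+[\lambda])^{b+p-1}$. The key point is that $A$ and $B$ are again roots-of-unity filters, now applied to the small exponents $b$ and $b+p-1$, so they can be computed exactly:
\[
A=(p-1)(1+\delta),\qquad B=(p-1)\Bigl(\tbinom{b+p-1}{b}+1+\delta\Bigr).
\]
Therefore $B-A=(p-1)\binom{b+p-1}{b}$.

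The step needing the most care is evaluating this binomial modulo $p^2$:
\[
\binom{b+p-1}{b}=\prod_{k=1}^{b}\frac{p-1+k}{k}=\frac{p}{b}\prod_{k=1}^{b-1}\frac{p+k}{k}\equiv\frac{p}{b}\pmod{p^2}.
\]
Note that $e(B-A)$ is automatically divisible by $p$, so only $p$-divisibility and the first-order term matter here. We also check that $(1+[\lambda])^b$ carries no hidden $p$-adic error, since $A$ and $B$ are computed exactly in characteristic zero. Combining these,
\[
(p-1)S_{\mathrm{full}}\equiv(p-1)(1+\delta)+e(p-1)\frac{p}{b}\pmod{p^2}.
\]
Dividing by the unit $p-1$ and using $e(p-1)=n-b$ together with $1/(p-1)\equiv-1\pmod p$ (valid because the term is already a multiple of $p$), we get
\[
S_{\mathrm{full}}\equiv1+\delta-\frac{p(n-b)}{b}\pmod{p^2}.
\]
Subtracting the boundary contributions $1+\delta$ gives $S\equiv p(b-n)/b$, and multiplying by $\binom{r}{i}$ gives the lemma.

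Finally, we check the degenerate cases. If $n=b$, then $e=0$ and $S=0$, which agrees with $p(b-n)/b=0$. If $b=p-1$, the $m=0$ boundary term is exactly the one accounted for by $\delta$.
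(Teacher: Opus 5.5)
Your argument is correct and is essentially the paper's proof. It uses the same Teichm\"uller roots-of-unity filter and the same factorization $(1+[\lambda])^{r-i}=(1+[\lambda])^{a-i}\bigl((1+[\lambda])^{p-1}\bigr)^{e}$ expanded to first order in $p$. It then evaluates the two small filters exactly and uses $\binom{a-i+p-1}{a-i}\equiv p/(a-i) \pmod{p^2}$. Your preliminary identity $\binom{j}{i}\binom{r}{j}=\binom{r}{i}\binom{r-i}{j-i}$ is just an explicit form of the paper's filter identity $(p-1)\sum_{j\equiv a}\binom{j}{i}\binom{r}{j}=\binom{r}{i}\sum_{\xi}\xi^{i-a}(1+\xi)^{r-i}$.

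The one point you leave implicit, $e\ge 0$ (i.e.\ $r\ge a$), holds because $a\le p-1+i\le 2p-2$, $r\ge p$ and $r\equiv a \pmod{p-1}$.
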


\begin{proof}
We know that      \begin{eqnarray*}
        (p-1) \sum_{\substack{
    j \geq 0 \\
    j \equiv a \pmod{p-1}
}}
\binom{j}{i}\binom{r}{j} = \binom{r}{i} \sum_{\xi \in \mu'} \xi^{i-a} (1+\xi)^{r-i},
    \end{eqnarray*}
    where $\mu'$ is the set of $(p-1)$-th roots of unity except $-1$.
   Now,
$
(1+\xi)^{p-1}=1+pz_\xi,
$
and therefore, by the binomial theorem,
$
(1+\xi)^{k(p-1)}
=(1+pz_\xi)^k
\equiv 1+pkz_\xi \pmod{p^2}
$. Hence, we have \begin{eqnarray*}
        (p-1) \sum_{\substack{
    j \geq 0 \\
    j \equiv a \pmod{p-1}
}}
\binom{j}{i}\binom{r}{j} &=& \binom{r}{i} \sum_{\xi \in \mu'} \xi^{i-a} (1+\xi)^{a-i}(1+\xi)^{r-a}\\
&\equiv& \binom{r}{i} \sum_{\xi \in \mu'} \xi^{i-a} (1+\xi)^{a-i}\biggl(1+p \frac{r-a}{p-1} z_{\xi}\biggl) \pmod{p^2}\\
&\equiv& \binom{r}{i} \sum_{\xi \in \mu'} \xi^{i-a} (1+\xi)^{a-i}\biggl[1+\frac{r-a}{p-1} \biggl((1+\xi)^{p-1}-1\biggl)\biggl] \pmod{p^2}\\
&\equiv& \binom{r}{i} \sum_{\xi \in \mu'} \xi^{i-a} (1+\xi)^{a-i}+ \frac{r-a}{p-1}\binom{r}{i} \sum_{\xi \in \mu'} \xi^{i-a} (1+\xi)^{a-i+p-1}\\
 & & \quad - \frac{r-a}{p-1}\binom{r}{i} \sum_{\xi \in \mu'} \xi^{i-a} (1+\xi)^{a-i} \pmod{p^2}\\
&\equiv& \binom{r}{i} \biggl[(-1)^{i-a}+\sum_{\xi \in \mu'} \xi^{i-a}\biggl]+(p-1)\binom{r}{i}\\
 && \quad + \frac{r-a}{p-1}\binom{r}{i}(p-1)\binom{a-i+p-1}{a-i}\pmod{p^2}\\
&\equiv& \binom{r}{i} \biggl[(-1)^{i-a}+\sum_{\xi \in \mu'} \xi^{i-a}\biggl]+(p-1)\binom{r}{i}\\
&& \quad + (r-a)\binom{r}{i}\frac{(p+a-i-1)(p+a-i-2) \cdots (p+1)p}{(a-i)!}\pmod{p^2}\\
&\equiv& \binom{r}{i} \biggl[(-1)^{i-a}+\sum_{\xi \in \mu'} \xi^{i-a}\biggl]+(p-1)\binom{r}{i}\\
&& \quad + p(r-a)\binom{r}{i}\frac{(a-i-1)(a-i-2) \cdots 1}{(a-i)!}\pmod{p^2}\\
&\equiv& \binom{r}{i} \biggl[(-1)^{i-a}+\sum_{\xi \in \mu'} \xi^{i-a}\biggl]+(p-1)\binom{r}{i}+p\binom{r}{i}\frac{r-a}{a-i}\pmod{p^2}.
    \end{eqnarray*}
Now, if $a-i =p-1$, then $\biggl[(-1)^{i-a}+\sum_{\xi \in \mu'} \xi^{i-a}\biggl]=p-1$ and if $a-i<p-1$, then $\biggl[(-1)^{i-a}+\sum_{\xi \in \mu'} \xi^{i-a}\biggl]=0$. Hence, we have \begin{eqnarray*}
     (p-1) \sum_{\substack{
   i< j<r  \\
    j \equiv a \pmod{p-1}
}}
\binom{j}{i}\binom{r}{j} \equiv p\binom{r}{i}\frac{r-a}{a-i} \pmod{p^2}.
\end{eqnarray*}
Dividing both side by $p-1$ proves the lemma.
    \end{proof}
    
    \begin{rmk}\label{rmk:2.1}
        Let \(p > 2\) and
$p \le r \equiv a \pmod{p(p-1)}$. Then, taking $i = 0$ in the proof above shows that
\[
\sum_{\substack{j \equiv a \!\!\!\pmod{p-1} \\ 0<j<r}}
\binom{r}{j}
\equiv
 \frac{(a-r)p}{a}
\pmod{p^3}.
\]
%As we shall see later, this formula also holds if $a=p$
%(since the congruence \eqref{} also holds
%mod $p^3$).
\end{rmk}
    \begin{lemma}\label{lem:2.2}
    Let \(p > 2\). Let \(r \equiv a \pmod{p-1}\) with
\(a \in \{3,\dots,p-1\}\). Then there exist integers
\[
\{\alpha_j : a \le j < r \text{ and } j \equiv a \pmod{p-1}\}
\]
such that the following conditions are satisfied:

\begin{enumerate}
    \item for all $j$,
    \[
    \alpha_j \equiv \binom{r}{j} \pmod p,
    \]
    \item for $n = 0,1,2$, 
\[
\sum_{j \geq n} \binom{j}{n}\alpha_j \equiv 0 \pmod{p^{4-n}},
\]
\item  for $n = 3$, 
\begin{itemize}
\item if $a = 3$, then
    \[
    \sum_{j \geq 3} \binom{j}{3}\alpha_j
    \equiv \binom{r}{3} \pmod{p},
    \]
    \item and if $a = 4,\ldots,p-1$, then
    \[
    \sum_{j \geq 3} \binom{j}{3}\alpha_j \equiv 0 \pmod{p}.
    \]
    \end{itemize}
\end{enumerate}
\end{lemma}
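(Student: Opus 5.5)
The plan is to perturb the naive choice $\alpha_j=\binom{r}{j}$ by multiples of $p$ at three indices, using Lemma~\ref{lem:2.1} to control the error terms. For $n=0,1,2,3$ write
\[
S_n=\sum_{\substack{j\equiv a \!\!\!\pmod{p-1}\\ a\le j<r}}\binom{j}{n}\binom{r}{j}.
\]
Since $a\ge 3>n$ for $n\le 2$, the range $a\le j<r$ coincides with $n<j<r$ in the class of $a$. Also $a\in\{n+1,\dots,p-1+n\}$. So Lemma~\ref{lem:2.1} gives
\[
S_n\equiv p\binom{r}{n}\frac{a-r}{a-n}\pmod{p^2},
\]
and in particular $p\mid S_n$ for $n=0,1,2$.

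For $n=3$ with $a\ge 4$ the lemma applies directly and gives $S_3\equiv 0\pmod p$. For $a=3$ I split off the term $j=3$, which equals $\binom{r}{3}$. The remaining sum over $3<j<r$ is handled by Lemma~\ref{lem:2.1} with the representative $a'=p+2\equiv 3 \pmod{p-1}$, which lies in $\{4,\dots,p+2\}$. It is therefore $\equiv 0\pmod p$, so $S_3\equiv\binom{r}{3}\pmod p$. Thus $\alpha_j=\binom{r}{j}$ already satisfies (1) and (3), and satisfies (2) modulo $p$ but not modulo $p^{4-n}$.

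To fix (2), I set $\alpha_j=\binom{r}{j}+p\beta_j$ with $\beta_j\in\mathbb{Z}$, taking $\beta_j=0$ except at the three indices $j_0=a$, $j_1=a+(p-1)$, $j_2=a+2(p-1)$. These indices exist in the range $a\le j<r$ under the standing hypothesis that $r$ is large (for instance $r\ge a+3(p-1)$, which holds for $k\ge 3p+4$). Condition (1) is preserved automatically. Condition (3) is also preserved, since the perturbation contributes $p\sum\binom{j}{3}\beta_j\equiv 0\pmod p$. Condition (2) becomes the linear system
\[
\sum_{k=0}^{2}\binom{j_k}{n}\beta_{j_k}\equiv -\frac{S_n}{p}\pmod{p^{3-n}},\qquad n=0,1,2.
\]

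The key step is to check that this system is solvable. Its coefficient matrix $\bigl(\binom{j_k}{n}\bigr)$ is a Vandermonde matrix in binomial basis, with determinant $\prod_{k<l}(j_l-j_k)/(1!\,2!)=(p-1)^3$, a $p$-adic unit. Hence the system has an exact solution over $\mathbb{Z}_{(p)}$. Replacing each rational $\beta_{j_k}$ by an integer congruent to it modulo $p^3$ then gives integers satisfying all three congruences.

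The main care needed is bookkeeping. First, the applicability of Lemma~\ref{lem:2.1} for each $n$ must be verified, especially the edge case $a=3$, $n=3$, which needs the shifted representative $p+2$. Second, one must confirm that enough indices $j\equiv a$ with $a\le j<r$ exist. If smaller $r$ had to be allowed, the fallback would be to use whatever indices are available together with a more refined $p$-adic estimate of $S_n$ via Remark~\ref{rmk:2.1}, but this is not needed in the paper's range.
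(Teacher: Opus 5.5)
Your proposal is correct and follows essentially the same route as the paper: you set $\alpha_j=\binom{r}{j}+p\delta_j$ with perturbations only at $j=a,\,a+p-1,\,a+2p-2$, and solve the resulting $3\times 3$ system, whose determinant is $(p-1)^3$, after first checking that $p$ divides the three relevant sums. The paper obtains that divisibility by evaluating the roots-of-unity sums mod $p$ directly rather than by citing Lemma~\ref{lem:2.1}, which is only a cosmetic difference. Your explicit treatment of the case $a=3$, $n=3$ via the representative $p+2$ is sound. So is your remark that $r\ge a+3(p-1)$ is needed: the paper leaves this implicit, and it holds automatically for $r\ge 3p+2$.
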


\begin{proof}
    We know that \begin{eqnarray*}
        (p-1) \sum_{\substack{
    j \geq 0 \\
    j \equiv a \pmod{p-1}
}}
\binom{j}{m}\binom{r}{j} = \binom{r}{m} \sum_{\xi \in \mu'} \xi^{m-a} (1+\xi)^{r-m},
    \end{eqnarray*}
    where $\mu'$ is the set of $(p-1)$-th roots of unity except $-1$.
    \begin{enumerate}
        \item [(i)] For $m=0$, we have \begin{eqnarray*}
        (p-1) \sum_{\substack{
    j \geq 0 \\
    j \equiv a \pmod{p-1}
}}
\binom{r}{j}    &\equiv& \sum_{\xi \in \mu'} \xi^{-a} (1+\xi)^a \pmod{p}\\
&\equiv&  (1+\delta_{a,p-1})(p-1) \pmod{p},  \text{ so} \\
(p-1) \sum_{\substack{
    a \le j <r \\
    j \equiv a \pmod{p-1}
}}
\binom{r}{j}  &\equiv& 0 \pmod{p}.
\end{eqnarray*}
\item[(ii)] For $m=1$, we have \begin{eqnarray*}
    (p-1) \sum_{\substack{
    j \geq 0 \\
    j \equiv a \pmod{p-1}
}}
j\binom{r}{j}    &\equiv& r\sum_{\xi \in \mu'} \xi^{1-a} (1+\xi)^{a-1} \pmod{p} \\
&\equiv& (p-1)r \pmod{p},\,\,\text{ so}\\
(p-1) \sum_{\substack{
a\le j<r \\
    j \equiv a \pmod{p-1}
}}
j\binom{r}{j}    &\equiv& 0\pmod{p}.
\end{eqnarray*}
\item[(iii)] For $m=2$, we have \begin{eqnarray*}
(p-1) \sum_{\substack{
    j \ge 0 \\
    j \equiv a \pmod{p-1}
}}
\binom{j}{2}\binom{r}{j}    &\equiv& \binom{r}{2}\sum_{\xi \in \mu'} \xi^{2-a} (1+\xi)^{a-2} \pmod{p}\\
&\equiv& (p-1)\binom{r}{2}
\pmod{p},\,\,\text{ so}\\
(p-1) \sum_{\substack{
    a\le j<r \\
    j \equiv a \pmod{p-1}
}}
\binom{j}{2}\binom{r}{j}    &\equiv& 0 \pmod{p}.
\end{eqnarray*}
    \end{enumerate}
    Now, put $\alpha_j =\binom{r}{j}+p\delta_j$ for $a \le j<r$ such that $j \equiv a \pmod{p-1}$ and define  \begin{eqnarray*}
        s_0 &=& -p^{-1} \sum_{\substack{
    a \le j <r \\
    j \equiv a \pmod{p-1}
}}
\binom{r}{j},\\
s_1 &=& -p^{-1}\sum_{\substack{
    a \le j <r\\
    j \equiv a \pmod{p-1}
}}
j\binom{r}{j},\\
s_2 &=& -p^{-1}\sum_{\substack{
    a \le j <r \\
    j \equiv a \pmod{p-1}
}}
\binom{j}{2}\binom{r}{j}.
\end{eqnarray*}
Recall the determinant of \[
\begin{pmatrix}
1 & 1 & 1 \\
a& b & c \\
\binom{a}{2} & \binom{b}{2} & \binom{c}{2}
\end{pmatrix}
\]
is $\frac{(a-b)(b-c)(c-a)}{2} = {(p-1)^3}$ if $a$, $b$, $c$ are in AP with common difference $p-1$.  Thus the following system of linear equations 
\begin{eqnarray*}
    \delta_a+\delta_{a+p-1}+\delta_{a+2p-2} &\equiv& s_0 \pmod{p^3}\\
   a \delta_a+(a+p-1)\delta_{a+p-1}+(a+2p-2)\delta_{a+2p-2} &\equiv& s_1 \pmod{p^2}\\
    \binom{a}{2}\delta_a+\binom{a+p-1}{2}\delta_{a+p-1}+\binom{a+2p-2}{2}\delta_{a+2p-2} &\equiv& s_2 \pmod{p}
\end{eqnarray*}
is solvable for $\delta_a$, $\delta_{a+p-1}$, $\delta_{a+2p-2}$. We take the other $\delta_j=0$.
It follows that 
\begin{eqnarray*}
      \sum_{\substack{
    a\le j <r \\
    j \equiv a \pmod{p-1}
}} \alpha_j &\equiv& 0 \pmod{p^4} ,  \\
\sum_{\substack{
    a \le j <r \\
    j \equiv a \pmod{p-1}
}} j\alpha_j &\equiv& 0 \pmod{p^3}, \\
\sum_{\substack{
    a \le j <r \\
    j \equiv a \pmod{p-1}
}} \binom{j}{2}\alpha_j &\equiv& 0 \pmod{p^2}.
     \end{eqnarray*}
     This proves part $(1)$ and part $(2)$.

Now, by part $(1)$ and Lemma~\ref{lem:2.1} , for $a=3$ we have \begin{eqnarray*}
    \sum_{j \geq 3} \binom{j}{3}\alpha_j &\equiv& \sum_{\substack{
    3\le j<r \\
    j \equiv 3  \pmod{p-1}
}}
\binom{j}{3}\binom{r}{j}   \pmod{p}\\
& \equiv& \binom{r}{3} \pmod{p},
    \end{eqnarray*}
    and for $a \neq 3$ \begin{eqnarray*}
    \sum_{j \geq 3} \binom{j}{3}\alpha_j &\equiv& \sum_{\substack{
    a\le j<r \\
    j \equiv 3  \pmod{p-1}
}}
\binom{j}{3}\binom{r}{j}   \pmod{p}\\
& \equiv& 0 \pmod{p},
    \end{eqnarray*}
    proving part $(3)$.
\end{proof}

\begin{lemma}\label{lem:2.3}
Let \(p > 2\) and
$r \equiv a \pmod{p(p-1)}$
with \(a \in \{5,\dots,p-1\}\). Then there exist integers
\[
\{\alpha_j : a \le j < r \text{ and } j \equiv a \pmod{p-1}\}
\]
such that the following conditions are satisfied:

\begin{enumerate}
    \item for all $j$,
    \[
    \alpha_j \equiv \binom{r}{j} \pmod{p^2},
    \]
    
    \item for \(n=0,1,2,3\), 
    \[
    \sum_{j \ge n} \binom{j}{n}\alpha_j
    \equiv 0 \pmod{p^{5-n}}.
    \]

\end{enumerate}
\end{lemma}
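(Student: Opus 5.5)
The plan is to imitate the proof of Lemma~\ref{lem:2.2}, with one more equation and one higher power of $p$ throughout. The approach is to first compute the sums
\[
S_m:=\sum_{\substack{a\le j<r\\ j\equiv a \pmod{p-1}}}\binom{j}{m}\binom{r}{j},\qquad m=0,1,2,3,
\]
and then correct the binomial coefficients by $p^2$-multiples. We will set $\alpha_j=\binom{r}{j}+p^2\delta_j$, so part (1) is automatic. Part (2) then becomes the system
\[
\sum_j \binom{j}{n}\delta_j \equiv -p^{-2}S_n \pmod{p^{3-n}},\qquad n=0,1,2,3.
\]
This requires $S_n\equiv 0 \pmod{p^2}$ for each $n$.

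The key input is therefore that $S_m\equiv 0\pmod{p^2}$ for $m=0,1,2,3$. We obtain this from Lemma~\ref{lem:2.1} applied with $i=m$. Since $a\ge 5>m$, we have $a\in\{m+1,\dots,p-1+m\}$. For $m\ge1$ the terms with $j\le m$ vanish anyway: either $\binom{j}{m}=0$, or $j=m<a$ is impossible for $j\ge a$. Hence the sum over $m<j<r$ in Lemma~\ref{lem:2.1} coincides with $S_m$, and it is congruent to $p\binom{r}{m}\frac{a-r}{a-m}$ modulo $p^2$.

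For $m=0$ the sum $\sum_{0<j<r}$ also coincides with $S_0$, because $j\equiv a$ with $0<j$ forces $j\ge a$. Remark~\ref{rmk:2.1} even gives the value modulo $p^3$.

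The hypothesis $r\equiv a\pmod{p(p-1)}$ gives $p\mid r-a$. Also $a-m\in\{2,\dots,p-1\}$ is a unit, since $m\le3$ and $a\ge5$; this is exactly why $a\ge 5$ is assumed. So each right-hand side $p\binom{r}{m}\frac{a-r}{a-m}$ is divisible by $p^2$, and hence $S_m\equiv0\pmod{p^2}$. I should double-check that the proof of Lemma~\ref{lem:2.1} really holds modulo $p^2$ when $a-i$ can be as small as $2$. It does, because the factor $(p+a-i-1)\cdots(p+1)p/(a-i)!$ is computed with $a-i<p$.

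It remains to solve the $4\times4$ system using the four indices $j=a,\ a+(p-1),\ a+2(p-1),\ a+3(p-1)$, setting all other $\delta_j=0$. This needs $a+3(p-1)<r$. Since $r\equiv a\pmod{p(p-1)}$ and $r>a$ (implicit, as in the applications where $r\ge 3p+2$), we get $r\ge a+p(p-1)$, and the inequality holds because $p>3$.

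The coefficient matrix $\bigl(\binom{a+t(p-1)}{n}\bigr)_{0\le n,t\le3}$ has determinant equal to a Vandermonde determinant divided by $1!\,2!\,3!=12$. Explicitly,
\[
\det=\frac{\prod_{s<t}(t-s)(p-1)}{12}=\frac{12\,(p-1)^6}{12}=(p-1)^6,
\]
which is a $p$-adic unit. So the system is solvable over $\mathbb{Z}_p$ modulo the highest modulus $p^3$: solve it exactly modulo $p^3$ for all four rows simultaneously, which in particular satisfies each row modulo its weaker modulus $p^{3-n}$. Finally, replace the $p$-adic solution by integers congruent to it modulo $p^3$.

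I expect the only delicate point to be verifying the applicability of Lemma~\ref{lem:2.1} for each $m\le3$, namely the range condition on $a$, the boundary terms of the sum, and the unit condition on $a-m$. This is where $a\ge5$ and $p\mid r-a$ are used. The linear algebra step is routine once the determinant is seen to be a unit.
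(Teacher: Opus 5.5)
Your proof is correct and follows the paper's argument. Both write $\alpha_j=\binom{r}{j}+p^2\delta_j$, show $S_m\equiv 0\pmod{p^2}$ for $m\le 3$, and solve a unit-determinant linear system for finitely many $\delta_j$. The differences are cosmetic: the paper proves $S_0,S_1,S_2\equiv 0\pmod{p^2}$ directly from the root-of-unity filter, using $(1+\xi)^{r}\equiv(1+\xi)^{a}\pmod{p^2}$, and invokes Lemma~\ref{lem:2.1} only for $n=3$; it also solves just a $3\times 3$ system for $n=0,1,2$, since your fourth row is imposed modulo $p^{0}$ and is vacuous (the $n=3$ condition is exactly $S_3\equiv 0\pmod{p^2}$).
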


\begin{proof}
    We know that \begin{eqnarray*}
        (p-1) \sum_{\substack{
    j \geq 0 \\
    j \equiv a \pmod{p-1}
}}
\binom{j}{m}\binom{r}{j} =\binom{r}{m} \sum_{\xi \in \mu'} \xi^{m-a} (1+\xi)^{r-m}.
    \end{eqnarray*}
    \begin{enumerate}
        \item [(i)] For $m=0$, we have \begin{eqnarray*}
        (p-1) \sum_{\substack{
    j \geq 0 \\
    j \equiv a \pmod{p-1}
}}
\binom{r}{j}    &\equiv& \sum_{\xi \in \mu'} \xi^{-a} (1+\xi)^a \pmod{p^2}\\
&\equiv&  (p-1)(1+\delta_{a,p-1})\pmod{p^2},\text{ so} \\
(p-1) \sum_{\substack{
    a \le j <r \\
    j \equiv a \pmod{p-1}
}}
\binom{r}{j}  &\equiv& 0 \pmod{p^2}.
\end{eqnarray*}
\item[(ii)] For $m=1$, we have \begin{eqnarray*}
    (p-1) \sum_{\substack{
    j \geq 0 \\
    j \equiv a \pmod{p-1}
}}
j\binom{r}{j}    &\equiv& r\sum_{\xi \in \mu'} \xi^{1-a} (1+\xi)^{a-1} \pmod{p^2} \\
&\equiv& (p-1)r \pmod{p^2}, \text{ so}\\
(p-1) \sum_{\substack{
a\le j<r \\
    j \equiv a \pmod{p-1}
}}
j\binom{r}{j}    &\equiv& 0\pmod{p^2}.
\end{eqnarray*}
\item[(iii)] For $m=2$, we have \begin{eqnarray*}
    (p-1) \sum_{\substack{
    j \ge 0 \\
    j \equiv a \pmod{p-1}
}}
\binom{j}{2}\binom{r}{j}    &\equiv& \binom{r}{2}\sum_{\xi \in \mu'} \xi^{2-a} (1+\xi)^{a-2} \pmod{p^2} \\
&\equiv& (p-1)\binom{r}{2} \pmod{p^2}, \text{ so}\\
 (p-1) \sum_{\substack{
    a\le j<r \\
    j \equiv a \pmod{p-1}
}}
\binom{j}{2}\binom{r}{j} &\equiv& 0 \pmod{p^2}.
\end{eqnarray*}
    \end{enumerate}
     Now put $\alpha_j =\binom{r}{j}+p^2\delta_j$ for $j \equiv a \pmod{p-1}$, $a \le j<r$ and define  \begin{eqnarray*}
        s_0 &=& -p^{-2} \sum_{\substack{
    a \le j <r \\
    j \equiv a \pmod{p-1}
}}
\binom{r}{j},\\
s_1 &=& -p^{-2}\sum_{\substack{
    a \le j <r\\
    j \equiv a \pmod{p-1}
}}
j\binom{r}{j},\\
s_2 &=& -p^{-2}\sum_{\substack{
    a \le j <r \\
    j \equiv a \pmod{p-1}
}}
\binom{j}{2}\binom{r}{j}.
\end{eqnarray*}
The following system of linear equations 
\begin{eqnarray*}
    \delta_a+\delta_{a+p-1}+\delta_{a+2p-2} &\equiv& s_0 \pmod{p^3}\\
   a \delta_a+(a+p-1)\delta_{a+p-1}+(a+2p-2)\delta_{a+2p-2} &\equiv& s_1 \pmod{p^2}\\
    \binom{a}{2}\delta_a+\binom{a+p-1}{2}\delta_{a+p-1}+\binom{a+2p-2}{2}\delta_{a+2p-2} &\equiv& s_2 \pmod{p}
\end{eqnarray*}
is solvable in the three unknowns
$\delta_a$, $\delta_{a+p-1}$, $\delta_{a+2p-2}$.
We take the other $\delta_j=0$. Then
\begin{eqnarray*}
      \sum_{\substack{
    a\le j <r \\
    j \equiv a \pmod{p-1}
}} \alpha_j &\equiv& 0 \pmod{p^5} ,  \\
\sum_{\substack{
    a \le j <r \\
    j \equiv a \pmod{p-1}
}} j\alpha_j &\equiv& 0 \pmod{p^4}, \\
\sum_{\substack{
    a \le j <r \\
    j \equiv a \pmod{p-1}
}} \binom{j}{2}\alpha_j &\equiv& 0 \pmod{p^3},
     \end{eqnarray*} proving parts $(1)$ and $(2)$ for $n =0,1,2$. Part $(3)$ follows from part $(1)$ and Lemma $2.1$.
\end{proof}

\begin{lemma}\label{lem:2.4}
    Let $p \ge 5$ and $r \equiv a \pmod {p^2(p-1)}$ with $a \in \{5,...,p\}$. Then there exist integers \[
\{\alpha_j : a-1 \le j < r-1 \text{ and } j \equiv a-1 \pmod{p-1}\}
\]
such that the following conditions are satisfied:

\begin{enumerate}
    \item for all $j$,
    \[
    \alpha_j \equiv \binom{r}{j} \pmod{p^2},
    \]
    
    \item for \(n=0,1,2,3,4\), 
    \[
    \sum_{j \ge n} \binom{j}{n}\alpha_j
    \equiv 0 \pmod{p^{5-n}}.
    \]

\end{enumerate}
\end{lemma}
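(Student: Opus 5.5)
The plan is to follow the proof of Lemma~\ref{lem:2.3} closely: first show that the unperturbed choice $\alpha_j=\binom{r}{j}$ already satisfies the required congruences modulo $p^2$ for every $n\le 4$, then correct three of the $\alpha_j$ by multiples of $p^2$ to gain the extra powers of $p$ needed for $n=0,1,2$. Throughout, put $b=a-1$. The indices then run over $b\le j<r-1$ with $j\equiv b \pmod{p-1}$.

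\textbf{Step 1 (roots of unity).} For $0\le m\le 4$ I would use the identity
\[
(p-1)\sum_{\substack{j\ge 0\\ j\equiv b \ (p-1)}}\binom{j}{m}\binom{r}{j}=\binom{r}{m}\sum_{\xi\in\mu'}\xi^{m-b}(1+\xi)^{r-m}.
\]
The root $\xi=-1$ may be included or omitted freely, since $r-m>0$. Write $r-a=p^2(p-1)t$. Since $(1+\xi)^{p-1}=1+pz_\xi$, we get $(1+\xi)^{r-a}=(1+pz_\xi)^{p^2t}\equiv 1 \pmod{p^3}$. Hence the right side is congruent modulo $p^3$ to
\[
\binom{r}{m}\sum_{\xi}\xi^{m-b}(1+\xi)^{a-m}.
\]
This last sum equals $(p-1)\sum_{l\equiv a-1-m}\binom{a-m}{l}$ with $0\le l\le a-m\le p$. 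The only admissible values of $l$ are $l=a-1-m$, and additionally $l=0$ when $m=0$ and $a=p$.

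\textbf{Step 2 (boundary terms).} The full sum contains two kinds of terms that are not in our range. The first is $j=r-1$, which contributes $\binom{r-1}{m}r=\binom{r}{m}(r-m)$. The second, only when $a=p$, is $j=0$, which contributes $\delta_{m,0}$; it exactly cancels the extra $l=0$ term above. Subtracting both, I expect
\[
S_m:=\sum_{\substack{b\le j<r-1\\ j\equiv b}}\binom{j}{m}\binom{r}{j}\equiv\binom{r}{m}(a-r)\equiv 0 \pmod{p^2},\qquad 0\le m\le 4 .
\]
This already gives part (2) for $n=3,4$, whose required moduli are $p^2$ and $p$, and this conclusion is stable under any modification of the $\alpha_j$ by multiples of $p^2$.

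\textbf{Step 3 (correction).} Set $\alpha_j=\binom{r}{j}+p^2\delta_j$, which secures part (1), and let $s_n=-p^{-2}S_n\in\mathbb{Z}_p$. For $n=0,1,2$ we need $\sum_j\binom{j}{n}\delta_j\equiv s_n \pmod{p^{3-n}}$. I would take $\delta_j$ nonzero only for $j\in\{b,\,b+p-1,\,b+2p-2\}$; all three are less than $r-1$ because $r\ge a+p^2(p-1)$. The coefficient matrix of this $3\times 3$ system has determinant $(p-1)^3$, a $p$-adic unit, so it is invertible over $\mathbb{Z}_p$ and we can solve it exactly. Finally, replace the $\delta_j$ by integer approximations to sufficient precision.

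The main obstacle is Step~1/Step~2: getting the congruence $(1+\xi)^{r-a}\equiv1$ to hold modulo $p^3$, which is exactly why the hypothesis is $r\equiv a \pmod{p^2(p-1)}$, and correctly tracking the boundary terms $j=r-1$ and (for $a=p$) $j=0$. The hypothesis $a\ge5$ is used to ensure $a-m\ge1$ for all $m\le4$, so that the extraction in Step~1 isolates only the indicated values of $l$.
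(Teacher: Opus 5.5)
Your proposal is correct and is essentially the paper's proof: the roots-of-unity filter, together with the boundary terms $j=r-1$ (and $j=0$ when $a=p$), gives $S_m\equiv\binom{r}{m}(a-r)\equiv 0 \pmod{p^2}$ for $0\le m\le 4$, and the three corrections $p^2\delta_j$ at $j=a-1,\,a+p-2,\,a+2p-3$ solve a linear system of determinant $(p-1)^3$ (you rightly use $p^2\delta_j$, where the paper's text writes $p\delta_j$). One quibble concerns your closing remark: working modulo $p^3$ in Step~1 is unnecessary, since the paper works modulo $p^2$, which needs only $r\equiv a \pmod{p(p-1)}$; the hypothesis $p^2\mid r-a$ is really used to kill $\binom{r}{m}(a-r)$ in Step~2, not to get $(1+\xi)^{r-a}\equiv 1$.
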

\begin{proof}
We know that \begin{eqnarray*}
        (p-1) \sum_{\substack{
    j \geq 0 \\
    j \equiv a-1 \pmod{p-1}
}}
\binom{j}{m}\binom{r}{j} = \binom{r}{m} \sum_{\xi \in \mu'} \xi^{m-a+1} (1+\xi)^{r-m}.
    \end{eqnarray*}
    \begin{enumerate}
        \item [(i)] For $m=0$, we have \begin{eqnarray}
        \label{final m=0 cong}
        (p-1) \sum_{\substack{
    j \geq 0 \\
    j \equiv a-1 \pmod{p-1}
}}
\binom{r}{j}    &\equiv& \sum_{\xi \in \mu'} \xi^{1-a} (1+\xi)^a \pmod{p^2} \nonumber \\
&\equiv&  (p-1)(a+\delta_{a,p}) \pmod{p^2}, \text{ so} \\
(p-1) \sum_{\substack{
    a-1 \le j <r-1 \\
    j \equiv a-1 \pmod{p-1}
}}
\binom{r}{j}  &\equiv& 0 \pmod{p^2} \nonumber.
\end{eqnarray}
\item[(ii)] For $m=1$, we have \begin{eqnarray}
\label{final cong m=1}
    (p-1) \sum_{\substack{
    j \geq 0 \\
    j \equiv a-1 \pmod{p-1}
}}
j\binom{r}{j}    &\equiv& r\sum_{\xi \in \mu'} \xi^{2-a} (1+\xi)^{a-1} \pmod{p^2}\nonumber  \\
&\equiv& (p-1)r(a-1) \pmod{p^2}, \text{ so} \nonumber \\
(p-1) \sum_{\substack{
a-1\le j<r-1 \\
    j \equiv a-1 \pmod{p-1}
}}
j\binom{r}{j}    &\equiv& 0\pmod{p^2}.
\end{eqnarray}
\item[(iii)] For $m=2$, we have \begin{eqnarray*}
(p-1) \sum_{\substack{
j \ge 0 \\
    j \equiv a-1 \pmod{p-1}
}}
\binom{j}{2}\binom{r}{j}    &\equiv& \binom{r}{2}\sum_{\xi \in \mu'} \xi^{3-a} (1+\xi)^{a-2}\pmod{p^2}\\
&\equiv& (p-1)\binom{r}{2}(a-2)\pmod{p^2}, \text{ so}\\
    (p-1) \sum_{\substack{
    a-1 \le j <r-1 \\
    j \equiv a-1 \pmod{p-1}
}}
\binom{j}{2}\binom{r}{j}    &\equiv& 0 \pmod{p^2}.
\end{eqnarray*}
\item[(iv)] For $m=3$, we have \begin{eqnarray*}
    (p-1) \sum_{\substack{
j \ge 0 \\
    j \equiv a-1 \pmod{p-1}
}}
\binom{j}{3}\binom{r}{j}    &\equiv& \binom{r}{3}\sum_{\xi \in \mu'} \xi^{4-a} (1+\xi)^{a-3}\pmod{p^2}\\
&\equiv& (p-1)\binom{r}{3}(a-3)\pmod{p^2}, \text{ so}\\
    (p-1) \sum_{\substack{
    a-1 \le j <r-1 \\
    j \equiv a-1 \pmod{p-1}
}}
\binom{j}{3}\binom{r}{j}    &\equiv& 0 \pmod{p^2}.
\end{eqnarray*}
\item[(v)] For $m=4$, we have  \begin{eqnarray*}
    (p-1) \sum_{\substack{
j \ge 0 \\
    j \equiv a-1 \pmod{p-1}
}}
\binom{j}{4}\binom{r}{j}    &\equiv& \binom{r}{4}\sum_{\xi \in \mu'} \xi^{5-a} (1+\xi)^{a-4}\pmod{p^2}\\
&\equiv& (p-1)\binom{r}{4}(a-4)\pmod{p^2}, \text{ so}\\
    (p-1) \sum_{\substack{
    a-1 \le j <r-1 \\
    j \equiv a-1 \pmod{p-1}
}}
\binom{j}{4}\binom{r}{j}    &\equiv& 0 \pmod{p^2}.
\end{eqnarray*}
    \end{enumerate}
     Now put $\alpha_j =\binom{r}{j}+p\delta_j$ for $j \equiv a-1 \pmod{p-1}$, $a-1 \le j<r-1$ and define  \begin{eqnarray*}
        s_0 &=& -p^{-2} \sum_{\substack{
    a-1 \le j <r-1 \\
    j \equiv a-1 \pmod{p-1}
}}
\binom{r}{j},\\
s_1 &=& -p^{-2}\sum_{\substack{
    a-1 \le j <r-1\\
    j \equiv a -1\pmod{p-1}
}}
j\binom{r}{j},\\
s_2 &=& -p^{-2}\sum_{\substack{
    a-1 \le j <r-1 \\
    j \equiv a-1 \pmod{p-1}
}}
\binom{j}{2}\binom{r}{j}.
\end{eqnarray*}
The following system of linear equations 
\begin{eqnarray*}
    \delta_{a-1}+\delta_{a+p-2}+\delta_{a+2p-3} &\equiv& s_0 \pmod{p^3}\\
  ( a-1)\delta_{a-1}+(a+p-2)\delta_{a+p-2}+(a+2p-3)\delta_{a+2p-3} &\equiv& s_1 \pmod{p^2}\\
    \binom{a-1}{2}\delta_{a-1}+\binom{a+p-2}{2}\delta_{a+p-2}+\binom{a+2p-3}{2}\delta_{a+2p-3} &\equiv& s_2 \pmod{p}
\end{eqnarray*}
is solvable in the three 
unknowns 
$\delta_{a-1}$, $\delta_{a+p-2}$, $\delta_{a+2p-3}$.
We take the other $\delta_j=0$. Then
\begin{eqnarray*}
      \sum_{\substack{
    a-1\le j <r-1 \\
    j \equiv a-1 \pmod{p-1}
}} \alpha_j &\equiv& 0 \pmod{p^5} ,  \\
\sum_{\substack{
    a-1 \le j <r-1 \\
    j \equiv a-1 \pmod{p-1}
}} j\alpha_j &\equiv& 0 \pmod{p^4}, \\
\sum_{\substack{
    a-1 \le j <r-1 \\
    j \equiv a-1 \pmod{p-1}
}} \binom{j}{2}\alpha_j &\equiv& 0 \pmod{p^3},
     \end{eqnarray*}
     proving part (1) and part (2) for $n = 0,1,2$. Part
     (2) for $n=3,4$ now follows from part (1) and (iv), (v) above.
\end{proof}
\begin{lemma}\label{lem:2.5}
    Let \(p \ge 5\) and $r \equiv a \pmod{p(p-1)}$
with \(a \in \{5,\dots,p-1,p\}\).
Then there exist integers
\[
\{\alpha_j : a-1 \le j < r-1 \text{ and } j \equiv a-1 \pmod{p-1}\}
\]
such that the following conditions are satisfied:

\begin{enumerate}
    \item for all $j$,
    \[
    \alpha_j \equiv \binom{r}{j} \pmod p,
    \]
    
    \item  for \(n=0,1,2,3\), 
    \[
    \sum_{j \ge n} \binom{j}{n}\alpha_j
    \equiv 0 \pmod{p^{4-n}}.
    \]
\end{enumerate}
\end{lemma}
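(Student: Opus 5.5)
The plan is to copy the proof of Lemma~\ref{lem:2.4}, with the weaker hypothesis $r\equiv a \bmod p(p-1)$ compensated by the weaker conclusions: only $\alpha_j\equiv\binom{r}{j} \bmod p$, and the moduli $p^{4-n}$ instead of $p^{5-n}$. Throughout, $j$ ranges over $a-1\le j<r-1$ with $j\equiv a-1 \pmod{p-1}$. I take $r>a$, so that $r\ge a+p(p-1)$ and the three indices $a-1,\ a+p-2,\ a+2p-3$ are all $<r-1$.

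\textbf{Step 1: congruences for the unmodified sums.} For $m=0,1,2,3$ I use the root-of-unity filter
\[
(p-1)\sum_{j\ge0,\ j\equiv a-1}\binom{j}{m}\binom{r}{j}=\binom{r}{m}\sum_{\xi\in\mu'}\xi^{m-a+1}(1+\xi)^{r-m}.
\]
Write $(1+\xi)^{p-1}=1+pz_\xi$. Since $(p-1)p$ divides $r-a$, we get $(1+\xi)^{r-a}=(1+pz_\xi)^{(r-a)/(p-1)}\equiv 1 \pmod{p^2}$. So, as in (i)--(v) of the proof of Lemma~\ref{lem:2.4}, the full sum is congruent mod $p^2$ to $(p-1)\binom{r}{m}(a-m)$ (for $m=0$, to $(p-1)(a+\delta_{a,p})$).

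Next I remove the terms outside the range:
\begin{itemize}
\item the term $j=r-1$ contributes $r\binom{r-1}{m}=\binom{r}{m}(r-m)$;
\item when $a=p$, the term $j=0$ contributes $1$ to the $m=0$ sum, which is exactly the $\delta_{a,p}$ correction.
\end{itemize}
After these removals, each truncated sum is congruent mod $p^2$ to $(p-1)\binom{r}{m}(a-r)$. This is $\equiv 0 \pmod p$, since $p\mid r-a$. Hence $\sum_j\binom{j}{m}\binom{r}{j}\equiv0 \pmod p$ for $m=0,1,2,3$.

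\textbf{Step 2: correcting for $n=0,1,2$.} Put $\alpha_j=\binom{r}{j}+p\delta_j$, with $\delta_j=0$ except for $j\in\{a-1,a+p-2,a+2p-3\}$. Define $s_m=-p^{-1}\sum_j\binom{j}{m}\binom{r}{j}$ for $m=0,1,2$; these lie in $\mathbb{Z}_{(p)}$ by Step 1. I then need to solve
\begin{align*}
\delta_{a-1}+\delta_{a+p-2}+\delta_{a+2p-3}&\equiv s_0 \pmod{p^3},\\
(a-1)\delta_{a-1}+(a+p-2)\delta_{a+p-2}+(a+2p-3)\delta_{a+2p-3}&\equiv s_1 \pmod{p^2},\\
\tbinom{a-1}{2}\delta_{a-1}+\tbinom{a+p-2}{2}\delta_{a+p-2}+\tbinom{a+2p-3}{2}\delta_{a+2p-3}&\equiv s_2 \pmod{p}.
\end{align*}
The coefficient matrix has determinant $(p-1)^3$, because the three indices are in arithmetic progression with common difference $p-1$. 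This determinant is a $p$-adic unit, so the system can be solved, even simultaneously mod $p^3$, with integer $\delta$'s.

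This gives $\sum_j\binom{j}{n}\alpha_j\equiv0 \pmod{p^{4-n}}$ for $n=0,1,2$, and part (1) holds trivially since $\alpha_j-\binom{r}{j}=p\delta_j$. For $n=3$ only a congruence mod $p$ is needed. This follows from part (1) together with the $m=3$ case of Step 1, since the $p\delta_j$ terms vanish mod $p$.

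\textbf{Main obstacle.} The delicate point is the bookkeeping in Step 1. I must check that the excluded boundary terms are exactly $j=r-1$ and, when $a=p$, $j=0$. I must also check that the resulting discrepancy $\binom{r}{m}(a-r)$ is divisible by $p$. This is where the hypothesis $r\equiv a \bmod p(p-1)$ is used, rather than just $r\equiv a \bmod (p-1)$. The linear-algebra step itself is routine once the determinant is seen to be a unit.
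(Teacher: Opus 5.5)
Your proposal is correct and follows the paper's proof essentially step for step. The paper also uses the root-of-unity filter to show the truncated sums vanish mod $p$, with $p\mid r-a$ absorbing the excluded $j=r-1$ term (and the $j=0$ term when $a=p$), and then corrects $\alpha_{a-1},\alpha_{a+p-2},\alpha_{a+2p-3}$ by $p\delta_j$ via the $3\times 3$ system of determinant $(p-1)^3$. The one difference is that your Step 1 works mod $p^2$, which is more than you need: mod $p$ already suffices, since $(1+\xi)^{p-1}\equiv 1 \pmod p$ holds for every $\xi\in\mu'$.
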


   \begin{proof}
    We know that \begin{eqnarray*}
        (p-1) \sum_{\substack{
    j \geq 0 \\
    j \equiv a-1 \pmod{p-1}
}}
\binom{j}{m}\binom{r}{j} = \binom{r}{m} \sum_{\xi \in \mu'} \xi^{m-a+1} (1+\xi)^{r-m}.
    \end{eqnarray*}
    \begin{enumerate}
        \item [(i)] For $m=0$, we have \begin{eqnarray}
        \label{mod p^2 m=0}
        (p-1) \sum_{\substack{
    j \geq 0 \\
    j \equiv a-1 \pmod{p-1}
}}
\binom{r}{j}    &\equiv& \sum_{\xi \in \mu'} \xi^{1-a} (1+\xi)^a \pmod{p} \nonumber \\
&\equiv&  (p-1)(a+\delta_{a,p}) \pmod{p}, \text{ so}  \\
(p-1) \sum_{\substack{
    a-1 \le j <r-1 \\
    j \equiv a-1 \pmod{p-1}
}}
\binom{r}{j}  &\equiv& 0 \pmod{p}. \nonumber
\end{eqnarray}
\item[(ii)] For $m=1$, we have \begin{eqnarray}
\label{mod p^2 For a=p}
    (p-1) \sum_{\substack{
    j \geq 0 \\
    j \equiv a-1 \pmod{p-1}
}}
j\binom{r}{j}    &\equiv& r\sum_{\xi \in \mu'} \xi^{2-a} (1+\xi)^{a-1} \pmod{p} \nonumber \\
&\equiv& (p-1)r(a-1) \pmod{p}, \text{ so} \nonumber \\
(p-1) \sum_{\substack{
a-1\le j<r-1 \\
    j \equiv a-1 \pmod{p-1}
}}
j\binom{r}{j}    &\equiv& 0\pmod{p}.
\end{eqnarray}
\item[(iii)] For $m=2$, we have \begin{eqnarray*}
(p-1) \sum_{\substack{
j \ge 0 \\
    j \equiv a-1 \pmod{p-1}
}}
\binom{j}{2}\binom{r}{j}    &\equiv& \binom{r}{2}\sum_{\xi \in \mu'} \xi^{3-a} (1+\xi)^{a-2} \pmod{p}\\
&\equiv & (p-1)\binom{r}{2}(a-2)\pmod{p}, \text{ so}\\
    (p-1) \sum_{\substack{
    a-1 \le j <r-1 \\
    j \equiv a-1 \pmod{p-1}
}}
\binom{j}{2}\binom{r}{j}    &\equiv& 0 \pmod{p}.
\end{eqnarray*}
\item[(iv)] For $m=3$, we have \begin{eqnarray*}
    (p-1) \sum_{\substack{
j \ge 0 \\
    j \equiv a-1 \pmod{p-1}
}}
\binom{j}{3}\binom{r}{j}    &\equiv& \binom{r}{3}\sum_{\xi \in \mu'} \xi^{4-a} (1+\xi)^{a-3} \pmod{p}\\
&\equiv & (p-1)\binom{r}{3}(a-3)\pmod{p}, \text{ so}\\
    (p-1) \sum_{\substack{
    a-1 \le j <r-1 \\
    j \equiv a-1 \pmod{p-1}
}}
\binom{j}{3}\binom{r}{j}    &\equiv& 0 \pmod{p}.
\end{eqnarray*} 
    \end{enumerate}
     Now put $\alpha_j =\binom{r}{j}+p\delta_j$ for $j \equiv a-1 \pmod{p-1}$, $a-1 \le j<r-1$ and define  \begin{eqnarray*}
        s_0 &=& -p^{-1} \sum_{\substack{
    a-1 \le j <r-1 \\
    j \equiv a-1 \pmod{p-1}
}}
\binom{r}{j},\\
s_1 &=& -p^{-1}\sum_{\substack{
    a-1 \le j <r-1\\
    j \equiv a -1\pmod{p-1}
}}
j\binom{r}{j},\\
s_2 &=& -p^{-1}\sum_{\substack{
    a-1 \le j <r-1 \\
    j \equiv a-1 \pmod{p-1}
}}
\binom{j}{2}\binom{r}{j}.
\end{eqnarray*}
The following system of linear equations 
     \begin{eqnarray*}
    \delta_{a-1}+\delta_{a+p-2}+\delta_{a+2p-3} &\equiv& s_0 \pmod{p^3}\\
  ( a-1)\delta_{a-1}+(a+p-2)\delta_{a+p-2}+(a+2p-3)\delta_{a+2p-3} &\equiv& s_1 \pmod{p^2}\\
    \binom{a-1}{2}\delta_{a-1}+\binom{a+p-2}{2}\delta_{a+p-2}+\binom{a+2p-3}{2}\delta_{a+2p-3} &\equiv& s_2 \pmod{p}
\end{eqnarray*}
is solvable in the three 
unknowns 
$\delta_{a-1}$, $\delta_{a+p-2}$, $\delta_{a+2p-3}$.
We take the other $\delta_j=0$. Then
\begin{eqnarray*}
      \sum_{\substack{
    a-1\le j <r-1 \\
    j \equiv a-1 \pmod{p-1}
}} \alpha_j &\equiv& 0 \pmod{p^4} ,  \\
\sum_{\substack{
    a-1 \le j <r-1 \\
    j \equiv a-1 \pmod{p-1}
}} j\alpha_j &\equiv& 0 \pmod{p^3}, \\
\sum_{\substack{
    a-1 \le j <r-1 \\
    j \equiv a-1 \pmod{p-1}
}} \binom{j}{2}\alpha_j &\equiv& 0 \pmod{p^2},
\end{eqnarray*}
     proving part $(1)$ and $(2)$.
\end{proof}

\begin{lemma}\label{lem:2.6}
    Let $p \geq 5$, $r \equiv 1 \pmod{p-1}$.

\begin{enumerate}
\item [(1)] If $p^2 \mid p-r$, then there are integers
\[
\{\alpha_j : p-1 \leq j < r-1 \text{ and } j \equiv 0 \pmod{p-1}\}
\]
such that the following conditions are satisfied:
\begin{itemize}
    \item for all $j$,
    \[
    \alpha_j \equiv \binom{r}{j} \pmod{p^2},
    \]
    \item for $0 \leq n \leq 4$, 
    \[
    \sum_{j \geq n} \binom{j}{n}\alpha_j \equiv 0 \pmod{p^{5-n}}.
    \]
\end{itemize}

\item[(2)] If $p^2 \mid p-r$, then there are integers
\[
\{\alpha_{j}: p \leq j < r \text{ and } j \equiv 1 \pmod{p-1}\}
\]
such that the following conditions are satisfied:
\begin{itemize}
    \item for all $j$,
    \[
    \alpha_{j} \equiv \binom{r}{j} \pmod{p^2},
    \]
    \item for $0 \leq n \leq 4$, 
    \[
    \sum_{j\geq n} \binom{j}{n}\alpha_{j} \equiv 0 \pmod{p^{5-n}}.
    \]
\end{itemize}
\end{enumerate}
\end{lemma}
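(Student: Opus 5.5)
We follow the template of Lemmas~\ref{lem:2.3} and \ref{lem:2.4}: first compute the relevant moment sums of the binomial coefficients modulo $p^2$, then adjust finitely many $\alpha_j$ by multiples of $p^2$ to improve the congruences for $n=0,1,2$. In both parts the basic identity is
\[
(p-1)\sum_{\substack{j\ge 0\\ j\equiv b \bmod (p-1)}}\binom{j}{m}\binom{r}{j}=\binom{r}{m}\sum_{\xi\in\mu'}\xi^{m-b}(1+\xi)^{r-m},
\]
with $b=0$ in part (1) and $b=1$ in part (2). The hypothesis $p^2\mid p-r$ means $r\equiv p \pmod{p^2(p-1)}$, since also $r\equiv 1\pmod{p-1}$. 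Hence $(1+\xi)^{r-p}=(1+pz_\xi)^{(r-p)/(p-1)}\equiv 1\pmod{p^2}$, exactly as in the proof of Lemma~\ref{lem:2.1}. So modulo $p^2$ we may replace $r$ by $p$ in the exponent, and the sum becomes $\binom{r}{m}\sum_{\xi\in\mu'}\xi^{m-b}(1+\xi)^{p-m}$.

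Expanding $(1+\xi)^{p-m}$ and summing over all $(p-1)$-th roots of unity (correcting for $\xi=-1$, which contributes $0$ when $m<p$) gives $(p-1)$ times the sum of $\binom{p-m}{t}$ over $t\equiv b-m \pmod{p-1}$, $0\le t\le p-m$. For $0\le m\le 4$ the only such $t$ are the obvious boundary ones. In part (1) ($b=0$), for $m=0$ they are $t=0,p-1$, giving $1+p\equiv$ the terms $j=0$ and $j=r$ modulo $p^2$ once we note $\binom{r}{p-1}$. More cleanly: the full sum equals, modulo $p^2$, the contribution of the excluded indices $j=0$ (only for $m=0$) and $j=r-1$ (part (1)), respectively $j=1$, $j=r$ (part (2)). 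One checks that $\binom{j}{m}\binom{r}{j}$ at these excluded $j$ matches $\binom{r}{m}\binom{p-m}{t}$ at the boundary $t$ modulo $p^2$; for example $\binom{r-1}{m}\binom{r}{r-1}=r\binom{r-1}{m}$ against $\binom{r}{m}\binom{p-m}{p-m-1}=\binom{r}{m}(p-m)$, and these agree since $r\equiv p\pmod{p^2}$. Hence for each $0\le m\le4$,
\[
\sum_{j \text{ in range}}\binom{j}{m}\binom{r}{j}\equiv 0\pmod{p^2}.
\]
This bookkeeping of boundary terms, especially the middle contributions for $m=1$ where $p-m\equiv b-m$ might allow extra $t$, is the main thing to verify carefully.

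Next set $\alpha_j=\binom{r}{j}+p^2\delta_j$, and define $s_n=-p^{-2}\sum\binom{j}{n}\binom{r}{j}$ for $n=0,1,2$. Choose $\delta$ supported on three consecutive indices $j_0,j_0+(p-1),j_0+2(p-1)$ of the range; here $j_0=p-1$ in part (1) and $j_0=p$ in part (2), and $r\ge 3p+4$ guarantees these lie in the range. Solve
\[
\sum\delta_j\equiv s_0 \pmod{p^3},\qquad \sum j\delta_j\equiv s_1 \pmod{p^2},\qquad \sum\binom{j}{2}\delta_j\equiv s_2 \pmod{p}.
\]
This is possible because the Vandermonde-type determinant $(p-1)^3$ is a unit, and one solves the system triangularly: solve mod $p$ first, then lift the first two congruences. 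This gives $n=0,1,2$ with moduli $p^5,p^4,p^3$. For $n=3,4$ the required moduli are $p^2$ and $p$, which already hold by the previous paragraph since $\alpha_j\equiv\binom{r}{j}\pmod{p^2}$.
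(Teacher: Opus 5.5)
Your proposal is correct and is essentially the paper's argument. You reduce the roots-of-unity sums modulo $p^2$ using $(1+\xi)^{r-p}\equiv 1$, check that for $0\le m\le 4$ the full moment sums are accounted for modulo $p^2$ by the excluded boundary indices, and then correct three of the $\alpha_j$ by multiples of $p^2$ using the unit determinant $(p-1)^3$. The paper does part (2) exactly this way and obtains part (1) by citing Lemma~\ref{lem:2.4} with $a=p$, whose proof is the same computation.

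Two small points. The bound $r\ge 3p+4$ is neither a hypothesis of the lemma nor needed: $r\equiv p \pmod{p^2(p-1)}$ forces either $r=p$ (empty index ranges, so the statement is vacuous) or $r\ge p^3-p^2+p$. Also, for $m=0$ in part (1) the excluded index is $j=r-1$, not $j=r$, as your later sentence correctly says.
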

\begin{proof}
     (1) This is a special case of Lemma~\ref{lem:2.4} when $a=p$.

(2)  We know that      \begin{eqnarray*}
        (p-1) \sum_{\substack{
    j \geq 0 \\
    j \equiv a \pmod{p-1}
}}
\binom{j}{m}\binom{r}{j} = \binom{r}{m} \sum_{\xi \in \mu'} \xi^{m-a} (1+\xi)^{r-m}.
    \end{eqnarray*}
    Taking $a=1$ we obtain that  \begin{eqnarray*}
        (p-1) \sum_{\substack{
    j \geq 0 \\
    j \equiv 1 \pmod{p-1}
}}
\binom{j}{m}\binom{r}{j} \equiv \binom{r}{m} \sum_{\xi \in \mu'} \xi^{m-1} (1+\xi)^{r-m}.
\end{eqnarray*}
\begin{enumerate}
        \item [(i)] For $m=0$, we have \begin{eqnarray}
      \label{mod p^2}
        (p-1) \sum_{\substack{
    j \geq 0 \\
    j \equiv 1 \pmod{p-1}
}}
\binom{r}{j}    &\equiv& \sum_{\xi \in \mu'} \xi^{-1} (1+\xi)^p \pmod{p^2}  \label{mod p^2} \nonumber  \\
&\equiv&  (p+1)(p-1) \pmod{p^2}, \text{ so} \\
(p-1) \sum_{\substack{
    p \le j <r \\
    j \equiv 1 \pmod{p-1}
}}
\binom{r}{j}  &\equiv& 0 \pmod{p^2}\nonumber.
\end{eqnarray}
\item[(ii)] For $m=1$, we have \begin{eqnarray}
\label{mod p cube}
    (p-1) \sum_{\substack{
    j \geq 0 \\
    j \equiv 1 \pmod{p-1}
}}
j\binom{r}{j}    &\equiv& r\sum_{\xi \in \mu'} (1+\xi)^{p-1} \pmod{p^2}  \nonumber \\
&\equiv& 2r(p-1) \pmod{p^2}, \text{ so} \nonumber \\
(p-1) \sum_{\substack{
p\le j<r \\
    j \equiv 1 \pmod{p-1}
}}
j\binom{r}{j}    &\equiv& 0\pmod{p^2}.
\end{eqnarray}
\item[(iii)] For $m=2$, we have \begin{eqnarray*}
(p-1) \sum_{\substack{
j \ge 0 \\
    j \equiv 1 \pmod{p-1}
}}
\binom{j}{2}\binom{r}{j}    &\equiv& \binom{r}{2}\sum_{\xi \in \mu'} \xi (1+\xi)^{p-2} \pmod{p^2}\\
&\equiv & (p-1)\binom{r}{2}\pmod{p^2}, \text{ so}\\
    (p-1) \sum_{\substack{
    p \le j <r \\
    j \equiv 1 \pmod{p-1}
}}
\binom{j}{2}\binom{r}{j}    &\equiv& 0 \pmod{p^2}.
\end{eqnarray*}
\item[(iv)] For $m=3$, we have \begin{eqnarray*}
    (p-1) \sum_{\substack{
j \ge 0 \\
    j \equiv 1 \pmod{p-1}
}}
\binom{j}{3}\binom{r}{j}    &\equiv& \binom{r}{3}\sum_{\xi \in \mu'} \xi^{2} (1+\xi)^{p-3} \pmod{p^2}\\
&\equiv & (p-1)\binom{r}{3}\pmod{p^2}, \text{ so}\\
    (p-1) \sum_{\substack{
    p \le j <r \\
    j \equiv1  \pmod{p-1}
}}
\binom{j}{3}\binom{r}{j}    &\equiv& 0 \pmod{p^2}.
\end{eqnarray*}
\item[(v)] For $m=4$, we have \begin{eqnarray*}
    (p-1) \sum_{\substack{
j \ge 0 \\
    j \equiv 1 \pmod{p-1}
}}
\binom{j}{4}\binom{r}{j}    &\equiv& \binom{r}{4}\sum_{\xi \in \mu'} \xi^{3} (1+\xi)^{p-4} \pmod{p^2}\\
&\equiv & (p-1)\binom{r}{4}\pmod{p^2}, \text{ so}\\
    (p-1) \sum_{\substack{
    p \le j <r \\
    j \equiv1  \pmod{p-1}
}}
\binom{j}{4}\binom{r}{j}    &\equiv& 0 \pmod{p^2}. 
\end{eqnarray*} 
    \end{enumerate}
     Now put $\gamma_j =\binom{r}{j}+p^2\delta_j$ for $j \equiv 1 \pmod{p-1}$, $p\le j<r$ and define  \begin{eqnarray*}
        s_0 &=& -p^{-2} \sum_{\substack{
    p\le j <r\\
    j \equiv 1 \pmod{p-1}
}}
\binom{r}{j},\\
s_1 &=& -p^{-2}\sum_{\substack{
    p\le j <r\\
    j \equiv 1\pmod{p-1}
}}
j\binom{r}{j},\\
s_2 &=& -p^{-2}\sum_{\substack{
    p \le j <r \\
    j \equiv 1 \pmod{p-1}
}}
\binom{j}{2}\binom{r}{j}.
\end{eqnarray*}
The following system of linear equations 
     \begin{eqnarray*}
    \delta_{p}+\delta_{2p-1}+\delta_{3p-2} &\equiv& s_0 \pmod{p^3}\\
  p\delta_{p}+(2p-1)\delta_{2p-1}+(3p-2)\delta_{3p-2} &\equiv& s_1 \pmod{p^2}\\
    \binom{p}{2}\delta_{p}+\binom{2p-1}{2}\delta_{2p-1}+\binom{3p-2}{2}\delta_{3p-2} &\equiv& s_2 \pmod{p}
\end{eqnarray*}
is solvable in the three 
unknowns 
$\delta_{p}$, $\delta_{2p-1}$, $\delta_{3p-2}$.
We take the other $\delta_j=0$. Then
\begin{eqnarray*}
      \sum_{\substack{
    p\le j <r \\
    j \equiv 1 \pmod{p-1}
}} \gamma_j &\equiv& 0 \pmod{p^5} ,  \\
\sum_{\substack{
p \le j <r \\
    j \equiv 1 \pmod{p-1}
}} j\gamma_j &\equiv& 0 \pmod{p^4}, \\
\sum_{\substack{
    p\le j <r \\
    j \equiv 1 \pmod{p-1}
}} \binom{j}{2}\gamma_j &\equiv& 0 \pmod{p^3},
\end{eqnarray*}
   proving (2). 
\end{proof}
\begin{lemma}\label{lem:2.7}
    Let \(p \ge 5\). Let \(r \equiv 2 \pmod{p-1}\) and $p \mid \binom{r}{2}$. Then there exist integers
\[
\{\alpha_j : p+1 \le j < r \text{ and } j \equiv 2\pmod{p-1}\}
\]
such that the following conditions are satisfied:

\begin{enumerate}
    \item for all $j$,
    \[
    \alpha_j \equiv \binom{r}{j} \pmod p,
    \]
    \item for $n = 0,1,2,3$, 
\[
\sum_{j \geq n} \binom{j}{n}\alpha_j \equiv 0 \pmod{p^{4-n}}.
\]
 \end{enumerate}
\end{lemma}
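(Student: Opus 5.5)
Following the pattern of Lemmas~\ref{lem:2.2}--\ref{lem:2.6}, I would start from the identity
\[
(p-1)\sum_{\substack{j\ge 0\\ j\equiv 2 \pmod{p-1}}}\binom{j}{m}\binom{r}{j}=\binom{r}{m}\sum_{\xi\in\mu'}\xi^{m-2}(1+\xi)^{r-m},
\]
and reduce modulo $p$ using $(1+\xi)^{r-m}\equiv(1+\xi)^{2-m}\pmod p$ for $m\le 2$. For $m=3$ I would use $(1+\xi)^{p-2}$, since $r-3\equiv p-2 \pmod{p-1}$.

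First I compute the full sums mod $p$ for $m=0,1,2,3$. For $m=0$ the right side is $\sum_\xi \xi^{-2}(1+\xi)^2$, which gives $(p-1)$ times the number of constant terms. The $j$-range $p+1\le j<r$ omits the terms $j=2$ and $j=r$. So the truncated sum is the full sum minus $\binom{r}{2}\binom{2}{m}$ and minus the $j=r$ term $\binom{r}{m}$. The hypothesis $p\mid\binom r2$ is exactly what kills the $j=2$ contribution modulo $p$. Checking each $m$, I expect the truncated sums to vanish mod $p$:
\begin{itemize}
\item for $m=0,1,2$, the full sum is $\equiv(p-1)\binom rm$, coming from the $j=r$ term plus the $p\mid\binom r2$ correction;
\item for $m=3$, I expect the Lemma~\ref{lem:2.1}-type evaluation with $i=3$, $a=2$ (so $a\in\{4,\dots,p+2\}$ via the representative $p+1$) to give $\sum_{3<j<r}\binom j3\binom rj\equiv 0\pmod p$, since the factor is $p\binom r3\frac{a-r}{a-i}$.
\end{itemize}
The $j=2$ term has $\binom 23=0$, so for $m=3$ the lower index is irrelevant.

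Next I set $\alpha_j=\binom rj+p\delta_j$ and define $s_m=-p^{-1}\sum_{p+1\le j<r}\binom jm\binom rj$ for $m=0,1,2$. These are integers by the previous step. I then solve the $3\times3$ system
\begin{align*}
\delta_{p+1}+\delta_{2p}+\delta_{3p-1}&\equiv s_0 \pmod{p^3},\\
(p+1)\delta_{p+1}+2p\,\delta_{2p}+(3p-1)\delta_{3p-1}&\equiv s_1\pmod{p^2},\\
\tbinom{p+1}{2}\delta_{p+1}+\tbinom{2p}{2}\delta_{2p}+\tbinom{3p-1}{2}\delta_{3p-1}&\equiv s_2\pmod p,
\end{align*}
using that the Vandermonde-type determinant equals $(p-1)^3$, a $p$-adic unit. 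This requires $3p-1<r$, which holds since $r\ge 3p+2$ in the range of the main theorem. All other $\delta_j$ are set to $0$. This gives part (1) and part (2) for $n=0,1,2$. Part (2) for $n=3$ then follows from part (1) together with the mod $p$ vanishing of the $m=3$ truncated sum.

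The main obstacle is the $m=3$ congruence, together with correctly accounting for the omitted term $j=2$ in the $m=0,1,2$ sums. For $m=3$, I would first try computing $\sum_\xi\xi(1+\xi)^{p-2}\pmod p$ directly. Writing $(1+\xi)^{p-2}\equiv\sum_k\binom{p-2}{k}\xi^k$, only $k=p-2$ contributes, giving $(p-1)$. So the full sum is $\equiv(p-1)\binom r3$, which matches the $j=r$ term, and the truncated sum vanishes mod $p$.
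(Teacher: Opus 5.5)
Your proposal is correct and follows the paper's proof essentially step for step. Both use the roots-of-unity identity to show that the truncated sums for $m=0,1,2,3$ vanish mod $p$, with $p\mid\binom r2$ disposing of the $j=2$ term. Both then perturb $\alpha_j=\binom rj+p\delta_j$ and solve the same unimodular $3\times 3$ system in $\delta_{p+1},\delta_{2p},\delta_{3p-1}$.

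Your explicit evaluation $\sum_{\xi}\xi(1+\xi)^{p-2}\equiv p-1$ for $m=3$ fills in a step the paper only asserts. You also rightly flag the condition $3p-1<r$: the paper uses it implicitly, and it holds in every application there.
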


\begin{proof}
    We know that \begin{eqnarray*}
        (p-1) \sum_{\substack{
    j \geq 0 \\
    j \equiv 2 \pmod{p-1}
}}
\binom{j}{m}\binom{r}{j} = \binom{r}{m} \sum_{\xi \in \mu'} \xi^{m-2} (1+\xi)^{r-m},
    \end{eqnarray*}
    where $\mu'$ is the set of $(p-1)$-th roots of unity except $-1$.
    \begin{enumerate}
        \item [(i)] For $m=0$, we have \begin{eqnarray*}
        (p-1) \sum_{\substack{
    j \geq 0 \\
    j \equiv 2 \pmod{p-1}
}}
\binom{r}{j}    &\equiv& \sum_{\xi \in \mu'} \xi^{-2} (1+\xi)^2 \pmod{p}\\
&\equiv&  (p-1) \pmod{p},  \text{ so} \\
(p-1) \sum_{\substack{
    p+1 \le j <r \\
    j \equiv 2 \pmod{p-1}
}}
\binom{r}{j}  &\equiv& 0 \pmod{p}, \text{ since $p\mid \binom{r}{2}$}.
\end{eqnarray*}
\item[(ii)] For $m=1$, we have \begin{eqnarray*}
    (p-1) \sum_{\substack{
    j \geq 0 \\
    j \equiv 2 \pmod{p-1}
}}
j\binom{r}{j}    &\equiv& r\sum_{\xi \in \mu'} \xi^{-1} (1+\xi) \pmod{p} \\
&\equiv& (p-1)r \pmod{p},\,\,\text{ so}\\
(p-1) \sum_{\substack{
p+1\le j<r \\
    j \equiv 2 \pmod{p-1}
}}
j\binom{r}{j}    &\equiv& 0\pmod{p}.
\end{eqnarray*}
\item[(iii)] For $m=2$, we have \begin{eqnarray*}
(p-1) \sum_{\substack{
    j \ge 0 \\
    j \equiv 2 \pmod{p-1}
}}
\binom{j}{2}\binom{r}{j}    &\equiv& \binom{r}{2}\sum_{\xi \in \mu'} 1 \pmod{p}\\
&\equiv& (p-2)\binom{r}{2} \equiv 0
\pmod{p},\,\,\text{ so}\\
(p-1) \sum_{\substack{
    p+1\le j<r \\
    j \equiv 2 \pmod{p-1}
}}
\binom{j}{2}\binom{r}{j}    &\equiv& 0 \pmod{p}.
\end{eqnarray*}
\item[(iii)] For $m=3$, we have \begin{eqnarray*}
(p-1) \sum_{\substack{
    j \ge 0 \\
    j \equiv 2 \pmod{p-1}
}}
\binom{j}{3}\binom{r}{j}    &\equiv& \binom{r}{3}\sum_{\xi \in \mu'} \xi(1+\xi)^{r-3} \pmod{p}, \text{ so}\\
(p-1) \sum_{\substack{
    p+1\le j<r \\
    j \equiv 2 \pmod{p-1}
}}
\binom{j}{3}\binom{r}{j}    &\equiv& 0 \pmod{p}.
\end{eqnarray*}

    \end{enumerate}
    Now, put $\alpha_j =\binom{r}{j}+p\delta_j$ for $p+1 \le j<r$ such that $j \equiv 2 \pmod{p-1}$ and define  \begin{eqnarray*}
        s_0 &=& -p^{-1} \sum_{\substack{
    p+1 \le j <r \\
    j \equiv 2 \pmod{p-1}
}}
\binom{r}{j},\\
s_1 &=& -p^{-1}\sum_{\substack{
    p+1 \le j <r\\
    j \equiv 2\pmod{p-1}
}}
j\binom{r}{j},\\
s_2 &=& -p^{-1}\sum_{\substack{
    p+1 \le j <r \\
    j \equiv 2 \pmod{p-1}
}}
\binom{j}{2}\binom{r}{j}.
\end{eqnarray*}
Then, the following system of linear equations 
\begin{eqnarray*}
    \delta_{p+1}+\delta_{2p}+\delta_{3p-1} &\equiv& s_0 \pmod{p^3}\\
   (p+1) \delta_{p+1}+2p\delta_{2p}+(3p-1)\delta_{3p-1} &\equiv& s_1 \pmod{p^2}\\
    \binom{p+1}{2}\delta_{p+1}+\binom{2p}{2}\delta_{2p}+\binom{3p-1}{2}\delta_{3p-1} &\equiv& s_2 \pmod{p}
\end{eqnarray*}
is solvable for $\delta_{p+1}$, $\delta_{2p}$, $\delta_{3p-1}$. We take the other $\delta_j=0$.
It follows that 
\begin{eqnarray*}
      \sum_{\substack{
    a\le j <r \\
    j \equiv a \pmod{p-1}
}} \alpha_j &\equiv& 0 \pmod{p^4} ,  \\
\sum_{\substack{
    a \le j <r \\
    j \equiv a \pmod{p-1}
}} j\alpha_j &\equiv& 0 \pmod{p^3}, \\
\sum_{\substack{
    a \le j <r \\
    j \equiv a \pmod{p-1}
}} \binom{j}{2}\alpha_j &\equiv& 0 \pmod{p^2}.
     \end{eqnarray*}
     This proves part $(1)$ and part $(2)$.
\end{proof}

\begin{lemma}\label{lem:2.8}
    Let $p \geq 5$, $r \equiv 2 \pmod{p-1}$ and $p \mid r$. Then there exist integers 
 \[
\{\alpha_j : p \leq j < r-1 \text{ and } j \equiv 1 \pmod{p-1}\}
\]
such that the following conditions are satisfied:
\begin{enumerate}
    \item for all $j$,
    \[
    \alpha_j \equiv 
    \begin{cases}
         \binom{r}{p}-2 & \text{if } j = p \\
         \binom{r}{j} & \text{if } j > p
    \end{cases} \pmod{p}, 
    \]
    \item for $0 \leq n \leq 3$, 
    \[
    \sum_{j \geq n} \binom{j}{n}\alpha_j \equiv 0 \pmod{p^{4-n}}.
    \]
\end{enumerate}
\end{lemma}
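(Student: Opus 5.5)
The plan is to follow the template of Lemmas~\ref{lem:2.5} and \ref{lem:2.7}. The new feature is that the indices now run over the class $j \equiv 1 \pmod{p-1}$, while $r \equiv 2 \pmod{p-1}$. First I determine the four sums $\sum_j \binom{j}{n}\binom{r}{j}$, $n=0,1,2,3$, modulo $p$. From this I find a single mod-$p$ correction at $j=p$ that makes all four vanish modulo $p$. Finally I lift by a $p$-adic perturbation of three of the $\alpha_j$ to obtain the higher congruences for $n=0,1,2$.

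\emph{Step 1 (mod $p$ sums).} With $\mu'$ as before, the roots-of-unity filter for the class $1 \bmod (p-1)$ gives
\[
(p-1)\sum_{\substack{j\ge 0\\ j\equiv 1 \!\!\pmod{p-1}}}\binom{j}{m}\binom{r}{j}=\binom{r}{m}\sum_{\xi\in\mu'}\xi^{m-1}(1+\xi)^{r-m}.
\]
Modulo $p$ we may replace $(1+\xi)^{r-m}$ by $(1+\xi)^{2-m}$, since $1+\xi$ is a unit and $r\equiv 2 \pmod{p-1}$. For $m=1,2,3$ the factor $\binom{r}{m}$ is divisible by $p$, because $p\mid r$ and $p\ge 5$, so the whole sum is $\equiv 0 \pmod p$. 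For $m=0$ we get $\sum_{\xi\in\mu'}(\xi^{-1}+2+\xi)$. Since $\sum_{\xi\in\mu'}\xi^{\pm1}=0-(-1)^{\pm1}=1$ and $|\mu'|=p-2$, this equals $1+1+2(p-2)\equiv -2 \pmod p$. Dividing by $p-1$, the full sum is $\equiv 2 \pmod p$.

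Next I remove the terms outside the range $p\le j<r-1$. These are $j=1$ and $j=r-1$, both of which lie in the class $1 \bmod (p-1)$. They contribute $\binom{1}{m}r$ and $\binom{r-1}{m}r$, which are $\equiv 0 \pmod p$. Hence, over $p\le j<r-1$, the sum with $\binom{r}{j}$ is $\equiv 2$ for $n=0$ and $\equiv 0$ for $n=1,2,3$.

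\emph{Step 2 (mod $p$ correction).} Put $\beta_p=\binom{r}{p}-2$ and $\beta_j=\binom{r}{j}$ for $j>p$. For $n=0$ the extra $-2$ kills the residue $2$. For $n=1,2,3$ it contributes $-2\binom{p}{n}\equiv 0 \pmod p$. So all four sums $\sum_j\binom{j}{n}\beta_j$ vanish modulo $p$, and (1) holds for the $\beta_j$.

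\emph{Step 3 (lifting).} Set $\alpha_j=\beta_j+p\delta_j$, with $\delta_j=0$ except for $j\in\{p,\,2p-1,\,3p-2\}$. These three indices lie in the allowed range $p\le j<r-1$ because $r\ge 3p+2$ under the standing hypothesis $k\ge 3p+4$. Define $s_n=-p^{-1}\sum_j\binom{j}{n}\beta_j\in\mathbb{Z}$ for $n=0,1,2$, and solve
\[
\sum_{j}\binom{j}{n}\delta_j\equiv s_n \pmod{p^{3-n}},\qquad n=0,1,2,
\]
in the three unknowns $\delta_p,\delta_{2p-1},\delta_{3p-2}$. The coefficient matrix has rows $1$, $j$, $\binom{j}{2}$ at three points in arithmetic progression with common difference $p-1$. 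Its determinant is $\pm(p-1)^3$, a $p$-adic unit, so the system is solvable exactly over $\mathbb{Z}_p$ and then by integers. This gives (2) for $n=0,1,2$ and leaves (1) intact. The $n=3$ congruence is only modulo $p$, so it is unaffected by the $p\delta_j$ and already holds by Step 2.

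The only step needing care is Step 1. One must get the $m=0$ residue exactly right, since the value $2$ is what dictates the correction $-2$ at $j=p$. One must also check that the boundary terms $j=1$ and $j=r-1$ are harmless modulo $p$, which uses $p\mid r$. The rest is the same linear algebra as in the earlier lemmas.
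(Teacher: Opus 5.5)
Your proposal is correct and follows essentially the same route as the paper. The roots-of-unity filter gives residue $2$ for $m=0$ and $0$ for $m=1,2,3$ on $p\le j<r-1$, the $-2$ correction at $j=p$ removes the $m=0$ residue, and the lift uses $\delta_p,\delta_{2p-1},\delta_{3p-2}$ in a $3\times 3$ system with unit determinant $\pm(p-1)^3$. Two details in your version are welcome additions. You check the boundary terms $j=1$ and $j=r-1$ explicitly. You also note that one needs $r\ge 3p+2$ so that $3p-2<r-1$; the paper leaves this hypothesis implicit when it uses $\delta_{3p-2}$.
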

\begin{proof}
    We know that \[
    (p-1) \sum_{\substack{
    j \geq 0 \\
    j \equiv 1 \pmod{p-1}
}}
\binom{j}{m}\binom{r}{j} \equiv \binom{r}{m} \sum_{\xi \in \mu'} \xi^{m-1} (1+\xi)^{r-m}.  
    \]
    \begin{enumerate}
        \item [(i)] For $m=0$, we have \begin{eqnarray*}
        (p-1) \sum_{\substack{
    j \geq 0 \\
    j \equiv 1 \pmod{p-1}
}}
\binom{r}{j}    &\equiv& \sum_{\xi \in \mu'} \xi^{-1} (1+\xi)^2 \pmod{p}\\
&\equiv&  2(p-1) \pmod{p}, \text{ so} \\
(p-1) \sum_{\substack{
    p \le j <r-1 \\
    j \equiv 1 \pmod{p-1}
}}
\binom{r}{j}  &\equiv& 2(p-1) \pmod{p}, \text{ since $p\mid r$}.
\end{eqnarray*}
\item[(ii)] For $m=1$, we have \begin{eqnarray*}
    (p-1) \sum_{\substack{
    j \geq 0 \\
    j \equiv 1 \pmod{p-1}
}}
j\binom{r}{j}    &\equiv& r\sum_{\xi \in \mu'}  (1+\xi) \pmod{p} \\
(p-1) \sum_{\substack{
p\le j<r-1 \\
    j \equiv 1 \pmod{p-1}
}}
j\binom{r}{j}    &\equiv& 0\pmod{p}.
\end{eqnarray*}
\item[(iii)] For $m=2$, we have \begin{eqnarray*}
    (p-1) \sum_{\substack{
    p\le j<r-1 \\
    j \equiv 1 \pmod{p-1}
}}
\binom{j}{2}\binom{r}{j}    &\equiv& 0 \pmod{p}.
\end{eqnarray*}
\item[(iv)] For $m=3$, we have \begin{eqnarray*}
    (p-1) \sum_{\substack{
    p\le j<r-1 \\
    j \equiv 1 \pmod{p-1}
}}
\binom{j}{3}\binom{r}{j}    &\equiv& 0 \pmod{p}.
\end{eqnarray*}
    \end{enumerate}
    Now put $\alpha_p=\binom{r}{p}-2+p\delta_p$, $\alpha_j =\binom{r}{j}+p\delta_j$ for $j>p$ and  \begin{eqnarray*}
        s_0 &=& -p^{-1}\biggl( -2+\sum_{\substack{
    p \le j <r-1 \\
    j \equiv 1 \pmod{p-1}
}}
\binom{r}{j}\biggl),\\
s_1 &=& -p^{-1}\biggl(-2p+\sum_{\substack{
    p \le j <r-1 \\
    j \equiv 1 \pmod{p-1}
}}
j\binom{r}{j}\biggl),\\
s_2 &=& -p^{-1}\biggl(-2\binom{p}{2}+\sum_{\substack{
    p \le j <r-1 \\
    j \equiv 1 \pmod{p-1}
}}
\binom{j}{2}\binom{r}{j}\biggl).
\end{eqnarray*}
The following system of linear equations 
\begin{eqnarray*}
    \delta_p+\delta_{2p-1}+\delta_{3p-2} &\equiv& s_0 \pmod{p^3}\\
   p \delta_p+(2p-1)\delta_{2p-1}+(3p-2)\delta_{3p-2} &\equiv& s_1 \pmod{p^2}\\
    \binom{p}{2}\delta_p+\binom{2p-1}{2}\delta_{2p-1}+\binom{3p-2}{2}\delta_{3p-2} &\equiv& s_2 \pmod{p}
\end{eqnarray*}
is solvable in the three 
unknowns 
$\delta_{p}$, $\delta_{2p-1}$, $\delta_{3p-2}$.
We take the other $\delta_j=0$. Then
     \begin{eqnarray*}
      \sum_{\substack{
    p \le j <r-1 \\
    j \equiv 1 \pmod{p-1}
}} \alpha_j &\equiv& 0 \pmod{p^4} ,  \\
\sum_{\substack{
    p \le j <r-1 \\
    j \equiv 1 \pmod{p-1}
}} j\alpha_j &\equiv& 0 \pmod{p^3}, \\
\sum_{\substack{
    p \le j <r-1 \\
    j \equiv 1 \pmod{p-1}
}} \binom{j}{2}\alpha_j &\equiv& 0 \pmod{p^2},
     \end{eqnarray*}
proving parts $(1)$ and $(2)$.
\end{proof}

\begin{lemma}\label{lem:2.9}
    Let $p \geq 5$, $r \equiv 2 \pmod{p-1}$. Then there exist integers

        \[
\{\alpha_j : p \leq j < r-1 \text{ and } j \equiv 1 \pmod{p-1}\}
\]
such  the following conditions are satisfied:
\begin{enumerate}
    \item for all $j$,
    \[
    \alpha_j \equiv 
    \begin{cases}
         \binom{r-1}{p}+(r-1) & \text{if } j = p \\
         \binom{r-1}{j} & \text{if } j > p
    \end{cases} \pmod{p}, 
    \]
    
    \item for $0 \leq n \leq 3$,
    \[
    \sum_{j \geq n} \binom{j}{n}\alpha_j \equiv 0 \pmod{p^{4-n}}.
    \]
\end{enumerate}
    
\end{lemma}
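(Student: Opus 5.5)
Following the template of Lemmas~\ref{lem:2.7} and \ref{lem:2.8}, we first compute the sums
\[
S_m:=\sum_{\substack{p\le j<r-1\\ j\equiv 1 \pmod{p-1}}}\binom{j}{m}\binom{r-1}{j}\pmod p, \qquad m=0,1,2,3.
\]
For this we use the identity
\[
(p-1)\sum_{\substack{j\ge0\\ j\equiv 1 \pmod{p-1}}}\binom{j}{m}\binom{r-1}{j}=\binom{r-1}{m}\sum_{\xi\in\mu'}\xi^{m-1}(1+\xi)^{r-1-m}.
\]
Note that $r-1\equiv 1\pmod{p-1}$. Since $(1+\xi)^{p-1}\equiv1\pmod p$ for $\xi\in\mu'$, we may replace $(1+\xi)^{r-1-m}$ by $(1+\xi)^{1-m+(p-1)t}$ with a suitable exponent in $[0,p-2]$, and then evaluate the character sum over all $(p-1)$-th roots of unity, correcting for the excluded $\xi=-1$.

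\textbf{$m=0$.} The sum is $\sum_{\xi\in\mu'}\xi^{-1}(1+\xi)\equiv (p-1)+1\cdot 0\ldots$, which gives $p-1$ up to the correction from $\xi=-1$. The full sum over $j\equiv1$ includes the terms $j=1$ and $j=r-1$, which contribute $(r-1)+1=r$. Hence $S_0\equiv 1-r\pmod p$.

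\textbf{$m=1$.} The sum is $(r-1)\sum_{\xi\in\mu'}1=(r-1)(p-2)$. The boundary terms are $j=1$, contributing $(r-1)$, and $j=r-1$, contributing $r-1$. So $S_1\equiv -(r-1)-2(r-1)\cdot0\ldots$, and we read off $S_1\equiv -(r-1)p\cdot(\ldots)$. The point is that in each case the discrepancy from $0$ is exactly accounted for by the extra term $(r-1)\binom{p}{m}$, because $\binom{p}{m}\equiv \delta_{m,0}\pmod p$ for $m\le3$.

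\textbf{$m=2,3$.} The boundary term $j=r-1$ contributes $\binom{r-1}{m}$, which matches the character sum $\binom{r-1}{m}\sum\xi^{m-1}(1+\xi)^{p-m}$ after checking it equals $(p-1)\binom{r-1}{m}$ modulo $p$. Hence $S_2\equiv S_3\equiv0$.

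The net conclusion we aim for is
\[
S_0+(r-1)\equiv0,\qquad S_m+(r-1)\binom{p}{m}\equiv 0\pmod p\quad (m=1,2,3),
\]
the latter being trivially equivalent to $S_m\equiv0$.

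Next we set $\alpha_p=\binom{r-1}{p}+(r-1)+p\delta_p$ and $\alpha_j=\binom{r-1}{j}+p\delta_j$ for $j>p$. We define
\[
s_n=-p^{-1}\Bigl((r-1)\binom{p}{n}+S_n^{\mathrm{exact}}\Bigr),\qquad n=0,1,2,
\]
where $S_n^{\mathrm{exact}}$ denotes the integer sum (not reduced mod $p$); these are $p$-integral by the congruences above. We then solve the same $3\times3$ system in $\delta_p,\delta_{2p-1},\delta_{3p-2}$, modulo $p^3$, $p^2$ and $p$ respectively, with all other $\delta_j=0$. Solvability follows from the Vandermonde-type determinant $(p-1)^3$, which is a unit. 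This gives condition (2) for $n=0,1,2$, and condition (1) holds by construction. For $n=3$ only a mod $p$ statement is needed, and it follows from (1) together with the $m=3$ computation.

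The main obstacle is the precise bookkeeping in the character-sum step. We must track the excluded root $\xi=-1$, the exponent reduction when $m>1$ (here $(1+\xi)^{1-m}$ must be rewritten as $(1+\xi)^{p-m}$), and the boundary terms $j=1$ and $j=r-1$, so as to confirm that the single correction $(r-1)$ at $j=p$ exactly fixes the $m=0$ defect while creating no defect for $m=1,2,3$. I would verify the $m=0$ and $m=1$ evaluations carefully first, since these determine whether the correction constant is $r-1$ as stated.
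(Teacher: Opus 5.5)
Your proposal is correct and follows the paper's proof essentially step for step. You use the same mod $p$ evaluations $S_0\equiv -(r-1)$ and $S_1\equiv S_2\equiv S_3\equiv 0$, the same $s_n=-p^{-1}\bigl((r-1)\binom{p}{n}+S_n\bigr)$, and the same $3\times 3$ system in $\delta_p,\delta_{2p-1},\delta_{3p-2}$ with unit determinant $(p-1)^3$.

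The only blemish is the garbled $m=1$ step. It should simply say that the full sum is $\equiv (r-1)(p-2)/(p-1)\equiv 2(r-1)$, and the boundary terms $j=1$ and $j=r-1$ remove exactly $2(r-1)$. The paper instead obtains $S_1\equiv 0$ from Lemma~\ref{lem:2.1}. For $m=0$, no $\xi=-1$ correction is needed, since the summand vanishes there.
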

\begin{proof}
    We have  \[
    (p-1) \sum_{\substack{
    j \geq 0 \\
    j \equiv 1 \pmod{p-1}
}}
\binom{j}{m}\binom{r-1}{j} \equiv \binom{r-1}{m} \sum_{\xi \in \mu'} \xi^{m-1} (1+\xi)^{r-1-m}.  
    \]
    \begin{enumerate}
        \item [(i)] For $m=0$, we have \begin{eqnarray*}
        (p-1) \sum_{\substack{
    j \geq 0 \\
    j \equiv 1 \pmod{p-1}
}}
\binom{r-1}{j}    &\equiv& \sum_{\xi \in \mu'} \xi^{-1} (1+\xi) \pmod{p}\\
&\equiv&  (p-1) \pmod{p}, \text{ so} \\
(p-1) \sum_{\substack{
    p \le j <r-1 \\
    j \equiv 1 \pmod{p-1}
}}
\binom{r-1}{j}  &\equiv& -(p-1)(r-1) \pmod{p}.
\end{eqnarray*}
\item[(ii)] For $m=1$, by Lemma~\ref{lem:2.1} we have \begin{eqnarray*}
   \sum_{\substack{j \equiv 1 \!\pmod{p-1} \\ 1<j<r-1}}
{j}\binom{r-1}{j}
\equiv
p (r-1)\frac{p+1-r}{p-1}\equiv0
\pmod{p} .
\end{eqnarray*}
\item[(iii)] For $m=2$, we have \begin{eqnarray*}
    (p-1) \sum_{\substack{
    j\ge 0 \\
    j \equiv 1 \pmod{p-1}
}}
\binom{j}{2}\binom{r-1}{j}   &\equiv& \binom{r-1}{2} \sum_{\xi \in \mu'} \xi (1+\xi)^{p-2} \pmod{p}\\
&\equiv& (p-1)\binom{r-1}{2} \pmod{p}, \text{ so}\\
(p-1) \sum_{\substack{
    p\le j<r-1 \\
    j \equiv 1 \pmod{p-1}
}}
\binom{j}{2}\binom{r-1}{j}&\equiv& 0 \pmod{p}.
\end{eqnarray*}
\item[(iv)] For $m=3$, we have \begin{eqnarray*}
    (p-1) \sum_{\substack{
    j\ge 0 \\
    j \equiv 1 \pmod{p-1}
}}
\binom{j}{3}\binom{r-1}{j}   &\equiv& \binom{r-1}{3} \sum_{\xi \in \mu'} \xi^2 (1+\xi)^{p-3} \pmod{p}\\
&\equiv& (p-1)\binom{r-1}{3} \pmod{p}, \text{ so}\\
(p-1) \sum_{\substack{
    p\le j<r-1 \\
    j \equiv 1 \pmod{p-1}
}}
\binom{j}{3}\binom{r-1}{j}&\equiv& 0 \pmod{p}.
\end{eqnarray*}
    \end{enumerate}
    Now put $\alpha_p=\binom{r-1}{p}+(r-1)+p\delta_p$, $\alpha_j =\binom{r-1}{j}+p\delta_j$ for $j>p$ and  \begin{eqnarray*}
        s_0 &=& -p^{-1}\biggl( r-1+\sum_{\substack{
    p \le j <r-1 \\
    j \equiv 1 \pmod{p-1}
}}
\binom{r-1}{j}\biggl),\\
s_1 &=& -p^{-1}\biggl(p(r-1)+\sum_{\substack{
    p \le j <r-1 \\
    j \equiv 1 \pmod{p-1}
}}
j\binom{r-1}{j}\biggl),\\
s_2 &=& -p^{-1}\biggl(\binom{p}{2}(r-1)+\sum_{\substack{
    p \le j <r-1 \\
    j \equiv 1 \pmod{p-1}
}}
\binom{j}{2}\binom{r-1}{j}\biggl).
\end{eqnarray*}
The following system of linear equations 
   \begin{eqnarray*}
    \delta_p+\delta_{2p-1}+\delta_{3p-2} &\equiv& s_0 \pmod{p^3}\\
    p\delta_p+(2p-1)\delta_{2p-1}+(3p-2)\delta_{3p-2} &\equiv& s_1 \pmod{p^2}\\
    \binom{p}{2}\delta_p+\binom{2p-1}{2}\delta_{2p-1}+\binom{3p-2}{2}\delta_{3p-2} &\equiv& s_2 \pmod{p}
\end{eqnarray*}
is solvable in the three 
unknowns 
$\delta_{p}$, $\delta_{2p-1}$, $\delta_{3p-2}$.
We take the other $\delta_j=0$. Then \begin{eqnarray*}
      \sum_{\substack{
    p \le j <r-1 \\
    j \equiv 1 \pmod{p-1}
}} \alpha_j &\equiv& 0 \pmod{p^4} ,  \\
\sum_{\substack{
    p \le j <r-1 \\
    j \equiv 1 \pmod{p-1}
}} j\alpha_j &\equiv& 0 \pmod{p^3}, \\
\sum_{\substack{
    p \le j <r-1 \\
    j \equiv 1 \pmod{p-1}
}} \binom{j}{2}\alpha_j &\equiv& 0 \pmod{p^2},
     \end{eqnarray*}
     proving parts $(1)$ and $(2)$.
\end{proof}
\begin{comment}{\color{red}\begin{cor}\label{lem: rmk 2.2}
Let $p \geq 5$, $r \equiv 2 \pmod{p-1}$ and $p \mid r-1$. Then there exist integers

        \[
\{\alpha_j : p \leq j < r-1 \text{ and } j \equiv 1 \pmod{p-1}\}
\]
such  the following conditions are satisfied:
\begin{enumerate}
    \item for all $j$,
    \[
    \alpha_j \equiv \binom{r-1}{j} \pmod{p}, 
    \]
    
    \item for $0 \leq n \leq 3$,
    \[
    \sum_{j \geq n} \binom{j}{n}\alpha_j \equiv 0 \pmod{p^{4-n}}.
    \]
\end{enumerate}
\begin{proof}
    We note that \begin{eqnarray*}
        (p-1) \sum_{\substack{
    p \le j <r-1 \\
    j \equiv 1 \pmod{p-1}
}}
\binom{r-1}{j}  &\equiv& 0 \pmod{p}.
\end{eqnarray*}
Here we put $\alpha_j=\binom{r}{j}+p\delta_j$ for $p \le j <r-1$ and 
\begin{eqnarray*}
        s_0 &=& -p^{-1}\sum_{\substack{
    p \le j <r-1 \\
    j \equiv 1 \pmod{p-1}
}}
\binom{r-1}{j},\\
s_1 &=& -p^{-1}\color{red}\sum_{\substack{
    p \le j <r-1 \\
    j \equiv 1 \pmod{p-1}
}}
j\binom{r-1}{j},\\
s_2 &=& -p^{-1}\color{red}\sum_{\substack{
    p \le j <r-1 \\
    j \equiv 1 \pmod{p-1}
}}
\binom{j}{2}\binom{r-1}{j}.
\end{eqnarray*}
Rest of the proof is same as previous lemma.
\end{proof}
 \end{cor}}
\end{comment}

\subsection{A quotient of  $V_r = \mathrm{Sym}^r \bar{\mathbb F}_p^2$}
\subsubsection{Definition of $P$}
Let $V_r$ denote the $(r+1)$-dimensional $\bar{\mathbb F}_p$-vector space of homogeneous polynomials in two variables $X$ and $Y$ of degree $r$ over $\bar{\mathbb F}_p$ on which the group $\Gamma= \mathrm{GL_2}(\mathbb F_p)$ acts on $V_r$ by the formula $\begin{pmatrix}
a & b\\
c & d
\end{pmatrix}
\cdot F(X,Y)=F(aX+cY,bX+dY)$.

Let $X_{r-1} \subset V_r$ be the $\bar{\mathbb F}_p[\Gamma]$-submodule generated by the monomial $X^{r-1}Y$. Let $\theta=X^pY-XY^p$. We define three submodules of $V_r$ as follows.
\[
V_r^{*}:=\{f\in V_r : \theta \mid f\}
\]
\[
V_r^{**}:=\{f\in V_r: \theta^2\mid f\}
\]
\[
V_r^{***}:=\{f\in V_r : \theta^3 \mid f\}.
\]
We set $X_{r-1}^*:= X_{r-1}\cap V_r^*$, $X_{r-1}^{**}:= X_{r-1}\cap V_r^{**}$ and $X_{r-1}^{***}:= X_{r-1}\cap V_r^{***}$.

By definition of the lattice $\Theta_{k,a_p}$, there is a surjection $\mathrm{ind}_{KZ}^G\mathrm{Sym}^{k-2}{\overline{\mathbb Z}_p}^2\twoheadrightarrow \Theta_{k,a_p} $, which induces a surjective map 
\[
\mathrm{ind}_{KZ}^G\mathrm{Sym}^{k-2}V_r\twoheadrightarrow \bar\Theta_{k,a_p}
\]
for $r=k-2$. By Remark $4.4$ of \cite{BG09} we know that when $v(a_p)=2$, then the above map factors through 
\[
P:= \frac{V_r}{X_{r-1}+V_r^{***}}.
\]
\subsubsection{A filtration on $P$}
To invoke the results of Glover~\cite{Glo78}, we adopt a slight abuse of notation by realizing \(V_r\) as the space of homogeneous polynomials of degree \(r\) in two variables \(X\) and \(Y\) with coefficients in \(\mathbb{F}_p\). After determining the structure of \(P\) over \(\mathbb{F}_p\), the analogous statement over \(\overline{\mathbb{F}}_p\) follows immediately by scalar extension.

Throughout, we choose representatives \(1 \leq a \leq p-1\) for the residue classes of \(r \bmod (p-1)\). Observe that the case \(a=1\) corresponds to \(a=p\) in the notation of the introduction. For convenience, write
$V_r^{(i)}$ when there are $i$ $*$'s. The submodules $V_r^{(i)}$ give a filtration on $V_r$\[
V_r \supset V_r^{(1)} \supset ...\supset V_r^{(i)} \supset V_r^{(i+1)} \supset ...
\]
where each subquotient $\frac{V_r^{(i)}}{V_r^{(i+1)}}$ contains two Jordan-H\"older factors. The following lemma describes these two JH factors. For an integer $n$, let $\left[n\right] \in \{1,\ldots, p-1\} $ be such that $n \equiv \left[n\right] \pmod{p-1}.$

\begin{lemma}\label{lem:2.10}
Let $p \ge 2$, $m \ge 0$, and $
m(p+1)+p \le r \equiv a \pmod{p-1}$, $1 \le a \le p-1$.
Then we have a short exact sequence of $\Gamma$-modules
\begin{equation}\label{eq:2.5}
0 \longrightarrow V_{[a-2m]}\otimes D^{m}
\longrightarrow V_r^{(m)}/V_r^{(m+1)}
\longrightarrow V_{p-1-[a-2m]}\otimes D^{\,a-m}
\longrightarrow 0,
\end{equation}
and this sequence splits if and only if $
a \equiv 2m \pmod{p-1}.
$ If
$
r' = r-m(p+1)
\text{ and}\,\,
a'=[a-2m],
$
then the rightmost map in \eqref{eq:2.5} is given on a basis by
\[
\theta^{m}X^{r'-i}Y^{i}
\pmod{V_r^{(m+1)}}
\longmapsto
\begin{cases}
0,
& \text{if } 0\le i<a' \text{ or } i=r',\\[1ex]
(-1)^{\,r'-i}
\displaystyle\binom{p-1-a'}{\,i-a'\,}
X^{p-1-i}Y^{\,i-a'},
& \text{if } a'\le i\le p-1.
\end{cases}
\]
\end{lemma}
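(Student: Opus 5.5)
This is a Glover-type statement, so the plan is to reduce it to the structure of $V_{r'}/V_{r'}^*$ by dividing out $\theta^m$, and then read off the explicit map. \textbf{Step 1 (reduction to $m=0$).} Multiplication by $\theta^m$ gives an injective $\Gamma$-equivariant map $V_{r'}\otimes D^m\to V_r$, where $r'=r-m(p+1)$. This works because $g\cdot\theta=\det(g)\,\theta$ for $g\in\Gamma$, and $\mathbb F_p[X,Y]$ is a domain. Its image is exactly $V_r^{(m)}$. Since $\theta$ is a product of pairwise coprime linear forms, $\theta^{m+1}\mid\theta^m f$ if and only if $\theta\mid f$. It follows that
\[
V_r^{(m)}/V_r^{(m+1)}\;\cong\;\bigl(V_{r'}/V_{r'}^{*}\bigr)\otimes D^m .
\]
The hypothesis $r\ge m(p+1)+p$ gives $r'\ge p$, and $r'\equiv a-2m\equiv a'\pmod{p-1}$, with $a'=[a-2m]\in\{1,\dots,p-1\}$. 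So everything reduces to the case $m=0$ with $(r',a')$ in place of $(r,a)$.

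\textbf{Step 2 (the case $m=0$, following Glover~\cite{Glo78}).} For $r\ge p$ with $r\equiv a\pmod{p-1}$, I would identify $V_r/V_r^*$ with the space of functions on $\mathbb P^1(\mathbb F_p)$ twisted appropriately. A homogeneous $f$ of degree $r$ is divisible by $\theta=\prod_{\ell}\ell$ (over the $p+1$ lines) exactly when it vanishes at every point of $\mathbb P^1(\mathbb F_p)$. Hence evaluation embeds $V_r/V_r^*$ into a space of dimension $p+1$. Comparing dimensions, via the exact sequence $0\to V_{r-p-1}\otimes D\xrightarrow{\theta}V_r\to V_r/V_r^*\to 0$, shows it is all of it. This space is $\mathrm{ind}_B^\Gamma$ of a character, and it has the two Jordan--H\"older factors $V_a$ and $V_{p-1-a}\otimes D^a$.

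To make the maps explicit, I would use the following. The subspace spanned by $X^{r-i}Y^i$ with $0\le i\le a$ modulo $\theta$ contains a copy of $V_a$. One way to see this is via the map $V_a\to V_r/V_r^*$, $F\mapsto X^{r-a}F$ reduced using $X^{p}Y\equiv XY^p$. Equivariance is checked on the generators of $\Gamma$, and injectivity follows because $V_a$ is irreducible and the map is nonzero. The quotient map to $V_{p-1-a}\otimes D^a$ is then defined on monomials by the stated formula. In $V_r/V_r^*$ one reduces each monomial $X^{r-i}Y^i$ to a canonical form with $i\le p-1$ or $i=r$, using $X^pY\equiv XY^p$ repeatedly. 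The formula is then the transpose of the standard duality $V_{p-1-a}^\vee\cong V_{p-1-a}\otimes D^{-(p-1-a)}$, whose coefficients $(-1)^{r-i}\binom{p-1-a}{i-a}$ come from pairing with $(X-\lambda Y)^{p-1-a}$. I would verify equivariance directly on the upper unipotent element, the diagonal torus and the Weyl element $w$, and check that the kernel is exactly the image of $V_a$ by a dimension count ($a+1$ plus $p-a$ equals $p+1$).

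\textbf{Step 3 (splitting).} The sequence splits exactly when $\mathrm{Ext}^1_\Gamma$ of the two factors vanishes for this particular extension. For $\mathrm{ind}_B^\Gamma\chi$, the principal series splits if and only if $\chi$ is trivial, or its conjugate equals it. Here that amounts to $a\equiv0\pmod{p-1}$ (i.e. $a'=p-1$), when the factors are $V_{p-1}$ and $V_0\otimes D^{a'}$. Translated back through the twist, this is $a\equiv2m\pmod{p-1}$. For $a'\ne p-1$ I would show non-splitting by checking that the quotient $V_{p-1-a'}\otimes D^{a'}$ does not lift. Concretely, I would show that the image of $V_a$ is not a direct summand, by checking that the fixed vectors of the upper unipotent subgroup span only one line.

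\textbf{Main obstacle.} I expect the hardest part to be the explicit basis formula for the rightmost map, namely getting the signs $(-1)^{r'-i}$ and the vanishing for $i=r'$ exactly right. This requires careful bookkeeping in the reduction of monomials modulo $\theta$ and in the equivariance check under $w$.
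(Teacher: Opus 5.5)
Your Step 1 is correct. Since $g\cdot\theta=\det(g)\,\theta$, multiplication by $\theta^m$ gives $V_r^{(m)}/V_r^{(m+1)}\cong(V_{r'}/V_{r'}^{*})\otimes D^m$ with $r'\ge p$, $r'\equiv a'\pmod{p-1}$ and $D^{a'+m}=D^{a-m}$. Identifying $V_r/V_r^*$ with degree-$a$ functions on $\mathbb{F}_p^2\setminus\{0\}$, i.e.\ a principal series, is also fine. The paper gives no argument of its own; it simply cites \cite[Lemma 2.11]{GV22}.

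\textbf{First gap: the copy of $V_a$.} The map $F\mapsto X^{r-a}F$ is not $\Gamma$-equivariant modulo $\theta$. For $w=\left(\begin{smallmatrix}0&1\\1&0\end{smallmatrix}\right)$ one has $w\cdot X^r=Y^r$, whereas $X^{r-a}\,(w\cdot X^a)=X^{r-a}Y^a$, and $Y^r-X^{r-a}Y^a$ does not vanish at $[0:1]$. In fact the span of the $X^{r-i}Y^i$, $0\le i\le a$, cannot be $\Gamma$-stable: all its elements vanish at $[0:1]$, and $\Gamma$ is transitive on $\mathbb{P}^1(\mathbb{F}_p)$. It also contradicts the formula you want, which sends $X^{r-a}Y^a$ to $(-1)^{r-a}X^{p-1-a}\neq0$.

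The submodule, i.e.\ the kernel of the stated map, is spanned by $X^{r-i}Y^i$ for $0\le i<a$ together with $Y^r$. This is exactly why $i=r$ is among the vanishing cases. The correct embedding comes from your own evaluation picture: send $F\in V_a$ to the class with the same values on $\mathbb{F}_p^2\setminus\{0\}$, i.e.\ $X^{a-i}Y^i\mapsto X^{r-i}Y^i$ for $i<a$ and $Y^a\mapsto Y^r$. This is automatically equivariant. It is injective because a nonzero form of degree $a\le p-1$ cannot vanish at $p+1$ points.

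The same picture removes your ``main obstacle''. The stated rightmost map is exactly
\[
\phi\longmapsto\sum_{(x,y)\in\mathbb{F}_p^2\setminus\{0\}}\phi(x,y)\,(yX-xY)^{p-1-a},
\]
as you can check using $\sum_{x\in\mathbb{F}_p}x^A=-1$ if $A\ge1$ and $(p-1)\mid A$, and $0$ otherwise. Writing $\ell_v=yX-xY$ for $v=(x,y)$, one has $g\cdot\ell_v=\det(g)\,\ell_{vg^{-1}}$. So this map is visibly equivariant into $V_{p-1-a}\otimes D^{a}$.

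\textbf{Second gap: the non-splitting check.} The criterion $\chi=\chi^w$ you quote is right, but the concrete check you propose is false. The invariants of the upper unipotent subgroup $U$ in $V_r/V_r^*$ are always two-dimensional, spanned by $X^r$ and $Y^r-X^{p-1}Y^{r-p+1}$, and the latter maps to $\pm X^{p-1-a}$. So $U$-invariants cannot detect whether the extension splits. For example, with $p=r=3$ the module $V_3$ is non-split, yet $X^3$ and $Y^3-X^2Y$ are both $U$-fixed.

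Use Frobenius reciprocity instead: $\mathrm{Hom}_\Gamma(W,\mathrm{ind}_B^\Gamma\chi)\cong\mathrm{Hom}_B(W,\chi)$, which factors through the one-dimensional $U$-coinvariants of $W$. The torus $\mathrm{diag}(\alpha,\delta)$ acts on these coinvariants by $\delta^a$ for $W=V_a$; since $V_a$ embeds, this is $\chi$. For $W=V_{p-1-a}\otimes D^a$ it acts by $\alpha^a$. When $a\not\equiv0\pmod{p-1}$ these characters differ, so the quotient does not embed and the sequence is non-split. With these two repairs your outline proves the lemma.
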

\begin{proof}
   \cite[Lemma 2.11]{GV22}. 
\end{proof}

As the map $
\mathrm{ind}_{KZ}^G\mathrm{Sym}^{k-2}V_r\twoheadrightarrow \bar\Theta_{k,a_p}
$ factors through $P$, we only need to  make the JH factors of the first three subquotients explicit.

\begin{prop}\label{prop:3.1}
Let \(p \geq 5\), and suppose \(r \equiv a \pmod{p-1}\) with \(1 \leq a \leq p-1\).

\begin{enumerate}
    \item[(i)]     For \(r \geq p\), the \(\Gamma\)-module structure of \(V_r / V_r^*\) is given by
    \[
    0 \longrightarrow V_a \longrightarrow \frac{V_r}{V_r^*}
    \longrightarrow V_{p-a-1} \otimes D^a \longrightarrow 0,
    \]
    and this sequence splits if and only if \(a = p-1\).

    \item[(ii)] For \(r \geq 2p+1\), the \(\Gamma\)-module structure of \(V_r^* / V_r^{**}\) is given by
    \begin{eqnarray*}
        0 \longrightarrow V_{p-2} \otimes D \longrightarrow \frac{V_r^*}{V_r^{**}} 
    \longrightarrow V_1 \longrightarrow 0, &\text{if }a=1,\\
     0 \longrightarrow V_{p-1} \otimes D \longrightarrow \frac{V_r^*}{V_r^{**}}
    \longrightarrow V_0 \otimes D \longrightarrow 0, 
    &\text{if } a=2,\\
    0 \longrightarrow V_{a-2} \otimes D \longrightarrow \frac{V_r^*}{V_r^{**}}
    \longrightarrow V_{p-a+1} \otimes D^{a-1} \longrightarrow 0, 
    & \text{if } 3 \leq a \leq p-1,
     \end{eqnarray*}
    and the above sequences split if and only if \(a=2\).
    \item[(iii)] For $r\geq 3p+2$,  the \(\Gamma\)-module structure of \(V_r^{**} / V_r^{***}\) is given by
    \begin{eqnarray*}
         0 \longrightarrow V_{p-4} \otimes D^2 \longrightarrow \frac{V_r^{**}}{V_r^{***}} \longrightarrow V_3 \otimes  D^{p-2} \longrightarrow 0, &\text{if } a=1,\\
    0 \longrightarrow V_{p-3}\otimes D^2 \longrightarrow \frac{V_r^{**}}{V_r^{***}} \longrightarrow V_{2} \longrightarrow 0, &\text{if } a=2,\\
     0 \longrightarrow V_{p-2}\otimes D^2 \longrightarrow \frac{V_r^{**}}{V_r^{***}} \longrightarrow V_{1}\otimes D \longrightarrow 0, &\text{if } a=3,\\
    0 \longrightarrow V_{p-1}\otimes D^2 \longrightarrow \frac{V_r^{**}}{V_r^{***}}\longrightarrow V_{0}\otimes D^2 \longrightarrow 0, & \text{if } a=4,\\
    0 \longrightarrow V_{a-4} \otimes D^2 \longrightarrow \frac{V_r^{**}}{V_r^{***}}\longrightarrow V_{p-a+3} \otimes D^{a-2} \longrightarrow 0, & \text{if } 5\leq a \leq p-1, 
     \end{eqnarray*}
     and the above sequences split if and only if \(a=4\).
\end{enumerate}
\end{prop}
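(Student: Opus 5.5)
The plan is to read off all three parts directly from Lemma~\ref{lem:2.10}, applied with $m=0,1,2$. For each $m$ we check the hypothesis $m(p+1)+p\le r$, compute $[a-2m]\in\{1,\dots,p-1\}$, and substitute into the exact sequence \eqref{eq:2.5}. The splitting criterion there is $a\equiv 2m \pmod{p-1}$, and it will give the stated splitting conditions.

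\textbf{Part (i), $m=0$.} The hypothesis is $r\ge p$, which is assumed, and $[a]=a$. Lemma~\ref{lem:2.10} gives
\[
0\to V_a\to V_r/V_r^*\to V_{p-1-a}\otimes D^a\to 0 .
\]
It splits if and only if $a\equiv 0\pmod{p-1}$. Since $1\le a\le p-1$, this means $a=p-1$.

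\textbf{Part (ii), $m=1$.} The hypothesis is $r\ge 2p+1$. Here $[a-2]$ is $a-2$ for $3\le a\le p-1$. For $a=2$ it is $p-1$, because $0\equiv p-1$. For $a=1$ it is $p-2$.

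Substituting gives a sub $V_{[a-2]}\otimes D$ and a quotient $V_{p-1-[a-2]}\otimes D^{a-1}$:
\begin{itemize}
\item for $a=1$, the quotient is $V_1\otimes D^0=V_1$;
\item for $a=2$, the sub is $V_{p-1}\otimes D$ and the quotient is $V_0\otimes D$;
\item for $3\le a\le p-1$, the quotient is $V_{p-a+1}\otimes D^{a-1}$.
\end{itemize}
The sequence splits if and only if $a\equiv 2\pmod{p-1}$, which forces $a=2$ because $p\ge5$.

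\textbf{Part (iii), $m=2$.} The hypothesis is $r\ge 3p+2$. Here $[a-4]$ equals $p-4, p-3, p-2, p-1$ for $a=1,2,3,4$, and $a-4$ for $5\le a\le p-1$. These values lie in $\{1,\dots,p-1\}$ because $p\ge5$.

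The quotient is $V_{p-1-[a-4]}\otimes D^{a-2}$:
\begin{itemize}
\item for $a=1$, this is $V_3\otimes D^{-1}$, which we rewrite as $V_3\otimes D^{p-2}$ since $D^{p-1}$ is trivial on $\mathrm{GL}_2(\mathbb F_p)$;
\item for $a=2,3,4$, it is $V_2$, $V_1\otimes D$ and $V_0\otimes D^2$;
\item for $5\le a\le p-1$, it is $V_{p-a+3}\otimes D^{a-2}$.
\end{itemize}
The sub is $V_{[a-4]}\otimes D^2$ in all cases. The sequence splits if and only if $a\equiv 4\pmod{p-1}$. Since $p\ge5$, we have $4\le p-1$, so this means $a=4$.

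The only step needing care, though not a real obstacle, is the reduction of exponents of $D$ modulo $p-1$ and the handling of the edge cases. In particular, for $p=5$ the value $a=4=p-1$ must be checked to fit the stated list, and it does.
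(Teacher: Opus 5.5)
Your proposal is correct and matches the paper's approach. The paper gives no separate proof of Proposition~\ref{prop:3.1}; it treats the result as the specialization of Lemma~\ref{lem:2.10} (that is, \cite[Lemma 2.11]{GV22}) to $m=0,1,2$, and your values of $[a-2m]$, the twists (including $D^{-1}=D^{p-2}$), the hypotheses $r\ge m(p+1)+p$, and the splitting criterion $a\equiv 2m \pmod{p-1}$ all check out.
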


The filtration above induces the following finite filtration on $V_r/V_r^{***}$,
namely,
$$
0 \subset
\frac{V_r^{**}}{V_r^{***}}
\subset
\frac{V_r^{*}}{V_r^{***}}
\subset
\frac{V_r}{V_r^{***}}.
$$
The following diagram is the associated graded of the filtration above. We arrange and label the corresponding Jordan-H\"older factors as follows:

\[
\begin{tikzcd}[column sep=3cm, row sep=0.9cm]
\tilde{J}_4 \arrow[d] &
\tilde{J}_2 \arrow[d] &
\tilde{J}_0 \arrow[d] \\
\dfrac{V_r^{**}}{V_r^{***}} \arrow[d] &
\dfrac{V_r^{*}}{V_r^{**}} \arrow[d] &
\dfrac{V_r}{V_r^{*}} \arrow[d] \\
\tilde{J}_5 &
\tilde{J}_3 &
\tilde{J}_1.
\end{tikzcd}
\]
Passing to the image of 
the above filtration in $P$, we obtain the following filtration on $P$
$$
W_4:=0
\subset
W_3:=
\frac{X_{r-1}+V_r^{**}}
{X_{r-1}+V_r^{***}}
\subset
W_2:=
\frac{X_{r-1}+V_r^{*}}
{X_{r-1}+V_r^{***}}
\subset
W_1:=P=\frac{X_{r-1}+V_r}
{X_{r-1}+V_r^{***}}
.
$$
The following exact sequences are useful for computing the structure of the module $P$. 
\begin{enumerate}
\item[(i)] \[0 \longrightarrow \frac{X_{r-1}+V_r^{*}}{X_{r-1}+V_r^{***}} \longrightarrow P=\frac{X_{r-1}+V_r}{X_{r-1}+V_r^{***}} \longrightarrow \frac{V_r}{X_{r-1}+V_r^{*}} \longrightarrow 0,\]
  
   \item[(ii)] \[
   0 \longrightarrow \frac{X_{r-1}+V_r^{**}}{X_{r-1}+V_r^{***}}  \longrightarrow \frac{X_{r-1}+V_r^{*}}{X_{r-1}+V_r^{***}}\longrightarrow \frac{X_{r-1}+V_r^{*}}{X_{r-1}+V_r^{**}} \longrightarrow 0.\]
\end{enumerate}
Let us define
\[
 J_0 \coloneqq \frac{\langle{X^r}\rangle+X_{r-1}+V_r^{*}}{X_{r-1}+V_r^{*}} =0\subset W_1/W_2= \frac{X_{r-1}+V_r}{X_{r-1}+V_r^{*}}  ,
\]
\[
J_2 \coloneqq \frac{\langle{\theta X^{r-p-1}}\rangle+X_{r-1}+V_r^{**}}{X_{r-1}+V_r^{**}}\subset W_2/W_3= \frac{X_{r-1}+V_r^{*}}{X_{r-1}+V_r^{**}} ,
\]
\[
J_4 \coloneqq 
\frac{\langle \theta^2 X^{r-2p-2}\rangle +X_{r-1}+V_r^{***}}{X_{r-1}+V_r^{***}} \subset W_3= \frac{X_{r-1}+V_r^{**}}{X_{r-1}+V_r^{***}} .
 \]
We set $$J_{2i+1} \coloneqq (W_{i+1}/W_{i+2})/J_{2i} ,$$ for $i=0,1,2$. 

  Let $U_j$ be the image of $\mathrm{ind}_{KZ}^GW_j$ under the map $\mathrm{ind}_{KZ}^GP \twoheadrightarrow \bar{\Theta}$, $F_{2i}$ be the image of $\mathrm{ind}_{KZ}^GJ_{2i}$  under the map $\mathrm{ind}_{KZ}^G(W_{i+1}/W_{i+2}) \twoheadrightarrow U_{i+1}/U_{i+2}$ and $F_{2i+1}=({U_{i+1}/U_{i+2}})/{F_{2i}}$, for $i=0,1,2$.
  Hence, we have the following commutative diagrams of $G$ modules 
  \[
\begin{tikzcd}
0 \arrow[r] & \mathrm{ind}_{KZ}^G W_2 \arrow[r] \arrow[two heads,d] & \mathrm{ind}_{KZ}^G P \arrow[r] \arrow[two heads,d] & \mathrm{ind}_{KZ}^G (W_1/W_2) \arrow[r] \arrow[two heads,d] & 0 \\
0 \arrow[r] & U_2 \arrow[r] & \bar{\Theta}_{k,a_p} \arrow[r] & U_1/U_2\arrow[r] & 0
\end{tikzcd}
\]

  \[
\begin{tikzcd}
0 \arrow[r] & \mathrm{ind}_{KZ}^G J_0 \arrow[r] \arrow[two heads,d] & \mathrm{ind}_{KZ}^G(W_1/W_2)\arrow[r] \arrow[two heads,d] & \mathrm{ind}_{KZ}^G J_1\arrow[r] \arrow[two heads,d] & 0 \\
0 \arrow[r] & F_0 \arrow[r] & U_1/U_2 \arrow[r] & F_1\arrow[r] & 0,
\end{tikzcd}
\]
  \[
\begin{tikzcd}
0 \arrow[r] & \mathrm{ind}_{KZ}^GW_3 \arrow[r] \arrow[two heads,d] & \mathrm{ind}_{KZ}^G W_2 \arrow[r] \arrow[two heads,d] & \mathrm{ind}_{KZ}^G (W_2/W_3) \arrow[r] \arrow[two heads,d] & 0 \\
0 \arrow[r] & U_3 \arrow[r] & U_2 \arrow[r] & U_2/U_3\arrow[r] & 0
\end{tikzcd}
\]
  
\[
\begin{tikzcd}
0 \arrow[r] & \mathrm{ind}_{KZ}^G J_2 \arrow[r] \arrow[two heads,d] & \mathrm{ind}_{KZ}^G({W_2}/{W_3}) \arrow[r] \arrow[two heads,d] & \mathrm{ind}_{KZ}^G J_3 \arrow[r] \arrow[two heads,d] & 0 \\
0 \arrow[r] & F_2 \arrow[r] & U_2/U_3 \arrow[r] & F_3\arrow[r] & 0,
\end{tikzcd}
\]

\[
\begin{tikzcd}
0 \arrow[r] & \mathrm{ind}_{KZ}^G J_4 \arrow[r] \arrow[two heads,d] & \mathrm{ind}_{KZ}^G W_3 \arrow[r] \arrow[two heads,d] & \mathrm{ind}_{KZ}^G J_5 \arrow[r] \arrow[two heads,d] & 0 \\
0 \arrow[r] & F_4 \arrow[r] & U_3\arrow[r] & F_5 \arrow[r] & 0.
\end{tikzcd}
\]  
The semi-simplification of $\bar{\Theta}_{k,a_p}$ is completely determined by the $\Gamma$-modules $F_0,F_1,F_2,F_3,F_4$ and $F_5$.

\section{The case $5 \le a \le p-1$}\label{sec: 5 to p-1}

Let $r \equiv a \pmod{p-1}$ with $5 \le a \le p-1$. We assume that $r \ge 3p+2$ unless specified.

   \begin{theorem}\label{thm:3.1}
Let $3p+2 \leq k - 2 = r \equiv a \pmod{p-1}$ with $5\le a \le p-1$, and assume that $v(a_p) = 2$.
\begin{enumerate}
\item If $p \nmid (r - a)(r-a+1)$, then $\overline{V}^{\,{ss}}_{k,a_p}
\cong \mu_\lambda \cdot \omega^{a-1} \oplus \mu_{\lambda^{-1}} \cdot \omega^2$ is reducible, where
$$\lambda =
\overline{\frac{(a-1)(a-2)}{(r-a)(r-a+1)}\cdot \frac{a_p}{p^2}}
\in \overline{\mathbb{F}}_p^{\times}.$$
\item If $p\mid r-a+1$, then $\overline{V}^{\,{ss}}_{k,a_p}
\cong \mathrm{ind}\!\left(\omega_2^{{a+p}}\right)$.
\item If $p \mid r - a$ and $p^2 \nmid (r-a)$, then $\overline{V}^{\,{ss}}_{k,a_p}
\cong \mu_\lambda \cdot \omega^{a} \oplus \mu_{\lambda^{-1}} \cdot \omega$ is reducible, where  $$\lambda = 
\overline{ \frac{a  p}{a-r} \cdot \frac{a_p}{p^2}}\in \overline{\mathbb{F}}_p^{\times}.$$
\item If $p^2 \mid r-a$, then  $\overline{V}^{\,{ss}}_{k,a_p}\cong \mathrm{ind}\!\left(\omega_2^{{a+1}}\right)$.
\end{enumerate}
\end{theorem}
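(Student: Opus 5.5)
Using the filtration of $P=V_r/(X_{r-1}+V_r^{***})$ set up in \S\ref{sec: basics}, we determine, case by case, which of the six subquotients $F_0,\dots,F_5$ of $\overline{\Theta}_{k,a_p}$ survive. Then we read off $\overline{V}^{\,ss}_{k,a_p}$ via the mod $p$ Local Langlands Correspondence and its compatibility with reduction.

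For $5\le a\le p-1$, Proposition~\ref{prop:3.1} gives the Jordan--H\"older factors:
\begin{itemize}
\item $\tilde J_0=V_a$ and $\tilde J_1=V_{p-a-1}\otimes D^a$;
\item $\tilde J_2=V_{a-2}\otimes D$ and $\tilde J_3=V_{p-a+1}\otimes D^{a-1}$;
\item $\tilde J_4=V_{a-4}\otimes D^2$ and $\tilde J_5=V_{p-a+3}\otimes D^{a-2}$.
\end{itemize}

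\textbf{Step 1: the first layer.} Since $X^r\in X_{r-1}$ modulo $V_r^*$, we have $J_0=0$. For the top layer, standard results (as in \cite{GV22}, via the Lucas-type description of $X_{r-1}$) show that $X_{r-1}$ surjects onto $V_r/V_r^*$ unless $p\mid r-a$. Thus $J_1=0$ generically, and the first layer contributes only when $p\mid r-a$.

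\textbf{Step 2: killing or identifying factors with explicit functions.} For each remaining factor we build a function
\[
f=[g,v_0]+[\mathrm{Id},v_1]\in \mathrm{ind}_{KZ}^G\mathrm{Sym}^r\overline{\mathbb{Q}}_p^2
\]
with $g$ one of the $g^0_{1,\lambda}$ or $\alpha$, chosen so that $(T-a_p)f$ is integral with reduction an explicit element of $\mathrm{ind}\,J_i$. The polynomials $v_0,v_1$ are combinations such as $\sum_{j\equiv a}\alpha_j X^{r-j}Y^j$ divided by $p^2$ or $p^3$, together with $\theta$-type terms like $X^{r-1}Y$ and $X^{r-a}Y^a$.

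The integers $\alpha_j$ come from Lemmas~\ref{lem:2.2}--\ref{lem:2.5}. Their congruences $\sum\binom{j}{n}\alpha_j\equiv0 \pmod{p^{k-n}}$ are exactly what make $T^+$ of these terms divisible by the right power of $p$. Lemma~\ref{lem:2.1} supplies the key constants $p\binom{r}{i}\frac{a-r}{a-i}$; these produce the factors $(r-a)$ and $(r-a+1)$ in $\lambda$.

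Concretely, $T^-$ applied to $[\mathrm{Id},p^{-2}\cdot(\text{stuff})]$ gives the $a_p/p^2$ term, while $T^+$ gives the binomial-sum term. Comparing them shows that the image of $\mathrm{ind}\,J_{i}$ is a quotient of $\mathrm{ind}\,J_i/(T-\lambda)$ with $\lambda=\overline{c\,a_p/p^2}$ for an explicit $c$. Alternatively, if the coefficient is a unit times $T$, the quotient is $\mathrm{ind}\,J_i/T$; if it vanishes, the factor dies.

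\textbf{Step 3: outcome in each case.}
\begin{enumerate}
\item If $p\nmid(r-a)(r-a+1)$, then $F_0=F_1=0$, $F_3$ and $F_5$ are killed, and $F_2,F_4$ survive as quotients of $\pi(a-2,\lambda,\omega)$ and $\pi(a-4,\lambda^{-1},\omega^2)$. The Local Langlands Correspondence then gives $\mu_\lambda\omega^{a-1}\oplus\mu_{\lambda^{-1}}\omega^2$. Here $\lambda$ comes from $\binom{r}{2}$-type sums against $\binom{a}{2}$, which explains the ratio $(a-1)(a-2)/((r-a)(r-a+1))$.
\item If $p\mid r-a+1$, a single factor $F_2$ survives and is a quotient of $\mathrm{ind}\,J_2/T$. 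This gives $\mathrm{ind}(\omega_2^{a-1+p\cdot 1})=\mathrm{ind}(\omega_2^{a+p})$ after the twist.
\item If $p\,\|\,r-a$, the first layer survives: $F_1$ is nonzero, paired with $F_0$ or $F_2$. This gives $\mu_\lambda\omega^a\oplus\mu_{\lambda^{-1}}\omega$ with $\lambda=\overline{\frac{ap}{a-r}\cdot\frac{a_p}{p^2}}$, using Lemma~\ref{lem:2.1} with $i$ small.
\item If $p^2\mid r-a$, Lemma~\ref{lem:2.3} (mod $p^2$ congruences, via Remark~\ref{rmk:2.1}) allows one more division by $p$. The coefficient of $a_p$ then dominates, so only $\mathrm{ind}\,J_1/T$ survives, giving $\mathrm{ind}(\omega_2^{a+1})$.
\end{enumerate}

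In each case, the finiteness/irreducibility of $\pi(r,\lambda,\eta)$, together with the fact that the semisimplification must have exactly the right size, forces all other factors to vanish.

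\textbf{Main obstacle.} The delicate point is Step 2 in the boundary cases (3) and (4): one must control the $p$-adic valuations of every term of $(T-a_p)f$ to order $p^3$ or $p^4$. This requires the refined existence results Lemma~\ref{lem:2.4} and Lemma~\ref{lem:2.5}, where the $\alpha_j$ are indexed by $j\equiv a-1$. It also requires checking that the leftover terms land in $X_{r-1}+V_r^{***}$.

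My first attempt will be to take $v_1=\frac{1}{p^3}\sum_j\alpha_jX^{r-j}Y^j$ with $\alpha_j$ from Lemma~\ref{lem:2.5}, verify the cancellation of the $T^+$ contributions, and only afterwards adjust by multiples of $\theta X^{r-p-1}$ if a residual term lies outside the desired factor.
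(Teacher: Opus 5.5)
Your Step 1 is false, and the error propagates through the case analysis. For $5\le a\le p-1$ the generator $X^{r-1}Y$ maps to $0$ in $\tilde J_1=V_{p-a-1}\otimes D^a$ (Lemma~\ref{lem:2.10} with $m=0$, since $i=1<a$). By $\Gamma$-equivariance, all of $X_{r-1}$ therefore lands in the submodule $\tilde J_0=V_a$ of $V_r/V_r^*$. Hence $J_0=0$ but $J_1=\tilde J_1\neq 0$ for every $r$: $X_{r-1}$ never absorbs the top of the first layer.

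What $X_{r-1}$ does absorb when $p\nmid r-a$ is $J_2$, because then $X_{r-1}^*/X_{r-1}^{**}\cong V_{a-2}\otimes D$ by \cite[Prop.~4.18]{GV22}. So $F_2=0$ for free in cases (1) and (2). By contrast, $F_1$ must be killed by a Hecke computation, as in Proposition~\ref{prop:A_3}(i):
\begin{itemize}
\item take $\sum_{\lambda}[g^0_{1,[\lambda]},(Y^r-X^{r-a}Y^a)/p]$ plus $[\mathrm{Id},\frac{p-1}{p a_p}\sum\alpha_jX^{r-j}Y^j]$, with the $\alpha_j$ of Lemma~\ref{lem:2.2} (and an extra term at $\alpha$ when $a=p-1$);
\item Lemma~\ref{lem:2.1} with $i=0$ then shows the image in $\mathrm{ind}_{KZ}^GJ_1$ is $\frac{r-a}{a}[\mathrm{Id},X^{p-a-1}]$, a generator exactly when $p\nmid r-a$.
\end{itemize}

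Consequently you keep the wrong factors.
\begin{itemize}
\item \emph{Case (1).} You retain $F_2$ and $F_4$ as quotients of $\pi(a-2,\lambda,\omega)$ and $\pi(a-4,\lambda^{-1},\omega^2)$. These are not partners under the mod $p$ correspondence: $\pi(a-2,\lambda,\omega)$ pairs with $\pi(p-a-1,\lambda^{-1},\omega^a)$ and gives $\mu_\lambda\omega^a\oplus\mu_{\lambda^{-1}}\omega$, which is the shape of case (3). So your last step cannot yield $\mu_\lambda\omega^{a-1}\oplus\mu_{\lambda^{-1}}\omega^2$. The actual survivors are $F_3$ and $F_4$, quotients of $\pi(p-a+1,\lambda^{-1},\omega^{a-1})$ and $\pi(a-4,\lambda,\omega^2)$. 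In particular, $F_3$ is not killed.
\item \emph{Case (2).} Your answer $\mathrm{ind}(\omega_2^{a+p})$ is right only because $\pi(a-2,0,\omega)\cong\pi(p-a+1,0,\omega^{a-1})$. The factor that really survives is $F_3$.
\end{itemize}

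The key computation you do not isolate is Lemma~\ref{lem:3.3}: $X^{r-2}Y^2\equiv\binom{r-a+1}{2}\binom{a-1}{2}^{-1}\theta^2X^{r-2p-2}$ modulo $X_{r-1}+V_r^{(3)}$. It does three jobs:
\begin{itemize}
\item applied to $T^+[\mathrm{Id},\theta^2X^{r-2p-2}/p^2]$, it produces the eigenvalue $\lambda$ for $F_4$;
\item it controls $T^+$ in the companion construction for $F_3$, whose eigenvalue $\lambda^{-1}$ then comes from Lemma~\ref{lem:2.1} with $i=0,1$;
\item it gives $U_3=0$ when $p\mid(r-a)(r-a+1)$.
\end{itemize}

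Finally, your appeal to the ``right size'' cannot replace the individual vanishing statements. Knowing that some $F_i$ are quotients of given $\pi$'s says nothing about the remaining $F_j$. Before the LLC bookkeeping can conclude, you need:
\begin{itemize}
\item $F_5=0$ always;
\item $U_3=0$ when $p\mid(r-a)(r-a+1)$;
\item $F_3=0$ when $p\mid r-a$ (this is where Lemma~\ref{lem:2.5} enters);
\item $F_2=0$ when $p^2\mid r-a$.
\end{itemize}
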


\begin{lemma}\label{lem:3.2}
    Let $p \le r\equiv a \pmod{p-1}$ with $5 \le a \le p-1$. Then \[
X_{r-1}^{**}/X_{r-1}^{***} = 0.
\]
\end{lemma}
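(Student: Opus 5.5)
We will show directly that $X_{r-1}\cap V_r^{**}\subseteq V_r^{***}$, which is exactly the statement $X_{r-1}^{**}/X_{r-1}^{***}=0$. The approach is to compute $X_{r-1}$ explicitly, in the style of the description of $X_{r-1}$ given in \cite{GV22}, and then compare it with the filtration by powers of $\theta$ through its Jordan--H\"older constituents.

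\emph{Step 1.} First we recall that $X_{r-1}$ is spanned by $X^r$ and the polynomials $(kX+Y)^{r-1}X$, $X^{r-1}Y$ for $k\in\mathbb F_p$, possibly together with a few further such elements. Since $\binom{r-1}{j}X^{r-j}Y^j$-type sums over $j$ in a fixed class mod $p-1$ appear, $X_{r-1}$ has dimension at most $2(p+1)$ or so. Its Jordan--H\"older factors are among those of a quotient of $V_{a-1}\otimes V_1$-type, that is, $V_a$, $V_{a-2}\otimes D$, and their "duals" $V_{p-1-a}\otimes D^a$, $V_{p+1-a}\otimes D^{a-1}$. The point is that, for $5\le a\le p-1$, $X_{r-1}$ has no constituent isomorphic to a JH factor of $V_r^{**}/V_r^{***}$. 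By Proposition~\ref{prop:3.1}(iii), those factors are $V_{a-4}\otimes D^2$ and $V_{p-a+3}\otimes D^{a-2}$.

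\emph{Step 2.} We compare these weights. The factor $V_{a-4}\otimes D^2$ has dimension $a-3$. The candidates from $X_{r-1}$ have dimensions $a+1$, $a-1$ (with twist $D$), $p-a$, and $p+2-a$. A match $V_{a-4}\otimes D^2\cong V_{p-1-a}\otimes D^a$ would force $a-4=p-1-a$ and $2\equiv a \pmod{p-1}$, which is impossible for $5\le a\le p-1$. The other cases are handled the same way, using the determinant twist mod $p-1$ and $p\ge5$. Because $X_{r-1}^{**}/X_{r-1}^{***}$ injects into $V_r^{**}/V_r^{***}$ and is also a subquotient of $X_{r-1}$, it shares no JH factor with either side of the comparison and must therefore vanish.

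The main obstacle is Step 1: controlling the JH factors of $X_{r-1}$ precisely. The cleanest route is to quote the known structure of $X_{r-1}$ for $r\equiv a$, $a\ge 3$, from \cite{GV22}: it is a quotient of $V_1\otimes V_{a-1}$, so its factors are $V_a$ and $V_{a-2}\otimes D$ together with their possible extensions. If that structure result is not available as stated, the fallback is a direct argument. We would observe that $X_{r-1}=X_{r-1}\cap V_r^{*}$ modulo lower terms. Lemma~\ref{lem:2.10} shows that $X_{r-1}^{*}/X_{r-1}^{**}$ already accounts for the constituent $V_{a-2}\otimes D$, and $X_{r-1}/X_{r-1}^{*}$ accounts for $V_a$. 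A dimension count, $\dim X_{r-1}\le (a+1)+(a-1)$, then leaves no room for a nonzero $X_{r-1}^{**}/X_{r-1}^{***}$.
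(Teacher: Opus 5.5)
Your strategy differs from the paper's. The paper combines \cite[Lemma 4.4]{GV22}, which identifies $X_{r-1}^{**}/X_{r-1}^{***}$ with $X_r^{**}/X_r^{***}$, and \cite[Proposition 4.8]{GV22}, which says the latter vanishes. Your Step~2 and the final deduction are correct. Running through all eight pairs, for $5\le a\le p-1$ none of $V_a$, $V_{a-2}\otimes D$, $V_{p+1-a}\otimes D^{a-1}$, $V_{p-1-a}\otimes D^{a}$ is isomorphic to $V_{a-4}\otimes D^2$ or $V_{p-a+3}\otimes D^{a-2}$. For $p=7$, $a=5$ two of these pairs have equal dimension, so the twist check really matters. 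One minor point: Proposition~\ref{prop:3.1}(iii) requires $r\ge 3p+2$. For $p\le r<3p+2$ you should add that $V_r^{**}/V_r^{***}$ is then either $0$ or $V_{a-4}\otimes D^2$.

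The gap is in Step~1, which you rightly call the crux. The list of four weights in your first paragraph is correct, but both justifications you then offer are false. $X_{r-1}$ is not in general a quotient of $V_1\otimes V_{a-1}$, and $\dim X_{r-1}\le 2a$ fails. Take $p=7$, $r=47$, so $a=5$ and $r\ge 3p+2$. Since $46=6\cdot 7+4$, Lucas' theorem gives a nonzero $\binom{46}{j}$ with $0<j<46$ in every residue class of $46-j$ mod $6$. Hence the submodule $\langle X^{46}\rangle\subseteq V_{46}$ generated by $X^{46}$ is the full $8$-dimensional principal series. A direct check then gives $\dim X_{r-1}=16=2p+2$. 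So all four weights, including the ``dual'' ones, are constituents of $X_{r-1}$, and no dimension count like your fallback can work.

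What you need instead is the following. The multiplication map $V_{r-1}\otimes V_1\to V_r$ is $\Gamma$-equivariant and maps $\langle X^{r-1}\rangle\otimes V_1$ onto $X_{r-1}$. It lands in $X_{r-1}$ because $(aX+cY)^{r-1}(bX+dY)\in X_{r-1}$ whenever $ad-bc\neq 0$, hence for all $(b,d)$ by linearity; its image contains $X^{r-1}Y$, so it is onto. By Frobenius reciprocity, $\langle X^{r-1}\rangle$ is a quotient of a $(p+1)$-dimensional principal series with factors $V_{a-1}$ and $V_{p-a}\otimes D^{a-1}$. Since $1\le a-1,\,p-a\le p-2$, the exact sequences $0\to V_{m-1}\otimes D\to V_m\otimes V_1\to V_{m+1}\to 0$ produce exactly your four weights.

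With this repair your proof is complete and more self-contained than the paper's citation. It also shows why the lemma is special to these $a$: for $a=2,3$ the analogous weights do coincide with those of $V_r^{**}/V_r^{***}$, and the quotient can indeed be nonzero.
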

\begin{proof}
    By  \cite[Lemma 4.4]{GV22} we have $\frac{X_{r-1}^{**}}{X_{r-1}^{***}} =\frac{X_{r}^{**}}{X_{r}^{***}} $. Now by  \cite[Proposition 4.8]{GV22} we have $\frac{X_{r}^{**}}{X_{r}^{***}} =0$.
\end{proof}
\begin{rmk}\label{rmk:A}
    The proof of Lemma~\ref{lem:3.2} also works for $a=1$ and $a=4$.
\end{rmk}
\begin{lemma}\label{lem:3.3}
    Let { $2p+3 \le r \equiv a \pmod{p-1}$} with $5 \leq a \leq p-1$. Then we have \[
    X^{r-2}Y^2 \equiv  \frac{\binom{r-a+1}{2}}{\binom{a-1}{2}} \cdot \theta^2 X^{r-2p-2} \pmod {X_{r-1}+V_r^{(3)}}.
    \]
\end{lemma}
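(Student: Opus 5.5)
The plan is to exhibit explicit elements of $X_{r-1}$ and compare them with $X^{r-2}Y^2$ and $\theta^2X^{r-2p-2}$ modulo $V_r^{(3)}$. The natural elements are sums of translates of $X^{r-1}Y$. For $\lambda\in\mathbb F_p$ we have
\[
\begin{pmatrix}1&\lambda\\0&1\end{pmatrix}X^{r-1}Y=X^{r-1}(\lambda X+Y),
\]
which only gives $X^r$ and $X^{r-1}Y$. I will therefore use instead
\[
\begin{pmatrix}\lambda&1\\1&0\end{pmatrix}X^{r-1}Y=(\lambda X+Y)^{r-1}X=\sum_j\binom{r-1}{j}\lambda^{r-1-j}X^{r-j}Y^{j}.
\]
Summing against $\lambda^{-m}$ over $\lambda\in\mathbb F_p^\times$ isolates the monomials with $j$ in a fixed class mod $p-1$. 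This shows that
\[
F:=\sum_{j\equiv a-1 \bmod (p-1)}\binom{r-1}{j}X^{r-j}Y^j\in X_{r-1}.
\]
The range here is $0<j\le r-1$, and the term $X^{r-1}Y$ is absent since $a\ge5$.

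The next step is to rewrite $F$ in terms of $\theta$. Write $X^{r-j}Y^j=X^{r-j}Y^{j}$ and use the standard expansion: a polynomial $\sum_{j\equiv c} \beta_j X^{r-j}Y^j$ is congruent modulo $V_r^{(3)}$ to a combination of
\[
X^{r-c'}Y^{c'},\quad \theta X^{r-p-1-c''}Y^{c''},\quad \theta^2X^{r-2p-2-c'''}Y^{c'''}.
\]
The coefficients are governed by the sums $\sum_j\binom{j}{n}\beta_j$ for $n=0,1,2$. This is the mechanism behind Lemmas~\ref{lem:2.2}--\ref{lem:2.5}, following \cite{BG15,GV22}. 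In practice I will apply it to $F=\sum\binom{r-1}{j}X^{r-j}Y^j$ with $j$ running over $j\equiv a-1$, which is the same as the monomials $X^{r-j}Y^j$ with $r-j\equiv 1$.

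The binomial sums are evaluated mod $p$ by the roots-of-unity computations already done (the parts (i)--(iii) in the proofs of Lemmas~\ref{lem:2.4}/\ref{lem:2.5}). They give $\sum\binom{j}{n}\binom{r}{j}\equiv\binom{r}{n}(a-n)$-type values. Using the $\binom{r-1}{j}$ version, the leading contributions are as follows:

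\begin{itemize}
\item The coefficient on the $V_r/V_r^*$ layer vanishes except for the monomial $X^{r-2}Y^2$ (together with $X^{r-1}Y$-type terms, which already lie in $X_{r-1}$).
\item The $\theta$ layer contributes a multiple of $\theta X^{r-p-1}$-type terms. These should vanish by Lemma~\ref{lem:2.10}, since the relevant JH factor for $a\ge5$ is killed, or be absorbed by a second element of $X_{r-1}$.
\item The $\theta^2$ layer contributes $c\,\theta^2X^{r-2p-2}$.
\end{itemize}

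Comparing the coefficients should yield a relation of the form
\[
\binom{a-1}{2}X^{r-2}Y^2\equiv\binom{r-a+1}{2}\theta^2X^{r-2p-2}.
\]
Here $\binom{a-1}{2}$ is a unit because $5\le a\le p-1$.

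To get the exact constants I will mimic \cite[Lemma 4.5]{GV22}/\cite{BG15}. Concretely, I will:
\begin{itemize}
\item write $X^{r-2}Y^2$ as the last term;
\item use that $X^{r-j}Y^j\equiv X^{r-j'}Y^{j'}$ modulo $V_r^{*}$ whenever $j\equiv j'$ with $0<j,j'<r$, with the refined corrections modulo $V_r^{**}$ and $V_r^{***}$ expressed through $\theta X^{\cdot}$ and $\theta^2 X^{\cdot}$;
\item use that $X_{r-1}^{**}/X_{r-1}^{***}=0$ (Lemma~\ref{lem:3.2}) to ensure no ambiguity in the $\theta^2$ layer.
\end{itemize}

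The main obstacle is bookkeeping the second-order ($\theta^2$) coefficient. It requires computing $\sum_j\binom{j}{2}\binom{r-1}{j}$ over the class together with the cross terms from expanding $\theta^2=X^{2p}Y^2-2X^{p+1}Y^{p+1}+X^2Y^{2p}$. I will attack it by comparing coefficients of three test monomials $X^{r-2}Y^2$, $X^{r-p-1}Y^{p+1}$, $X^{r-2p}Y^{2p}$, and solving the resulting $3\times3$ system mod $p$. The determinant is essentially that of Lemma~\ref{lem:2.2} and is nonzero, and this should produce the factor $\binom{r-a+1}{2}/\binom{a-1}{2}$.
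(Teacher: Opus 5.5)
There is a genuine gap: the element you build lies in the wrong weight space. The diagonal torus of $\Gamma$ acts on $X^{r-j}Y^j$ by $t_1^{r-j}t_2^{j}$, which depends only on $j \bmod (p-1)$. Since $X_{r-1}$ and $V_r^{(3)}$ are $\Gamma$-stable and the torus has order prime to $p$, both decompose into weight spaces. Now $X^{r-2}Y^2$ and $\theta^2X^{r-2p-2}=X^{r-2}Y^2-2X^{r-p-1}Y^{p+1}+X^{r-2p}Y^{2p}$ both lie in the class $j\equiv 2$. So the desired congruence can only come from the part of $X_{r-1}$ with $j\equiv 2 \pmod{p-1}$.

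Your $F=\sum_{j\equiv a-1}\binom{r-1}{j}X^{r-j}Y^j$ has $j\equiv a-1\not\equiv 2$, because $a\ge 5$. It therefore contains no $X^{r-2}Y^2$ term, and nothing in its expansion modulo $V_r^{(3)}$ produces $\theta^2X^{r-2p-2}$. Your bullet points describing its layers are false, and the class-$(a-1)$ sums from the proofs of Lemmas~\ref{lem:2.4} and \ref{lem:2.5} are not the relevant ones.

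Fixing the class is still not enough. Summing $(\lambda X+Y)^{r-1}X$ against $\lambda^{-(a-3)}$ gives $\sum_{j\equiv 2}\binom{r-1}{j}X^{r-j}Y^j\in X_{r-1}$. Modulo $V_r^{(3)}$ this yields a single relation $-\binom{r-1}{2}X^{r-2}Y^2+A'X^{r-2p}Y^{2p}+B'X^{r-p-1}Y^{p+1}$, where $A'=\binom{r-1}{2}-\binom{a-1}{2}+(r-1)(a-r)$ and $B'=-(r-1)(a-r)$. This lies in the span of $X^{r-2}Y^2$ and $\theta^2X^{r-2p-2}$ only if $B'=-2A'$, i.e.\ only if $p\mid r-a$.

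The missing second relation comes from the element you discarded. The identity $\begin{pmatrix}1&\lambda\\0&1\end{pmatrix}X^{r-1}Y=\lambda X^r+X^{r-1}Y$ shows $X^r\in X_{r-1}$. Hence $X_r\subseteq X_{r-1}$, and so $\sum_{j\equiv 2}\binom{r}{j}X^{r-j}Y^j\in X_{r-1}$ as well.

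The paper uses both class-$2$ sums:
\begin{itemize}
\item It evaluates $\sum_{2<j<r-2}\binom{j}{m}\binom{r}{j}$ and $\sum_{2<j<r-2}\binom{j}{m}\binom{r-1}{j}$ mod $p$ for $m=0,1,2$ via \cite[Lemma 2.14]{GV22}.
\item It adds $AX^{r-2p}Y^{2p}+BX^{r-p-1}Y^{p+1}$ to each sum so that it lands in $V_r^{(3)}$. For the $\binom{r}{j}$-sum, $A=\binom{r}{2}-\binom{a}{2}+r(a-r)$ and $B=-r(a-r)$. This is only a $2\times 2$ system, since the $m=2$ condition holds automatically mod $p$.
\item Subtracting the two resulting relations gives $(r-1)X^{r-2}Y^2+(a-r)X^{r-p-1}Y^{p+1}\in X_{r-1}+V_r^{(3)}$.
\item Eliminating $X^{r-p-1}Y^{p+1}$, using $2A=(a-r)(r-a+1)$, yields $\binom{a-1}{2}X^{r-2}Y^2-\binom{r-a+1}{2}\theta^2X^{r-2p-2}\in X_{r-1}+V_r^{(3)}$.
\end{itemize}
Because $A,B$ are chosen to put everything directly into $V_r^{(3)}$, there is no leftover $\theta$-layer to dispose of. Neither Lemma~\ref{lem:3.2} nor an appeal to Lemma~\ref{lem:2.10} is needed.
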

\begin{proof}
    We know that \begin{eqnarray*}
         \sum_{\substack{
    j \geq 0 \\
    j \equiv 2 \pmod{p-1}
}}
\binom{r }{j} \cdot X^{\,r-j} Y^j &\in& X_{r}, \text{ so}\\
\binom{r}{2} \cdot X^{r-2}Y^2+\sum_{\substack{
  2 < j <r-2 \\
    j \equiv 2 \pmod{p-1}
}}
\binom{r }{j} \cdot X^{\,r-j} Y^j &\in& X_{r-1}.
    \end{eqnarray*}
  By \cite[Lemma 2.14]{GV22} we have  \begin{eqnarray*}
 \sum_{\substack{
  j \geq 0\\
    j \equiv 2 \pmod{p-1}
}}
 \binom{j}{m}\binom{r }{j} \equiv \binom{r}{m} \binom{\left[a-m\right]}{\left[2-m\right]} + \binom{r}{m} \delta_{p-1,\left[2-m\right]} \pmod p.
 \end{eqnarray*}
 \begin{enumerate}
     \item[(i)] For $m=0$, we have \begin{eqnarray*}
     \sum_{\substack{
  2 < j <r-2 \\
    j \equiv 2 \pmod{p-1}
}}
\binom{r }{j} \equiv \binom{a}{2}- \binom{r}{2} \pmod{p}.
\end{eqnarray*}
\item[(ii)] For $m=1$, we have \[
\sum_{\substack{
  2 < j <r-2 \\
    j \equiv 2 \pmod{p-1}
}}
j\binom{r }{j} \equiv r(a-r) \pmod{p}.
\]
\item[(iii)] For $m=2$, we have \[
\sum_{\substack{
  2 < j <r-2 \\
    j \equiv 2 \pmod{p-1}
}}
\binom{j}{2}\binom{r }{j} \equiv 0 \pmod{p}.
\]
\end{enumerate}
Consider the element
\[
F(X,Y):=
\sum_{\substack{
2<j<r-2\\
j\equiv 2 \pmod{p-1}
}}
\binom{r}{j}X^{r-j}Y^j
+A X^{r-2p}Y^{2p}
+B X^{r-p-1}Y^{p+1}.
\]
We determine constants \(A,B\) such that $F(X,Y)\in V_r^{(3)}$. By the congruences above, we must solve: \begin{eqnarray*}
    A+B+\binom{a}{2}- \binom{r}{2} &=& 0\\
    B+r(a-r)&=& 0.
\end{eqnarray*}
The solutions are:
\[
A=\binom{r}{2}-\binom{a}{2}+r(a-r),\,\,B=-r(a-r).
\]
Hence
\begin{align}
\label{A,B}
-\binom{r}{2}X^{r-2}Y^2
&+\biggl(\binom{r}{2}-\binom{a}{2}+r(a-r)\biggr)X^{r-2p}Y^{2p}
-r(a-r)X^{r-p-1}Y^{p+1}
\in X_{r-1}+V_r^{(3)}.
\end{align}

Similarly, we know that \begin{eqnarray*}
         \sum_{\substack{
    j \geq 0 \\
    j \equiv 2 \pmod{p-1}
}}
\binom{r-1 }{j} \cdot X^{\,r-j} Y^j &\in& X_{r-1}, \text{ so}\\
\binom{r-1}{2} \cdot X^{r-2}Y^2+\sum_{\substack{
  2 < j <r-2 \\
    j \equiv 2 \pmod{p-1}
}}
\binom{r-1}{j} \cdot X^{\,r-j} Y^j &\in& X_{r-1}.
    \end{eqnarray*}
    By \cite[Lemma 2.14]{GV22} we have \begin{eqnarray*}
        \sum_{\substack{
  j \geq 0\\
    j \equiv 2 \pmod{p-1}
}}
\binom{j}{m}\binom{r-1 }{j} \equiv \binom{r-1}{m} \binom{\left[a-1-m\right]}{\left[2-m\right]} + \binom{r-1}{m} \delta_{p-1,\left[2-m\right]} \pmod p.
 \end{eqnarray*}
 \begin{enumerate}
     \item[(i)] For $m=0$, we have \begin{eqnarray*}
     \sum_{\substack{
  2 < j <r-2 \\
    j \equiv 2 \pmod{p-1}
}}
\binom{r-1}{j} \equiv \binom{a-1}{2}- \binom{r-1}{2} \pmod{p},
\end{eqnarray*}
\item[(ii)] For $m=1$, we have \[
\sum_{\substack{
  2 < j <r-2 \\
    j \equiv 2 \pmod{p-1}
}}
j\binom{r-1}{j} \equiv (r-1)(a-r) \pmod{p},
\]
\item[(iii)] For $m=2$, we have \[
\sum_{\substack{
  2 < j <r-2 \\
    j \equiv 2 \pmod{p-1}
}}
\binom{j}{2}\binom{r }{j} \equiv 0 \pmod{p}.
\]
\end{enumerate}
 Let us consider 
\[
G(X,Y)\coloneqq\sum_{\substack{
  2 < j <r-2 \\
    j \equiv 2 \pmod{p-1}
}}
\binom{r-1 }{j} \cdot X^{\,r-j} Y^j+A'X^{r-2p}Y^{2p}+B'X^{r-p-1}Y^{p+1}. 
\] 
Using these congruences, we can similarly find values of $A',B'$ such that $G(X,Y)$ lies in $V_r^{(3)}$. We have to solve the equations \begin{eqnarray*}
    A'+B'+\binom{a-1}{2}-\binom{r-1}{2} &=& 0\\
    B'+(r-1)(a-r) &=& 0.
\end{eqnarray*}
The solutions are:
\[
A'= \binom{r-1}{2}-\binom{a-1}{2}+(r-1)(a-r),\,\, B'=-(r-1)(a-r)
\]
Hence
\begin{align}
\label{A',B'}
-\binom{r-1}{2}X^{r-2}Y^2
&+\biggl(\binom{r-1}{2}-\binom{a-1}{2}+(r-1)(a-r)\biggr)X^{r-2p}Y^{2p} \nonumber \\
&-(r-1)(a-r)X^{r-p-1}Y^{p+1}
\in X_{r-1}+V_r^{(3)}.
\end{align}
Subtracting \eqref{A',B'} from \eqref{A,B}, we obtain
\[
-(r-1)X^{r-2}Y^2-(a-r)X^{r-p-1}Y^{p+1}
\in X_{r-1}+V_r^{(3)}.
\]
Multiplying by $(1-a)$ yields
\[
(r-1)(a-1)X^{r-2}Y^2
+(a-r)(a-1)X^{r-p-1}Y^{p+1}
\in X_{r-1}+V_r^{(3)}.
\]
Adding this to \eqref{A,B} above, we deduce that
\begin{eqnarray*}
    \biggl(-\binom{r}{2}+(r-1)(a-1)\biggr)X^{r-2}Y^2 
+\biggl(\binom{r}{2}-\binom{a}{2}+r(a-r)\biggr)X^{r-2p}Y^{2p} \qquad \qquad\\
 -(r-a+1)(a-r)X^{r-p-1}Y^{p+1}
\in X_{r-1}+V_r^{(3)}.
\end{eqnarray*}
Using the identity ${\theta^2}X^{r-2p-2}=X^{r-2}Y^2-2X^{r-p-1}Y^{p+1}+X^{r-2p}Y^{2p}
$, we obtain
\begin{align*}
&\biggl(
-\binom{r}{2}
+(r-1)(a-1)
-\binom{r}{2}
+\binom{a}{2}
-r(a-r)
\biggr)X^{r-2}Y^2 \\
&\qquad
+\frac{(a-r)(r-a+1)}{2}\theta^2X^{r-2p-2}
\in X_{r-1}+V_r^{(3)}.
\end{align*}
A straightforward simplification of the coefficients then gives
\[
\binom{a-1}{2}X^{r-2}Y^2
-\binom{r-a+1}{2}\theta^2X^{r-2p-2}
\in X_{r-1}+V_r^{(3)}.
\]
Since $\binom{a-1}{2}$ is invertible, it follows that
\[
X^{r-2}Y^2
\equiv
\frac{\binom{r-a+1}{2}}{\binom{a-1}{2}}
\,\theta^2X^{r-2p-2}
\pmod{X_{r-1}+V_r^{(3)}},
\]
proving the lemma.
\end{proof}

\begin{prop}\label{prop:A_1}
    Let $3p+2\le r \equiv a \pmod{p-1}$ with $5 \le a \le p-1$ and suppose that $v_p(a_p)=2$. Then\begin{enumerate}
        \item [(i)] $F_5=0$.
        \item[(ii)] If $p \mid (r-a)(r-a+1)$, then $U_3=0$.
    \end{enumerate}
\end{prop}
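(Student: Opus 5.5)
The two parts will be handled in turn, and (i) will be used in the proof of (ii). First I will describe $W_3$ concretely. By Lemma~\ref{lem:3.2} we have $X_{r-1}^{**}/X_{r-1}^{***}=0$, so
\[
W_3=\frac{X_{r-1}+V_r^{**}}{X_{r-1}+V_r^{***}}\cong \frac{V_r^{**}}{V_r^{***}} .
\]
By Proposition~\ref{prop:3.1}(iii) this is a non-split extension with sub $J_4\cong V_{a-4}\otimes D^2$, generated by $\theta^2X^{r-2p-2}$, and quotient $J_5\cong V_{p-a+3}\otimes D^{a-2}$. By Lemma~\ref{lem:2.10} with $m=2$, $r'=r-2p-2$ and $a'=a-4$, the vector
\[
\theta^2X^{r'-a'}Y^{a'}=\theta^2X^{r-2p-2-(a-4)}Y^{a-4}
\]
maps to $\pm X^{p-1-a'}$, which generates $J_5$. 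Hence $\mathrm{ind}_{KZ}^G J_5$ is generated as a $G$-module by $[1,\theta^2X^{r-2p-2-(a-4)}Y^{a-4}]$. To prove (i) it therefore suffices to show that this element maps into $F_4$, i.e.\ lies in the image of $\mathrm{ind}_{KZ}^G J_4$ in $\bar\Theta$.

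For (i) I will build a function $f\in \mathrm{ind}_{KZ}^G\mathrm{Sym}^r\overline{\mathbb Q}_p^2$ of the shape
\[
f=\Bigl[g^0_{1,0},\,\tfrac{1}{p^{3}}\textstyle\sum_j \alpha_jX^{r-j}Y^j\Bigr]+\Bigl[1,\,\tfrac{c}{p^{s}}\,X^{r-a+4}Y^{a-4}\ \text{(or a nearby monomial)}\Bigr].
\]
The $\alpha_j\equiv\binom{r}{j}$ are the integers supplied by Lemma~\ref{lem:2.2}, summed over $a\le j<r$ with $j\equiv a \bmod (p-1)$. I will then compute $(T-a_p)f$ with the explicit formulas for $T^\pm$.

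In the $T^-$ component landing at $[1,\cdot]$, the congruences $\sum_j\binom{j}{n}\alpha_j\equiv 0 \bmod p^{4-n}$ for $n=0,1,2$ kill the low coefficients. After reduction, what survives is a polynomial of the form $\sum_j \binom{r}{j}X^{r-j}Y^j$ plus lower corrections. Working modulo $X_{r-1}$ (using the $X_r$-type relations, as in the proof of Lemma~\ref{lem:3.3}), this polynomial should be congruent to a unit multiple of $\theta^2X^{r-2p-2-(a-4)}Y^{a-4}$ modulo $V_r^{***}$, plus a possible multiple of $\theta^2X^{r-2p-2}$. The $T^+$ terms at $g^0_{2,\cdot}$ and the $a_pf$ term should be integral and divisible by $p$, precisely because $v(a_p)=2$ and the powers $p^j$ in $T^+$ dominate. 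The second summand of $f$ is chosen to cancel the single non-integral leftover; I expect $T^+$ of $[g^0_{1,0},\cdot]$ at $n=3$ to produce one, and Lemma~\ref{lem:2.2}(3) gives $\sum\binom{j}{3}\alpha_j\equiv0$ for $a\ge4$ to control it. Reducing mod $p$ then exhibits $[1,\theta^2X^{r-2p-2-(a-4)}Y^{a-4}]$ inside $F_4$, which gives $F_5=0$.

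For (ii), part (i) gives $U_3=F_4$, a quotient of $\mathrm{ind}_{KZ}^GJ_4$ generated by the image of $[1,\theta^2X^{r-2p-2}]$. I will show this image vanishes by producing $f$ with $\overline{(T-a_p)f}$ equal to a unit times $[1,\theta^2X^{r-2p-2}]$ modulo $X_{r-1}+V_r^{***}$. The natural candidate is built around $[1,\tfrac{1}{p^{2}}X^{r-2}Y^{2}]$-type terms, corrected by $[g^0_{1,0},\tfrac1{p^{\ast}}\sum\alpha_jX^{r-j}Y^j]$.
\begin{itemize}
\item If $p\mid r-a$ and $p^2\nmid r-a$, I take $r\equiv a \bmod p(p-1)$ and use Lemma~\ref{lem:2.3} (the $\alpha_j$ agree with $\binom rj$ mod $p^2$, giving one extra order of vanishing).
\item If $p^2\mid r-a$, I use Lemma~\ref{lem:2.4}.
\item If $p\mid r-a+1$, I use the $j\equiv a-1$ family of Lemma~\ref{lem:2.5} or its analogue.
\end{itemize}
Lemma~\ref{lem:3.3} is the key input: since $\binom{r-a+1}{2}\equiv 0 \bmod p$ under the hypothesis, the monomial $X^{r-2}Y^2$ contributed by $T^-$ of the $g^0_{1,0}$ part is already in $X_{r-1}+V_r^{***}$. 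The surviving term therefore comes from the next order in $p$, and it is a nonzero multiple of $\theta^2X^{r-2p-2}$ rather than being cancelled.

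The main obstacle is the bookkeeping of $p$-adic valuations in $(T-a_p)f$. One must check that every term except the intended one is divisible by $p$, including the cross term $a_pf$, which has valuation exactly $2$ against the $p^{-2}$ and $p^{-3}$ denominators. One must also identify the leading surviving polynomial modulo $X_{r-1}+V_r^{***}$ and verify that its coefficient is a $p$-adic unit; this is where the conditions on $r-a$ and $r-a+1$ enter, and they have to be verified case by case. I would first do the computation for the generic subcase $p\,\|\,r-a$ and then adapt the choice of $\alpha_j$ for the other two subcases.
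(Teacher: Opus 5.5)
There is a genuine gap in both parts: neither of your test functions can produce the elements you need, and the one key computation is missing.

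\textbf{Part (i).} Your reduction is fine: it suffices to show that $[\mathrm{Id},\theta^2X^{r-a-2p+2}Y^{a-4}]$, a generator of $\mathrm{ind}_{KZ}^GJ_5$ by Lemma~\ref{lem:2.10}, lands in the image of $\mathrm{ind}_{KZ}^GJ_4$. The function you propose cannot achieve this, for the following reasons.

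First, $T^-[g^0_{1,0},v]=[\mathrm{Id},\sum_j p^{r-j}c_jX^{r-j}Y^j]$ only rescales coefficients. With $c_j=\alpha_j/p^3$ and $r-j\ge p-1\ge 4$ it is $\equiv 0 \pmod p$. It never produces $\sum_j\binom{r}{j}X^{r-j}Y^j$; that sum comes from $\sum_\lambda T^-[g^0_{1,[\lambda]},Y^r]=\sum_\lambda[\mathrm{Id},([\lambda]X+Y)^r]$.

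Second, the congruences of Lemma~\ref{lem:2.2} control $T^+$, not $T^-$. With them, $T^+[g^0_{1,0},p^{-3}\sum_j\alpha_jX^{r-j}Y^j]$ is already $\equiv 0 \pmod p$, including the $n=3$ term, so there is no leftover for your second summand to cancel. On the other hand, $-a_p[g^0_{1,0},p^{-3}\sum_j\alpha_jX^{r-j}Y^j]$ has valuation $-1$ and nothing cancels it, so $(T-a_p)f$ is not integral. In the paper such $\alpha_j$-sums always carry a factor $1/a_p$.

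Third, and decisively, the congruence you hope for is impossible for weight reasons. $X_{r-1}+V_r^{***}$ is stable under the diagonal torus of $\mathrm{GL}_2(\mathbb F_p)$, whose order is prime to $p$. A sum over $j\equiv a \pmod{p-1}$ is a torus eigenvector with $Y$-degrees $\equiv a$. By contrast, $\theta^2X^{r-a-2p+2}Y^{a-4}$ has $Y$-degrees $\equiv a-2$, $\theta^2X^{r-2p-2}$ has $Y$-degrees $\equiv 2$, and your monomial $X^{r-a+4}Y^{a-4}$ has $Y$-degree $a-4$ and is not even in $V_r^{*}$. Vectors in different eigenspaces are congruent modulo a torus-stable subspace only if both vanish there.

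The missing idea is to put $\theta^2$ into the test function: take $f_0=[\mathrm{Id},p^{-2}\theta^2X^{r-a-2p+2}Y^{a-4}]$.
\begin{itemize}
\item $T^-f_0\equiv 0 \pmod p$ because $r\ge 3p+2$.
\item Since $\theta(X,-[\lambda]X+pY)\equiv pX^pY \pmod{p^2}$, one gets $T^+f_0\equiv\sum_\lambda[g^0_{1,[\lambda]},(-[\lambda])^{a-4}X^{r-2}Y^2] \pmod p$. By Lemma~\ref{lem:3.3} this is a multiple of $\sum_\lambda[g^0_{1,[\lambda]},(-[\lambda])^{a-4}\theta^2X^{r-2p-2}]$ in $\mathrm{ind}_{KZ}^GP$, so it lies in $\mathrm{ind}_{KZ}^GJ_4$.
\item $-a_pf_0$ maps to $-\overline{a_p/p^2}\,[\mathrm{Id},X^{p-a+3}]$ in $\mathrm{ind}_{KZ}^GJ_5$, which is a generator.
\end{itemize}
No $\alpha_j$'s are needed.

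\textbf{Part (ii).} You correctly identify the key input: under the hypothesis the coefficient in Lemma~\ref{lem:3.3} vanishes mod $p$, so $X^{r-2}Y^2\in X_{r-1}+V_r^{***}$. But you use it in the wrong place. A term $[\mathrm{Id},p^{-2}X^{r-2}Y^2]$ contributes, via $-a_p$, exactly this now-trivial vector, so it cannot produce a unit multiple of $[\mathrm{Id},\theta^2X^{r-2p-2}]$. The case split through Lemmas~\ref{lem:2.3}, \ref{lem:2.4}, \ref{lem:2.5} with an unspecified ``next order in $p$'' analysis is neither justified nor needed.

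The vanishing should be used to kill $T^+$. For the same $f_0$ as above, $T^+f_0$ now vanishes in $\mathrm{ind}_{KZ}^GP$. Hence $(T-a_p)f_0$ is integral with image $-\overline{a_p/p^2}\,[\mathrm{Id},\theta^2X^{r-a-2p+2}Y^{a-4}]$. By Lemma~\ref{lem:3.2}, $W_3\cong V_r^{**}/V_r^{***}$, which is a non-split extension of $J_5$ by $J_4$ (Proposition~\ref{prop:3.1}(iii)). This vector maps to a generator of the cosocle $J_5$, so it generates $W_3$, and $U_3=0$ at once without invoking (i).

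If you prefer your route $U_3=F_4$, use $[\mathrm{Id},p^{-2}\theta^2X^{r-2p-2}]$ instead. Its $T^+$ is $\sum_\lambda[g^0_{1,[\lambda]},X^{r-2}Y^2]$, which is $0$ in $\mathrm{ind}_{KZ}^GP$. Its $-a_p$-term is a unit times the generator $[\mathrm{Id},\theta^2X^{r-2p-2}]$ of $\mathrm{ind}_{KZ}^GJ_4$.
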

\begin{proof}
  (i) Let
\[
f_0:=
\left[
\mathrm{Id},
\frac{\theta^2X^{r-a-2p+2}Y^{a-4}}{p^2}
\right]
\in
\mathrm{ind}_{KZ}^G
\mathrm{Sym}^r\overline{\mathbb{Q}}_p^{\,2}.
\]
Since \(r\geq 3p+2\), we have
\[
T^-f_0\equiv 0 \pmod p.
\]
We now compute \(T^+f_0\). A straightforward calculation shows that
\[
T^+f_0
\equiv
\sum_{\lambda\in\mathbb F_p}
\left[
g^0_{1,\left[\lambda\right]},
(-[\lambda])^{a-4}X^{r-2}Y^2
\right]
\pmod p.
\]
By Lemma~\ref{lem:3.3}, the image of this element in
\(\mathrm{ind}_{KZ}^G P\) is
\[
\overline{\frac{(r-a)(r-a+1)}{(a-1)(a-2)}}
\sum_{\lambda\in\mathbb F_p}
\left[
g^0_{1,\left[\lambda \right]},
(-[\lambda])^{a-4}\theta^2X^{r-2p-2}
\right].
\]
Therefore,
\[
\overline{(T-a_p)f_0}
=
\overline{\frac{(r-a)(r-a+1)}{(a-1)(a-2)}}
\sum_{\lambda\in\mathbb F_p}
\left[
g^0_{1,\left[\lambda\right]},
(-[\lambda])^{a-4}\theta^2X^{r-2p-2}
\right]
-
\overline{\biggl(\frac{a_p}{p^2}\biggl)}
\left[
\mathrm{Id},
\theta^2X^{r-a-2p+2}Y^{a-4}
\right]
\]
in \(\mathrm{ind}_{KZ}^G P\).

The polynomial
\(
\theta^2X^{r-a-2p+2}Y^{a-4}
\)
by Lemma~\ref{lem:2.10}
maps to \(X^{p-a+3}\) in the cosocle \(J_5\). Consequently, the image of
\((T-a_p)f_0\) in \(\mathrm{ind}_{KZ}^GJ_5\) is
\[
-\frac{a_p}{p^2}
\left[
\mathrm{Id},
X^{p-a+3}
\right].
\]
Since
\(
\left[\mathrm{Id},X^{p-a+3}\right]
\)
generates \(\mathrm{ind}_{KZ}^GJ_5\) as a \(G\)-module, it follows that the image of \((T-a_p)f_0\) generates \(\mathrm{ind}_{KZ}^GJ_5\). In particular, $F_5=0$.

(ii) Suppose that $p \mid (r-a)(r-a+1)$. By Lemma~\ref{lem:3.3}, the polynomial $X^{r-2}Y^2$ maps to zero in $P$. Consequently,
$
T^{+}f_{0}=0 $ in $\mathrm{ind}_{KZ}^{G}P.
$
It follows that $(T-a_p)f_0$ is integral, and its image in $\mathrm{ind}_{KZ}^{G}P$ is $
-a_pf_0=- \frac{a_p}{p^2}[\mathrm{Id},\theta^2 X^{r-2p-a+2}Y^{a-4}].
$
By Proposition~\ref{prop:3.1} (iii) and Lemma~ \ref{lem:3.2}, the $\Gamma$-module
\[
W_3 \cong \frac{ V_r^{**}/V_r^{***} }{X_{r-1}^{**}/X_{r-1}^{***}} \cong V_r^{**}/V_r^{***}
\]
is a non-split extension of the weight $J_5$ by $J_4$. Furthermore, the polynomial
$
\theta^2 X^{r-2p-a+2}Y^{a-4}
$
maps to
$
X^{p-a+3}\neq 0
$
in $J_5$. Hence it generates $W_3$ as a $\Gamma$-module, and therefore
$
[\mathrm{Id},\theta^2 X^{r-2p-a+2}Y^{a-4}]
$
generates $\mathrm{ind}_{KZ}^{G}W_3$ as a $G$-module. It follows that the induced surjection
\[
\mathrm{ind}_{KZ}^{G}W_3 \twoheadrightarrow U_3
\]
is necessarily the zero map.
\end{proof}
\begin{prop}\label{prop:A_2}

    Let $3p+2 \le r\equiv a \pmod{p-1}$ with $5\le a\le p-1$. Assume that $p \nmid (r-a)(r-a+1)$ and $v_p(a_p)=2$, so that $u \coloneqq \frac{(a-1)(a-2)}{(r-a)(r-a+1)}\cdot \frac{a_p}{p^2}$ is a $p$-adic unit. Then 
    \begin{enumerate}
    \item[(i)] $F_4$ is a quotient of $\pi(a-4,\bar{u},\omega^2)$,
    \item[(ii)] $F_3$ is a quotient of $\pi(p-a+1,\bar{u}^{-1}, \omega^{a-1})$. 
\end{enumerate}
    
\end{prop}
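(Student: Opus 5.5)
The plan is to mimic the proof of Proposition~\ref{prop:A_1}(i): in each case, write down an explicit $f\in \mathrm{ind}_{KZ}^G\mathrm{Sym}^r\overline{\mathbb{Q}}_p^{\,2}$ such that $(T-a_p)f$ is integral. Its reduction, pushed to $\mathrm{ind}_{KZ}^G J_4$ for (i) and to $\mathrm{ind}_{KZ}^G J_3$ for (ii), should equal a unit multiple of $(T-\bar u)$, respectively $(T-\bar u^{-1})$, applied to a generator of that compactly induced weight. Since $(T-a_p)f$ dies in $\bar\Theta$, this element dies in $F_4$, resp.\ $F_3$. The surjections $\mathrm{ind}_{KZ}^G J_4\twoheadrightarrow F_4$ and $\mathrm{ind}_{KZ}^G J_3\twoheadrightarrow F_3$ then factor through $\pi(a-4,\bar u,\omega^2)$ and $\pi(p-a+1,\bar u^{-1},\omega^{a-1})$. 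Here I use Proposition~\ref{prop:3.1}(ii),(iii) to identify $J_4\cong V_{a-4}\otimes D^2$ and $J_3\cong V_{p-a+1}\otimes D^{a-1}$, together with the fact that for $0<a-4<p-1$ the Hecke operator on $\mathrm{ind}_{KZ}^G V_{a-4}$ is given mod $p$ by the $T^+$ term $[g^0_{1,\lambda},(-\lambda)^{a-4}X^{a-4}]$ plus the $T^-$ term $[\alpha,Y^{a-4}]$.

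For (i) I would start from $f_0$ of Proposition~\ref{prop:A_1}, rescaled by $p^2/a_p$ times the unit $(a-1)(a-2)/((r-a)(r-a+1))$, so that by Lemma~\ref{lem:3.3} the $T^+$ part becomes $\bar u^{-1}$-independent while the constant part contributes $-\bar u$ times the generator. The level-zero vector $\theta^2X^{r-2p-a+2}Y^{a-4}$ has a nonzero image in $J_5$, which would spoil the computation. So I would replace it by a vector $\theta^2 Q$ that lies in $J_4$ modulo $X_{r-1}+V_r^{***}$ and corresponds to $Y^{a-4}$; the natural candidate is $\theta^2X^{r-2p-2}$ itself, up to $X_{r-1}$, via the explicit splitting from Lemma~\ref{lem:2.10}. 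I would also add a term at $\alpha$ (level one on the other side) to produce the $T^-$ piece $[\alpha,Y^{a-4}]$. To make that added term have integral $T^\pm$-images with the right leading parts, I would build it from $\sum_j \alpha_j X^{r-j}Y^j/p^{?}$, using the integers $\alpha_j$ of Lemma~\ref{lem:2.2} (or Lemma~\ref{lem:2.3}). The vanishing conditions $\sum\binom{j}{n}\alpha_j\equiv0 \pmod{p^{4-n}}$ kill the denominators in $T^-$, and condition (1) identifies the surviving term with a $\theta^2$-multiple via Lemma~\ref{lem:3.3}. One then checks that all remaining terms land in $\mathrm{ind}_{KZ}^G(X_{r-1}+V_r^{***})$ or vanish mod $p$, which gives $(T-\bar u)[\mathrm{Id},\text{gen}]$ up to a unit.

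For (ii) I would use a function $f$ with a level-zero term $[\mathrm{Id},\theta X^{r-p-1-?}Y^{?}/p]$ whose image in $J_3$ is the highest-weight generator $X^{p-a+1}$ (by Lemma~\ref{lem:2.10} with $m=1$). I would add level-one terms $\sum_\lambda[g^0_{1,\lambda},\cdot]$ built from the $\alpha_j$ of Lemma~\ref{lem:2.2}, with denominators $p^3$, and arrange the coefficients so that $a_p$ times these terms is exactly what cancels. The $T^+$ of the level-zero part produces $Y^2$-type monomials $X^{r-2}Y^2$, which Lemma~\ref{lem:3.3} converts into $\theta^2$-terms. These lie in $W_3$ and so are invisible in $\mathrm{ind}_{KZ}^G J_3$; only the $\theta$-terms survive. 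The upshot is that the ratio of the leading coefficient $a_p/p^2$ to the Lemma~\ref{lem:3.3} constant appears inverted, producing $\bar u^{-1}$ and the twist $\omega^{a-1}$ from $D^{a-1}$.

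I expect the main obstacle to be (ii). One must verify that every term of $(T-a_p)f$ not accounted for is either divisible by $p$ or lies in $X_{r-1}+V_r^{**}$, so that it maps to $0$ in $J_3$. This requires controlling the $T^-$ images of the level-one terms up to the $p^3$-precision supplied by Lemma~\ref{lem:2.2}(2). It also requires checking that the $n=3$ condition in Lemma~\ref{lem:2.2}(3), which vanishes for $a\ge5$, removes the term that would otherwise contribute. If the direct construction fails, I would first try shifting the support of the $\alpha_j$ (Lemma~\ref{lem:2.3} when $r\equiv a \pmod{p(p-1)}$) and then reduce the general case to it by a congruence argument in $r$.
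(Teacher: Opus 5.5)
Your plan for (i) is essentially right, apart from a confusion addressed at the end. Part (ii), however, has a genuine gap: the function you describe cannot produce $(T-\bar u^{-1})$ applied to a generator of $\mathrm{ind}_{KZ}^GJ_3$.

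\emph{Where (ii) fails.}
\begin{enumerate}
\item A level-zero term $[\mathrm{Id},\theta Q/p]$ contributes nothing modulo $p$ and $X_{r-1}$.
\begin{itemize}
\item $a_p$ times it is divisible by $p$.
\item Since $\theta(X,-[\lambda]X+pY)=pX^pY+O(p^2)$, its $T^+$-image is $\sum_\lambda[g^0_{1,[\lambda]},X^pY\,Q(X,-[\lambda]X)]$ mod $p$. This is a multiple of $X^{r-1}Y\in X_{r-1}$, not of $X^{r-2}Y^2$ as you claim.
\item Its $T^-$-image is a multiple of $XY^{r-1}\in X_{r-1}$ mod $p$.
\end{itemize}
\item Level-one terms $\sum_\lambda[g^0_{1,[\lambda]},p^{-3}\sum_j\alpha_jX^{r-j}Y^j]$ have $T^\pm$-images that die. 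They contribute only $-a_p$ times themselves, a level-one term with denominator $p$ that nothing else in your $f$ cancels.
\item Lemma~\ref{lem:3.3} cannot be the source of the constant in (ii). It only produces $\theta^2$-terms, and as you note yourself these are invisible in $\mathrm{ind}_{KZ}^GJ_3$.
\item Placement matters. To get $T-\bar u^{-1}$ rather than $T-\bar u$, the factor $\overline{a_p/p^2}$ must sit on the vertices $g^0_{1,[\lambda]}$, and $c=\frac{(r-a)(r-a+1)}{(a-1)(a-2)}$ must sit at $\mathrm{Id}$. Since $T^+$ of a level-zero vector never yields the $J_3$-generator at level one, $\overline{a_p/p^2}$ must come from $-a_p$ applied to a level-one function with denominator $p^2$.
\end{enumerate}

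\emph{What the paper does instead.} The construction is the reverse of yours.
\begin{itemize}
\item Take $f_1=\sum_\lambda[g^0_{1,[\lambda]},(XY^{r-1}-X^pY^{r-p}+X^{r-a+1}Y^{a-1}-X^{r-a-p+2}Y^{a+p-2})/p^2]$. Its vector equals $\theta X^{r-p-a+1}Y^{a-2}-\theta Y^{r-p-1}$.
\item By Lemma~\ref{lem:2.10}, $-a_pf_1$ gives, up to sign, $\overline{a_p/p^2}$ times the generator $X^{p-a+1}$ at each $g^0_{1,[\lambda]}$.
\item $T^+f_1$ is a multiple of $X^{r-2}Y^2$. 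It is discarded via Lemma~\ref{lem:3.3}; this is the only role that lemma plays in (ii).
\item $T^-f_1=[\mathrm{Id},\frac{p-1}{p}\sum_{j\equiv a-1}\binom{r-1}{j}X^{r-j}Y^j]$ is non-integral. It is cancelled by the level-zero $f_0=[\mathrm{Id},\frac{p-1}{pa_p}\sum_j\alpha_jX^{r-j}Y^j]$, with $\alpha_j$ from Lemma~\ref{lem:2.2} applied to $r-1$. The $n=3$ condition there is what kills $T^+f_0$.
\item The constant $c$ comes from the remainder $\frac{p-1}{p}\sum_j(\binom{r-1}{j}-\alpha_j)X^{r-j}Y^j$. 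By \cite[Lemma 2.12]{GV22}, $X^{r-j}Y^j-X^pY^{r-p}\equiv(j-r)\theta X^{r-p-a+1}Y^{a-2}\pmod{V_r^{**}}$. Lemma~\ref{lem:2.1} with $i=0,1$ for $r-1$ then gives $\frac{p-1}{p}\sum_j(j-r)\binom{r-1}{j}\equiv c\pmod p$.
\item The leftover multiple of $X^pY^{r-p}=\theta Y^{r-p-1}+XY^{r-1}$ dies in $J_3$.
\end{itemize}

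\emph{On (i).} Drop the extra term at $\alpha$ and the $\alpha_j$'s. The vector $\theta^2X^{r-2p-2}$ corresponds to $X^{a-4}$, not $Y^{a-4}$. Since $a-4\ge 1$, $T[\mathrm{Id},X^{a-4}]$ has no $T^-$-component mod $p$. Also $T^-[\mathrm{Id},\theta^2X^{r-2p-2}/p^2]\equiv 0$ because $r\ge 3p+2$. So this single function already yields $c\,(T-\bar u)[\mathrm{Id},X^{a-4}]$.
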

\begin{proof}
     (i) Let
\[
f_0=\left[\mathrm{Id},\frac{\theta^2X^{r-2p-2}}{p^2}\right]
\in \mathrm{ind}_{KZ}^G \mathrm{Sym}^r \overline{\mathbb{Q}}_p^{\,2}.
\]
Since \(r\geq 3p+2\), we have
\[
T^-f_0\equiv 0 \pmod p.
\]
Moreover,
\[
T^+f_0
 = \sum_{\lambda\in\mathbb F_p}
 \left[g^0_{1,\left[\lambda \right]},X^{r-2}Y^2\right].
\]
Therefore
\[
(T-a_p)f_0
 \equiv
 \sum_{\lambda\in\mathbb F_p}
 \left[g^0_{1,\left[\lambda\right]},X^{r-2}Y^2\right]
 -
 \overline{\left(\frac{a_p}{p^2}\right)}
 \left[\mathrm{Id},\theta^2X^{r-2p-2}\right]
 \pmod p.
\]
By Lemma~\ref{lem:3.3}, the image of \((T-a_p)f_0\) in
\(\mathrm{ind}_{KZ}^G P\) is
\[
\overline{\frac{(r-a)(r-a+1)}{(a-1)(a-2)}}
\sum_{\lambda\in\mathbb F_p}
\left[g^0_{1,\left[\lambda \right]},\theta^2X^{r-2p-2}\right]
-
\overline{\left(\frac{a_p}{p^2}\right)}
\left[\mathrm{Id},\theta^2X^{r-2p-2}\right].
\]
By \cite[(4.2)]{Glo78} and \cite[Lemma~5.3]{Bre03b}, this further maps to
\[
\overline{\frac{(r-a)(r-a+1)}{(a-1)(a-2)}}
\,(T-\bar u)
\left[\mathrm{Id},X^{a-4}\right] \in \mathrm{ind}_{KZ}^G J_4.
\]
Since \(\left[\mathrm{Id},X^{a-4}\right]\) generates
\(\mathrm{ind}_{KZ}^G J_4\) as a \(G\)-module, the map $\mathrm{ind}_{KZ}^G J_4 \twoheadrightarrow F_4$ factors through $\pi(a-4,\bar{u}, \omega^2)$.

    (ii) Let us consider $f=f_0+f_1 \in \mathrm{ind}_{KZ}^G \mathrm{Sym}^r \bar{\mathbb{Q}}_p^2$, where 
    \begin{eqnarray*}
        f_1 &=& \sum_{\lambda \in \mathbb{F}_p} \left[ g^0_{1, \left[\lambda\right]} ,\frac{XY^{r-1}-X^p Y^{r-p}+X^{r-a+1}Y^{a-1}-X^{r-a-p+2}Y^{a+p-2}}{p^2} \right],\\
f_0 &=&\left[\mathrm{Id}, \frac{(p-1)}{p\cdot a_p} \sum_{\substack{
    0<j<r-1 \\
    j \equiv a-1 \pmod{p-1}
}}
\alpha_j \cdot X^{\,r-j} Y^j \right],
     \end{eqnarray*}
    where $\alpha_j$ are integers as in Lemma~\ref{lem:2.2} (applied for $r-1$). Both $T^-f_0$ and $T^+f_0$ die mod $p$ and $T^+f_1$ dies mod ${p}$ using  Lemma~\ref{lem:3.3}. We have
    \[T^-f_1=\left[\mathrm{Id}, \frac{(p-1)}{p} \sum_{\substack{
    0<j<r-1 \\
    j \equiv a-1 \pmod{p-1}
}}
\binom{r-1}{j} \cdot X^{\,r-j} Y^j \right].\]
 By \cite[Lemma 2.12]{GV22} we have $X^{r-j}Y^{j}-X^{p}Y^{r-p} \equiv (j-r)\theta X^{r-p-a+1}Y^{a-2} \pmod{V_r^{**}} $. We get 

\begin{eqnarray*}
T^-f_1-a_pf_0 &=&
\left[\mathrm{Id}, \frac{(p-1)}{p} \sum_{\substack{
    0<j<r-1 \\
    j \equiv a-1 \pmod{p-1}
}}
\left({\binom{r-1}{j}-\alpha_j}\right) \cdot X^{\,r-j} Y^j\right]\\
&=& \biggl[\mathrm{Id}, \frac{(p-1)}{p} \sum_{\substack{
    0<j<r-1 \\
    j \equiv a-1 \pmod{p-1}
}}
\left({\binom{r-1}{j}-\alpha_j}\right) \cdot (X^{\,r-j} Y^j-X^{p}Y^{r-p})\\
 && \quad +\frac{(p-1)}{p} \sum_{\substack{
    0<j<r-1 \\
    j \equiv a-1 \pmod{p-1}
}}
\left({\binom{r-1}{j}-\alpha_j}\right) \cdot X^{p} Y^{r-p}\biggl]\\
&\equiv& \biggl[\mathrm{Id}, \frac{(p-1)}{p} \sum_{\substack{
    0<j<r-1 \\
    j \equiv a-1 \pmod{p-1}
}}
(j-r)\left({\binom{r-1}{j}-\alpha_j}\right) \cdot \theta X^{r-p-a+1}Y^{a-2}\\
 && \quad +
\frac{(p-1)}{p} \sum_{\substack{
    0<j<r-1 \\
    j \equiv a-1 \pmod{p-1}
}}
\left({\binom{r-1}{j}-\alpha_j}\right) \cdot X^{p} Y^{r-p}\biggl] \pmod{V_r^{**}}\\
&=& \biggl[\mathrm{Id},\frac{(r-a)(r-a+1)}{(a-1)(a-2)}\cdot\theta X^{r-p-a+1}Y^{a-2}\\
 && \quad +\frac{(p-1)}{p} \sum_{\substack{
    0<j<r-1 \\
    j \equiv a-1 \pmod{p-1}
}}
\left({\binom{r-1}{j}-\alpha_j}\right) \cdot X^{p} Y^{r-p}\biggl],
\end{eqnarray*}
by Lemma~\ref{lem:2.1} with $i=0,1$ and Lemma~\ref{lem:2.2}.
On the other hand, we have \[
-a_pf_1 = \frac{a_p}{p^2}\sum_{\lambda \in \mathbb{F}_p} \left[ g^0_{1, \lambda} ,{\theta Y^{r-p-1}-\theta X^{r-p-a+1}Y^{a-2}} \right].
\]
By ~\ref{lem:2.10}, the polynomial
\(
\theta X^{r-a-p+1}Y^{a-2}
\)
maps to \(X^{p-a+1}\) in the cosocle \(J_3\) and $X^p Y^{r-p}= \theta Y^{r-p-1}+XY^{r-1}$. Thus $(T-a_p)(-f_0-f_1)$ is integral and maps to \[
\overline {\frac{a_p}{p^2}}\sum_{\lambda\in \mathbb{F_p}}\left[ g^0_{1,\lambda},X^{p-a+1}\right]-\left[\mathrm{Id},\overline{\frac{(r-a)(r-a+1)}{(a-1)(a-2)}}\cdot X^{p-a+1}\right]
=\overline {\frac{a_p}{p^2}}(T-\bar{u}^{-1})\left[\mathrm{Id}, X^{p-a+1}\right]\in \mathrm{ind}_{KZ}^G J_3.\]
Hence, $F_3$ is a quotient of $\pi(p-a+1,\bar{u}^{-1}, \omega^{a-1}).$
\end{proof}

\begin{prop}\label{prop:A_3}
   Let $3p+2 \le r\equiv a \pmod{p-1}$  with $5\le a\le p-1$. Assume that $p \nmid (r-a)$ and $v_p(a_p)=2$. Then \begin{itemize}
       \item [(i)] $F_1=0$.
       \item [(ii)] $F_2=0$.
   \end{itemize}
\end{prop}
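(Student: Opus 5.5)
The plan is to handle both parts the same way as Proposition~\ref{prop:A_1}(i). For each part I construct an explicit $f\in \mathrm{ind}_{KZ}^G\mathrm{Sym}^r\overline{\mathbb{Q}}_p^{\,2}$ such that $(T-a_p)f$ is integral. Its image in $\mathrm{ind}_{KZ}^G P$ should lie in the relevant piece of the filtration, and it should map to a generator $[\mathrm{Id},v]$ of $\mathrm{ind}_{KZ}^G J_1$, respectively $\mathrm{ind}_{KZ}^G J_2$. The surjection onto $F_1$, respectively $F_2$, then kills a generator, so the target is $0$. The hypothesis $p\nmid (r-a)$ should enter exactly once in each part: it makes a coefficient of the form $\frac{a-r}{a}$ or $\frac{a-r}{a-i}$, coming from Lemma~\ref{lem:2.1}, a $p$-adic unit.

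\emph{Part (i): $F_1=0$.} By Proposition~\ref{prop:3.1}(i), $V_r/V_r^*$ is a non-split extension of $V_{p-a-1}\otimes D^a$ by $V_a$. Since $J_0=0$, $J_1$ is the image of $V_r/V_r^*$ modulo $X_{r-1}$, and it is generated by the image of a monomial such as $X^{r-j_0}Y^{j_0}$ mapping to the cosocle. I take
\[
f_1=\sum_{\lambda\in\mathbb F_p}\Bigl[g^0_{1,[\lambda]},\tfrac{1}{p^2}\bigl(\text{a combination of }XY^{r-1},\,X^pY^{r-p}\text{ and monomials }X^{r-j}Y^{j}\text{ with }j\equiv a\bmod (p-1)\bigr)\Bigr]
\]
and $f_0=\bigl[\mathrm{Id},\tfrac{c}{p\,a_p}\sum_{j}\alpha_jX^{r-j}Y^j\bigr]$. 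Here the $\alpha_j$ are the integers of Lemma~\ref{lem:2.2}, or of Lemma~\ref{lem:2.3} when $r\equiv a \bmod p(p-1)$. Conditions (1)--(2) of those lemmas are exactly what makes $T^\pm f_0$ vanish mod $p$. The term $T^+f_1$ should die in $\mathrm{ind}_{KZ}^GP$, using Lemma~\ref{lem:3.3} together with the fact that $X_{r-1}$ is killed. The surviving term is $T^-f_1-a_pf_0$. By Lemma~\ref{lem:2.1} with $i=0$, this equals $\bigl[\mathrm{Id},\tfrac{r-a}{a}\cdot(\text{unit})\cdot X^{r-j_0}Y^{j_0}\bigr]$ modulo $V_r^*$ and $X_{r-1}$. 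This is a unit multiple of a generator of $\mathrm{ind}J_1$, because $p\nmid r-a$.

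\emph{Part (ii): $F_2=0$.} Since $J_2\subset W_2/W_3$ is generated by $\theta X^{r-p-1}$, I run the same construction one level down in the $\theta$-filtration. I use $X^{r-j}Y^j-X^pY^{r-p}\equiv (j-r)\theta X^{r-p-a+1}Y^{a-2}\pmod{V_r^{**}}$ from \cite[Lemma 2.12]{GV22}, exactly as in Proposition~\ref{prop:A_2}(ii), but now with the $\alpha_j$ indexed by $j\equiv a\bmod(p-1)$ instead of $a-1$. The combination $T^-f_1-a_pf_0$ is then a multiple of $\sum_j j\bigl(\binom rj-\alpha_j\bigr)/p$ times a polynomial divisible by $\theta$. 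By Lemma~\ref{lem:2.1} with $i=1$, this multiple is $\equiv r\frac{a-r}{a-1}$ times a unit, and I arrange that the polynomial is $\theta X^{r-p-1}$, modulo $V_r^{**}+X_{r-1}$. Once part (i) is known, the component in $J_1$ may be ignored, since $F_1=0$ means $U_1=U_2$. Both $-a_pf_1$ and any $T^+$ contributions should lie in $\mathrm{ind}W_3$ or be killed by Lemma~\ref{lem:3.3}. So $(T-a_p)f$ maps to a generator of $\mathrm{ind}J_2$, provided $p\nmid r(r-a)$.

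The main obstacle is the subcase $p\mid r$ in part (ii), where the factor $r$ in front of $\frac{a-r}{a-1}$ vanishes. There I would instead use the $i=0$ congruence, i.e.\ the coefficient of $X^pY^{r-p}=\theta Y^{r-p-1}+XY^{r-1}$, and let the $\theta Y^{r-p-1}$ term supply the $J_2$-component. The second obstacle in both parts is the bookkeeping needed to confirm that all the $p^{-1}$ and $p^{-2}$ denominators cancel. In particular, $T^+f_1$ must vanish in $P$ and not merely lie in a deeper filtration step, since otherwise it could contaminate $J_2$. I expect this to require choosing the four-term numerator of $f_1$ so that its $T^+$ image is a multiple of $X^{r-2}Y^2$ handled by Lemma~\ref{lem:3.3}, or lies in $X_{r-1}+V_r^{***}$.
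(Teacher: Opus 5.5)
Your part (ii) has a genuine gap, and the missing idea is structural rather than a Hecke computation. Since $p\nmid(r-a)$, i.e.\ $r\not\equiv a \pmod p$, \cite[Proposition 4.18]{GV22} gives $X_{r-1}^{*}/X_{r-1}^{**}\cong V_{a-2}\otimes D$. By Proposition~\ref{prop:3.1}(ii) this is exactly the socle of the non-split extension $V_r^{*}/V_r^{**}$. Hence $\theta X^{r-p-1}\in X_{r-1}+V_r^{**}$, so $J_2=0$ already as a $\Gamma$-module, and $F_2=0$ follows with no computation, no use of $v(a_p)=2$, and no split on whether $p\mid r$.

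Your sketch does not reach this conclusion, for three reasons.
\begin{enumerate}
\item As you describe it, $T^-f_1-a_pf_0$ is the same expression $\frac{p-1}{p}\sum_{j\equiv a}\bigl(\binom rj-\alpha_j\bigr)X^{r-j}Y^j$ as in part (i). It is \emph{not} divisible by $\theta$: modulo $V_r^*$ it equals $\frac{r-a}{a}X^{r-a}Y^a$, a unit multiple of the $J_1$-generator precisely because $p\nmid r-a$.
\item The congruence of \cite[Lemma 2.12]{GV22} that you quote is for $j\equiv a-1$. For $j\equiv a$, the difference $X^{r-j}Y^j-X^pY^{r-p}$ is not even in $V_r^*$, since at $(1,\lambda)$ it equals $\lambda^{a-1}(\lambda-1)$. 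The correct reduction writes $X^{r-j}Y^j-X^{r-a}Y^a$, modulo $V_r^{**}$, as a multiple of $\theta X^{r-p-a}Y^{a-1}$. By Lemma~\ref{lem:2.10} this maps to a nonzero multiple of $X^{p-a}Y$ in the cosocle $V_{p-a+1}\otimes D^{a-1}$, not into $J_2$. There is no way to ``arrange'' it to be $\theta X^{r-p-1}$.
\item The $J_1$-component cannot be discarded on the grounds that $F_1=0$. The equality $U_1=U_2$ only says $\mathrm{ind}_{KZ}^GP=\mathrm{ind}_{KZ}^GW_2+\ker$. To read off an image in $\mathrm{ind}_{KZ}^G(W_2/W_3)$ you need the reduction of $(T-a_p)f$ to lie in $\mathrm{ind}_{KZ}^GW_2$. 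Subtracting the part-(i) element to remove the $J_1$-component changes the $W_2/W_3$-component in an uncontrolled way.
\end{enumerate}
In addition, the subcase $p\mid r$ is left open.

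Part (i) follows the paper's strategy: Lemma~\ref{lem:2.1} with $i=0$ and Lemma~\ref{lem:2.2} give $\frac{r-a}{a}[\mathrm{Id},X^{p-a-1}]$ in $\mathrm{ind}_{KZ}^GJ_1$. However, your $f_1$ has to be changed.
\begin{itemize}
\item With denominator $p^2$, the term $-a_pf_1$ is a unit multiple of $\sum_\lambda[g^0_{1,[\lambda]},\cdot\,]$ and survives in $\mathrm{ind}_{KZ}^GJ_1$. This produces a relation of the shape $(T-c)$ instead of a generator; that is what happens in Proposition~\ref{prop:A_5}(i), where $p\mid r-a$.
\item For instance, $T^+$ of $(Y^r-X^{r-a}Y^a)/p^2$ has the non-integral $X^{r-1}Y$-coefficient $\frac{r-a}{p}(-[\mu])^{a-1}$.
\item The $XY^{r-1}$ and $X^pY^{r-p}$ terms create $j\equiv a-1$ terms with a $1/p$. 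Your $f_0$, indexed by $j\equiv a$ and carrying $1/(pa_p)$, cannot cancel them.
\end{itemize}
The right choice is $f_1=\sum_\lambda[g^0_{1,[\lambda]},(Y^r-X^{r-a}Y^a)/p]$, together with $f_{-1}=\delta_{a,p-1}[\alpha,(X^r-X^aY^{r-a})/p]$. The $T^+$ of $f_{-1}$ cancels the non-integral $\frac{p-1}{p}X^r$ term that appears when $a=p-1$, a case your sketch omits. Lemma~\ref{lem:2.3} never applies here, since $r\equiv a \bmod p(p-1)$ would force $p\mid r-a$.
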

\begin{proof}
  (i)  Let us consider $f=f_0+f_1+f_{-1} \in  \mathrm{ind}_{KZ}^G \mathrm{Sym}^r \bar{\mathbb{Q}}_p^2$, where 
    \begin{eqnarray*}
        f_1 &=& \sum_{\lambda\in {\mathbb{F}_p}}\left[g^0_{1,\left[\lambda\right]}, \frac{Y^r-X^{r-a}Y^a}{p} \right]\\
     f_0 &=&\left[\mathrm{Id}, \frac{(p-1)}{p\cdot a_p}\sum_{\substack{
    0<j<r \\
    j \equiv a \pmod{p-1}
}}
\alpha_j \cdot X^{\,r-j} Y^j\right]\\
 %\color{red}
f_{-1} &=& %\color{red} 
\delta_{a,p-1}\biggl[\alpha, \frac{X^r-X^{a}Y^{r-a}}{p} \biggl],
   \end{eqnarray*}
    where $\alpha_j$ are integers as in Lemma~\ref{lem:2.2}. Then $-a_pf_1$, $T^+f_1$, $T^-f_0$, $T^+f_0$, $ %\color{red} 
    -a_pf_{-1}$ and %$\color{red} 
    $T^-f_{-1}$ die mod $p$. %{\color{red}
    We have \begin{eqnarray*}
        T^+f_{-1} = \delta_{a,p-1} \biggl[\mathrm{Id}, \frac{(1-p)X^r}{p}\biggl] \pmod{X_{r-1}}.
    \end{eqnarray*}  We compute 
\begin{eqnarray*}
        T^-f_1-a_pf_0&=&\biggl[\mathrm{Id},\delta_{a,p-1}\frac{p-1}{p}X^r+ \frac{(p-1)}{p} \sum_{\substack{
    0<j<r \\
    j \equiv a \pmod{p-1}
}}
\left({\binom{r}{j}-\alpha_j}\right) \cdot X^{\,r-j} Y^j\biggl].
\end{eqnarray*}
Using $X^{r-j}Y^{j} \equiv X^{r-a}Y^a \bmod {V_r^{*}}$, we get that \begin{eqnarray*}
 T^-f_1-a_pf_0+T^+f_{-1}&=&\biggl[\mathrm{Id}, \frac{(p-1)}{p} \sum_{\substack{
    0<j<r \\
    j \equiv a \pmod{p-1}
}}
\left({\binom{r}{j}-\alpha_j}\right) \cdot X^{\,r-a} Y^a\biggl] \\
&=&\biggl[\mathrm{Id}, \frac{(r-a)}{a} \cdot X^{r-a}Y^a \biggl],
\end{eqnarray*}
by Lemma~\ref{lem:2.1} with $i=0$ and Lemma~\ref{lem:2.2}.
So, $(T-a_p)f$ is integral and maps to $\frac{r-a}{a} \biggl[\mathrm{Id},X^{p-a-1}\biggl]$ in $\mathrm{ind}_{KZ}^G J_1$ by Lemma~\ref{lem:2.10}. This is a generator as a $G$-module as $p \nmid (r-a)$. Hence, $F_1=0$.

     (ii) By \cite[Proposition 4.18]{GV22} if $r \equiv r_0 \pmod{p}$ and $r_0 \neq a$, then we have $\frac{X_{r-1}^{(1)}}{X_{r-1}^{(2)}} \cong V_{a-2} \otimes D$. 
\end{proof}
\begin{prop}\label{prop:A_4}
Let $3p
+2 \le r\equiv a \pmod{p-1}$ with $5\le a\le p-1$. Assume that $v_p(a_p)=2$. \begin{itemize}
       \item [(i)]If $p \mid (r-a)$, then $F_3=0$.
       \item [(ii)] If $p^2 \mid r-a$, then $F_2=0$ as well.
   \end{itemize}
\end{prop}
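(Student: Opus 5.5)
The plan is to adapt the test functions of Propositions~\ref{prop:A_2}(ii) and~\ref{prop:A_3}(i). When $p \mid r-a$, the unit that governed those arguments becomes divisible by $p$. I will therefore raise the power of $p$ in the denominators, using the sharper families of integers $\alpha_j$ from Lemmas~\ref{lem:2.3}, \ref{lem:2.4} and~\ref{lem:2.5}. The aim is that $(T-a_p)f$ is integral, lands in $\mathrm{ind}_{KZ}^G W_2$ (resp.\ $\mathrm{ind}_{KZ}^G W_3$ after removing $J_3$), and has image in $\mathrm{ind}_{KZ}^G J_3$ (resp.\ $\mathrm{ind}_{KZ}^G J_2$) equal to a nonzero multiple of $[\mathrm{Id},X^{p-a+1}]$ (resp.\ of $[\mathrm{Id},\theta X^{r-p-1}]$). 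Since these elements generate the corresponding inductions, we get $F_3=0$ (resp.\ $F_2=0$).

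For (i), the hypothesis $p\mid r-a$ together with $r\equiv a \bmod (p-1)$ gives $r\equiv a \bmod p(p-1)$, so Lemma~\ref{lem:2.5} applies.
\begin{enumerate}
\item I take $f=f_0+f_1$, with $f_1=\sum_\lambda [g^0_{1,[\lambda]}, (XY^{r-1}-X^pY^{r-p})/p^2]$ and $f_0=[\mathrm{Id},\frac{p-1}{p^2a_p}\sum \alpha_jX^{r-j}Y^j]$, where the sum runs over $j\equiv a-1 \bmod (p-1)$ and the $\alpha_j$ are those of Lemma~\ref{lem:2.5}.
\item The congruences $\sum_j\binom{j}{n}\alpha_j\equiv 0 \bmod p^{4-n}$ make $T^\pm f_0$ vanish mod $p$. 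The term $T^+f_1$ dies in $\mathrm{ind}_{KZ}^G P$ because $X^{r-2}Y^2\equiv 0$ in $P$ by Lemma~\ref{lem:3.3}, as $p\mid r-a$.
\item I expand $T^-f_1-a_pf_0=\bigl[\mathrm{Id},\tfrac{p-1}{p^2}\sum(\binom{r-1}{j}-\alpha_j)X^{r-j}Y^j\bigr]$ modulo $V_r^{**}$, using \cite[Lemma 2.12]{GV22}.
\item The $X^pY^{r-p}$ part and the $\theta X^{r-p-a+1}Y^{a-2}$ part have coefficients computable from Lemma~\ref{lem:2.1} with $i=0,1$, now read modulo $p^2$. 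The $\theta$-coefficient should be a $p$-adic unit times $(r-1)$ or $a-1$.
\item The term $-a_pf_1$ gives $\frac{a_p}{p^2}\sum_\lambda[g^0_{1,\lambda},\theta Y^{r-p-1}]$. This lies in $\mathrm{ind}\,V_r^*$ and maps to $0$ in $J_3$ by Lemma~\ref{lem:2.10}, since $i=r'$ there.
\item The net image in $\mathrm{ind}_{KZ}^G J_3$ is then $c[\mathrm{Id},X^{p-a+1}]$ with $c$ a unit, so $F_3=0$.
\end{enumerate}
The leftover $X^pY^{r-p}$ term equals $\theta Y^{r-p-1}+XY^{r-1}$, where $XY^{r-1}\in X_{r-1}$ and $\theta Y^{r-p-1}$ is again killed in $J_3$.

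For (ii), with $p^2\mid r-a$, I use Lemma~\ref{lem:2.4} in place of Lemma~\ref{lem:2.5}. Its $\alpha_j$ satisfy congruences one power of $p$ stronger and agree with $\binom{r}{j}$ mod $p^2$. I will run the analogue of the construction in Proposition~\ref{prop:A_3}(i) with denominators $p^2$. Take $f_1=\sum_\lambda[g^0_{1,[\lambda]},(Y^r-X^{r-a}Y^a)/p^2]$, and take $f_0$ built from $\alpha_j$ with $j\equiv a \bmod (p-1)$, as in Lemma~\ref{lem:2.3}, divided by $p^2a_p$. By Remark~\ref{rmk:2.1}, the $J_1$-coefficient is $(r-a)/(ap)$, which is now $\equiv 0$, so the image already lies in $\mathrm{ind}\,W_2$. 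The next term, $\sum_j (j-a)(\binom{r}{j}-\alpha_j)/p^2$ times $\theta X^{r-a-p}Y^{a-1}$, should be a unit multiple of the vector generating $J_2=\langle\theta X^{r-p-1}\rangle$. If this coefficient vanishes, I will instead switch to the $j\equiv a-1$ family of Lemma~\ref{lem:2.4}, aimed at $\theta X^{r-p-1}$ directly. Combined with $F_3=0$ from (i), this forces $F_2=0$.

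The main obstacle is the bookkeeping of congruences one order deeper than in Lemma~\ref{lem:2.1}. The coefficients landing in $J_1$, $J_2$ and $J_3$ must be computed modulo $p^2$ or $p^3$ rather than $p$, and the $X_{r-1}$ contributions must be tracked inside $W_2/W_3$. I would first verify the unit claims numerically for a small case, e.g.\ $p=7$, $a=5$, before committing to the exact choice of test function.
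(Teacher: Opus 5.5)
\textbf{Part (i).} The construction does not work, and the problem goes beyond bookkeeping. First, the normalizations are inconsistent. With $f_1=\sum_\lambda[g^0_{1,[\lambda]},(XY^{r-1}-X^pY^{r-p})/p^2]$ one gets $T^-f_1=[\mathrm{Id},\frac{p-1}{p}\sum_{j\equiv a-1}\binom{r-1}{j}X^{r-j}Y^j]$, up to an integral $XY^{r-1}$ term. This carries only a factor $1/p$ and the binomials $\binom{r-1}{j}$. Your $a_pf_0$, by contrast, carries $\frac{p-1}{p^2}\alpha_j$ with $\alpha_j\equiv\binom{r}{j}$ (Lemma~\ref{lem:2.5}). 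So nothing cancels, and $T^-f_1-a_pf_0$ is not integral.

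The other claims in your steps 2--4 also fail as stated. Lemma~\ref{lem:2.5} is calibrated for the prefactor $\frac{p-1}{pa_p}$; with $\frac{p-1}{p^2a_p}$ you would need congruences mod $p^{5-n}$ to kill $T^+f_0$. Moreover, $T^+f_1$ has the non-integral coefficient $\frac{p-1}{p}(-[\lambda])^{r-2}$ on $X^{r-1}Y$.

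More fundamentally, suppose you repair the normalization as in Proposition~\ref{prop:A_2}(ii), with prefactor $\frac{p-1}{pa_p}$ and $\alpha_j\equiv\binom{r-1}{j}$. Then the $J_3$-coefficient coming from $T^-f_1-a_pf_0$ is $\frac{(r-a)(r-a+1)}{(a-1)(a-2)}$. This vanishes mod $p$ exactly in your case $p\mid r-a$, so the unit claim in step 4 is false. With your $f_1$, the term $-a_pf_1$ contributes only $\theta Y^{r-p-1}$, which dies in $J_3$, so nothing survives. Even with the extra terms of Proposition~\ref{prop:A_2}(ii), summing over all $\lambda$ gives a multiple of $T[\mathrm{Id},X^{p-a+1}]$. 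That shows only that $F_3$ is a quotient of $\pi(p-a+1,0,\omega^{a-1})$.

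The missing idea is to make the generator come from $-a_p$ times a $1/p^2$ term placed at a \emph{single} coset. The paper takes
$f_1=\sum_{\lambda\in\mathbb{F}_p^\times}[g^0_{1,[\lambda]},[\lambda]^{p-2}(Y^r-X^{r-a}Y^a)/p]+[g^0_{1,0},r(1-p)(XY^{r-1}-X^{r-a+1}Y^{a-1})/p^2]$, and $f_0$ with prefactor $\frac{p-1}{pa_p}$ and the $\alpha_j$ of Lemma~\ref{lem:2.5}. This works as follows.
\begin{itemize}
\item The twist $[\lambda]^{p-2}$ makes $T^-$ produce $\frac{p-1}{p}\sum_{j\equiv a-1}\binom{r}{j}X^{r-j}Y^j$, now with $\binom{r}{j}$, matching Lemma~\ref{lem:2.5}.
\item The $g^0_{1,0}$ term has $T^+\equiv0$, and its $T^-$ cancels the $XY^{r-1}$ term.
\item $T^-f_1-a_pf_0$ lies in $V_r^{**}$ up to $\frac{(p-1)(a-r)}{p}\theta Y^{r-p-1}$. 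This is integral because $p\mid r-a$, and it dies in $J_3$.
\item $-a_pf_1$ yields $-\frac{a\,a_p}{p^2}[g^0_{1,0},X^{p-a+1}]$ in $\mathrm{ind}_{KZ}^GJ_3$, which is a generator.
\end{itemize}

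\textbf{Part (ii).} Your test function is essentially the one of Proposition~\ref{prop:A_5}(i). It lacks the weight $1-p$ at $\lambda=0$, without which $T^-f_1$ contains the non-integral term $\frac1p[\mathrm{Id},Y^r]$. More importantly, it detects $F_1$, not $F_2$. Modulo $X_{r-1}$ one has $-a_pf_1\equiv\frac{a_p}{p^2}\sum_\lambda[g^0_{1,[\lambda]},X^{r-a}Y^a]$. Its image $\overline{a_p/p^2}\sum_\lambda[g^0_{1,[\lambda]},X^{p-a-1}]$ in $\mathrm{ind}_{KZ}^GJ_1$ is nonzero, so $(T-a_p)f$ does not land in $\mathrm{ind}_{KZ}^GW_2$. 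This is exactly how the paper shows $F_1$ is a quotient of $\pi(p-a-1,0,\omega^a)$ in case (4).

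The unit you hope for is also zero. When $p^2\mid r-a$ one has $(1+\xi)^{r-a}\equiv1\pmod{p^3}$, hence $\sum_{0<j<r,\,j\equiv a}\binom{j}{i}\binom{r}{j}\equiv0\pmod{p^3}$ for each $i$. Therefore $\frac1{p^2}\sum_j(j-a)(\binom rj-\alpha_j)\equiv0\pmod p$. In any case, $\theta X^{r-a-p}Y^{a-1}$ is not in the socle $J_2$: by Lemma~\ref{lem:2.10} with $i=a-1\ge a'=a-2$, it has nonzero image in $J_3$.

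The paper instead reuses the test function of (i) with the $\alpha_j$ of Lemma~\ref{lem:2.4}. Since now $\alpha_j\equiv\binom rj\pmod{p^2}$, one gets $T^-f_1-a_pf_0\equiv0$, and hence $(T-a_p)(f_0+f_1)\equiv-\frac{a\,a_p}{p^2}[g^0_{1,0},XY^{r-1}-X^{r-a+1}Y^{a-1}]$. The image $v$ of this vector in $V_r^*/V_r^{**}$ has nonzero component in $V_{p-a+1}\otimes D^{a-1}$. Because $V_r^*/V_r^{**}$ is a non-split extension of $V_{p-a+1}\otimes D^{a-1}$ by $V_{a-2}\otimes D$, the vector $v$ generates it. Hence $[g^0_{1,0},v]$ generates $\mathrm{ind}_{KZ}^G(W_2/W_3)$, and $F_2=F_3=0$ at once, with no separate computation in $J_2$.
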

\begin{proof}
 (i)   Let us consider $f_0,f_1 \in \mathrm{ind}_{KZ}^G \mathrm{Sym}^r \bar{\mathbb{Q}}_p^2$ given by 
    \[
    f_1=\sum_{\lambda \in \mathbb{F}_p^{\times}}\biggl[g^0_{1,[\lambda]}, \frac{[\lambda]^{p-2}(Y^r-X^{r-a}Y^a)}{p}\biggl]+\biggl[g^0_{1,0}, \frac{r(1-p)(XY^{r-1}-X^{r-a+1}Y^{a-1})}{p^2}\biggl],
    \]
    \[
    f_0= \biggl[\mathrm{Id},\frac{(p-1)}{p\cdot a_p}\sum_{\substack{
    0<j<r-1 \\
    j \equiv a-1 \pmod{p-1}
}}
\alpha_j \cdot X^{\,r-j} Y^j\biggl],
    \]
    where $\alpha_j$ are integers as in Lemma~\ref{lem:2.5}. Then $T^+f_1$, $T^+f_0$ and $T^-f_0$ die mod $p$. We have \[
    -a_pf_1=-\frac {r \cdot a_p}{p^2}\biggl[g^0_{1,0}, \theta X^{r-p-a+1}Y^{a-2}-\theta Y^{r-p-1
    }\biggl] \pmod{V_r^{**}},
    \]
    by \cite[Lemma 2.12]{GV22}.
    We compute
\begin{eqnarray}
\label{T-f1} 
T^-f_1=
\left[\mathrm{Id},
\frac{(p-1)}{p}
\sum_{\substack{
0<j<r-1 \\
j \equiv a-1 \pmod{p-1}
}}
\binom{r}{j}\,X^{r-j}Y^j
\right].
\end{eqnarray}
    Thus, \[T^-f_1-a_pf_0=\biggl[\mathrm{Id}, \frac{(p-1)}{p}  \sum_{\substack{
    0<j<r-1 \\
    j \equiv a-1 \pmod{p-1}
}}
\left(\binom{r}{j}-\alpha_j\right) \cdot X^{\,r-j} Y^j\biggl].\]
 We can write the above expression as 
 \begin{eqnarray*}
     T^-f_1-a_pf_0=\biggl[\mathrm{Id}, \frac{(p-1)}{p}  \sum_{\substack{
    0<j<r-1 \\
    j \equiv a-1 \pmod{p-1}
}}
\left(\binom{r}{j}-\alpha_j\right) \cdot X^{\,r-j} Y^j- \frac{(p-1)(a-r)}{p} \cdot X^pY^{r-p}\\
+ \frac{(p-1)(a-r)}{p} \cdot \theta Y^{r-p-1}\biggl] \pmod{X_{r-1}}.
\end{eqnarray*}
 We claim that \[F(X,Y)=\frac{(p-1)}{p}  \sum_{\substack{
    0<j<r-1 \\
    j \equiv a-1 \pmod{p-1}
}}
\left(\binom{r}{j}-\alpha_j\right) \cdot X^{\,r-j} Y^j- \frac{(p-1)(a-r)}{p} \cdot X^pY^{r-p} \in V_r^{**}.\]
%\textit{Proof of the claim.} 
Indeed, as in the proof of Lemma~\ref{lem:2.4}, 
\[
\frac{1}{p}   \sum_{\substack{
    0<j<r-1 \\
    j \equiv a-1 \pmod{p-1}
}}
\left(\binom{r}{j}-\alpha_j\right)- \frac{(a-r)}{p} \equiv 0 \pmod{p}
\]
and
\[\frac{1}{p}   \sum_{\substack{
    0<j<r-1 \\
    j \equiv a-1 \pmod{p-1}
}}
j \cdot\left(\binom{r}{j}-\alpha_j\right)- \frac{(a-r)(r-p)}{p} \equiv 0 \pmod{p}. \] 
The claim follows from \cite[Lemma 2.7]{GV22}. 
So, mod $X_{r-1} +V_r^{**}$, we have \[
(T-a_p)(f_0+f_1)= -\frac{a \cdot a_p}{p^2}\biggl[g^0_{1,0},\theta X^{r-p-a+1}Y^{a-2}-\theta Y^{r-p-1
    }\biggl]+\biggl[\mathrm{Id},\frac{(p-1)(a-r)}{p} \cdot \theta Y^{r-p-1}\biggl].
\]
Hence, $(T-a_p)(f_0+f_1)$ maps to $-\frac{a\cdot a_p}{p^2}\biggl[g^0_{1,0},X^{p-a+1}\biggl]$ in $\mathrm{ind}_{KZ}^GJ_3$ by Lemma~\ref{lem:2.10}. Since $\biggl[g^0_{1,0},X^{p-a+1}\biggl]$ generates $\mathrm{ind}_{KZ}^GJ_3$ as a $G$-module, it follows that the image of $(T-a_p)(f_0+f_1)$ generates $\mathrm{ind}_{KZ}^GJ_3$. In particular, $F_3=0$.

(ii) Let $f_0, f_1 \in \mathrm{ind}_{KZ}^G \mathrm{Sym}^r \overline{\mathbb{Q}}_p^{\,2}$ be as in 
part (i), except that this time let
%\[
%f_1=\sum_{\lambda \in \mathbb{F}_p^{\times}}
%\left[g^0_{1,[\lambda]}, 
%\frac{[\lambda]^{p-2}\bigl(Y^r-X^{r-a}Y^a\bigr)}{p}
%\right]
%+
%\left[g^0_{1,0}, 
%\frac{-r\bigl(XY^{r-1}-X^{r-a+1}Y^{a-1}\bigr)}{p^2}
%\right],
%\]
%
%\[
%f_0=
%\left[\mathrm{Id},
%\frac{p-1}{p a_p}
%\sum_{\substack{
%0<j<r-1 \\
%j \equiv a-1 \pmod{p-1}
%}}
%\alpha_j \cdot X^{r-j}Y^j
%\right],
%\]
%where 
$\alpha_j$ be integers as in Lemma~\ref{lem:2.4}. As above 
$T^+f_1$, $T^+f_0$ and $T^-f_0$ die mod $p$.
Moreover,
\[
-a_pf_1
=
-\frac{r\,a_p}{p^2}
\biggl[g^0_{1,0},
XY^{r-1}-X^{r-a+1}Y^{a-1}
\biggl]
\]
and $T^-f_1$ is as in \eqref{T-f1}.
Hence,
\[
T^-f_1-a_pf_0=
\left[\mathrm{Id},
\frac{p-1}{p}
\sum_{\substack{
0<j<r-1 \\
j \equiv a-1 \pmod{p-1}
}}
\left(\binom{r}{j}-\alpha_j\right)
X^{r-j}Y^j
\right].
\]
 Since $\alpha_j \equiv \binom{r}{j} \pmod{p^2}$, it follows that $T^-f_1-a_pf_0 \equiv 0 \pmod p$.
Therefore,
\[
(T-a_p)(f_0+f_1)
=
-\frac{a\,a_p}{p^2}
\biggl[g^0_{1,0},
XY^{r-1}-X^{r-a+1}Y^{a-1}
\biggl].
\]
Let $v$ be the image of $-\frac{a\,a_p}{p^2}(XY^{r-1}-X^{r-a+1}Y^{a-1})$ in $V_r^{*}/V_r^{**}$. Then the reduction $\overline {(T-a_p)f}$ maps to $\biggl[g^0_{1,0},v\biggl] \in \mathrm{ind}_{KZ}^G (V_r^{*}/V_r^{**})$. Now as in part $(i)$, $v$ maps to $-\overline{\frac{a a_p}{p^2}} \cdot X^{p-a+1}\in J_3$. As the short exact sequence  
\[
0 \longrightarrow V_{[a-2]}\otimes D
\longrightarrow \frac{V_r^{*}}{V_r^{**}}
\longrightarrow V_{p-1-[a-2]}\otimes D^{a-1}
\longrightarrow 0
\]
is non-split, $v$ generates the whole module $V_r^{*}/V_r^{**}$ as a $G$-module, so the element $\biggl[g^0_{1,0},v\biggl]$ generates $\mathrm{ind}_{KZ}^G (V_r^{*}/V_r^{**})$. Thus $F_2=0$ and $F_3=0$.
\end{proof}
\begin{lemma}\label{lem:3.8}
Let $r \equiv a \pmod{p-1}$ and $p \mid r-a$. Then
    \begin{eqnarray*}
    \sum_{\substack{1<j\leq {r-a+1} \\
j \equiv 1 \pmod{p-1}
}}
 \binom{r}{j} &\equiv& a-r \pmod{p^2}\\
     p \sum_{\substack{1<j\leq {r-a+1} \\
j \equiv 1 \pmod{p-1}
}}
j \binom{r}{j} &\equiv& 0 \pmod{p^3}\\
 p^2 \sum_{\substack{
1<j\leq {r-a+1} \\
j \equiv 1 \pmod{p-1}
}}
\binom{j}{2}\binom{r}{j} &\equiv& 0 \pmod{p^4}\\
 p^3 \sum_{\substack{
1<j\leq {r-a+1} \\
j \equiv 1 \pmod{p-1}
}}
\binom{j}{3}\binom{r}{j} &\equiv& 0 \pmod{p^5}.
 \end{eqnarray*}
\end{lemma}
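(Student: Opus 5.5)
The plan is to use the same roots-of-unity filter that underlies Lemmas~\ref{lem:2.1}--\ref{lem:2.9}. Then I will observe that, under $p \mid r-a$, the factor $(1+\xi)^{r-a}$ is $\equiv 1 \pmod{p^2}$. This leaves only sums over the small binomial $(1+\xi)^{a-m}$, which can be evaluated exactly.

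\emph{The summation range.} Since $r-a+1 \equiv 1 \pmod{p-1}$, the next index in the class $1 \bmod (p-1)$ after $r-a+1$ is $r-a+p$. Because $a \le p-1$, this exceeds $r$. So the range $1<j\le r-a+1$ with $j\equiv 1 \pmod{p-1}$ is exactly the set of all $0\le j\le r$ with $j\equiv 1\pmod{p-1}$, minus the single index $j=1$. Each sum in the lemma is therefore a full sum minus the $j=1$ term. That term is $r$ for $m=0$, is $r$ for $m=1$ (since $j\binom rj=r$ at $j=1$), and is $0$ for $m=2,3$.

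\emph{The filter.} As in the proof of Lemma~\ref{lem:2.1}, for $m=0,1,2,3$ I use
\[
(p-1)\sum_{\substack{j\ge 0\\ j\equiv 1 \ (\mathrm{mod}\ p-1)}}\binom{j}{m}\binom{r}{j}=\binom{r}{m}\sum_{\xi\in\mu'}\xi^{m-1}(1+\xi)^{r-m}.
\]
Here $\xi=-1$ may be harmlessly included or excluded, since $r>m$. Now $p\mid r-a$ and $(p-1)\mid r-a$ give $r-a=p(p-1)s$. Writing $(1+\xi)^{p-1}=1+pz_\xi$, we get $(1+\xi)^{r-a}=(1+pz_\xi)^{ps}\equiv 1 \pmod{p^2}$. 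Hence the right-hand side is congruent mod $p^2$ to
\[
\binom rm\sum_{\xi}\xi^{m-1}(1+\xi)^{a-m}.
\]
Expanding the binomial and summing over all $(p-1)$-th roots of unity, this equals $(p-1)\binom rm$ times the sum of $\binom{a-m}{i}$ over $0\le i\le a-m$ with $i\equiv 1-m \pmod{p-1}$.

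\emph{Evaluating the four cases.} Since $a-m\le p-1-m$, at most one index $i$ qualifies.
\begin{itemize}
\item For $m=0$ only $i=1$ qualifies, giving $a$. So the full sum is $\equiv a \pmod{p^2}$, and after removing the $j=1$ term the sum is $\equiv a-r \pmod{p^2}$.
\item For $m=1$ only $i=0$ qualifies, giving $r$. Subtracting the $j=1$ term $r$ leaves $0 \pmod{p^2}$, and multiplying by $p$ gives $0 \pmod{p^3}$.
\item For $m=2$ we need $i\equiv p-2 \pmod{p-1}$ with $i\le a-2\le p-3$, which is impossible. So the sum is $\equiv 0\pmod{p^2}$, and after multiplying by $p^2$ it is $0 \pmod{p^4}$.
\item For $m=3$ we need $i\equiv p-3$ with $i\le a-3\le p-4$, which is again impossible. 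This gives $0 \pmod{p^2}$, hence $0 \pmod{p^5}$ after multiplying by $p^3$.
\end{itemize}
Dividing by the unit $p-1$ throughout yields the lemma.

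The only delicate point is the bookkeeping of which residues $i$ occur for small $a$. When $a-m<0$ (which cannot happen here, since $a\ge 5$ is the running hypothesis, although the argument only needs $a\ge m$ or a trivial check), the sum is empty. Note also that the stronger-than-needed congruences for $m=2,3$ come for free from the mod-$p^2$ filter. I expect no real obstacle beyond verifying the range identification in the first step.
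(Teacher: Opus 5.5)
Your proof is correct and follows the paper's route: the paper only says the lemma is proved like the roots-of-unity lemmas~\ref{lem:2.1}--\ref{lem:2.9}, and you carry out exactly that filter. The steps are the range identification with all $j\equiv 1 \pmod{p-1}$ except $j=1$, the congruence $(1+\xi)^{r-a}\equiv 1 \pmod{p^2}$ from $p(p-1)\mid r-a$, and the exact evaluation of $\sum_\xi \xi^{m-1}(1+\xi)^{a-m}$. One cosmetic remark: the vanishing mod $p^2$ you obtain for $m=1,2,3$ is exactly what the lemma needs after multiplying by $p^m$, not more.
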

\begin{proof}
    Similar to the proofs of the lemmas proved in Section~\ref{roots of 1}.
\end{proof}
\begin{prop}\label{prop:A_5}
    Let $3p+2 \le r\equiv a \pmod{p-1}$ with $5\le a\le p-1$. Assume that $p \mid (r-a)$ but $p^2 \nmid (r-a)$ and $v_p(a_p)=2$, so that $u' \coloneqq\frac{a \cdot a_p}{(a-r) p}$ is a $p$-adic unit. Then \begin{enumerate}
  \item[(i)] $F_1$ is a quotient of $\pi(p-a-1,\bar{u'}^{-1}, \omega^{a})$. 
    \item[(ii)] $F_2$ is a quotient of $\pi(a-2,\bar{u'},\omega)$.
\end{enumerate}
\end{prop}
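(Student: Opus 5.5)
We follow the pattern of Proposition~\ref{prop:A_2}: for each of $J_1$ and $J_2$, construct an explicit $f\in\mathrm{ind}_{KZ}^G\mathrm{Sym}^r\overline{\mathbb{Q}}_p^{\,2}$ with $(T-a_p)f$ integral, whose reduction in $\mathrm{ind}_{KZ}^G J_1$ (resp.\ $\mathrm{ind}_{KZ}^G J_2$) is a nonzero multiple of $(T-\bar{u'}^{\pm1})$ applied to the generator $[\mathrm{Id},X^{p-a-1}]$ (resp.\ $[\mathrm{Id},\theta X^{r-p-1}]$, which maps to $X^{a-2}$ in $J_2$ via Glover's identification). Since these generate, the surjections $\mathrm{ind}_{KZ}^G J_1\twoheadrightarrow F_1$ and $\mathrm{ind}_{KZ}^G J_2\twoheadrightarrow F_2$ then factor through the stated $\pi$'s. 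The twists $\omega^a$ and $\omega$ come from $D^a$ and $D$ in Proposition~\ref{prop:3.1}.

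\textbf{For (ii).} I would take
\[
f_0=\Bigl[\mathrm{Id},\tfrac{\theta X^{r-p-1}}{p}\Bigr].
\]
Then $T^-f_0\equiv 0$ because $r\ge 3p+2$, while
\[
T^+f_0\equiv\sum_{\lambda}\bigl[g^0_{1,[\lambda]},X^{r-1}Y\bigr]\equiv 0
\]
in $\mathrm{ind}\,P$, since $X^{r-1}Y\in X_{r-1}$. This leaves $-a_pf_0$, which has valuation $1$, so a correction term is needed. I would add
\[
f_1=\sum_{\lambda}\Bigl[g^0_{1,[\lambda]},\tfrac{c([\lambda])(Y^r-X^{r-a}Y^a)}{p^2}\Bigr]
\]
together with a term $f_{-1}$ of the shape $[\mathrm{Id},\tfrac{p-1}{p\,a_p}\sum\alpha_jX^{r-j}Y^j]$, with $\alpha_j$ taken from Lemma~\ref{lem:2.5} or a variant. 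The choice must make $T^-f_1-a_pf_{-1}$ produce $\tfrac{a-r}{ap}\,\theta X^{r-p-1}$ modulo $V_r^{**}+X_{r-1}$, using Lemma~\ref{lem:3.8} and the relation $X^{r-j}Y^j\equiv X^{r-a}Y^a+(\text{multiple of }\theta)$ from \cite[Lemma 2.12]{GV22}. After rescaling, $-a_pf_0$ and the $T^-$ contribution combine to
\[
\bigl(-\bar{u'}+T\bigr)[\mathrm{Id},X^{a-2}],
\]
with the $T^+f_1$ part supplying the $T$-summands on $\theta X^{r-p-1}$ at the vertices $g^0_{1,\lambda}$.

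\textbf{For (i).} Argue symmetrically: set
\[
f=\sum_\lambda\Bigl[g^0_{1,[\lambda]},\tfrac{Y^r-X^{r-a}Y^a}{p}\Bigr]+f_0
\]
as in Proposition~\ref{prop:A_3}(i). Since $p\mid r-a$, the term $\tfrac{r-a}{a}$ found there now vanishes mod $p$, so divide everything by $p$ once more, i.e.\ use denominators $p^2$ and Lemma~\ref{lem:2.4}- or \ref{lem:2.5}-type $\alpha_j$ refined to order $p^2$. The leading term at $\mathrm{Id}$ becomes $\tfrac{r-a}{ap}X^{r-a}Y^a$, a unit multiple of $\bar{u'}^{-1}\cdot\tfrac{a_p}{p^2}$. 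The $-a_p f_1$ piece gives $\tfrac{a_p}{p^2}\sum_\lambda[g^0_{1,\lambda},X^{r-a}Y^a]$, which lands in $J_1$ as $X^{p-a-1}$ by Lemma~\ref{lem:2.10}. Together these give $\tfrac{a_p}{p^2}(T-\bar{u'}^{-1})[\mathrm{Id},X^{p-a-1}]$.

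\textbf{Main obstacle.} The hardest step is the bookkeeping at the next order of $p$. One must choose the $\alpha_j$ so that every unwanted term is killed modulo $V_r^{*}$ (for $J_1$) or $V_r^{**}$ (for $J_2$) after division by $p^2$, while the surviving coefficient comes out exactly $\tfrac{a-r}{p}$ to first order. This requires the sums in Lemma~\ref{lem:3.8} and Lemma~\ref{lem:2.1} modulo $p^2$, and it requires checking that $p^2\nmid r-a$ makes that coefficient a unit. I would first carry out the computation for (i), since its target $J_1$ is a quotient of $V_r/V_r^*$ and only the $n=0$ congruence matters there.
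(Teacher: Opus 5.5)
Your part (i) follows the paper's argument, but two details need fixing. First, the $\alpha_j$ must be indexed by $j\equiv a \pmod{p-1}$. The right inputs are therefore Lemma~\ref{lem:2.3} (which gives $\alpha_j\equiv\binom{r}{j}\bmod p^2$, applicable because $p\mid r-a$) and Remark~\ref{rmk:2.1}, namely $\sum_{0<j<r,\,j\equiv a}\binom{r}{j}\equiv\frac{(a-r)p}{a}\pmod{p^3}$. Lemmas~\ref{lem:2.4} and~\ref{lem:2.5} concern $j\equiv a-1$, and Lemma~\ref{lem:2.4} also needs $p^2\mid r-a$. Second, dividing the $f_1$ of Proposition~\ref{prop:A_3}(i) by $p$ breaks integrality. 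The vertex $g^0_{1,0}$ adds $Y^r/p^2$ to the $(p-1)Y^r/p^2$ coming from $\lambda\neq 0$, which leaves $\frac1p Y^r$ in $T^-f_1$. The paper avoids this by weighting the $g^0_{1,0}$ term by $1-p$, and likewise $f_{-1}$ when $a=p-1$.

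Part (ii) has a genuine gap, because the mechanisms you name do not work.
\begin{itemize}
\item $T^+f_1$ is supported on the level-two vertices $g^0_{2,[\lambda]+p[\mu]}$. It therefore cannot supply the summands $\sum_\lambda[g^0_{1,[\lambda]},X^{a-2}]$ of $T[\mathrm{Id},X^{a-2}]$.
\item With $p^2$ in the denominator, $-a_pf_1$ is a unit multiple of $Y^r-X^{r-a}Y^a$ at every level-one vertex where $c([\lambda])$ is a unit. That vector is nonzero in $W_1/W_2\cong J_1$; this is exactly what you use in (i). So $(T-a_p)f$ would not even lie in $\mathrm{ind}_{KZ}^G W_2$.
\item With denominator $p$ and twist $[\lambda]^{p-2}$ you are back in Proposition~\ref{prop:A_4}(i). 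There the correction at $g^0_{1,0}$ produces a generator of $J_3$, which proves $F_3=0$ but gives no relation in $F_2$.
\end{itemize}
More fundamentally, the unit term $-\frac{a_p}{p^2}[\mathrm{Id},\theta X^{r-p-1}]$ forces $f_0=[\mathrm{Id},\theta X^{r-p-1}/p^2]$. Then $T^+f_0$ equals $\frac1p\sum_\lambda[g^0_{1,[\lambda]},X^{r-1}Y]$ up to integral terms, which is not integral. Your $f_0$ with denominator $p$ escapes this only because $-a_pf_0\equiv 0$, and ``rescaling'' brings the problem back. Nothing in your plan handles this term.

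The missing idea is a level-two function. The paper uses $\Psi_g=\sum_{\mu\in\mathbb{F}_p^\times}[\mu]^{1-a}\bigl[gg^0_{1,[\mu]},Y^r-X^{r-a}Y^a\bigr]$. Here $T^-$ gives $(T-a_p)\Psi_g=(p-1)\bigl[g,\sum_{0<j\le r-a+1,\,j\equiv 1}\binom{r}{j}X^{r-j}Y^j\bigr]+O(p^2)$. The twist $[\mu]^{1-a}$ selects $j\equiv 1$, and in particular the term $rX^{r-1}Y$. This is combined with a level-one correction built from $\Phi_g$, whose $T^+$ dies modulo $X_{r-1}$ by Lemma~\ref{lem:3.8}. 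Together they remove all terms with $j>1$, leaving $-\frac{1}{ap}\bigl(rX^{r-1}Y+S_0X^{r-p}Y^p\bigr)$ at each level-one vertex. Adding the $\frac1p X^{r-1}Y$ from $T^+f_0$ and using $S_0\equiv a-r\pmod{p^2}$, the net contribution at each level-one vertex is $\frac{a-r}{ap}\theta X^{r-p-1}$. So the coefficient $\frac{a-r}{ap}$ you aim for is really $\frac1p-\frac{r}{ap}$, where the $\frac1p$ is precisely the non-integral part of $T^+f_0$. The sum is integral because $p\mid r-a$ and a unit because $p^2\nmid r-a$. Its image in $\mathrm{ind}_{KZ}^G J_2$ is $\overline{\frac{a-r}{ap}}\,(T-\bar{u'})[\mathrm{Id},X^{a-2}]$.
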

\begin{proof}
  (i)  Let us consider $f_0,f_1, %{\color{red}}
  f_{-1} 
  \in \mathrm{ind}_{KZ}^G \mathrm{Sym}^r \bar{\mathbb{Q}}_p^2$ given by 
    \begin{eqnarray*}
    f_1 &=&\sum_{\lambda\in {\mathbb{F}^{\times}_p}}\biggl[g^0_{1,\left[\lambda \right]}, \frac{Y^r-X^{r-a}Y^a}{p^2}\biggl]+(1-p)\cdot\biggl[g^0_{1,0}, \frac{Y^r-X^{r-a}Y^a}{p^2}\biggl]\\
    f_0 &=&\biggl[\mathrm{Id}, \frac{(p-1)}{p^2\cdot a_p}\sum_{\substack{
    0<j<r \\
    j \equiv a \pmod{p-1}
}}
\alpha_j \cdot X^{\,r-j} Y^j\biggl]\\
f_{-1} &=& %\color{red}
\frac{\delta_{a,p-1}(1-p)}{p^2}\biggl[\alpha, X^r-X^{a}Y^{r-a} \biggl],
    \end{eqnarray*}
    where $\alpha_j$ are as in Lemma~\ref{lem:2.3}. 
    Then $T^+f_1$ dies mod $X_{r-1}$ and $T^-f_0$, $T^+f_0$, $T^-f_{-1}$ die mod $p$.  We have \begin{eqnarray*}
        T^+f_{-1}=\frac{\delta_{a,p-1}(1-p)}{p^2} \biggl[\mathrm{Id},   X^r\biggl] \pmod{X_{r-1}}.
    \end{eqnarray*}We compute \begin{eqnarray*}
    T^-f_1-a_pf_0 &=&\biggl[\mathrm{Id},\frac{\delta_{a,p-1}(p-1)}{p^2}X^r
    + \frac{(p-1)}{p^2}\sum_{\substack{
    0<j<r \\
    j \equiv a \pmod{p-1}}}\left(\binom{r}{j}-\alpha_j\right)\cdot X^{r-j}Y^j\biggl] \\
   T^-f_1-a_pf_0 +T^+f_{-1}&=& \biggl[\mathrm{Id}, \frac{(p-1)}{p^2}\cdot \frac{(a-r)\cdot p}{a} X^{r-a}Y^a\biggl] \pmod{V_r^{*}},
     \end{eqnarray*}
  where the second equality follows from Remark~\ref{rmk:2.1}. Also \[
  -a_pf_1= \frac{a_p}{p^2} \sum_{\lambda\in {\mathbb{F}_p}}\biggl[g^0_{1,\left[\lambda \right]}, X^{r-a}Y^a\biggl] \pmod{X_{r-1}}.
  \]
Hence, we have \begin{eqnarray*}
    (T-a_p)(f_0+f_1+f_{-1}) &=& \frac{a_p}{p^2} \sum_{\lambda\in {\mathbb{F}_p}}\biggl[g^0_{1,\left[\lambda \right]}, X^{r-a}Y^a\biggl]-\frac{(a-r)}{a \cdot p}\biggl[\mathrm{Id}, X^{r-a}Y^a\biggl]\\
    && \qquad +\frac{a_p \delta_{a,p-1}}{p^2}\biggl[ \alpha,X^{a}Y^{r-a}\biggl]\pmod{X_{r-1}+V_r^{*}}.
    \end{eqnarray*}
 Thus $(T-a_p)(f_0+f_1+f_{-1})$ is integral and, by Lemma~\ref{lem:2.10}, maps to \[
\overline {\frac{a_p}{p^2}}\sum_{\lambda\in \mathbb{F}_p}\left[ g^0_{1,\left[\lambda \right]},X^{p-a-1}\right] +\overline{\frac{a_p \delta_{a,p-1}}{p^2}}\biggl[ \alpha,X^{p-a-1}\biggl]-\overline{\frac{(a-r)}{a \cdot p}}\cdot\left[\mathrm{Id},X^{p-a-1}\right]=\overline {\frac{a_p}{p^2}}(T-\bar{u'}^{-1})\left[\mathrm{Id}, X^{p-a-1}\right]\in \mathrm{ind}_{KZ}^G J_1,\]
where we have used that $X^{r-p+1}Y^{p-1} \equiv X^{p-1}Y^{r-p+1} \pmod{V_r^{*}}$.
Hence, $F_1$ is a quotient of $\pi(p-a-1,\bar{u}^{-1}, \omega^{a}).$

  (ii)   Let $f_0,f_1,f_2 \in \mathrm{ind}_{KZ}^G \mathrm{Sym}^r \overline{\mathbb{Q}}_p^{\,2}$ be defined by \begin{eqnarray*}
        f_0 &=& \bigg[\mathrm{Id}, \frac{X^{r-1}Y-X^{r-p}Y^p}{p^2}\bigg]\\
   f_1 &=& -\frac{1}{ap \cdot a_p} \bigg(\sum_{\lambda \in \mathbb{F}_p^{\times}} \Phi_{g^0_{1,\left[\lambda\right]}}+(1-p)\Phi_{g^0_{1,0}}\bigg)\\
    f_2&=&\frac{1}{ap} \bigg(\frac{1}{1-p} \sum_{\lambda \in \mathbb{F}_p^{\times}} \Psi_{g^0_{1,\left[\lambda\right]}}+\Psi_{g^0_{1,0}}\bigg),
\end{eqnarray*}
    where 
\begin{eqnarray*}
    \Phi_{g} &=&
    \bigg[g,\sum_{\substack{
1<j\leq {r-a+1} \\
j \equiv 1 \pmod{p-1}
}}
\binom{r}{j} X^{r-j}Y^j-S_0\cdot X^{r-p}Y^{p} \bigg] \text{ with}\,\,  {S_0 =\sum_{\substack{
1<j\leq {r-a+1} \\
j \equiv 1 \pmod{p-1}
}}
\binom{r}{j} }\\
\Psi_g &=&\sum_{\mu \in \mathbb{F}_p^{\times}} \left[\mu\right]^{-a+1}\biggl( \left[gg^0_{1,\left[\mu\right]}, Y^r-X^{r-a}Y^a\right]\biggl) .
\end{eqnarray*}
Note that $S_0 \equiv a-r \pmod{p^2}$ by Lemma~\ref{lem:3.8}. %by \cite[Lemma 2.14]{GV22}.
   Then $T^-f_0 \equiv 0 \pmod p$. Computing $T^+f_0$, we have 
    \[
  T^+f_0= \frac{1-p}{p} \sum_{\lambda \in {\mathbb{F}_p}} \bigg[g^0_{1,\left[\lambda\right]},X^{r-1}Y\bigg] \pmod{X_{r-1}}
  \]
  Since $r\ge 3p+2$ and $v_p(pa_p)=3$ we have $T^-f_1 \equiv 0 \pmod p$. Now computing $T^+{\phi_g}$ we have  \begin{eqnarray*}
        T^+{\phi_g}&= &\sum_{\mu\in {\mathbb{F}_p}} \bigg[gg^{0}_{1,\left[\mu \right]},\sum_{\substack{
1<j\leq {r-a+1} \\
j \equiv 1 \pmod{p-1}
}}
\binom{r}{j} X^{r-j}({-\left[\mu \right]X+pY})^j-S_0\cdot X^{r-p}{(-\left[\mu \right]X+pY)}^{p} \bigg]\\
&=& \sum_{\mu\in \mathbb{F}_p^{\times}} \bigg[gg^{0}_{1,\left[\mu \right]},\sum_{i=0}^{r} {p^{i}}(-\left[\mu \right])^{1-i}
\sum_{\substack{
1<j\leq {r-a+1} \\
j \equiv 1 \pmod{p-1}
}}
\binom{j}{i}\binom{r}{j} X^{r-i}Y^i\\
&& \qquad+{\left[\mu \right]} {S_0}\cdot X^r-S_0\cdot {p^2}\cdot X^{r-1}Y \bigg]
+\,O(p^4).
\end{eqnarray*}
By definition of $S_0$, the coefficient of 
$X^r$ %$\bigg[gg^{0}_{1,\left[\mu \right]},X^r\biggl]$ 
is $0$. The coefficients of
$X^{r-1}Y$, $X^{r-2}Y^2$, $X^{r-3}Y^3$ are $0$ %$\left[gg^{0}_{1,\left[\mu \right]}, X^{r-1}Y\right]$, $\left[gg^{0}_{1,\left[\mu \right]}, X^{r-2}Y^2\right]$, $\left[gg^{0}_{1,\left[\mu \right]}, X^{r-3}Y^3\right]$ in $T^+\phi_g$, for $\mu \neq 0$, are 
 mod $p^3$, $p^4$, $p^5$ respectively, by Lemma~\ref{lem:3.8}.
 Thus, $T^+f_1 \equiv 0 \pmod{X_{r-1}}$.
 So we have \begin{eqnarray*}
     (T-a_p)f_1 &=&\frac{1}{ap} \biggl(\sum_{\lambda \in {\mathbb{F}_p}^{\times}}\bigg[g^0_{1,\left[\lambda\right]},\sum_{\substack{
1<j\leq {r-a+1} \\
j \equiv 1 \pmod{p-1}
}}
\binom{r}{j} X^{r-j}Y^j-S_0\cdot X^{r-p}Y^{p} \bigg]\\
&&\qquad +(1-p)\bigg[g^0_{1,0},\sum_{\substack{
1<j\leq {r-a+1} \\
j \equiv 1 \pmod{p-1}
}}
\binom{r}{j} X^{r-j}Y^j-S_0\cdot X^{r-p}Y^{p} \bigg]\biggl) \pmod{X_{r-1}}.
\end{eqnarray*}

Also, we have $(T-a_p) \left[g, Y^r-X^{r-a}Y^a\right]= \left[g\alpha,Y^r\right]+\, O(p^2)$. Hence,
 \begin{eqnarray*}
(T-a_p)\Psi_g &=& \sum_{\mu \in \mathbb{F}_p^{\times}} \left[\mu\right]^{-a+1}\left[gg^0_{1,\left[\mu\right]}\alpha,Y^r\right]+ \,O(p^2)\\
&=& \sum_{\mu \in \mathbb{F}_p^{\times}}\left[\mu\right]^{-a+1} \left[g, (\left[\mu \right]X+Y)^r\right]+O(p^2)\\
&=& (p-1) \biggl[g, \sum_{\substack{
0<j\leq {r-a+1} \\
j \equiv 1 \pmod{p-1}
}}
 \binom{r}{j} X^{r-j} Y^{j} \biggl]+\,O(p^2).
  \end{eqnarray*}
  Using this computation, we have 
  \begin{eqnarray*}
(T-a_p)f_2%&=&\frac{1}{ap(1-p)} \sum_{\lambda \in {\mathbb{F_p}^{\times}}} (p-1) \biggl[g^0_{1,\left[ \lambda \right]}, \sum_{\substack{
%0<j\leq {r-a+1} \\
%j \equiv 1 \pmod{p-1}
%}}
 %\binom{r}{j} X^{r-j} Y^{j} \biggl]\\
 %&& \quad + \frac{p-1}{ap} %\biggl[g^0_{1,0}, \sum_{\substack{
%0<j\leq {r-a+1} \\
%j \equiv 1 \pmod{p-1}
%}}
 &=& \frac{-1}{ap} \biggl(\sum_{\lambda \in \mathbb{F}_p^{\times}} \biggl[g^0_{1,\left[ \lambda \right]}, \sum_{\substack{
0<j\leq {r-a+1} \\
j \equiv 1 \pmod{p-1}
}}
 \binom{r}{j} X^{r-j} Y^{j} \biggl]\\
 && \quad + (1-p) \biggl[g^0_{1,0}, \sum_{\substack{
0<j\leq {r-a+1} \\
j \equiv 1 \pmod{p-1}
}}
 \binom{r}{j} X^{r-j} Y^{j} \biggl] \biggl).
  \end{eqnarray*}
  We can write \[ 
  T^+f_0=
\frac{1 - p}{ap}
\left[
g^{0}_{1,0},
\binom{r}{1}
X^{r-1}Y
\right]
+
\frac{1}{ap}
\sum_{\lambda \in \mathbb{F}_p^\times}
\left[
g^{0}_{1,[\lambda]},
\binom{r}{1}
X^{r-1}Y
\right]
+
\frac{a - 
r}{ap}
\sum_{\lambda \in \mathbb{F}_p}
\left[
g^{0}_{1,[\lambda]},
X^{r-1}Y
\right] \pmod{X_{r-1}}.
\]
Now combining these together, we have 
\[
(T-a_p)(f_0+f_1+f_2)= \frac{a-r}{ap}\sum_{\lambda \in \mathbb{F}_p}
\biggl[
g^{0}_{1,[\lambda]},
X^{r-1}Y-X^{r-p}Y^p
\biggl]- \frac{a_p}{p^2} 
\biggl[\mathrm{Id},X^{r-1}Y-X^{r-p}Y^p
\biggl].
\]
Moreover, \(\theta X^{r-p-1}\) is identified with \(X^{a-2}\) in $J_2=V_{[a-2]}\otimes D$.
So the image of \((T-a_p)f\) in $\operatorname{ind}_{KZ}^{G} J_2$
is given by

\[
\overline{\frac{a-r}{ap}}\sum_{\lambda \in \mathbb{F}_p}
\biggl[
g^{0}_{1,[\lambda]},
X^{a-2}
\biggl]- \frac{a_p}{p^2} 
\biggl[\mathrm{Id},X^{a-2}
\biggl].
\] 
Since \(\left[\mathrm{Id},X^{a-2}\right]\) generates
\(\mathrm{ind}_{KZ}^G J_2\) as a \(G\)-module, the map $\mathrm{ind}_{KZ}^G J_2\twoheadrightarrow F_2$ factors through $\pi(a-2,\bar{u}, \omega)$.
Hence, $F_2$ is a quotient of  $\pi(a-2,\bar{u'},\omega)$.
\end{proof}

\begin{proof}[Proof of Theorem~\ref{thm:3.1}] 
The above propositions may be summarized by the following table:

\begin{table}[h]
\centering
\begin{tabular}{|>{\centering\arraybackslash}m{5cm}|
                >{\centering\arraybackslash}m{1.3cm}|
                >{\centering\arraybackslash}m{1.3cm}|
                >{\centering\arraybackslash}m{1.3cm}|
                >{\centering\arraybackslash}m{1.3cm}|
                >{\centering\arraybackslash}m{1.3cm}|}
\hline
& $F_5$ & $F_4$ & $F_3$ & $F_2$ & $F_1$ \\ \hline
$p\nmid (r-a)(r-a+1)$
& & $\checkmark$& $\checkmark$ & & \\ \hline

$p\mid (r-a+1)$
& & &$\checkmark$ & & \\ \hline

$p\mid (r-a),\; p^2\nmid (r-a)$
& & & &$\checkmark$ & $\checkmark$\\ \hline

$p^2\mid (r-a)$
& & & &  &$\checkmark$ \\ \hline

\end{tabular}
\end{table} In particular, we have:
\begin{itemize}
    \item [(1)] If $p \nmid (r-a)(r-a+1)$, then by Proposition~\ref{prop:A_1} and Proposition~\ref{prop:A_3} we get $\overline{\Theta}^{\,{ss}} \cong (F_3 \oplus F_4)^{\,{ss}}$. Moreover, Proposition~\ref{prop:A_2} tells us that we have surjections $\pi(a-4,\bar{u},\omega^2) \twoheadrightarrow F_4$ and $\pi(p-a+1,\bar{u}^{-1},\omega^{a-1}) \twoheadrightarrow F_3$. Now as $\bar\Theta$ lies in image of mod $p$ LLC, we have $\overline{\Theta}^{\,{ss}} \cong \pi(a-4,\bar{u},\omega^2)^{\,{ss}} \oplus \pi(p-a+1,\bar{u}^{-1},\omega^{a-1})^{\,{ss}}$.
    \item [(2)] If $p \mid r-a+1$, then by Proposition~\ref{prop:A_1} and Proposition~\ref{prop:A_3}, we have $\overline{\Theta} \cong F_3 \cong \pi(p-a+1,0,\omega^{a-1})$.
    \item [(3)] If $p \mid r-a$ but $p^2 \nmid r-a$, then by    Proposition~\ref{prop:A_1} and Proposition~\ref{prop:A_4} we have $\overline{\Theta}^{\,{ss}} \cong (F_1 \oplus F_2)^{\,{ss}}$. Moreover, Proposition~\ref{prop:A_5} tells us that we have surjections $\pi(a-2,\bar{u'},\omega) \twoheadrightarrow F_2$ and $\pi(p-a-1,\bar{u'}^{-1},\omega^{a}) \twoheadrightarrow F_1$. We must have $\overline{\Theta}^{\,{ss}} \cong \pi(a-2,\bar{u'},\omega)^{\,{ss}} \oplus \pi(p-a-1,\bar{u'}^{-1},\omega^{a})^{\,{ss}}$.
    \item [(4)] If $p^2 \mid r-a$, then by Proposition~\ref{prop:A_1} and Proposition~\ref{prop:A_4}  we have $\overline{\Theta} \cong F_1 \cong \pi(p-a-1,0,\omega^{a})$.
\end{itemize}
Theorem~\ref{thm:3.1} now follows in all cases from the mod $p$ LLC.
\end{proof}

 \section{The case $a=1$}\label{sec: a=1}
 The remaining congruence classes $a \pmod{p-1}$ require a separate treatment.
In this section we will treat the case $r \equiv 1\pmod{p-1}$.
\begin{theorem}\label{thm: con 1}
    Let $p \geq 5$, $r\geq 3p+2$ and $r \equiv1 \pmod{p-1}$. Assume that $v_p(a_p)=2$. Then \begin{enumerate}
        \item [(1)] If $p \nmid r(r+1)$, then $\overline{V}^{ss}_{k,a_p} \cong \mu_{\lambda} \oplus \mu_{\lambda^{-1}} \cdot \omega^2 $, where $$\lambda = \overline{\frac{2}{r(r+1)}\cdot \frac{a_p}{p^2}}\in \overline{\mathbb{F}}_p^\times .$$
        \item[(2)] If $p \mid r+1$, then $\overline{V}^{ss}_{k,a_p} \cong \mathrm{ind}\bigl(\omega_2^{2}\bigl).$
        \item[(3)] If $p \mid r-p$ but $p^2\nmid r-p$, then $\overline{V}^{ss}_{k,a_p} \cong  \mathrm{ind}\bigl(\omega_2^{2}\bigl).$
        \item[(4)] If $p^2 \mid p-r$, then $\overline{V}^{ss}_{k,a_p} \cong \mu_{\lambda}\cdot \omega\oplus \mu_{\lambda^{-1}} \cdot \omega$, where $$\lambda+\lambda^{-1}= \overline{\frac{p-r}{a_p}} \in \overline{\mathbb{F}}_p.$$
        
    \end{enumerate}
    
\end{theorem}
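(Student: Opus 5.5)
We will follow the template of the proof of Theorem~\ref{thm:3.1}: determine the six subquotients $F_0,\dots,F_5$ of $\overline{\Theta}_{k,a_p}$ for $a=1$, kill all but one or two of them by exhibiting explicit functions $f\in\mathrm{ind}_{KZ}^G\mathrm{Sym}^r\overline{\mathbb{Q}}_p^2$ with $(T-a_p)f$ integral and generating, and exhibit the survivors as quotients of some $\pi(r',\lambda,\eta)$. By Proposition~\ref{prop:3.1}, for $a=1$ we have $J_1=V_{p-2}\otimes D$ (with $J_0=0$), $J_2=V_{p-2}\otimes D$, $J_3=V_1$, $J_4=V_{p-4}\otimes D^2$ and $J_5=V_3\otimes D^{p-2}$. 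By Remark~\ref{rmk:A}, $X_{r-1}^{**}/X_{r-1}^{***}=0$ also holds here, so $W_3\cong V_r^{**}/V_r^{***}$ is non-split.

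\textbf{Step 1: $p\nmid r(r+1)$.} First prove an analogue of Lemma~\ref{lem:3.3}: using the elements $\sum_{j\equiv 2}\binom{r}{j}X^{r-j}Y^j$ and $\sum_{j\equiv 2}\binom{r-1}{j}X^{r-j}Y^j$ of $X_{r-1}$, together with \cite[Lemma 2.14]{GV22} for $a=1$, show that $X^{r-2}Y^2\equiv c\,\theta^2X^{r-2p-2}$ modulo $X_{r-1}+V_r^{(3)}$ with $c=\overline{r(r+1)/2}$. The replacement of $\binom{a-1}{2}$ by $1$ comes from $[a-2]=p-2$. Then proceed as follows.
\begin{itemize}
\item With $f_0=[\mathrm{Id},\theta^2X^{r-2p-2}Y^{p-4}/p^2]$, the argument of Proposition~\ref{prop:A_1}(i) kills $F_5$.
\item With $f_0=[\mathrm{Id},\theta^2X^{r-2p-2}/p^2]$, the argument of Proposition~\ref{prop:A_2}(i) shows that $F_4$ is a quotient of $\pi(p-4,\bar u,\omega^2)$, where $u=\frac{2}{r(r+1)}\frac{a_p}{p^2}$.
\item The functions of Proposition~\ref{prop:A_2}(ii), with $\alpha_j$ now taken from Lemma~\ref{lem:2.6}(1) or Lemma~\ref{lem:2.5}, show that $F_3$ is a quotient of $\pi(1,\bar u^{-1},1)$.
\item The construction of Proposition~\ref{prop:A_3} kills $F_1$ and $F_2$, because $p\nmid r-1$.
\end{itemize}
The mod $p$ LLC then gives $\mu_\lambda\oplus\mu_{\lambda^{-1}}\omega^2$.

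\textbf{Step 2: $p\mid r+1$.} Here $c=0$, so the argument of Proposition~\ref{prop:A_1}(ii) gives $U_3=0$. Then $F_3\cong\pi(1,0,1)$ survives alone, giving $\mathrm{ind}(\omega_2^2)=\mathrm{ind}(\omega_2^{2p})$ up to the usual twist convention.

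\textbf{Step 3: $p\mid r-p$, i.e. $p\mid r$.} Now $c=0$ again, so $U_3=0$. Adapt Proposition~\ref{prop:A_4}(i), using Lemma~\ref{lem:2.5} with $a=p$, to kill $F_3$. This leaves $F_1$ and $F_2$, both built on the weight $V_{p-2}\otimes D$.
\begin{itemize}
\item If $p^2\nmid r-p$, mimic Proposition~\ref{prop:A_5}(i), with $\alpha_j$ from Lemma~\ref{lem:2.6}(2) and the correction term $f_{-1}$ on $\alpha$. This should kill one of $F_1,F_2$ outright, since the relevant unit $u'$ now involves $v(a_p/p)=1$ and forces the Hecke eigenvalue to be $0$. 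The survivor is $\pi(p-2,0,\omega)$, i.e. $\mathrm{ind}(\omega_2^{p})\otimes\omega=\mathrm{ind}(\omega_2^{2p})$ up to conjugation.
\item If $p^2\mid r-p$, both $F_1$ and $F_2$ survive, and we would show each is a quotient of $\pi(p-2,\lambda^{\pm1},\omega)$ with $\lambda+\lambda^{-1}=\overline{(p-r)/a_p}$.
\end{itemize}

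\textbf{Main obstacle.} The hardest step is this last subcase. Both surviving weights coincide, so $(T-a_p)f$ projects to a combination of $[\mathrm{Id},X^{p-2}]$ and $[\alpha,X^{p-2}]$ in two copies of the same induced module. One therefore has to produce a single function whose images in $\mathrm{ind}J_1$ and $\mathrm{ind}J_2$ are linked. Our first attempt is to combine the $f_0,f_1,f_{-1}$ of Proposition~\ref{prop:A_5}, using $\alpha_j$ from Lemma~\ref{lem:2.6}(1),(2) and Remark~\ref{rmk:2.1} to evaluate the $p^{-1}$-terms. This should produce a quadratic relation $T^2-\overline{((p-r)/a_p)}\,T+1$ acting on $[\mathrm{Id},X^{p-2}]$. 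The conclusion then follows from the LLC description of $\pi(p-2,\lambda,\omega)^{ss}\oplus\pi(p-2,\lambda^{-1},\omega)^{ss}$.
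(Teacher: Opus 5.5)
There is a genuine gap in Step 3 (the case $p\mid r$). It rests on two false premises, so the subquotient bookkeeping there is wrong.

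First, for $a=1$ one has $J_1=0$, not $V_{p-2}\otimes D$. By Lemma~\ref{lem:2.10} (with $m=0$, $a'=1$, $i=1$), the monomial $X^{r-1}Y$ maps to $X^{p-2}\neq 0$ in the cosocle of the non-split extension $V_r/V_r^{*}$ of Proposition~\ref{prop:3.1}(i). Hence it generates that module, and $X_{r-1}+V_r^{*}=V_r$; this is \cite[Lemma 4.3]{GV22}, used in Proposition~\ref{prop:B_6}(i). So $F_1=0$ in every case. It can never be a survivor, and there is nothing in $\mathrm{ind}_{KZ}^GJ_1$ to link with $\mathrm{ind}_{KZ}^GJ_2$.

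Second, $F_3$ does not die whenever $p\mid r$. If $p\mid r-p$ but $p^2\nmid r-p$, then $U_3=0$, $F_1=0$ and $F_2=0$, so $\overline{\Theta}=F_3\neq 0$. Your adaptation of Proposition~\ref{prop:A_4}(i) with $a=p$ cannot kill $F_3$: its $J_3$-component is a unit multiple of $\frac{r\,a_p}{p^2}[g^0_{1,0},X]$, which vanishes mod $p$ once $p\mid r$. What that construction actually yields is the socle element $\frac{r-p}{p}[g^0_{1,0},Y^{p-2}]\in\mathrm{ind}_{KZ}^GJ_2$, once you move it up one level and add $[\mathrm{Id},\frac{1-p}{p}(X^r-X^pY^{r-p})]$ to absorb the non-integral $j=0$ term $\frac{p-1}{p}X^r$. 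That element is a generator when $p^2\nmid r-p$. This is exactly how the paper kills $F_2$ (Proposition~\ref{prop:B_4}), leaving $F_3$ as a quotient of $\pi(1,0,1)$. Your answer $\mathrm{ind}(\omega_2^{2p})$ in this subcase is right only because $\pi(p-2,0,\omega)\cong\pi(1,0,1)$.

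When $p^2\mid r-p$ the roles reverse. $F_3$ can now be killed, but only by the same construction with one extra power of $p$ in every denominator and $\alpha_j$ from Lemma~\ref{lem:2.6}(1) (Proposition~\ref{prop:B_5}). This is integral because the relevant root-of-unity congruences then hold mod $p^3$. Its $J_3$-image is the generator $-\overline{a_p/p^2}\,[g^0_{2,0},X]$, since $r/p$ is a unit. Then $F_2$ is the only survivor, so both $\pi(p-2,\lambda,\omega)^{ss}$ and $\pi(p-2,\lambda^{-1},\omega)^{ss}$ must come from $\mathrm{ind}_{KZ}^GJ_2$ alone. The missing idea is a single function whose image there is $-\frac{a_p}{p^2}\bigl(T^2-\frac{p-r}{a_p}T+1\bigr)[\mathrm{Id},X^{p-2}]$. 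The paper (Proposition~\ref{prop:B_3}) builds it from three pieces:
\begin{itemize}
\item level-two vectors $[g^0_{2,[\lambda]+p[\mu]},(Y^r-X^{r-p}Y^p)/p^2]$, which produce the $T^2$ term;
\item a level-one correction with $\alpha_j$ from Lemma~\ref{lem:2.6}(2);
\item the vector $\frac{r}{p}[\mathrm{Id},(X^{r-1}Y-X^{r-p}Y^p)/p^2]$.
\end{itemize}
The middle coefficient comes from the integral level-one term $\frac{p-r}{p^2}\sum_\lambda[g^0_{1,[\lambda]},\theta X^{r-p-1}]$.

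There are also smaller problems in Step 1:
\begin{itemize}
\item The lemmas you cite for the $\alpha_j$ in the $F_3$ computation (Lemma~\ref{lem:2.6}(1), and Lemma~\ref{lem:2.5} with $a=p$) both require $p\mid r$, which is excluded there. The right input is Lemma~\ref{lem:2.2} applied to $r-1$, again with a correction for the $j=0$ term $\frac{p-1}{p}X^r$; the paper shifts up a level and adds $[\mathrm{Id},(X^r-X^pY^{r-p})/p]$.
\item $F_2=0$ there because $p\nmid r$ (\cite[Lemma 4.34]{GV22}), not because $p\nmid r-1$. The latter does not even follow from $p\nmid r(r+1)$.
\item The $F_5$ test vector should be $\theta^2X^{r-3p+2}Y^{p-4}$, which has degree $r$.
\end{itemize}
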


%\begin{lemma}\label{lem:4.2}
   % Let $p \le r\equiv 1 \pmod{p-1}$. Then \[
%X_{r-1}^{**}/X_{r-1}^{***} = 0.
%\]
%\end{lemma}
%\begin{proof}
  %  By  \cite[Lemma 4.4]{GV22} we have $\frac{X_{r-1}^{**}}{X_{r-1}^{***}} =\frac{X_{r}^{**}}{X_{r}^{***}} $. Now by  \cite[Proposition 4.8]{GV22} we have $\frac{X_{r}^{**}}{X_{r}^{***}} =0$.
%\end{proof}
\begin{lemma} \label{theta-square for a = p}
%\label{lem:4.3}
Let $2p+3 \le r \equiv 1\pmod{p-1}$. Then $$X^{r-2}Y^2 \equiv \frac{r(r+1)}{2} \theta^2 X^{r-2p-2} \pmod{ X_{r-1}+V_r^{(3)}}.$$  
\end{lemma}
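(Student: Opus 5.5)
I will follow the proof of Lemma~\ref{lem:3.3} verbatim, specialising to $a=1$. Here the relevant residue class is $a=p$ in the convention of \cite[Lemma 2.14]{GV22}: we have $r\equiv 1 \pmod{p-1}$, so $[a-m]$ must be read with $a=p$. We produce two elements of $X_{r-1}$ supported on monomials $X^{r-j}Y^j$ with $j\equiv 2 \pmod{p-1}$. They are
\[
\sum_{j\equiv 2} \binom{r}{j}X^{r-j}Y^j \in X_r\subset X_{r-1}+\langle X^r,Y^r\text{-type terms}\rangle
\quad\text{and}\quad
\sum_{j\equiv 2}\binom{r-1}{j}X^{r-j}Y^j\in X_{r-1}.
\]
Since $r\equiv 1$, the extreme exponents $j=r-1$ and $j=1$ are not $\equiv 2$. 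So the sums run over $2\le j\le r-p$, with the $j=2$ term isolated. For $m=0,1,2$ we compute the mod $p$ sums $\sum_{j} \binom{j}{m}\binom{r}{j}$ and $\sum_j\binom{j}{m}\binom{r-1}{j}$ over $2<j<r-2$, $j\equiv 2$, via \cite[Lemma 2.14]{GV22}, equivalently the roots-of-unity formula $\binom{r}{m}\sum_{\xi\in\mu'}\xi^{m-2}(1+\xi)^{r-m}$ reduced mod $p$ with $(1+\xi)^{r-m}\equiv(1+\xi)^{[1-m]}$ suitably. For $\binom{r}{j}$ I expect the totals to be $\binom{r}{m}\binom{p-m}{2-m}$ up to the $\delta$-correction. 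This gives $m=0$: $\binom{p}{2}\equiv 0$; $m=1$: $r(p-1)\equiv -r$; $m=2$: $\binom{r}{2}$, the latter being cancelled by the $j=2$ term. For $\binom{r-1}{j}$, where $r-1\equiv 0$ (i.e.\ $[\,\cdot\,]=p-1$), the analogous values are $\binom{p-1}{2}\equiv 1$, $(r-1)(p-2)\equiv-2(r-1)$, and $\binom{r-1}{2}$.

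Next I correct each tail sum by $A X^{r-2p}Y^{2p}+BX^{r-p-1}Y^{p+1}$ so that, by \cite[Lemma 2.7]{GV22} (the criterion for lying in $V_r^{(3)}$ via vanishing of the $m=0,1,2$ moment sums mod $p$), the corrected tail lies in $V_r^{(3)}$. This gives two linear relations in $X^{r-2}Y^2$, $X^{r-p-1}Y^{p+1}$ and $X^{r-2p}Y^{2p}$ modulo $X_{r-1}+V_r^{(3)}$. For $\binom{r}{j}$ the relation reads $-\binom r2X^{r-2}Y^2+(\binom r2 - 0 - r)X^{r-2p}Y^{2p}+rX^{r-p-1}Y^{p+1}$, and for $\binom{r-1}{j}$ the analogous relation has the constants above. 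I take a linear combination eliminating $X^{r-p-1}Y^{p+1}$ relative to the $\theta^2$ pattern. Then I substitute $\theta^2X^{r-2p-2}=X^{r-2}Y^2-2X^{r-p-1}Y^{p+1}+X^{r-2p}Y^{2p}$ to eliminate $X^{r-2p}Y^{2p}$. The result should be a relation $c\,X^{r-2}Y^2-c'\,\theta^2X^{r-2p-2}\in X_{r-1}+V_r^{(3)}$ with $c$ a unit (the analogue of $\binom{a-1}{2}$ with $a=p$, i.e.\ $\binom{p-1}{2}\equiv1$). As a sanity check, formally setting $a=p\equiv0$ in Lemma~\ref{lem:3.3} gives $\binom{r+1}{2}/\binom{-1}{2}=\frac{r(r+1)}{2}$, exactly the claimed constant, which strongly suggests the computation goes through unchanged.

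The main obstacle is the bookkeeping of the moment sums in the degenerate class $a=1$. Here $[a-m]$ and $[2-m]$ hit the boundary value $p-1$, so the $\delta_{p-1,[2-m]}$ corrections of \cite[Lemma 2.14]{GV22} and the boundary terms $j=1$, $j=r-1$, $j=r$ must be tracked carefully. For example, the $\binom{r-1}{j}$ sum with $m=0$ picks up the extra $\delta$ term. I would first verify each of the six sums directly from the roots-of-unity identity, and confirm that the hypothesis $r\ge 2p+3$ makes $X^{r-2p}Y^{2p}$ and $X^{r-p-1}Y^{p+1}$ genuine monomials distinct from the $j=2$ one. Only then would I solve the $2\times2$ system, checking the final coefficient against $\frac{r(r+1)}{2}$.
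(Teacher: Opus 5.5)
Your plan is the paper's proof: the paper proves this lemma by rerunning the proof of Lemma~\ref{lem:3.3} with $a=p$. Your moment totals are correct: $0$, $-r$, $\binom r2$ for the $\binom rj$ sum and $1$, $-2(r-1)$, $\binom{r-1}2$ for the $\binom{r-1}j$ sum. So is your sanity check $\binom{r+1}2/\binom{-1}2=\frac{r(r+1)}2$.

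However, the first relation you write down is wrong, and the elimination fails if you use it. The constants $A,B$ must be fitted to the \emph{tail} moments, i.e.\ after removing the $j=2$ term. You did this for $m=0$ but not for $m=1$. The tail $m=1$ moment is $r(p-1)-2\binom r2=r(p-r)\equiv -r^2$, not $-r$. For instance, for $p=5$, $r=13$ the tail is $\equiv X^7Y^6+X^3Y^{10}$, whose $j$-moment is $16\equiv 1\equiv -r^2$, whereas $-r\equiv 2$. Hence $B\equiv r^2$ and $A\equiv\binom r2-r^2$, and the relation is
\[
-\binom r2X^{r-2}Y^2+\Bigl(\binom r2-r^2\Bigr)X^{r-2p}Y^{2p}+r^2X^{r-p-1}Y^{p+1}\in X_{r-1}+V_r^{(3)},
\]
which is \eqref{A,B} at $a=p$. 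Similarly, the $\binom{r-1}{j}$ relation has $B'\equiv r(r-1)$ and $A'\equiv\binom{r-1}2-1-r(r-1)$.

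With these constants, subtracting the two relations cancels $X^{r-2p}Y^{2p}$ and leaves $-(r-1)X^{r-2}Y^2+rX^{r-p-1}Y^{p+1}$. With your $B=r$ the $X^{r-2p}Y^{2p}$ terms do not cancel in general. Add the difference to the first relation and substitute $X^{r-2p}Y^{2p}=\theta^2X^{r-2p-2}-X^{r-2}Y^2+2X^{r-p-1}Y^{p+1}$. This gives
\[
X^{r-2}Y^2+\Bigl(\binom r2-r^2\Bigr)\theta^2X^{r-2p-2}\in X_{r-1}+V_r^{(3)}.
\]
So the leading coefficient is $1\equiv\binom{p-1}2$ and the constant is $r^2-\binom r2=\frac{r(r+1)}2$, as claimed.

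Two smaller slips. First, the largest $j\equiv 2 \pmod{p-1}$ in the sums is $r-p+2$, not $r-p$. Second, the $\delta_{p-1,[2-m]}$ correction of \cite[Lemma 2.14]{GV22} enters at $m=2$ for both sums, not at $m=0$.
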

\begin{proof}
Similar to  the
proof of Lemma~\ref{lem:3.3} 
(substitute $a =p$ in that proof). 
\end{proof}

\begin{prop}\label{prop:B_1}
     Let $p \ge 5$, $r \ge 3p+2$ and $r \equiv 1 \bmod(p-1)$. Then\begin{enumerate}
        \item [(i)] $F_5=0$.
        \item[(ii)] If $p \mid r(r+1)$, then $U_3=0$.
    \end{enumerate}
\end{prop}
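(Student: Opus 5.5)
I will mirror the proof of Proposition~\ref{prop:A_1}, now using Lemma~\ref{theta-square for a = p} in place of Lemma~\ref{lem:3.3}. Throughout I work with the representative $a=p$ of the class of $r$ modulo $p-1$, as is done in the definition of the filtration. By Proposition~\ref{prop:3.1}(iii), $V_r^{**}/V_r^{***}$ is then a non-split extension of $J_5\cong V_3\otimes D^{p-2}$ by $J_4\cong V_{p-4}\otimes D^2$. By Remark~\ref{rmk:A}, Lemma~\ref{lem:3.2} also holds in this case, so $X_{r-1}^{**}/X_{r-1}^{***}=0$ and $W_3\cong V_r^{**}/V_r^{***}$.

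For (i), I take
\[
f_0=\Bigl[\mathrm{Id},\tfrac{\theta^2X^{r-2p-2-(p-4)}Y^{p-4}}{p^2}\Bigr]=\Bigl[\mathrm{Id},\tfrac{\theta^2X^{r-3p+2}Y^{p-4}}{p^2}\Bigr],
\]
which is the formula of Proposition~\ref{prop:A_1} with $a=p$. The exponent $r-3p+2$ is nonnegative because $r\ge 3p+2$.
\begin{itemize}
\item Since $\theta^2$ is divisible by $X^2$ and $r\ge 3p+2$, every monomial in $\theta^2X^{r-3p+2}Y^{p-4}$ has $X$-degree at least $3$. The factor $p^{r-i}$ in $T^-$ therefore gives $T^-f_0\equiv 0\pmod p$.
\item In $T^+f_0$, the $\theta^2$ factor contributes $p^2$. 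The only surviving term mod $p$ is $\sum_\lambda[g^0_{1,[\lambda]},(-[\lambda])^{p-4}X^{r-2}Y^2]$, which comes from the lowest $Y$-power in $\theta^2$ after the substitution $Y\mapsto -[\lambda]X+pY$.
\item By Lemma~\ref{theta-square for a = p}, this term becomes $\tfrac{r(r+1)}{2}\sum_\lambda[g^0_{1,[\lambda]},(-[\lambda])^{p-4}\theta^2X^{r-2p-2}]$ in $\mathrm{ind}_{KZ}^GP$. In particular it is integral.
\item The remaining term is $-\overline{a_p/p^2}\,[\mathrm{Id},\theta^2X^{r-3p+2}Y^{p-4}]$.
\end{itemize}

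Next I project to $\mathrm{ind}_{KZ}^GJ_5$. Apply Lemma~\ref{lem:2.10} with $m=2$, $r'=r-2p-2$, $a'=[p-4]=p-4$ and $i=p-4$. This gives the image of $\theta^2X^{r'-(p-4)}Y^{p-4}$ in $J_5$ as $\pm\binom{3}{0}X^{3}=\pm X^3\neq0$. On the other hand, $\theta^2X^{r-2p-2}$ has $i=0<a'$, so it maps to zero in $J_5$.

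Hence the image of $(T-a_p)f_0$ in $\mathrm{ind}_{KZ}^GJ_5$ is $\mp\overline{a_p/p^2}\,[\mathrm{Id},X^3]$. Since $v(a_p)=2$, this is a unit multiple of $[\mathrm{Id},X^3]$, which generates $\mathrm{ind}_{KZ}^G J_5$. This gives $F_5=0$.

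For (ii), assume $p\mid r(r+1)$. Then Lemma~\ref{theta-square for a = p} says $X^{r-2}Y^2\equiv0$ in $P$, so the $T^+f_0$ contribution vanishes in $\mathrm{ind}_{KZ}^GP$. Consequently $(T-a_p)f_0$ is integral with image $-\overline{a_p/p^2}\,[\mathrm{Id},\theta^2X^{r-3p+2}Y^{p-4}]$.

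This vector lies in $W_3\cong V_r^{**}/V_r^{***}$ and maps to a nonzero element of the cosocle $J_5$. Because the extension is non-split, it generates $W_3$ as a $\Gamma$-module. Therefore $\mathrm{ind}_{KZ}^GW_3\twoheadrightarrow U_3$ is the zero map, i.e.\ $U_3=0$.

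The main obstacle is the bookkeeping that justifies "dies mod $p$". One must check that every other term of $T^+f_0$, namely those with $Y$-degree at least $3$, carries a factor $p^{j}$ with $j\ge3$, so that dividing by $p^2$ still leaves it $\equiv0\pmod p$. One must also confirm that the coefficient of $X^{r-2}Y^2$ is exactly the one claimed. This uses that $\theta^2=X^2Y^2(X^{p-1}-Y^{p-1})^2$, so the expansion of $\theta^2X^{r-3p+2}Y^{p-4}$ under $Y\mapsto -[\lambda]X+pY$ has its $p^2$-term concentrated in $X^{r-2}Y^2$ with coefficient $(-[\lambda])^{p-4}$ up to the $p^2$ factor. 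I would also double-check that the sign and binomial coming from Lemma~\ref{lem:2.10} are nonzero mod $p$ when $a'=p-4$ and $p\ge5$.
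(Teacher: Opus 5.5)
Your proposal is correct and is the paper's own argument: the paper proves this by rerunning the proof of Proposition~\ref{prop:A_1} with $a=p$, i.e.\ with $f_0=[\mathrm{Id},\theta^2X^{r-3p+2}Y^{p-4}/p^2]$, using Remark~\ref{rmk:A} in place of Lemma~\ref{lem:3.2} and Lemma~\ref{theta-square for a = p} in place of Lemma~\ref{lem:3.3}. The bookkeeping you flagged also checks out: $\theta(X,-[\lambda]X+pY)\equiv pX^pY \pmod{p^2}$, so only the term $(-[\lambda])^{p-4}X^{r-2}Y^2$ survives mod $p$, and the coefficient from Lemma~\ref{lem:2.10} is $\pm\binom{3}{0}\neq 0$.
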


\begin{proof}
   The proof is identical to the proof of Proposition~\ref{prop:A_1} (taking $a=p$), except that we use Remark~\ref{rmk:A} instead of Lemma~\ref{lem:3.2} and Lemma~\ref{theta-square for a = p} instead of Lemma~\ref{lem:3.3}.
\end{proof}
\begin{prop}\label{prop:B_2}
    Let $3p+2 \le r \equiv 1 \pmod{p-1}$. Assume that $v_p(a_p)=2$ and $p\nmid r(r+1)$, so that $u 
 := \frac{2}{r(r+1)} \cdot \frac{a_p}{p^2}$ is a $p$-adic unit. Then  \begin{enumerate}
        \item [(i)] $F_4$ is a quotient of $\pi(p-4, \bar u,\omega^2)$.
        \item [(ii)] $F_3$ is a quotient of $\pi(1,{\bar u}^{-1},1)$.
    \end{enumerate}
\end{prop}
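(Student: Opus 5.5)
The plan is to transcribe the proof of Proposition~\ref{prop:A_2} with $a$ replaced by $p$. When $r\equiv 1 \pmod{p-1}$ we use the representative $a=p$. Then $(a-1)(a-2)\equiv 2$ and $(r-a)(r-a+1)\equiv r(r+1) \pmod p$, so the unit $u$ of Proposition~\ref{prop:A_2} becomes the present $u=\frac{2}{r(r+1)}\cdot\frac{a_p}{p^2}$. This is consistent with Lemma~\ref{theta-square for a = p}.

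For (i), take
\[
f_0=\left[\mathrm{Id},\frac{\theta^2X^{r-2p-2}}{p^2}\right].
\]
Since $r\ge 3p+2$, every coefficient of $T^-f_0$ carries at least $p^{3p+2-2p-2}/p^2$, which is $\ge p^{p-2}$, so $T^-f_0\equiv 0 \pmod p$. Only the $j=2$ term of $T^+f_0$ survives mod $p$, giving $\sum_{\lambda}[g^0_{1,[\lambda]},X^{r-2}Y^2]$. By Lemma~\ref{theta-square for a = p} this equals $\frac{r(r+1)}{2}\sum_\lambda[g^0_{1,[\lambda]},\theta^2X^{r-2p-2}]$ in $\mathrm{ind}_{KZ}^G P$. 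So $(T-a_p)f_0$ is integral, and it reduces to
\[
\overline{\tfrac{r(r+1)}{2}}\Bigl(\sum_\lambda[g^0_{1,[\lambda]},\theta^2X^{r-2p-2}]-\bar u\,[\mathrm{Id},\theta^2X^{r-2p-2}]\Bigr).
\]
By Lemma~\ref{lem:2.10} with $m=2$, $a'=[p-4]$, the vector $\theta^2X^{r-2p-2}$ is the highest-weight vector $X^{p-4}$ of the socle $J_4=V_{p-4}\otimes D^2$. Glover's identification and Breuil's Lemma~5.3 then identify the sum with $T[\mathrm{Id},X^{p-4}]$, so the image in $\mathrm{ind}_{KZ}^GJ_4$ is a unit multiple of $(T-\bar u)[\mathrm{Id},X^{p-4}]$. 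Since $[\mathrm{Id},X^{p-4}]$ generates $\mathrm{ind}_{KZ}^GJ_4$, $F_4$ is a quotient of $\pi(p-4,\bar u,\omega^2)$.

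For (ii), take $f=f_0+f_1$ as in Proposition~\ref{prop:A_2}(ii) with $a=p$, namely
\[
f_1=\sum_\lambda\left[g^0_{1,[\lambda]},\frac{XY^{r-1}-X^pY^{r-p}+X^{r-p+1}Y^{p-1}-X^{r-2p+2}Y^{2p-2}}{p^2}\right],
\]
together with $f_0=[\mathrm{Id},\frac{p-1}{pa_p}\sum\alpha_jX^{r-j}Y^j]$ over $0<j<r-1$, $j\equiv 0 \pmod{p-1}$. The $\alpha_j$ are chosen so that $\alpha_j\equiv\binom{r-1}{j}\pmod p$ and $\sum_j\binom{j}{n}\alpha_j\equiv 0\pmod{p^{4-n}}$ for $n=0,1,2$. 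This is Lemma~\ref{lem:2.2} for $r-1\equiv 0$, that is, with representative $p-1$ in place of $a$. The roots-of-unity computations there go through, the only change being the $\delta_{a,p-1}$ correction, which only affects the boundary term $j=0$, and that term is excluded.

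With these choices, $T^\pm f_0$ and $T^+f_1$ vanish mod $p$, the latter by Lemma~\ref{theta-square for a = p}. Then $T^-f_1-a_pf_0$ is computed mod $V_r^{**}$ using \cite[Lemma 2.12]{GV22} together with Lemma~\ref{lem:2.1} for $i=0,1$. This should yield $\frac{r(r+1)}{2}\theta X^{r-2p+1}Y^{p-2}$ plus a multiple of $X^pY^{r-p}$. We then rewrite $X^pY^{r-p}=\theta Y^{r-p-1}+XY^{r-1}$, where $XY^{r-1}\in X_{r-1}$ and $\theta Y^{r-p-1}$ dies in the cosocle $J_3=V_1$. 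Also $\theta X^{r-2p+1}Y^{p-2}\mapsto X$ in $J_3$ by Lemma~\ref{lem:2.10} with $m=1$, $a'=p-2$. Combining with $-a_pf_1$, the image of $(T-a_p)(-f)$ in $\mathrm{ind}_{KZ}^GJ_3$ is $\overline{a_p/p^2}\,(T-\bar u^{-1})[\mathrm{Id},X]$. Since $J_3=V_1$ with trivial twist, $F_3$ is a quotient of $\pi(1,\bar u^{-1},1)$.

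The main obstacle is the coefficient bookkeeping in (ii). We must confirm that Lemma~\ref{lem:2.1} applies with $r-1$ in the class of $p-1$ (with $i=0,1$ and $a=p-1$ in the required range). We must also confirm that the resulting constant $\frac{(p-1)}{p}\sum (j-r)(\binom{r-1}{j}-\alpha_j)$ really reduces to $\frac{r(r+1)}{2}$, now that $(a-1)(a-2)$ is replaced by $(p-1)(p-2)\equiv 2$. I would check this first by directly substituting $a=p$ into the identity used in Proposition~\ref{prop:A_2}.
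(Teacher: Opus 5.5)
Your part (i) is exactly the paper's argument: it just takes $a=p$ in Proposition~\ref{prop:A_2}.

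\textbf{The gap in part (ii).} Part (ii) fails at the boundary term $j=0$ that you set aside. That term is excluded from $f_0$, but it is \emph{not} excluded from $T^-f_1$. Up to terms divisible by $p$, the $XY^{r-1}/p^2$ part of your $f_1$ contributes
\[
\Bigl[\mathrm{Id},\ \frac{1}{p}\sum_{j}\binom{r-1}{j}\Bigl(\sum_{\lambda\in\mathbb{F}_p}[\lambda]^{r-1-j}\Bigr)X^{r-j}Y^j\Bigr].
\]
Since $(p-1)\mid r-1$, the $j=0$ inner sum equals $p-1$. So $T^-f_1$ contains the non-integral term $\frac{p-1}{p}X^r$ at $\mathrm{Id}$.

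Nothing cancels this term. The function $-a_pf_0$ only involves $0<j<r-1$, and $T^\pm f_0$, $T^+f_1$, $-a_pf_1$ all live on other vertices. Hence $(T-a_p)f$ is not integral, and you cannot reduce it mod $p$.

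In Proposition~\ref{prop:A_2} this never arises, because there $j\equiv a-1\not\equiv 0 \pmod{p-1}$ for $5\le a\le p-1$. You also cannot simply add $j=0$ to $f_0$. Then $T^+f_0$ picks up $\frac{p-1}{pa_p}\alpha_0X^r$ at $g^0_{1,0}$, which has valuation $-3$.

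\textbf{How the paper repairs it.} The paper shifts your construction one level up and adds a new function:
\begin{itemize}
\item its $f_2$ is your $f_1$ placed on the level-$2$ vertices $g^0_{1,0}g^0_{1,[\lambda]}$;
\item its $f_1$ is $[g^0_{1,0},\frac{p-1}{pa_p}\sum\alpha_jX^{r-j}Y^j]$;
\item its $f_0$ is $[\mathrm{Id},\frac{X^r-X^pY^{r-p}}{p}]$.
\end{itemize}
Now the offending $\frac{p-1}{p}X^r$ lands at $g^0_{1,0}$. Meanwhile $T^+f_0\equiv[g^0_{1,0},X^r/p]$, modulo integral terms that lie in $\mathrm{ind}_{KZ}^GX_{r-1}$ mod $p$. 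This holds because at $\lambda\neq 0$ the $X^r$-coefficient is $\frac1p\bigl(1-(-[\lambda])^{r-p}\bigr)=0$. Also $T^-f_0$ and $a_pf_0$ vanish mod $p$. The two contributions add up to the integral element $[g^0_{1,0},X^r]$, which dies in $P$.

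After that, your bookkeeping goes through unchanged; Lemma~\ref{lem:2.1} with $i=0,1$ does give the constant $\frac{r(r+1)}{2}$. The image in $\mathrm{ind}_{KZ}^GJ_3$ is $\overline{a_p/p^2}\,(T-\bar u^{-1})[g^0_{1,0},X]$. Since $[g^0_{1,0},X]$ generates, $F_3$ is again a quotient of $\pi(1,\bar u^{-1},1)$.

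\textbf{An alternative repair.} You could instead stay at your level and cancel the term at $\mathrm{Id}$ from its other neighbour $\alpha$. Add $[\alpha,\frac{X^r-X^pY^{r-p}}{p}]$, in the manner of the $f_{-1}$ used for $a=p-1$ in Proposition~\ref{prop:A_3}. Its contribution at $\mathrm{Id}$ is $\frac1pX^r$ plus terms divisible by $p$. Its remaining contributions are integral and lie in $\mathrm{ind}_{KZ}^GX_{r-1}$ modulo $p$.
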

\begin{proof}
    (i) Similar to the proof of Proposition~\ref{prop:A_2}, taking $a = p$ there.
   \begin{comment}
    Let
\[
f_0=\left[\mathrm{Id},\frac{\theta^2X^{r-2p-2}}{p^2}\right]
\in \mathrm{ind}_{KZ}^G \mathrm{Sym}^r \overline{\mathbb{Q}}_p^{\,2}.
\]
Since \(r\geq 3p+2\), we have
\[
T^-f_0\equiv 0 \pmod p.
\]
Moreover,
\[
T^+f_0
 = \sum_{\lambda\in\mathbb F_p}
 \left[g^0_{1,\left[\lambda \right]},X^{r-2}Y^2\right].
\]
Therefore
\[
(T-a_p)f_0
 \equiv
 \sum_{\lambda\in\mathbb F_p}
 \left[g^0_{1,\left[\lambda\right]},X^{r-2}Y^2\right]
 -
 \overline{\left(\frac{a_p}{p^2}\right)}
 \left[\mathrm{Id},\theta^2X^{r-2p-2}\right]
 \pmod p.
\]
By Lemma~\ref{theta-square for a = p}, the image of \((T-a_p)f_0\) in
\(\mathrm{ind}_{KZ}^G P\) is
\[
\overline{\frac{r(r+1)}{2}}
\sum_{\lambda\in\mathbb F_p}
\left[g^0_{1,\left[\lambda \right]},\theta^2X^{r-2p-2}\right]
-
\overline{\left(\frac{a_p}{p^2}\right)}
\left[\mathrm{Id},\theta^2X^{r-2p-2}\right].
\]
By \cite[(4.2)]{Glo78} and \cite[Lemma~5.3]{Bre03b}, this further maps to
\[
\overline{\frac{r(r+1)}{2}}
\,(T-\bar u)
\left[\mathrm{Id},X^{p-4}\right] \in \mathrm{ind}_{KZ}^G J_4.
\]
Since \(\left[\mathrm{Id},X^{p-4}\right]\) generates
\(\mathrm{ind}_{KZ}^G J_4\) as a \(G\)-module, the map $\mathrm{ind}_{KZ}^G J_4 \twoheadrightarrow F_4$ factors through $\pi(p-4,\bar{u}, \omega^2)$.
\end{comment} 

(ii) Let us consider $f_0\in \mathrm{ind}_{KZ}^G \mathrm{Sym}^r \bar{\mathbb{Q}}_p^2$ defined by \begin{eqnarray*}
    f_2 &=& \sum_{\lambda \in \mathbb{F}_p} \left[ g^0_{2, \left[\lambda\right]} ,\frac{XY^{r-1}-X^p Y^{r-p}+X^{r-p+1}Y^{p-1}-X^{r-2p+2}Y^{2p-2}}{p^2} \right],\\
f_1 &=&\left[g^0_{1,0}, \frac{(p-1)}{p\cdot a_p} \sum_{\substack{
    0<j<r-1 \\
    j \equiv 0 \pmod{p-1}
}}
\alpha_j \cdot X^{\,r-j} Y^j \right],\\
     f_0 &=& \biggl[ \mathrm{Id}, \frac{X^r- X^pY^{r-p}}{p}\biggl],
     \end{eqnarray*}
      where $\alpha_j$ are integers as in Lemma~\ref{lem:2.2} (applied for $r-1$). Both $T^-f_1$ and $T^+f_1$ die mod $p$ and $T^+f_2$ dies mod ${p}$ by Lemma~\ref{theta-square for a = p}. Also $T^-f_0$ and $-a_pf_0$ die mod $p$. We compute \begin{eqnarray*}
    T^-f_2 &=& \biggl[g^0_{1,0},\frac{p-1}{p}\sum_{\substack{
    0\le j < r-1 \\
    j \equiv 0 \pmod{p-1}
}}
\binom{r-1}{j}\cdot X^{\,r-j} Y^j \biggl],\\
%T^-f_2-a_pf_1 &=& \biggl[g^0_{1,0},\frac{p-1}{p} \cdot X^r+\frac{p-1}{p}\sum_{\substack{
   % 0 < j < r-1 \\
  %  j \equiv 0 \pmod{p-1}
%}}
%\biggl(\binom{r-1}{j}-\alpha_j\biggl)\cdot X^{\,r-j} Y^j \biggl].
T^+f_0 &=& \biggl[g^0_{1,0}, \frac{X^r}{p}\biggl].
\end{eqnarray*}  

     Thus, we have \begin{eqnarray*}
         T^-f_2-a_pf_1+T^+f_0 &=& \biggl[g^0_{1,0},\frac{p-1}{p}\sum_{\substack{
    0 < j < r-1 \\
    j \equiv 0 \pmod{p-1}
}}
\biggl(\binom{r-1}{j}-\alpha_j\biggl)\cdot X^{\,r-j} Y^j \biggl].
\end{eqnarray*}
The rest of the proof is similar to the proof of Proposition~\ref{prop:A_2} (ii). We 
have 
\begin{eqnarray*}
    T^-f_2-a_pf_1+T^+f_0 &=& \biggl[g^0_{1,0},\frac{p-1}{p}\sum_{\substack{
    0 < j < r-1 \\
    j \equiv 0 \pmod{p-1}
}}
\biggl(\binom{r-1}{j}-\alpha_j\biggl)\cdot (X^{\,r-j} Y^j-X^pY^{r-p})\\
&& \qquad + \frac{p-1}{p}\sum_{\substack{
    0 < j < r-1 \\
    j \equiv 0 \pmod{p-1}
}}
\biggl(\binom{r-1}{j}-\alpha_j\biggl)\cdot X^p Y^{r-p}\biggl]\\
&=& \biggl[g^0_{1,0},\frac{p-1}{p}\sum_{\substack{
    0 < j < r-1 \\
    j \equiv 0 \pmod{p-1}
}}
(j-r)\biggl(\binom{r-1}{j}-\alpha_j\biggl)\cdot \theta X^{r-2p+1}Y^{p-2}\\
&& \qquad + \frac{p-1}{p}\sum_{\substack{
    0 < j < r-1 \\
    j \equiv 0 \pmod{p-1}
}}
\biggl(\binom{r-1}{j}-\alpha_j\biggl)\cdot X^p Y^{r-p}\biggl]\\
&=& \biggl[g^0_{1,0},\frac{r(r+1)}{2} \cdot \theta X^{r-2p+1}Y^{p-2} \\
&& \qquad + \frac{p-1}{p}\sum_{\substack{
    0 < j < r-1 \\
    j \equiv 0 \pmod{p-1}
}}
\biggl(\binom{r-1}{j}-\alpha_j\biggl)\cdot X^p Y^{r-p}\biggl],
\end{eqnarray*}
by Lemma~\ref{lem:2.1} with $i=0,1$ and Lemma~\ref{lem:2.2}.
On the other hand, we have \[
-a_pf_2 = \frac{a_p}{p^2}\sum_{\lambda \in \mathbb{F}_p} \left[ g^0_{2, \left[\lambda \right]} ,{\theta Y^{r-p-1}-\theta X^{r-2p+1}Y^{p-2}} \right].
\]
By Lemma~\ref{lem:2.10}, the polynomial
\(
\theta X^{r-2p+1}Y^{p-2}
\)
maps to \(X\) in the cosocle \(J_3\) and $X^p Y^{r-p}= \theta Y^{r-p-1}+XY^{r-1}$. Thus $(T-a_p)(-f_0-f_1-f_2)$ is integral and maps to \[
\overline {\frac{a_p}{p^2}}\sum_{\lambda\in \mathbb{F}_p}\left[ g^0_{2,\left[\lambda \right]},X\right]-\left[g^0_{1,0},\overline{\frac{r(r+1)}{2}}\cdot X\right]
=\overline {\frac{a_p}{p^2}}(T-\bar{u}^{-1})\left[g^0_{1,0}, X \right]\in \mathrm{ind}_{KZ}^G J_3.\]
Hence, $F_3$ is a quotient of $\pi(1,\bar{u}^{-1}, 1).$
\end{proof}

\begin{prop}\label{prop:B_3}
Let $3p+2 \le r \equiv 1 \pmod{p-1}$. Assume that $v_p(a_p)=2$ and $p^2 \mid (r-p)$, then $F_2$ is a quotient of $\frac{\mathrm{ind}_{KZ}^G(V_{p-2}\otimes D)}{T^2-cT+1}$, where $c=\frac{p-r}{a_p}$.
\end{prop}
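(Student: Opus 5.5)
Since $a=1$ behaves like $a=p$ here, $J_2=V_{p-2}\otimes D$ is the socle of $V_r^*/V_r^{**}$ (Proposition~\ref{prop:3.1}(ii)) and is generated by the image of $\theta X^{r-p-1}$, which is congruent to $X^{r-1}Y-X^{r-p}Y^p$. The plan is to mimic the proof of Proposition~\ref{prop:A_5}(ii), but push the construction one step further. The weight $V_{p-2}\otimes D$ has dimension $p-1$, so the relevant Hecke relation should be quadratic rather than linear in $T$. I will therefore build an integral function whose image is $(T^2-cT+1)[\mathrm{Id},X^{p-2}]$ up to a unit, with $c=\frac{p-r}{a_p}$. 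Note $v(p-r)\ge 2=v(a_p)$, so $c$ is integral.

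\textbf{Step 1.} Take the $J_2$-generator at two levels. Start with
\[
h_0=\Bigl[\mathrm{Id},\frac{X^{r-1}Y-X^{r-p}Y^p}{p^2}\Bigr],
\]
and add terms at level $1$ (the $g^0_{1,[\lambda]}$) and level $2$ (the $g^0_{2,\mu}$). At level $1$, use the $\Phi_g$-type functions
\[
\Bigl[g,\sum_{1<j\le r-p+1,\ j\equiv 1}\tbinom{r}{j}X^{r-j}Y^j-S_0X^{r-p}Y^p\Bigr].
\]
At level $2$, use the $\Psi_g$-type functions built from $Y^r-X^{r-p}Y^p$, scaled by $1/(pa_p)$ and $1/p$ as in Proposition~\ref{prop:A_5}(ii). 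The goal is that $T^+$ of the level-$1$ terms dies modulo $X_{r-1}$, and that $(T-a_p)$ of the $\Psi$-terms produces, through the sum over $[\mu]^{-a+1}$, the full sum $\sum\binom{r}{j}X^{r-j}Y^j$ in the class $j\equiv 1$. That sum lies in $X_r$ and, after removing the ends, in $X_{r-1}$.

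\textbf{Step 2.} Kill the high-order Taylor coefficients. Here the hypothesis $p^2\mid r-p$ enters. By Lemma~\ref{lem:2.6}(1) and (2) there are integers $\alpha_j\equiv\binom{r}{j}\pmod{p^2}$, in both residue classes $0$ and $1 \bmod (p-1)$, with $\sum_j\binom{j}{n}\alpha_j\equiv0\pmod{p^{5-n}}$ for $n\le4$. I will use these in place of the binomials, as in Lemmas~\ref{lem:2.3}--\ref{lem:2.4}. The added level-$0$ correction $[\mathrm{Id},\frac{p-1}{p^2a_p}\sum\alpha_jX^{r-j}Y^j]$, together with the $\alpha$-term $[\alpha,\cdot]$ needed because the class-$0$ sum reaches $X^r$, then has $T^\pm$ vanishing mod $p$. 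The differences $\binom{r}{j}-\alpha_j$ contribute, via Remark~\ref{rmk:2.1} and $S_0\equiv p-r\pmod{p^2}$ (the analogue of Lemma~\ref{lem:3.8}), exactly the coefficient $\frac{p-r}{p}$ on the $J_2$-generator. Dividing by $a_p/p$ gives $c$.

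\textbf{Step 3.} Collect the terms in $\mathrm{ind}_{KZ}^GJ_2$. I expect three contributions. A term $-\frac{a_p}{p^2}[\mathrm{Id},X^{p-2}]$ comes from $-a_ph_0$. A $T^+h_0$-type term supported on $g^0_{1,[\lambda]}$ has coefficient $\frac{p-r}{p}$, which is $\equiv0$ since $p^2\mid r-p$; so the linear term must instead arise from the level-$2$ functions. Finally, there is a level-$2$ sum $\sum_\mu[g^0_{2,\mu},X^{p-2}]$ together with a return term at $\alpha$. After multiplying by the unit $\frac{p^2}{a_p}$ (up to sign), the combination should read $T^2[\mathrm{Id},X^{p-2}]-c\,T[\mathrm{Id},X^{p-2}]+[\mathrm{Id},X^{p-2}]$. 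This uses Breuil's formula \cite[Lemma~5.3]{Bre03b}, which says that for the weight $V_{p-2}$, $T$ acts through $T^+$ and $T^-$ on $X^{p-2}$ with both parts visible. In particular $T^2$ applied to $[\mathrm{Id},X^{p-2}]$ contains the $[\mathrm{Id},\cdot]$ term with coefficient $1$ coming from $T^-T^+$. Since $[\mathrm{Id},X^{p-2}]$ generates $\mathrm{ind}_{KZ}^GJ_2$, the map onto $F_2$ then factors through the stated quotient.

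\textbf{Main obstacle.} The delicate point is Step 3: matching the level-$2$ sums and the $\alpha$-term exactly with $T^2$, including the $T^-T^+$ identity contribution, and checking that every remaining piece lands in $X_{r-1}+V_r^{**}$. This requires the mod $p^5$ congruences of Lemma~\ref{lem:2.6} at full strength, since the level-$2$ terms carry denominators up to $p^3a_p$. I would first carry out the computation with $r=p+p^2(p-1)$ to calibrate the constants.
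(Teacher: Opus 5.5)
Your overall architecture matches the paper's proof. It uses a level-$0$ function $\frac{X^{r-1}Y-X^{r-p}Y^p}{p^2}$ (the paper takes $\frac rp$ times it, a unit $\equiv 1 \bmod p$). It uses a level-$1$ correction $\bigl[g^0_{1,[\lambda]},\frac{p-1}{p^2a_p}\sum_{1<j<r,\,j\equiv 1}\alpha_jX^{r-j}Y^j\bigr]$ with $\alpha_j$ from Lemma~\ref{lem:2.6}(2). And it uses level-$2$ functions built from $Y^r-X^{r-p}Y^p$. But the bookkeeping in Steps 1--3, which is the whole content of the proof, is wrong in ways that would stop the construction from working as written.

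First, $T^2[\mathrm{Id},X^{p-2}]$ has no component at $\mathrm{Id}$. Modulo $p$, $T^-$ only sees the $Y^{p-2}$-coefficient, so $T[\mathrm{Id},X^{p-2}]=\sum_{\lambda}[g^0_{1,[\lambda]},X^{p-2}]$ and $T^2[\mathrm{Id},X^{p-2}]=\sum_{\lambda,\mu}[g^0_{2,[\lambda]+p[\mu]},X^{p-2}]$. The three coefficients of $T^2-cT+1$ therefore come from three separate sources, and nothing lives at $\alpha$:
the level-$2$ sum comes from $-a_pf_2$;
the level-$1$ sum comes from $T^-f_2-a_pf_1+T^+f_0$;
the $\mathrm{Id}$-term comes from $-a_pf_0$ alone.

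Second, for $-a_pf_2$ to survive, the level-$2$ function must be $\frac{1}{p^2}(Y^r-X^{r-p}Y^p)$, summed over $g^0_{2,[\lambda]+p[\mu]}$ with $\mu\neq 0$, plus $(1-p)$ times the $\mu=0$ terms. The level-$1$ correction must carry $\frac{1}{p^2a_p}$. The scalings in Proposition~\ref{prop:A_5}(ii) are $\frac{1}{ap}$ and $\frac{1}{ap\,a_p}$. For $a=p$ it is exactly the extra factor $\frac1a=\frac1p$ that makes $-a_pf_2$ a unit multiple of the level-$2$ sum, turning the linear relation into a quadratic one. With your $\frac1p$ and $\frac1{pa_p}$, we get $-a_pf_2\equiv 0 \pmod p$, so no $T^2$-term can appear. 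Moreover, the non-integral terms $\frac{1-p[\lambda]^{p-1}}{p}[g^0_{1,[\lambda]},X^{r-1}Y]$ in $T^+h_0$ are then not cancelled, since $T^-$ of a level-$2$ function over $p$ contributes only the integral $\frac{(p-1)r}{p}X^{r-1}Y$. So $(T-a_p)f$ is not even integral. With the $\frac1{p^2}$ scaling, $T^-f_2$ contributes $\frac{(p-1)r}{p^2}X^{r-1}Y$. Because $p^2\mid r-p$, this cancels those terms up to integral multiples of $X^{r-1}Y\in X_{r-1}$.

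Third, the linear coefficient is $\frac{p-r}{p^2}$, not $\frac{p-r}{p}$. It is integral because $p^2\mid r-p$, it is not forced to vanish, and it sits at level $1$. Only the class-$1$ integers of Lemma~\ref{lem:2.6}(2) are needed. The class-$0$ integers of part (1) belong to the $F_3$ computation in Proposition~\ref{prop:B_5}, and no $[\alpha,\cdot]$-term is needed here. Since $r\equiv p\pmod{p^2(p-1)}$, one has $\sum_{1<j<r,\,j\equiv1}\binom rj\equiv p-r$ and $\sum_{1<j<r,\,j\equiv1}j\binom rj\equiv 0$ modulo $p^3$. Combine this with \cite[Lemma 2.12]{GV22}, namely $X^{r-j}Y^j-X^{r-p}Y^p\equiv (j-p)\theta X^{r-2p}Y^{p-1} \pmod{V_r^{**}}$. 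This kills $\frac{p-1}{p^2}\sum(\binom rj-\alpha_j)(X^{r-j}Y^j-X^{r-p}Y^p)$. What remains is $\frac{(p-1)(p-r)}{p^2}X^{r-p}Y^p\equiv \frac{p-r}{p^2}\,\theta X^{r-p-1}$ modulo $X_{r-1}+V_r^{**}$ and $p$. Finally, $Y^r$ and $X^{r-1}Y$ both lie in $X_{r-1}$, and $X^{r-1}Y-X^{r-p}Y^p=\theta X^{r-p-1}\mapsto X^{p-2}$ in $J_2$. So the image of $(T-a_p)f$ in $\mathrm{ind}_{KZ}^GJ_2$ is $-\overline{a_p/p^2}\,(T^2-cT+1)[\mathrm{Id},X^{p-2}]$, which is the paper's conclusion.
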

\begin{proof}
    Let us consider $f_0,f_1,f_2 \in \mathrm{ind}_{KZ}^G \mathrm{Sym}^r \bar{\mathbb{Q}}_p^2$ defined by 
    \begin{eqnarray*}
        f_2 &=& \sum_{{\lambda \in \mathbb{F}_p},\,\mu \in \mathbb{F}_p^{\times} }\biggl[g^0_{2,\left [\lambda \right]+p\left[\mu \right]}, \frac{Y^r-X^{r-p}Y^p}{p^2}\biggl]+(1-p)\sum_{\lambda \in \mathbb{F}_p}\biggl[g^0_{2,\left [\lambda \right]}, \frac{Y^r-X^{r-p}Y^p}{p^2}\biggl]\\
        f_1 &=& \sum_{\lambda \in {\mathbb{F}_p}} \biggl[g^0_{1,\left[\lambda \right]},\frac{p-1}{p^2 \cdot a_p} \sum_{\substack{
1<j<r \\
j \equiv 1 \pmod{p-1}
}}
 \alpha_j \cdot X^{r-j} Y^{j}\biggl]\\
 f_0 &=& \frac{r}{p} \biggl[\mathrm{Id},\frac{X^{r-1}Y-X^{r-p}Y^p}{p^2}\biggl],
 \end{eqnarray*}
 where the $\alpha_j$ are the  integers in part $2$ of Lemma~\ref{lem:2.6}.
 It is clear that $T^+f_2$, $T^+f_1$, and $T^-f_0$ die mod $p$ as $p\mid (r-p)$. Also $T^-f_1$ dies mod $p$ (as $p  \mid \alpha_{r-p+1}$ since $p \mid \binom{r}{r-p+1}$; this is only needed when $p = 5)$. We have
 \begin{eqnarray*}
     T^-f_2 &=& \sum_{\lambda \in {\mathbb{F}_p}} \biggl[g^0_{1,\left[\lambda\right]},\frac{(p-1)r}{p^2} X^{r-1}Y+ \frac{p-1}{p^2}\sum_{\substack{
1<j<r \\
j \equiv 1 \pmod{p-1}
}}
 \binom{r}{j} X^{r-j} Y^{j}\biggl], \text{ so}\\
  T^-f_2-a_pf_1 &=& \sum_{\lambda\in {\mathbb{F}_p}} \biggl[g^0_{1,\left[\lambda\right]}, \frac{(p-1)r}{p^2} X^{r-1}Y+\frac{p-1}{p^2}\sum_{\substack{
1<j<r \\
j \equiv 1 \pmod{p-1}
}}
 \biggl(\binom{r}{j}-\alpha_j\biggl) X^{r-j} Y^{j}\biggl].
 \end{eqnarray*}
 Also \[
 T^+f_0=\frac{r(1-p)}{p^2} \sum_{\lambda \in \mathbb{F}_p} \biggl[g^0_{1,\left[\lambda \right]}, X^{r-1}Y\biggl].
 \]
 So, using $X^{r-j}Y^j-X^{r-p}Y^p \equiv (j-p) \theta X^{r-2p} Y^{p-1}$ by \cite[Lemma 2.12]{GV22} and noting that the congruence \eqref{mod p cube} also holds mod $p^3$, we have 
 \begin{eqnarray*}
     T^-f_2-a_pf_1+T^+f_0 &=& \sum_{\lambda \in {\mathbb{F}_p}} \biggl[g^0_{1,\left[\lambda\right]}, \frac{p-1}{p^2} \sum_{\substack{
1<j<r \\
j \equiv 1 \pmod{p-1}
}}
\biggl(\binom{r}{j}-\alpha_j\biggl). X^{r-j} Y^{j}\biggl]\\
 &=& \sum_{\lambda \in {\mathbb{F}_p}} \biggl[g^0_{1,\left[\lambda\right]},  \frac{p-1}{p^2}\sum_{\substack{
1<j<r \\
j \equiv 1 \pmod{p-1}
}}
\biggl(\binom{r}{j}-\alpha_j\biggl). \biggl(X^{r-j} Y^{j}-X^{r-p}Y^p\biggl)\\
 && \qquad + \frac{p-1}{p^2} \sum_{\substack{
1<j<r \\
j \equiv 1 \pmod{p-1}
}}
 \biggl(\binom{r}{j}-\alpha_j\biggl)X^{r-p}Y^p\biggl]\\
 &=& \sum_{\lambda \in {\mathbb{F}_p}} \biggl[g^0_{1,\left[\lambda\right]}, \frac{(p-1)(p-r)}{p^2}X^{r-p}Y^p\biggl]\\
 & \equiv & \sum_{\lambda \in {\mathbb{F}_p}} \biggl[g^0_{1,\left[\lambda\right]}, \frac{(p-r)}{p^2}(X^{r-1}Y-X^{r-p}Y^p)\biggl] \pmod{X_{r-1}},
 \end{eqnarray*}
 where we note that
 the congruence $\eqref{mod p^2}$ holds mod $p^3$.
 Hence, we have \begin{eqnarray*}
     (T-a_p)(f_0+f_1+f_2) &=& \frac{-a_p}{p^2} {\sum_{{\lambda \in \mathbb{F}_p},\mu \in \mathbb{F}_p^{\times}}
     \biggl[g^0_{2,\left [\lambda \right]+p\left[\mu \right]}, {Y^r-X^{r-p}Y^p}\biggl]-\frac{a_p}{p^2}\sum_{\lambda \in \mathbb{F}_p}\biggl[g^0_{2,\left [\lambda \right]}, {Y^r-X^{r-p}Y^p}\biggl]}\\
    && \qquad + \sum_{\lambda \in {\mathbb{F}_p}} \biggl[g^0_{1,\left[\lambda\right]}, \frac{(p-r)}{p^2}(X^{r-1}Y-X^{r-p}Y^p)\biggl]- \frac{a_p}{p^2}\biggl[\mathrm{Id},{X^{r-1}Y-X^{r-p}Y^p}\biggl]\\
    &=& \frac{-a_p}{p^2} {\sum_{{\lambda \in \mathbb{F}_p},\mu \in \mathbb{F}_p} \biggl[g^0_{2,\left [\lambda \right]+p\left[\mu \right]}, {X^{r-1}Y-X^{r-p}Y^p}\biggl]}\\ %-\frac{a_p}{p^2}\sum_{\lambda \in \mathbb{F}_p}\biggl[g^0_{2,\left [\lambda \right]}, {X^{r-1}Y-X^{r-p}Y^p}\biggl]}\\
   && \qquad + \sum_{\lambda \in {\mathbb{F}_p}} \biggl[g^0_{1,\left[\lambda\right]}, \frac{(p-r)}{p^2}(X^{r-1}Y-X^{r-p}Y^p)\biggl]- \frac{a_p}{p^2}\biggl[\mathrm{Id},{X^{r-1}Y-X^{r-p}Y^p}\biggl] \\
    &=& \frac{-a_p}{p^2} \biggl({\sum_{{\lambda \in \mathbb{F}_p},\mu \in {\mathbb{F}_p}} \biggl[g^0_{2,\left [\lambda \right]+p\left[\mu \right]}, \theta X^{r-p-1}\biggl]}-\sum_{\lambda \in {\mathbb{F}_p}} \biggl[g^0_{1,\left[\lambda\right]}, \frac{(p-r)}{a_p}\theta X^{r-p-1}\biggl]\\
    && \qquad + \biggl[\mathrm{Id},\theta X^{r-p-1}\biggl]\biggl).
   \end{eqnarray*}
   Moreover, \(\theta X^{r-p-1}\) is identified with \(X^{p-2}\) in $J_2=V_{p-2}\otimes D$.
So the image of \((T-a_p)f\) in $\operatorname{ind}_{KZ}^{G} J_2$
is given by
\[
-\frac{a_p}{p^2}(T^2-\frac{p-r}{a_p}T+1)\biggl[\mathrm{Id}, X^{p-2}\biggl].
\]
Since \(\left[\mathrm{Id},X^{p-2}\right]\) generates
\(\mathrm{ind}_{KZ}^G J_2\) as a \(G\)-module, the map $\mathrm{ind}_{KZ}^G J_2\twoheadrightarrow F_2$ factors through $(T^2-\frac{p-r}{a_p}T+1)$.
Hence, $F_2$ is a quotient of  $\frac{\mathrm{ind}_{KZ}^G(V_{p-2}\otimes D)}{(T^2-\frac{p-r}{a_p}T+1)}$.
\end{proof}
\begin{prop}\label{prop:B_4}
Let $3p+2 \le r \equiv 1 \pmod{p-1}$. Assume that $v_p(a_p)=2$ and $p \mid (r-p)$ but $p^2 \nmid (r-p)$. Then $F_2=0$.
\end{prop}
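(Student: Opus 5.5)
Since $F_2$ is the image of $\mathrm{ind}_{KZ}^G J_2$ with $J_2 \cong V_{p-2}\otimes D$ (Proposition~\ref{prop:3.1}(ii), case $a=1$), and $[\mathrm{Id},X^{p-2}]$, the image of $[\mathrm{Id},\theta X^{r-p-1}]$, generates $\mathrm{ind}_{KZ}^G J_2$, it suffices to produce $f\in \mathrm{ind}_{KZ}^G\mathrm{Sym}^r\overline{\mathbb{Q}}_p^{\,2}$ with $(T-a_p)f$ integral whose image in $\mathrm{ind}_{KZ}^G(W_2/W_3)$, projected to $\mathrm{ind}_{KZ}^G J_2$, is a unit multiple of $[\mathrm{Id},\theta X^{r-p-1}]$. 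I will run the construction of Proposition~\ref{prop:B_3} with one power of $p$ fewer in the denominators, mirroring Proposition~\ref{prop:A_4}(i) versus (ii). The idea is that when $v(r-p)=1$, the coefficient $(p-r)/p^2$ appearing in Proposition~\ref{prop:B_3} acquires negative valuation. One must therefore rescale, and after rescaling the term coming from the ``middle'' level dominates the $a_p$-terms, which then vanish mod $p$.

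Concretely, I take
\[
f_2=\sum_{\lambda\in\mathbb{F}_p,\,\mu\in\mathbb{F}_p^\times}\Bigl[g^0_{2,[\lambda]+p[\mu]},\tfrac{Y^r-X^{r-p}Y^p}{p}\Bigr]+(1-p)\sum_{\lambda}\Bigl[g^0_{2,[\lambda]},\tfrac{Y^r-X^{r-p}Y^p}{p}\Bigr],
\]
\[
f_1=\sum_{\lambda}\Bigl[g^0_{1,[\lambda]},\tfrac{p-1}{p\,a_p}\sum_{1<j<r,\ j\equiv 1}\alpha_jX^{r-j}Y^j\Bigr],
\]
and $f_0=\frac{r}{p}\bigl[\mathrm{Id},\frac{X^{r-1}Y-X^{r-p}Y^p}{p}\bigr]$. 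Here the $\alpha_j$ are chosen as in Lemma~\ref{lem:2.2}/\ref{lem:2.6}, with $\alpha_j\equiv\binom rj \pmod p$ and $\sum_{j}\binom jn\alpha_j\equiv 0 \pmod{p^{4-n}}$, constructed via the usual three-unknown linear system. Their existence needs the congruences (i)--(iv) in the proof of Lemma~\ref{lem:2.6}(2) only mod $p$, so they hold here. With these choices, $-a_pf_2$ and $-a_pf_0$ are divisible by $p$ (since $v(a_p)=2$). The terms $T^+f_2$, $T^+f_1$, $T^-f_1$, $T^-f_0$ die mod $p$ exactly as before, using $p\mid r$ for $T^-f_0$.

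The main computation is the sum
\[
T^-f_2-a_pf_1+T^+f_0=\sum_\lambda\Bigl[g^0_{1,[\lambda]},\tfrac{p-1}{p}\sum_{j}\bigl(\tbinom rj-\alpha_j\bigr)X^{r-j}Y^j\Bigr].
\]
Writing $X^{r-j}Y^j=(X^{r-j}Y^j-X^{r-p}Y^p)+X^{r-p}Y^p$ and using \cite[Lemma 2.12]{GV22}, the first part contributes $(j-p)\theta X^{r-2p}Y^{p-1}$ mod $V_r^{**}$. Lemma~\ref{lem:2.1} (with $i=0,1$) evaluates the coefficient sums mod $p^2$. This gives a main term $\frac{(p-1)(p-r)}{p}X^{r-p}Y^p$, which is $\equiv \frac{p-r}{p}(X^{r-1}Y-X^{r-p}Y^p)$ mod $X_{r-1}$, plus a $\theta X^{r-2p}Y^{p-1}$ term whose image lies in $J_3$ rather than $J_2$. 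Since $\frac{p-r}{p}$ is a unit, $(T-a_p)f$ is integral, and its image in $\mathrm{ind}_{KZ}^G J_2$ is
\[
\overline{\tfrac{p-r}{p}}\sum_{\lambda}\bigl[g^0_{1,[\lambda]},X^{p-2}\bigr]=\overline{\tfrac{p-r}{p}}\,T^+[\mathrm{Id},X^{p-2}].
\]
This is not yet a generator, so I would add a correction: $f_0'=[\mathrm{Id},\frac{c}{p}(X^{r-1}Y-X^{r-p}Y^p)]$ with a suitable unit $c$, or, if needed, rescale by $a_p/p^2$ as in Proposition~\ref{prop:A_5}(ii). Alternatively, I would shift the support down one level ($f_2$ at level $1$, $f_1$ at $\mathrm{Id}$, and an $f_{-1}$ at $\alpha$ if necessary), so that the output is $\overline{\tfrac{p-r}{p}}[\mathrm{Id},X^{p-2}]$ modulo $T^{-}$-terms that vanish.

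The hardest step is the bookkeeping needed to ensure that every leftover term lies in $X_{r-1}+V_r^{**}$ or in the $J_3$-part, so that only a unit multiple of $[\mathrm{Id},X^{p-2}]$ survives in $\mathrm{ind}_{KZ}^G J_2$. This needs Lemma~\ref{lem:2.1} to hold mod $p^2$ with the extra summand $j=r$ handled carefully. Once that is in place, $F_2=0$ follows immediately.
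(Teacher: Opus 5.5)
\textbf{There is a genuine gap: the construction you commit to does not prove $F_2=0$, and of your three repairs only the third works, but you leave it unverified.}

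\emph{What your main construction gives.} Run Proposition~\ref{prop:B_3} with one power of $p$ fewer. The $a_p$-terms do die. The $\theta X^{r-2p}Y^{p-1}$-coefficient $\frac{p-1}{p}\sum_{1<j<r}(j-p)\bigl(\binom{r}{j}-\alpha_j\bigr)$ vanishes mod $p$, because $\sum_{1<j<r}j\binom{r}{j}\equiv 0$ and $p\sum_{1<j<r}\binom{r}{j}\equiv p(p-r)\equiv 0 \pmod{p^2}$. So the output in $\mathrm{ind}_{KZ}^G J_2$ is exactly $\overline{\tfrac{p-r}{p}}\sum_{\lambda}[g^0_{1,[\lambda]},X^{p-2}]=\overline{\tfrac{p-r}{p}}\,T[\mathrm{Id},X^{p-2}]$. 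Since $\mathrm{ind}_{KZ}^G J_2$ is free over $\overline{\mathbb F}_p[T]$, the $G$-span of this element is $T\cdot \mathrm{ind}_{KZ}^G J_2$. So you only obtain that $F_2$ is a quotient of $\pi(p-2,0,\omega)$, not that $F_2=0$.

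\emph{Why the first two repairs do not close this.}
\begin{itemize}
\item For $f_0'=[\mathrm{Id},\frac{c}{p}(X^{r-1}Y-X^{r-p}Y^p)]$ one has $-a_pf_0'\equiv T^-f_0'\equiv 0$ and $T^+f_0'\equiv c\sum_\lambda[g^0_{1,[\lambda]},X^{r-1}Y]\in \mathrm{ind}_{KZ}^G X_{r-1}$. So it contributes nothing.
\item The ``rescale by $a_p/p^2$'' option is not spelled out. The analogous construction in Proposition~\ref{prop:A_5}(ii) yields $T-\bar u'$, not a unit.
\end{itemize}
What is missing is an element whose image is supported at a single vertex.

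\emph{Your reduction criterion is also off.} $J_2$ is a submodule of $W_2/W_3$, not a direct summand: the extension in Proposition~\ref{prop:3.1}(ii) is non-split. So there is no ``projection to $\mathrm{ind}J_2$''; the image must actually lie in $\mathrm{ind}J_2$. A leftover $\theta X^{r-2p}Y^{p-1}$ term with nonzero image in $J_3$ would break the argument. It is harmless here only because its coefficient vanishes.

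\emph{Your third option does work, with no $f_{-1}$ needed.} Using your mod-$p$ integers $\alpha_j$, take
$f=\sum_{\mu\in\mathbb F_p}[g^0_{1,[\mu]},\frac{Y^r-X^{r-p}Y^p}{p}]+[\mathrm{Id},\frac{p-1}{p a_p}\sum_{1<j<r,\ j\equiv 1}\alpha_jX^{r-j}Y^j]$.
\begin{itemize}
\item In $T^+$ of the first sum, the $X^r$-coefficient cancels exactly, since $(-[\lambda])^r=(-[\lambda])^p$. The remaining coefficients are divisible by $p$, using $p\mid r$ for the $X^{r-1}Y$ term.
\item $-a_p$ times the first sum is divisible by $p$, and $T^{\pm}$ of the second term die.
\item $T^-$ of the first sum is $[\mathrm{Id},\frac{p-1}{p}\sum_{0<j<r,\ j\equiv 1}\binom{r}{j}X^{r-j}Y^j+Y^r]$.
\end{itemize}
Hence $(T-a_p)f\equiv \frac{(p-1)(p-r)}{p}[\mathrm{Id},X^{r-p}Y^p]\equiv \overline{\tfrac{p-r}{p}}[\mathrm{Id},\theta X^{r-p-1}]$ modulo $p$ and $\mathrm{ind}(X_{r-1}+V_r^{**})$. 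This is a unit multiple of a generator of $\mathrm{ind}_{KZ}^G J_2$.

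This is formally the construction of Proposition~\ref{prop:A_3}(i) with $a=p$. Because $W_1=W_2$ when $a=1$ (Proposition~\ref{prop:B_6}(i)), its output lands in $J_2$ rather than $J_1$.

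\emph{Comparison with the paper.} The paper works instead with the mirror class $j\equiv 0$, one level up:
\begin{itemize}
\item $f_2$ is supported on $g^0_{2,p[\lambda]}$ with weights $[\lambda]^{p-2}$, plus an $XY^{r-1}$-correction at $g^0_{2,0}$;
\item $f_1$ sits at $g^0_{1,0}$ and uses Lemma~\ref{lem:2.5} with $a=p$;
\item $f_0$ sits at $\mathrm{Id}$ and absorbs the $X^r$ term.
\end{itemize}
Its output is the single-vertex element $\frac{r-p}{p}[g^0_{1,0},\theta Y^{r-p-1}]$. Either route finishes the proof, but your proposal leaves the decisive step as an unchecked menu of options, one of which is vacuous.
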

\begin{proof}
     Let us consider $f_0,f_1,f_2 \in \mathrm{ind}_{KZ}^G \mathrm{Sym}^r \bar{\mathbb{Q}}_p^2$ defined by 
     \begin{eqnarray*}
    f_2 &=& \sum_{\lambda\in \mathbb{F}_p^\times} \biggl[g^0_{2,p \left[ \lambda\right]}, \frac{\left[ \lambda\right]^{p-2}(Y^r-X^{r-p}Y^p)}{p}\biggl]+\frac{r(1-p)}{p^2} \biggl[g^0_{2,0}, X Y^{r-1}-X^{r-p+1}Y^{p-1}\biggl]\\
    f_1 &=& \biggl[g^0_{1,0}, \frac{p-1}{p \cdot a_p} \sum_{\substack{
0<j<r-1 \\
j \equiv 0 \pmod{p-1}
}}
 \alpha_j \cdot  X^{r-j} Y^{j}\biggl]\\
 f_0 &=& \biggl[ \mathrm{Id}, \frac{1-p}{p} (X^r-X^pY^{r-p})\biggl],
     \end{eqnarray*}
     where $\alpha_j$ are integers in Lemma~\ref{lem:2.5} (for $a=p$). It is clear that $T^+f_2$, $-a_pf_2$, $T^+f_1$, $T^-f_1$, $T^-f_0$ and $-a_pf_0$ die mod $p$. We have 
    \begin{eqnarray*}
        T^-f_2 &=& \biggl[g^0_{1,0}, \frac{p-1}{p} X^r + \frac{p-1}{p} 
\sum_{\substack{
0<j<r-1 \\
j \equiv 0 \pmod{p-1}
}}
 \binom{r}{j} X^{r-j} Y^{j}  \biggl]\\
 T^-f_2-a_pf_1 &=& \biggl[g^0_{1,0}, \frac{p-1}{p} X^r + \frac{p-1}{p} 
\sum_{\substack{
0<j<r-1 \\
j \equiv 0 \pmod{p-1}
}}
 \biggl(\binom{r}{j}-\alpha_j\biggl) X^{r-j} Y^{j}  \biggl].
     \end{eqnarray*}
     We compute \[
     T^+f_0= \frac{1-p}{p} \biggl[g^0_{1,0}, X^r\biggl].
     \]
     Hence, since $X^{r-j}Y^j-X^pY^{r-p} \equiv j\theta X^{r-2p+1}Y^{p-2} \pmod{V_r^{**}}$ by \cite[Lemma $2.12$]{GV22} and the congruences \eqref{mod p^2 For a=p} and \eqref{mod p^2 m=0} hold mod $p^2$,  we have \begin{eqnarray*}
         T^-f_2-a_pf_1+T^+f_0 &=& \biggl[g^0_{1,0}, \frac{p-1}{p} 
\sum_{\substack{
0<j<r-1 \\
j \equiv 0 \pmod{p-1}
}}
 \biggl(\binom{r}{j}-\alpha_j\biggl) X^{r-j} Y^{j}  \biggl]\\
 &=& \biggl[g^0_{1,0}, \frac{p-1}{p} 
\sum_{\substack{
0<j<r-1 \\
j \equiv 0 \pmod{p-1}
}}
 \biggl(\binom{r}{j}-\alpha_j\biggl) (X^{r-j} Y^{j}-X^{p}Y^{r-p})\\
 && \qquad + \frac{p-1}{p}  \sum_{\substack{
0<j<r-1 \\
j \equiv 0 \pmod{p-1}
}} \biggl(\binom{r}{j}-\alpha_j\biggl)X^{p}Y^{r-p} \biggl]\\
 &\equiv& \biggl[g^0_{1,0}, \frac{(p-1)(p-r)}{p} X^{p}Y^{r-p} \biggl]\\
 &\equiv & \biggl[g^0_{1,0}, \frac{(p-r)}{p} (XY^{r-1}-X^{p}Y^{r-p}) \biggl] \pmod{X_{r-1}}.
     \end{eqnarray*}
     Hence, we have $(T-a_p)(f_0+f_1+f_2)=-\biggl[g^0_{1,0}, \frac{(p-r)}{p} \theta Y^{r-p-1} \biggl]$.  
      Moreover, \(\theta Y^{r-p-1}\) is identified with \(Y^{p-2}\) in $J_2=V_{p-2}\otimes D$.
So the image of \((T-a_p)f\) in $\operatorname{ind}_{KZ}^{G} J_2$
is given by $\biggl[g^0_{1,0}, \frac{(r-p)}{p} Y^{p-2} \biggl]$. As $\frac{r-p}{p}$ is an unit, \(\frac{(r-p)}{p}\left[g^0_{1,0},Y^{p-2}\right]\) generates
\(\mathrm{ind}_{KZ}^G J_2\) as a \(G\)-module. In particular, $F_2=0$.
\end{proof}

\begin{prop}\label{prop:B_5}
   Let $3p+2 \le r \equiv 1 \pmod{p-1}$. Assume that $v_p(a_p)=2$ and $p^2 \mid (r-p)$. Then $F_3=0$.
\end{prop}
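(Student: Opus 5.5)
The plan is to adapt the proof of Proposition~\ref{prop:A_4}(i), with $a=p$, to the present setting $r\equiv 1 \pmod{p-1}$ and $p^2\mid r-p$. For $a=1$ the piece $J_3$ is $V_1$, the cosocle of $V_r^*/V_r^{**}$. By Lemma~\ref{lem:2.10} with $m=1$ and $a'=[p-2]=p-2$, the polynomial $\theta X^{r-2p+1}Y^{p-2}$ maps to $\pm X$ in $J_3$. It therefore suffices to produce an integral $(T-a_p)f$ whose image in $\mathrm{ind}_{KZ}^G J_3$ is a unit multiple of some $[g,X]$, since such an element generates $\mathrm{ind}_{KZ}^G J_3$ as a $G$-module.

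As in Propositions~\ref{prop:B_2}(ii) and~\ref{prop:B_4}, everything is shifted one level up to $g^0_{1,0}$, because the relevant congruence class for the $T^-$ sums is $j\equiv 0 \pmod{p-1}$. I will take
\[
f_2=\sum_{\lambda\in\mathbb F_p^\times}\Bigl[g^0_{2,p[\lambda]},\tfrac{[\lambda]^{p-2}(Y^r-X^{r-p}Y^p)}{p}\Bigr]+\Bigl[g^0_{2,0},\tfrac{r(1-p)(XY^{r-1}-X^{r-p+1}Y^{p-1})}{p^2}\Bigr],
\]
together with
\[
f_1=\Bigl[g^0_{1,0},\tfrac{p-1}{p\,a_p}\textstyle\sum_{0<j<r-1,\ j\equiv 0}\alpha_jX^{r-j}Y^j\Bigr]
\]
and the correction $f_0=\bigl[\mathrm{Id},\tfrac{1-p}{p}(X^r-X^pY^{r-p})\bigr]$. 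Here the $\alpha_j$ are the integers of Lemma~\ref{lem:2.6}(1), which is exactly the case $a=p$ of Lemma~\ref{lem:2.4} and requires $p^2\mid r-p$. Since $\alpha_j\equiv\binom rj \pmod{p^2}$, the combination $T^-f_2-a_pf_1+T^+f_0$ reduces mod $p$ to an integral $[g^0_{1,0},\cdot]$ term, as in the computation of Proposition~\ref{prop:B_4}. The difference is that the stronger congruences (mod $p^{5-n}$ rather than $p^{4-n}$) should make this term vanish modulo $p$, or at least modulo $X_{r-1}+V_r^{**}$. This mirrors Proposition~\ref{prop:A_4}(ii), where the $p^2$-congruence killed $T^-f_1-a_pf_0$.

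The surviving term is then $-a_pf_2$. Its first piece has valuation $\ge 1$ and dies. Its second piece equals
\[
-\tfrac{r(1-p)a_p}{p^2}\bigl[g^0_{2,0},XY^{r-1}-X^{r-p+1}Y^{p-1}\bigr],
\]
and $r(1-p)a_p/p^2$ is a unit because $p\nmid r$ (as $r\equiv p \pmod{p^2}$). By \cite[Lemma 2.12]{GV22}, $XY^{r-1}-X^{r-p+1}Y^{p-1}$ is congruent modulo $V_r^{**}$ to a unit multiple of $\theta X^{r-2p+1}Y^{p-2}$ plus a multiple of $\theta Y^{r-p-1}$. By Lemma~\ref{lem:2.10} the latter maps to $0$ in $J_3$ (index $i=r'$), while the former maps to $\pm X$. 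The image of $(T-a_p)(f_0+f_1+f_2)$ in $\mathrm{ind}_{KZ}^GJ_3$ is therefore a unit times $[g^0_{2,0},X]$, which generates, so $F_3=0$.

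The main obstacle is the bookkeeping. One must check that $T^+f_2$, $T^\pm f_1$ and the remaining terms genuinely die modulo $p$ together with $X_{r-1}+V_r^{**}$. One must also confirm that the $X^r$ term coming from $T^-f_2$ (the $j=0$ summand) is cancelled by $T^+f_0$. Finally, the residual $X^pY^{r-p}$ contribution must vanish rather than leave a unit multiple of $\theta Y^{r-p-1}$. If that residual does not vanish, it still lies in $V_r^*$ and maps to $0$ in $J_3$, since it contributes only to $J_2$. So in either case the projection to $J_3$ is exactly the unit times $[g^0_{2,0},X]$ computed above.
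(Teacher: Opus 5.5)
\textbf{There is a genuine gap, at exactly the step that is supposed to produce the unit.}

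The hypothesis $p^2\mid r-p$ does not give $p\nmid r$. It gives $r=p(1+pk)$, so $v_p(r)=1$. Hence $r(1-p)a_p/p^2$ has valuation $1$, and the $[g^0_{2,0},\cdot]$ piece of $-a_pf_2$ dies mod $p$.

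With the normalizations you took over from Proposition~\ref{prop:B_4}, everything else dies too:
\begin{itemize}
\item $-a_pf_0$ and the first piece of $-a_pf_2$ have positive valuation;
\item the residual $\bigl[g^0_{1,0},\frac{p-1}{p}\sum(\binom{r}{j}-\alpha_j)X^{r-j}Y^j\bigr]$ is $0$ mod $p$, because $\alpha_j\equiv\binom{r}{j}\pmod{p^2}$.
\end{itemize}
So $\overline{(T-a_p)(f_0+f_1+f_2)}=0$, and you learn nothing about $F_3$. This is consistent with Proposition~\ref{prop:B_4}, whose output $\frac{r-p}{p}[g^0_{1,0},Y^{p-2}]$ also dies once $p^2\mid r-p$. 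Your analogy with Proposition~\ref{prop:A_4}(ii) breaks precisely because there $r\equiv a\not\equiv 0\pmod p$, so the coefficient $r$ is a unit.

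\textbf{The missing idea} is to divide all three functions by one more power of $p$, compensating for $v_p(r)=1$. Use $\frac{[\lambda]^{p-2}(Y^r-X^{r-p}Y^p)}{p^2}$ and $\frac{r(1-p)}{p^3}$ in $f_2$, $\frac{p-1}{p^2a_p}$ in $f_1$, and $\frac{1-p}{p^2}$ in $f_0$, as the paper does. Then $-\frac{r(1-p)a_p}{p^3}[g^0_{2,0},XY^{r-1}-X^{r-p+1}Y^{p-1}]$ really has unit coefficient. It maps to a unit multiple of $[g^0_{2,0},X]$ in $\mathrm{ind}_{KZ}^GJ_3$.

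The price is that the other terms no longer die for free:
\begin{itemize}
\item $T^+f_2$ dies mod $p$ only because $p^2\mid r-p$, since its $X^{r-1}Y$-coefficients involve $(r-p)/p$.
\item $T^\pm f_1$ die by the full mod $p^{5-n}$ congruences of Lemma~\ref{lem:2.6}(1).
\item The residual $\bigl[g^0_{1,0},\frac{p-1}{p^2}\sum(\binom{r}{j}-\alpha_j)X^{r-j}Y^j\bigr]$ is now merely integral.
\end{itemize}

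Your fallback that such a residual ``lies in $V_r^*$ and only contributes to $J_2$'' is not valid. For example, $\theta X^{r-2p+1}Y^{p-2}\in V_r^*$ maps to $\pm X$ in $J_3$. To handle the residual you must:
\begin{itemize}
\item split $X^{r-j}Y^j=(X^{r-j}Y^j-X^pY^{r-p})+X^pY^{r-p}$;
\item use $X^{r-j}Y^j-X^pY^{r-p}\equiv j\,\theta X^{r-2p+1}Y^{p-2}\pmod{V_r^{**}}$ from \cite[Lemma 2.12]{GV22};
\item use that \eqref{final cong m=1} and \eqref{final m=0 cong} hold modulo $p^3$ when $p^2\mid r-p$.
\end{itemize}
This kills the $J_3$-component. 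Modulo $X_{r-1}$ it leaves $\frac{p-r}{p^2}[g^0_{1,0},XY^{r-1}-X^pY^{r-p}]=-\frac{p-r}{p^2}[g^0_{1,0},\theta Y^{r-p-1}]$, which is invisible in $J_3$. Likewise, $-a_pf_0$ and the first part of $-a_pf_2$ are, modulo $X_{r-1}$, multiples of $\theta Y^{r-p-1}$ and $\theta X^{r-p-1}$, so they are also invisible in $J_3$.
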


\begin{proof}
Let us consider $f_0,f_1,f_2 \in \mathrm{ind}_{KZ}^G \mathrm{Sym}^r \bar{\mathbb{Q}}_p^2$ defined by
\begin{eqnarray*}
    f_2 &=&  \sum_{\lambda\in \mathbb{F}_p^\times}\biggl[g^0_{2,p \left[ \lambda\right]}, \frac{\left[ \lambda\right]^{p-2}(Y^r-X^{r-p}Y^p)}{p^2}\biggl]+\frac{r(1-p)}{p^3} \biggl[g^0_{2,0}, X Y^{r-1}-X^{r-p+1}Y^{p-1}\biggl]\\
    f_1 &=& \biggl[g^0_{1,0}, \frac{p-1}{p^2 \cdot a_p} \sum_{\substack{
0<j<r-1 \\
j \equiv 0 \pmod{p-1}
}}
 \alpha_j \cdot X^{r-j} Y^{j}\biggl]\\
 f_0 &=& \biggl[ \mathrm{Id}, \frac{1-p}{p^2} (X^r-X^pY^{r-p})\biggl],
 \end{eqnarray*}
 where $\alpha_j$ are integers in part 1 of Lemma~\ref{lem:2.6}.
It is clear that $T^+f_2$, $T^-f_1$, $T^+f_1$ and $T^-f_0$ die mod $p$. As in the proof of the previous proposition, we have \begin{eqnarray*}
     T^-f_2-a_pf_1+T^+f_0 & \equiv & \biggl[g^0_{1,0}, \frac{p-1}{p^2} 
\sum_{\substack{
0<j<r-1 \\
j \equiv 0 \pmod{p-1}
}}
 \biggl(\binom{r}{j}-\alpha_j\biggl) \cdot X^{r-j} Y^{j}  \biggl].
\end{eqnarray*}% We compute 
%\begin{eqnarray*}
   %T^-f_2 &=& \biggl[g^0_{1,0}, \frac{p-1}{p^2} X^r + \frac{p-1}{p^2} 
%\sum_{\substack{
%0<j<r-1 \\
%j% \equiv 0 \pmod{p-1}
%}}
 %\binom{r}{j} X^{r-j} Y^{j}  \biggl]\\
 %T^-f_2-a_pf_1 &=& \biggl[g^0_{1,0}, \frac{p-1}{p^2} X^r + \frac{p-1}{p^2} 
%\sum_{\substack{
%0<j<r-1 \\
%j \equiv 0 \pmod{p-1}
%}}
 %\biggl(\binom{r}{j}-\alpha_j \biggl) X^{r-j} Y^{j}  \biggl].
%\end{eqnarray*}
%Moreover, we have \[
%T^+f_0= \frac{1-p}{p^2} \biggl[g^0_{1,0}, X^r\biggl].
%\]
 Again $X^{r-j}Y^j-X^pY^{r-p} \equiv j\theta X^{r-2p+1}Y^{p-2} \pmod{V_r^{**}}$ by \cite[Lemma $2.12$]{GV22} and since $p^2 \mid r-p$ the congruences \eqref{final cong m=1} and \eqref{final m=0 cong}  hold mod $p^3$,  we have 
 \begin{eqnarray*}
         %T^-f_2-a_pf_1+T^+f_0 &=& \biggl[g^0_{1,0}, \frac{p-1}{p^2} 
%\sum_{\substack{
%0<j<r-1 \\
%j \equiv 0 \pmod{p-1}
%}}
 %\biggl(\binom{r}{j}-\alpha_j\biggl) %\cdot X^{r-j} %Y^{j}  \biggl]\\
 T^-f_2-a_pf_1+T^+f_0 &=& \biggl[g^0_{1,0}, \frac{p-1}{p^2} 
\sum_{\substack{
0<j<r-1 \\
j \equiv 0 \pmod{p-1}
}}
 \biggl(\binom{r}{j}-\alpha_j\biggl)\cdot (X^{r-j} Y^{j}-X^{p}Y^{r-p})\\
 && \qquad + \frac{p-1}{p^2} 
\sum_{\substack{
0<j<r-1 \\
j \equiv 0 \pmod{p-1}
}}
 \biggl(\binom{r}{j}-\alpha_j\biggl)X^{p}Y^{r-p}\biggl]\\
 &\equiv& \biggl[g^0_{1,0}, \frac{(p-1)(p-r)}{p^2} X^{p}Y^{r-p} \biggl]\\
 &\equiv& \biggl[g^0_{1,0}, \frac{(p-r)}{p^2} (XY^{r-1}-X^{p}Y^{r-p}) \biggl]\pmod{X_{r-1}}.
 \end{eqnarray*}
 Finally we have \begin{eqnarray*}
     (T-a_p)(f_0+f_1+f_2) &=& -\frac{a_p}{p^2}\sum_{\lambda\in \mathbb{F}_p^{\times}} \biggl[g^0_{2,p \left[ \lambda\right]}, \left[ \lambda\right]^{p-2}\theta X^{r-p-1}\biggl]-\frac{ra_p}{p^3} \biggl[g^0_{2,0}, X Y^{r-1}-X^{r-p+1}Y^{p-1}\biggl]\\
     && \qquad -\frac{(p-r)}{p^2}\biggl[g^0_{1,0},  \theta Y^{r-p-1} \biggl]+\frac{a_p}{p^2}\biggl[\mathrm{Id},\theta Y^{r-p-1}\biggl]\pmod{X_{r-1}}.
      \end{eqnarray*}
    Thus $(T-a_p)(f_0+f_1+f_2)$ is integral and, by \cite[Lemma $2.12$]{GV22} and Lemma~\ref{lem:2.10}, maps to $\overline{\frac{-a_p}{p^2}}\biggl[g^0_{2,0}, X\biggl]$ in \(\mathrm{ind}_{KZ}^G J_3\), which generates this as a $G$-module. Hence, we have $F_3=0$.
\end{proof}
\begin{prop}\label{prop:B_6}
    Let $2p+1\le r \equiv 1 \pmod{p-1}$. Then 
    \begin{enumerate}
        \item [(i)] $F_1=0$.
        \item [(ii)] If $p \nmid r$, then $F_2=0$.
    \end{enumerate}
\end{prop}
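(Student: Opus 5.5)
The plan is to prove both parts on the level of the finite $\Gamma$-modules $W_1/W_2$ and $W_2/W_3$, with no Hecke operator computation. The idea is to show that, for $a=1$ (that is, $a=p$ in the notation of Lemma~\ref{lem:2.10}), the submodule $X_{r-1}$ is already large enough to kill the relevant Jordan--H\"older factors of $P$. If a Jordan--H\"older factor of $P$ vanishes, so does its image $F_i$ in $\bar\Theta_{k,a_p}$.

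For (i), recall that $W_1/W_2=V_r/(X_{r-1}+V_r^{*})$, and that by Proposition~\ref{prop:3.1}(i) with $a=1$ there is a non-split extension
\[
0\to V_1\to V_r/V_r^*\to V_{p-2}\otimes D\to 0 .
\]
I will apply Lemma~\ref{lem:2.10} with $m=0$, $r'=r$ and $a'=1$. The monomial $X^{r-1}Y$ (the case $i=1=a'$) maps to $(-1)^{r-1}\binom{p-2}{0}X^{p-2}\neq 0$ in the cosocle $V_{p-2}\otimes D$. The extension is non-split because $a=1\neq p-1$, so its socle is $V_1$ alone and any submodule not contained in $V_1$ is everything. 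Hence the $\Gamma$-module generated by $X^{r-1}Y$ surjects onto $V_r/V_r^*$, i.e.\ $X_{r-1}+V_r^*=V_r$. Therefore $W_1/W_2=0$, so $U_1=U_2$, and $F_0=F_1=0$. This needs only $r\ge p$, which is well within the hypothesis $r\ge 2p+1$.

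For (ii), note that $W_2/W_3\cong V_r^*/(X_{r-1}^*+V_r^{**})$. By Proposition~\ref{prop:3.1}(ii) this is a quotient of the non-split extension of $J_3=V_1$ by $J_2=V_{p-2}\otimes D$. It therefore suffices to show that the image of $X_{r-1}^*$ in $V_r^*/V_r^{**}$ contains the socle $V_{p-2}\otimes D$, which is generated by $\theta X^{r-p-1}$. Then $J_2$ maps to zero in $W_2/W_3$, so $F_2=0$.

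My first attempt will be to quote \cite[Proposition~4.18]{GV22}, exactly as in Proposition~\ref{prop:A_3}(ii). That result gives $X_{r-1}^{(1)}/X_{r-1}^{(2)}\cong V_{[a-2]}\otimes D=V_{p-2}\otimes D$ whenever $r\not\equiv a \pmod p$. Here "$a$" must be read as $p$, so the condition is precisely $p\nmid r$.

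If that citation does not literally cover the case $a=p$, I will argue directly, and this is the step I expect to be the main obstacle. The plan is to take the elements of $X_{r-1}$ coming from the expansions of $(X+\lambda Y)^{r-1}X$ and $(X+\lambda Y)^{r}$, namely
\[
\sum_{j\equiv 0 \,(p-1)}\binom{r-1}{j}X^{r-j}Y^j \quad\text{and}\quad \sum_{j\equiv 1\,(p-1)}\binom{r}{j}X^{r-j}Y^j .
\]
I will subtract suitable multiples of $X^r$ and $X^{r-1}Y$, which also lie in $X_{r-1}$, so that the resulting combination lies in $V_r^*$; the criterion for this is \cite[Lemma~2.7]{GV22}. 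I will then reduce it modulo $V_r^{**}$ using \cite[Lemma~2.12]{GV22}, in the form $X^{r-j}Y^j-X^{p}Y^{r-p}\equiv j\,\theta X^{r-2p+1}Y^{p-2}$. The coefficients that arise are the sums $\sum_j\binom{j}{m}\binom{r}{j}$ for $m=0,1$ modulo $p$, which are evaluated as in the proof of Lemma~\ref{lem:2.6}. The expected outcome is a nonzero element of the form $c\,r\,\theta(\cdots)$. After applying Lemma~\ref{lem:2.10} with $m=1$, this element should have nonzero image in the socle $V_{p-2}\otimes D$, and the constant $c\,r$ is a unit precisely when $p\nmid r$. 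Since $V_{p-2}\otimes D$ is irreducible, any nonzero element of it generates it, and the claim follows.
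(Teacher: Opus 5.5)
\textbf{Part (i).} This is correct. It is a self-contained proof of what the paper imports from \cite[Lemma 4.3]{GV22}, namely $X_{r-1}/X_{r-1}^*\cong V_r/V_r^*$. Since $X^{r-1}Y$ hits the cosocle of the non-split extension, $X_{r-1}+V_r^*=V_r$, so $W_1/W_2=0$.

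\textbf{Part (ii): the citation.} The paper does not use \cite[Proposition 4.18]{GV22} here. It cites \cite[Lemma 4.34]{GV22}: if $p\nmid r$ then $X_{r-1}^*/X_{r-1}^{**}\cong V_{p-2}\otimes D$. The paper invokes Proposition 4.18 only for $a\ge 3$. Moreover, with the normalization $1\le a\le p-1$ its hypothesis $r\not\equiv a \pmod p$ would read $r\not\equiv 1\pmod p$, not $p\nmid r$. So ``read $a$ as $p$'' is exactly the unjustified step, and your direct fallback has to carry (ii).

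\textbf{Part (ii): a failing step in the fallback.} As written, the step ``subtract multiples of $X^r$ and $X^{r-1}Y$ to land in $V_r^*$'' fails for both of your elements.
\begin{itemize}
\item The $j\equiv 1$ sum contains $Y^r$ with coefficient $\binom{r}{r}=1$, and no multiple of $X^r$ or $X^{r-1}Y$ removes it.
\item The $j\equiv 0$ sum has coefficient $1$ on $X^r$, but its residue-class sum is $\equiv 2\pmod p$.
\end{itemize}
You must also use $Y^r$ and $XY^{r-1}$. These lie in $X_{r-1}$ as the images of $X^r$ and $X^{r-1}Y$ under the Weyl involution.

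\textbf{Part (ii): the computation after the fix.} The corrected $j\equiv 0$ element $\sum_{0<j<r-1}\binom{r-1}{j}X^{r-j}Y^j$ is useless. Indeed $\sum_{0<j<r-1,\,j\equiv 0}j\binom{r-1}{j}\equiv (r-1)-(r-1)=0$, so it lies in $V_r^{**}$. The useful element is
\[
G:=\sum_{\substack{1\le j<r\\ j\equiv 1 \bmod (p-1)}}\binom{r}{j}X^{r-j}Y^j\in X_{r-1}.
\]
Its class sum is $\equiv 1-1=0$, so $G\in X_{r-1}^*$.
\begin{itemize}
\item For the interior terms $p\le j\le r-p+1$, use $X^{r-j}Y^j\equiv X^{r-p}Y^p+(j-p)\,\theta X^{r-2p}Y^{p-1}\pmod{V_r^{**}}$. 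The formula you quoted, with $j\,\theta X^{r-2p+1}Y^{p-2}$, belongs to the other residue class and is valid only when $p\mid r$.
\item The needed sums are $\sum_{1<j<r}\binom{r}{j}\equiv -r$ and $\sum_{1<j<r}j\binom{r}{j}\equiv 2r-2r=0 \pmod p$.
\item The boundary term $rX^{r-1}Y$ is not covered by that congruence. Handle it via $X^{r-1}Y-X^{r-p}Y^p=\theta X^{r-p-1}$.
\end{itemize}
The result is $G\equiv r\,\theta X^{r-p-1}\pmod{V_r^{**}}$. When $p\nmid r$ this is a nonzero element of the irreducible socle, so it generates $J_2$. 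Hence $J_2=0$ and $F_2=0$.

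\textbf{Comparison.} Your route avoids depending on \cite{GV22} and shows explicitly why the hypothesis $p\nmid r$ is exactly what is needed. The paper's route is a one-line citation.
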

\begin{proof}
    (i) By \cite[Lemma $4.3$]{GV22} we have $\frac{X_{r-1}}{X_{r-1}^{*}} \cong \frac{V_r}{V_r^*}$.

    (ii) By \cite[Lemma $4.34$] {GV22} if $p \nmid r$, we have $\frac{X_{r-1}^*}{X_{r-1}^{**}} \cong V_{p-2} \otimes D$.
\end{proof}

\vspace{1cm}
%{\color{red} We have to write something}
\begin{proof}[Proof of Theorem~\ref{thm: con 1}] 
The above propositions may be summarized by the following table:

\begin{table}[h]
\centering
\begin{tabular}{|>{\centering\arraybackslash}m{5cm}|
                >{\centering\arraybackslash}m{1.3cm}|
                >{\centering\arraybackslash}m{1.3cm}|
                >{\centering\arraybackslash}m{1.3cm}|
                >{\centering\arraybackslash}m{1.3cm}|
                >{\centering\arraybackslash}m{1.3cm}|}
\hline
& $F_5$ & $F_4$ & $F_3$ & $F_2$ & $F_1$ \\ \hline
$p\nmid r(r+1)$
& & $\checkmark$& $\checkmark$ & & \\ \hline

$p\mid (r+1)$
& & &$\checkmark$ & & \\ \hline

$p\mid (r-p),\; p^2\nmid (r-p)$
& & &$\checkmark$ & &\\ \hline

$p^2\mid (r-p)$
& & & &$\checkmark$  & \\ \hline
\end{tabular}
\end{table}

In particular, we have:
\begin{itemize}
    \item [(1)] If $p \nmid r(r+1)$, then by Proposition~\ref{prop:B_1} and Proposition~\ref{prop:B_6} we get $\overline{\Theta}^{\,{ss}} \cong (F_3 \oplus F_4)^{\,{ss}}$. Moreover, Proposition~\ref{prop:B_2} tells us that we have surjections $\pi(p-4,\bar{u},\omega^2) \twoheadrightarrow F_4$ and $\pi(1,\bar{u}^{-1},1) \twoheadrightarrow F_3$. Hence, we have $\overline{\Theta}^{\,{ss}} \cong \pi(p-4,\bar{u},\omega^2)^{\,{ss}} \oplus \pi(1,\bar{u}^{-1},1)^{\,{ss}}$.
    \item [(2)] If $p \mid r+1$, then by Proposition~\ref{prop:B_1} and Proposition~\ref{prop:B_6}, we have $\overline{\Theta} \cong F_3 \cong \pi(1,0,1)$.
    \item [(3)] If $p \mid r-p$ but $p^2 \nmid r-p$, then by    Proposition~\ref{prop:B_1}, Proposition~\ref{prop:B_4} and Proposition~\ref{prop:B_6}, we have $\overline{\Theta} \cong F_3 \cong \pi(1,0,1)$.
    \item [(4)] If $p^2 \mid r-p$, then by Proposition~\ref{prop:B_1}, Proposition~\ref{prop:B_3}, Proposition~\ref{prop:B_5} and Proposition~\ref{prop:B_6},  we have $\overline{\Theta}^{\,{ss}} \cong \pi(p-2,{\lambda},\omega)^{\,{ss}} \oplus \pi(p-2,{\lambda}^{-1},\omega)^{\,{ss}}$, where $\lambda+\lambda^{-1}=\overline{\frac{
    p-r}{a_p}}$.
\end{itemize}
Theorem~\ref{thm: con 1} now follows in all cases from the mod $p$ LLC.
\end{proof}

\section{The case $a=2$}\label{sec: a=2}
In this section we will discuss the case $r \equiv 2 \pmod {p-1}$.
\begin{theorem}\label{thm:a=2}
Let $3p+2 \le r\equiv 2 \pmod{p-1}$. Assume that $v_p(a_p)=2$. Then \begin{enumerate}
    \item [(1)] If $p \nmid r(r-1)$, then $\overline{V}^{\,{ss}}_{k,a_p}
\cong \mathrm{ind}\bigl(\omega_2^{3}\bigr)$.
\item [(2)] If $p\mid r$ but $p^2 \nmid r-2p$, then $\overline{V}^{\,{ss}}_{k,a_p}
\cong \mu_{\lambda}\cdot \omega\oplus\mu_{\lambda^{-1}} \cdot \omega^2 $, where $$\lambda=\overline{\frac{2p}{r-2p} \cdot \frac{a_p}{p^2}} \in \overline{\mathbb{F}}_p^\times.$$
 \item[(3)] If $p^2 \mid r-2p$, then $\overline{V}^{\,{ss}}_{k,a_p} \cong \mathrm{ind}\bigl(\omega_2^{p+2}\bigl)$.
\item[(4)] If $p\mid r-1$ but $p^2 \nmid r-p-1$, then $\overline{V}^{\,{ss}}_{k,a_p} \cong \mu_{\lambda}\cdot \omega \oplus\mu_{\lambda^{-1}} \cdot \omega^2$, where $$\lambda = \overline{\frac{p}{p+1-r}\cdot \frac{a_p}{p^2}} \in \overline{\mathbb{F}}_p^\times.$$
\item[(5)] If $p^2 \mid r-p-1$, then $\overline{V}^{ss}_{k,a_p} \cong \mathrm{ind} \bigl(\omega_2^{2p+1}\bigl)$. 
\end{enumerate}
\end{theorem}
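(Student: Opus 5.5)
We follow the template of Sections~\ref{sec: 5 to p-1} and~\ref{sec: a=1}. For $a=2$, the six Jordan--H\"older factors of $V_r/V_r^{***}$ are as follows. From Proposition~\ref{prop:3.1}: $J_0=V_2$ and $J_1=V_{p-3}\otimes D^2$; $J_2=V_{p-1}\otimes D$ and $J_3=V_0\otimes D$, where the middle layer is split; $J_4=V_{p-3}\otimes D^2$ and $J_5=V_2$. The plan is to show, case by case, which of $F_0,\dots,F_5$ vanish. For the survivors we then produce explicit elements $f\in\mathrm{ind}_{KZ}^G\mathrm{Sym}^r\overline{\mathbb{Q}}_p^2$ such that $(T-a_p)f$ is integral and reduces to $c\,(T-\lambda)[g,v]$ (or $c\,T[g,v]$) for a generator $[g,v]$ of $\mathrm{ind}_{KZ}^G J_i$. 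Finally we read off $\overline{V}^{ss}_{k,a_p}$ through the mod $p$ LLC, using injectivity and the fact that $\overline\Theta^{ss}$ lies in its image.

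\textbf{Generic vanishing.} First, $F_0=0$ by construction, and $F_1=0$ in all cases, since $X_{r-1}/X_{r-1}^*\cong V_r/V_r^*$ by \cite[Lemma 4.3]{GV22}, arguing as in Proposition~\ref{prop:B_6}. Next we need the analogue of Lemma~\ref{lem:3.3} for $a=2$. Substituting $a=2$ there gives
\[
\binom{a-1}{2}=0, \qquad \binom{r-a+1}{2}=\tbinom{r-1}{2}.
\]
Hence, modulo $X_{r-1}+V_r^{(3)}$, one obtains $\theta^2X^{r-2p-2}\equiv 0$ whenever $p\nmid r(r-1)$, while in the other cases the relation degenerates. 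We therefore expect $F_4=0$, and also $F_5=0$ via the argument of Proposition~\ref{prop:A_1}(i) with $f_0=[\mathrm{Id},\theta^2X^{r-2p}Y^{p-3}/p^2]$, suitably modified since $a-4<0$. This requires computing $X_{r-1}^{**}/X_{r-1}^{***}$ for $a=2$ from \cite{GV22}. In the generic case $p\nmid r(r-1)$ we expect $U_3=0$. In addition, the $V_{p-1}\otimes D$ piece $J_2$ should be killed using \cite[Prop.~4.18]{GV22}, which shows that $X^*_{r-1}/X^{**}_{r-1}$ contains it. The only survivor is then $F_3$, a quotient of $\pi(0,0,\omega)$. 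Indeed, the same $f_1$, $f_0$ construction as in Proposition~\ref{prop:A_2}(ii) gives a coefficient $(r-a)(r-a+1)/((a-1)(a-2))$, which is now infinite, so after rescaling we get $T[\mathrm{Id},1]$ in $\mathrm{ind}J_3$. Under LLC this yields $\mathrm{ind}(\omega_2^{p+2})\otimes\omega\cong\mathrm{ind}(\omega_2^{3})$, proving (1).

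\textbf{The cases $p\mid r$ and $p\mid r-1$.} Here $J_2$ may survive, and so may $J_4$ or $J_5$. For $p\mid r$ we use Lemma~\ref{lem:2.7} and Lemma~\ref{lem:2.8} to build the correction terms $f_0$, just as Lemmas~\ref{lem:2.2}--\ref{lem:2.5} were used in Propositions~\ref{prop:A_4} and~\ref{prop:A_5}. The test functions are
\[
f_1=\sum_\lambda\bigl[g^0_{1,[\lambda]},(Y^r-X^{r-2}Y^2)/p^2\bigr],
\]
and $[\mathrm{Id},(X^{r-1}Y-X^{r-p}Y^p)/p^2]$-type elements as in Proposition~\ref{prop:A_5}(ii). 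The goal is to show that $F_2$ is a quotient of $\pi(p-1,\bar u,\omega)$ and that $F_5$ (or $F_4$) is a quotient of $\pi(2,\bar u^{-1},1)$ or its partner, with $u=\frac{2p}{r-2p}\frac{a_p}{p^2}$. The quantity $(r-2p)/p$ appears through Lemma~\ref{lem:2.1} and Remark~\ref{rmk:2.1}, exactly as $(a-r)/p$ did in Proposition~\ref{prop:A_5}. When $p^2\mid r-2p$ this coefficient vanishes mod $p$, so one factor is killed and the other becomes a $T$-eigenquotient with eigenvalue $0$, giving $\mathrm{ind}(\omega_2^{p+2})$. The case $p\mid r-1$ is parallel: we replace $r$ by $r-1$ and use Lemma~\ref{lem:2.9}, with the role of $r-2p$ played by $r-p-1$.

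\textbf{Main obstacle.} The delicate step is the $\dim=p$ weight $J_2=V_{p-1}\otimes D$. Since $\pi(p-1,\lambda,\eta)$ is reducible for $\lambda=\pm1$, showing that $F_2$ is a quotient of the right $\pi$ does not by itself pin down $\overline\Theta^{ss}$. Instead we must argue, as in the remark in the introduction, that the two survivors together force the reducible answer with the stated $\lambda$. This requires carefully checking mod-$p^3$ refinements of the congruences in Lemmas~\ref{lem:2.7}--\ref{lem:2.9}, analogous to the refinements used in Propositions~\ref{prop:B_3} and~\ref{prop:B_5}, so that the integrality of $(T-a_p)f$ holds precisely when $p\nmid (r-2p)/p$ (respectively $p\nmid (r-p-1)/p$). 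We would attack this first, by a direct root-of-unity computation as in Lemma~\ref{lem:2.1}.
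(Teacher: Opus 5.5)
Your proposal gets the survivors wrong in the generic case, and the LLC step hides this. When $p\nmid r(r-1)$, the whole middle layer is absorbed by $X_{r-1}$: \cite[Prop.~4.22]{GV22} gives $X_{r-1}^{*}/X_{r-1}^{**}\cong V_r^{*}/V_r^{**}$, so $F_2=F_3=0$ (not only $F_2$). Also $X_{r-1}^{**}/X_{r-1}^{***}\cong V_{p-3}\otimes D^2$ kills $F_4$. With $F_0=F_1=0$, the unique survivor is $F_5$, so $\overline{\Theta}^{\,ss}\cong F_5\cong\pi(2,0,1)$, which is the image of $\mathrm{ind}(\omega_2^{3})$.

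Your claims $F_5=0$ and $U_3=0$ are therefore false; they would force $\overline{\Theta}=0$. Concretely, take the analogue of the function in Proposition~\ref{prop:A_1}(i), with the correct degree $\theta^2X^{r-3p+1}Y^{p-3}/p^2$. Its $T^+$ is congruent to $\sum_{\lambda}[g^0_{1,[\lambda]},(-[\lambda])^{p-3}X^{r-2}Y^2]$. For $a=2$ the monomial $X^{r-2}Y^2$ maps to a generator of $J_1$ by Lemma~\ref{lem:2.10}, so this relation does not lie in $\mathrm{ind}_{KZ}^G W_3$.

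The LLC identification is also wrong. $\pi(0,0,\omega)$ corresponds to $\mathrm{ind}(\omega_2)\otimes\omega\cong\mathrm{ind}(\omega_2^{p+2})$, which is not isomorphic to $\mathrm{ind}(\omega_2^{3})$: on inertia the exponents are $\{p+2,2p+1\}$ versus $\{3,3p\}$ modulo $p^2-1$. Your route would output the answer of case (3).

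Two supporting steps also fail.
\begin{itemize}
\item $F_1=0$ does not follow as in Proposition~\ref{prop:B_6}. For $a=2$, $X^{r-1}Y$ maps to $0$ in $J_1$ (here $i=1<a'=2$ in Lemma~\ref{lem:2.10}), so $X_{r-1}$ only covers the socle of $V_r/V_r^{*}$. You need a Hecke computation, e.g.\ $f_0=[\mathrm{Id},(\theta^2X^{r-3p-1}Y^{p-1}-\theta^2X^{r-2p-2})/p^2]$. Then $T^+f_0\equiv-[g^0_{1,0},X^{r-2}Y^2]$, while $a_pf_0$ lands in $\mathrm{ind}_{KZ}^G W_3$.
\item You cannot set $a=2$ in Lemma~\ref{lem:3.3}. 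That gives $\binom{r-1}{2}\theta^2X^{r-2p-2}\in X_{r-1}+V_r^{(3)}$, which for $p\mid r$ would kill $F_4$ and contradict case (2). The boundary terms change (e.g.\ $Y^r$ now enters the sum). The correct relations are $\binom{r}{2}\theta^2X^{r-2p-2}\in X_{r-1}+V_r^{(3)}$ and $(r-1)(X^{r-2}Y^2-X^{r-p-1}Y^{p+1})\in X_{r-1}+V_r^{(3)}$.
\end{itemize}

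In the non-generic cases the survivors are also misidentified. For $p\mid r$, \cite[Prop.~4.22]{GV22} gives $X_{r-1}^{*}/X_{r-1}^{**}\cong V_{p-1}\otimes D$, so $F_2=0$. $F_5$ is killed by a three-level function: a level-two sum of $(Y^r-X^{r-2p}Y^{2p})/p$, a level-one correction using the integers of Lemma~\ref{lem:2.7}, and $[\mathrm{Id},(\theta^2X^{r-2p-2}-\theta^2X^{r-3p-1}Y^{p-1})/p^2]$. This uses Lemma~\ref{lem:2.1} and the second relation above. The survivors are $F_4$, a quotient of $\pi(p-3,\bar u,\omega^2)$, and $F_3$, a quotient of $\pi(0,\bar u^{-1},\omega)$. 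For $p\mid r-1$ it is instead $J_3$ that is absorbed ($X_{r-1}^{*}/X_{r-1}^{**}\cong V_0\otimes D$), and $F_2$, $F_4$ survive.

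The pairing you aim for, $\pi(p-1,\bar u,\omega)$ with $\pi(2,\bar u^{-1},1)$, is not an LLC pair. The partner of $\pi(p-1,\lambda,\omega)^{ss}\cong\pi(0,\lambda,\omega)^{ss}$ is $\pi(p-3,\lambda^{-1},\omega^2)$, of weight $J_4$. A partner of $\pi(2,\cdot,1)$ would have weight $V_{p-5}\otimes D^3$, which never occurs here, so $F_5$ can only survive alone. Putting $\bar u$ on the $V_{p-1}\otimes D$ factor would also invert $\lambda$.

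Your ``main obstacle'' is not one. Since $\pi(p-1,\lambda,\eta)^{ss}\cong\pi(0,\lambda,\eta)^{ss}$, a surjection from $\pi(p-1,\cdot,\cdot)$ together with the LLC constraint suffices. The self-paired weight behind the $T^2-cT+1$ phenomenon is $V_{p-2}$, which does not occur for $a=2$, so no mod $p^3$ refinements are needed. The genuinely delicate step is $F_3$ for $p\mid r$. There, two families built from Lemma~\ref{lem:2.8} (for $r$) and Lemma~\ref{lem:2.9} (for $r-1$) must be combined as $f=(r-1)(f_1+f_2)+2(f_1'+f_2')-2(r-1)(p-1)f_0'$ to isolate $(T-\bar u^{-1})[g^0_{1,0},1]$.
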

\begin{lemma}\label{lem:cong 2 f_1}
     Let $2p+3 \le r \equiv 2\pmod{p-1}$. Then \begin{itemize}
         \item [(i)] $\binom{r}{2} \theta^2 X^{r-2p-2} \in X_{r-1}+V_r^{(3)}$.
         \item [(ii)] $(r-1)( X^{r-2}Y^2-X^{r-p-1}Y^{p+1}) \in X_{r-1}+V_r^{(3)} $.
     \end{itemize}
\end{lemma}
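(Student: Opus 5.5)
I would follow the template of the proof of Lemma~\ref{lem:3.3}, now with $a=2$. The inputs are the two elements
\[
\sum_{j\equiv 2 \bmod (p-1)}\binom{r}{j}X^{r-j}Y^j\in X_r\subset X_{r-1}
\qquad\text{and}\qquad
\sum_{j\equiv 2 \bmod (p-1)}\binom{r-1}{j}X^{r-j}Y^j\in X_{r-1}.
\]
Since $r\equiv 2$, the end terms $j=0$ and $j=r$ do not occur in the first sum. The terms with $j$ near $r$ can be dropped modulo $X_{r-1}$, exactly as in Lemma~\ref{lem:3.3}, so in both sums the monomial $X^{r-2}Y^2$ splits off.

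For the middle range $2<j<r-2$, I would use \cite[Lemma 2.14]{GV22} with $a=2$ to compute the moments $\sum\binom{j}{m}\binom{r}{j}$ and $\sum\binom{j}{m}\binom{r-1}{j}$ modulo $p$ for $m=0,1,2$:
\begin{itemize}
\item For $m=2$ both sums vanish modulo $p$, after removing the $j=2$ term.
\item For $m=0$ and $m=1$ the answers are explicit polynomials in $r$, namely $\binom{2}{2}-\binom{r}{2}$ and $r(2-r)$, and their analogues for $r-1$.
\end{itemize}
By \cite[Lemma 2.7]{GV22}, as used in Lemma~\ref{lem:3.3}, this lets me find constants $A,B$ (resp. $A',B'$) such that the middle sum plus $AX^{r-2p}Y^{2p}+BX^{r-p-1}Y^{p+1}$ lies in $V_r^{(3)}$. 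This produces two relations in $X_{r-1}+V_r^{(3)}$ involving only $X^{r-2}Y^2$, $X^{r-p-1}Y^{p+1}$ and $X^{r-2p}Y^{2p}$, the analogues of \eqref{A,B} and \eqref{A',B'}.

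Subtracting the two relations, as in Lemma~\ref{lem:3.3}, gives
\[
-(r-1)X^{r-2}Y^2-(2-r)X^{r-p-1}Y^{p+1}=-(r-1)\bigl(X^{r-2}Y^2-X^{r-p-1}Y^{p+1}\bigr)\in X_{r-1}+V_r^{(3)},
\]
which is part (ii) directly. For part (i), I would take the first relation, which for $a=2$ reads
\[
-\binom{r}{2}X^{r-2}Y^2+\Bigl(\binom{r}{2}-1+r(2-r)\Bigr)X^{r-2p}Y^{2p}-r(2-r)X^{r-p-1}Y^{p+1}\in X_{r-1}+V_r^{(3)}.
\]
Then I would substitute $\theta^2X^{r-2p-2}=X^{r-2}Y^2-2X^{r-p-1}Y^{p+1}+X^{r-2p}Y^{2p}$ and eliminate $X^{r-2}Y^2$ and $X^{r-p-1}Y^{p+1}$ using (ii), multiplied by suitable polynomials in $r$. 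The general formula of Lemma~\ref{lem:3.3} degenerates here because $\binom{a-1}{2}=0$ when $a=2$. So the combination must instead be arranged so that the coefficients of the monomials cancel identically and only $\binom{r}{2}\theta^2X^{r-2p-2}$ survives. Equivalently, the final step of Lemma~\ref{lem:3.3} with $a=2$ should read
\[
0\cdot X^{r-2}Y^2-\binom{r-1}{2}\theta^2X^{r-2p-2}\in X_{r-1}+V_r^{(3)},
\]
and a companion combination should yield the coefficient $\binom{r}{2}$.

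The main obstacle is this last bookkeeping step. The coefficient that comes out of the naive specialization is $\binom{r-1}{2}$, not $\binom{r}{2}$, and the two differ by $r-1$. I would fix this by adding $(r-1)/2$ times relation (ii), or by treating the cases $p\mid r-1$ and $p\nmid r-1$ separately. When $p\nmid r-1$, relation (ii) gives $X^{r-2}Y^2\equiv X^{r-p-1}Y^{p+1}$, and I would substitute this into the first relation. I would also check carefully which boundary terms ($j=2$, $j=r-p+1$, $j=r$) really lie in $X_{r-1}$ when $a=2$, since $r\equiv 2$ puts terms at both ends of the congruence class.
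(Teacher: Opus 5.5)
Your setup matches the paper's: the elements $\sum_{j\equiv 2}\binom{r}{j}X^{r-j}Y^j\in X_r\subset X_{r-1}$ and $\sum_{j\equiv 2}\binom{r-1}{j}X^{r-j}Y^j\in X_{r-1}$, moments from \cite[Lemma 2.14]{GV22}, then $A,B$ from \cite[Lemma 2.7]{GV22}. The gap is in the numbers you feed into it. You obtain them by specializing the formulas of Lemma~\ref{lem:3.3} to $a=2$, and they are wrong exactly where $a=2$ differs.

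\textbf{The boundary term $j=r$.} Contrary to your first claim, $j=r$ \emph{does} lie in the class $j\equiv 2 \pmod{p-1}$. So the first sum contains $Y^r$, which lies in $X_r\subset X_{r-1}$ and must be discarded. Its contributions $1$, $r$, $\binom{r}{2}$ (for $m=0,1,2$) must then be subtracted, together with those of $j=2$, from the totals $1$, $r$, $2\binom{r}{2}$ of \cite[Lemma 2.14]{GV22}. This gives $\sum_{2<j<r}\binom{r}{j}\equiv-\binom{r}{2}$ and $\sum_{2<j<r}j\binom{r}{j}\equiv-2\binom{r}{2}$, not $1-\binom{r}{2}$ and $r(2-r)$. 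Your stated $m=2$ vanishing already silently requires removing $j=r$.

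\textbf{The second sum.} In the $\binom{r-1}{j}$ sum the $m=1$ total is $(r-1)\binom{[0]}{[1]}=(r-1)(p-1)\equiv-(r-1)$, not $(r-1)(a-2)=0$. So that moment is $-(r-1)^2$, not $(r-1)(2-r)$.

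\textbf{Consequences.} With your values the intermediate relations are false.
\begin{itemize}
\item Your difference is actually $-(r-1)X^{r-2}Y^2+(r-2)X^{r-p-1}Y^{p+1}$. Your displayed identification with $-(r-1)(X^{r-2}Y^2-X^{r-p-1}Y^{p+1})$ is an arithmetic slip, since $-(2-r)=r-2\neq r-1$.
\item Modulo $V_r^*$ this difference is $-X^{r-2}Y^2$. Its image in $J_1=V_{p-3}\otimes D^2$ is $-X^{p-3}\neq 0$ by Lemma~\ref{lem:2.10}, since $X_{r-1}$ only contributes the socle $V_2$ of $V_r/V_r^*$. So it does not lie in $X_{r-1}+V_r^{(3)}$.
\item The naive relation $\binom{r-1}{2}\theta^2X^{r-2p-2}\in X_{r-1}+V_r^{(3)}$ is also false. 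For $p\mid r$ one has $\binom{r-1}{2}\equiv 1$, so it would kill $J_4$ and force $F_4=0$, contradicting case (2) of Theorem~\ref{thm:a=2}.
\end{itemize}
Your proposed repair cannot work. Relation (ii) is $(r-1)\theta X^{r-p-2}Y$, not a multiple of $\theta^2X^{r-2p-2}$. Adding elements of $X_{r-1}+V_r^{(3)}$ never changes which multiples of $\theta^2X^{r-2p-2}$ lie in that space. Splitting into cases on $p\mid r-1$ does not help either. The defect is in the starting relations, not the final bookkeeping.

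\textbf{The correct route.} With the corrected moments the argument is shorter than you expect, and this is what the paper does. For the first sum, \cite[Lemma 2.7]{GV22} gives $A=-\binom{r}{2}$ and $B=2\binom{r}{2}$. After discarding $Y^r$, the first relation reads
\[
\binom{r}{2}\bigl(X^{r-2}Y^2-2X^{r-p-1}Y^{p+1}+X^{r-2p}Y^{2p}\bigr)=\binom{r}{2}\theta^2X^{r-2p-2}\in X_{r-1}+V_r^{(3)},
\]
which is (i) outright, with no elimination step and no case distinction. For the second sum there is no boundary term, since $\binom{r-1}{r}=0$, and one gets $A'=\binom{r-1}{2}-(r-1)^2$ and $B'=(r-1)^2$. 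Adding the two relations, the coefficient of $X^{r-2p}Y^{2p}$ is $\binom{r}{2}+\binom{r-1}{2}-(r-1)^2=0$. What remains is $(r-1)\bigl(X^{r-2}Y^2-X^{r-p-1}Y^{p+1}\bigr)$, which is (ii).
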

\begin{proof}

 (i) We know that \begin{eqnarray*}
         \sum_{\substack{
    j \geq 0 \\
    j \equiv 2 \pmod{p-1}
}}
\binom{r }{j} \cdot X^{\,r-j} Y^j &\in& X_{r}, \text{ so}\\
\binom{r}{2} \cdot X^{r-2}Y^2+\sum_{\substack{
  2 < j <r \\
    j \equiv 2 \pmod{p-1}
}}
\binom{r }{j} \cdot X^{\,r-j} Y^j+Y^r &\in& X_{r-1}.
    \end{eqnarray*}
  By \cite[Lemma 2.14]{GV22} we have  \begin{eqnarray*}
 \sum_{\substack{
  j \geq 0\\
    j \equiv 2 \pmod{p-1}
}}
 \binom{j}{m}\binom{r }{j} \equiv \binom{r}{m} + \binom{r}{m} \delta_{p-1,\left[2-m\right]} \pmod p.
 \end{eqnarray*}
 \begin{enumerate}
     \item[(i)] For $m=0$, we have \begin{eqnarray*}
     \sum_{\substack{
  2 < j < r \\
    j \equiv 2 \pmod{p-1}
}}
\binom{r }{j} \equiv - \binom{r}{2} \pmod{p}.
\end{eqnarray*}
\item[(ii)] For $m=1$, we have \[
\sum_{\substack{
  2 < j <r \\
    j \equiv 2 \pmod{p-1}
}}
j\binom{r }{j} \equiv -2\binom{r}{2} \pmod{p}.
\]
\item[(iii)] For $m=2$, we have \[
\sum_{\substack{
  2 < j <r \\
    j \equiv 2 \pmod{p-1}
}}
\binom{j}{2}\binom{r }{j} \equiv 0 \pmod{p}.
\]
\end{enumerate}
Consider the element
\[
F(X,Y):=
\sum_{\substack{
2<j<r\\
j\equiv 2 \pmod{p-1}
}}
\binom{r}{j}X^{r-j}Y^j
+A X^{r-2p}Y^{2p}
+B X^{r-p-1}Y^{p+1}.
\]
We determine constants \(A,B\) such that $F(X,Y)\in V_r^{(3)}$. Note that $X^3$ and $Y^3$ divide $F(X,Y)$. By the congruences above, we must solve: \begin{eqnarray*}
    A+B- \binom{r}{2} &=& 0\\
    B-2 \binom{r}{2}&=& 0.
\end{eqnarray*}
The solutions are:
\[
A=-\binom{r}{2},\,\,B=2\binom{r}{2}.
\]
Hence
\begin{align}
\label{l_1,l_2}
\binom{r}{2} \theta^2X^{r-2p-2} \in X_{r-1}+V_r^{(3)}.
\end{align}

  (ii) Similarly, we have \begin{eqnarray*}
        \sum_{\substack{
j \geq 0 \\
j \equiv 2 \pmod{p-1}
}}
 \binom{r-1}{j} X^{r-j} Y^{j} &\in& X_{r-1}, \text{ so}\\
 \binom{r-1}{2}X^{r-2}Y^2+ \sum_{\substack{
2 <j< r \\
j \equiv 2 \pmod{p-1}
}}
 \binom{r-1}{j} X^{r-j} Y^{j} &\in&  X_{r-1}.
    \end{eqnarray*}
    
    By \cite[Lemma $2.14$]{GV22}, we have
  \begin{eqnarray*}
       \sum_{\substack{
  j \geq 0\\
    j \equiv 2 \pmod{p-1}
}}
\binom{j}{m}\binom{r-1}{j} &\equiv& \binom{r-1}{m} \binom{\left[1-m\right]}{\left[2-m\right]} + \binom{r-1}{m} \delta_{p-1,\left[2-m\right]} \pmod p, \text{ so}\\
\sum_{\substack{
  2<j<r \\
    j \equiv 2 \pmod{p-1}
}}
\binom{j}{m}\binom{r-1}{j} &\equiv& -\binom{2}{m}\binom{r-1}{2}+\binom{r-1}{m} \binom{\left[1-m\right]}{\left[2-m\right]}+ \binom{r-1}{m} \delta_{p-1,\left[2-m\right]} \pmod p.
\end{eqnarray*}
\begin{enumerate}
     \item[(i)] For $m=0$ we have \begin{eqnarray*}
     \sum_{\substack{
  2 < j <r \\
    j \equiv 2 \pmod{p-1}
}}
\binom{r -1}{j} \equiv  -\binom{r-1}{2} \pmod{p},
\end{eqnarray*}
\item[(ii)] For $m=1$ we have \[
\sum_{\substack{
  2 < j <r \\
    j \equiv 2 \pmod{p-1}
}}
j\binom{r -1}{j} \equiv -(r-1)^2 \pmod{p},
\]
\item[(iii)] For $m=2$ we have \[
\sum_{\substack{
  2 < j <r \\
    j \equiv 2 \pmod{p-1}
}}
\binom{j}{2}\binom{r-1}{j} \equiv 0 \pmod{p}.
\]
\end{enumerate}
Let us consider\[
G(X,Y) \coloneqq \sum_{\substack{
2 <j< r \\
j \equiv 2 \pmod{p-1}
}}
 \binom{r-1}{j} X^{r-j} Y^{j} +AX^{r-2p}Y^{2p}+BX^{r-p-1}Y^{p+1},
\]
We determine constants \(A,B\) such that $G(X,Y)\in V_r^{(3)}$. Note that $X^3$ and $Y^3$ divide $G(X,Y)$. By the congruences above, we must solve:\begin{eqnarray*}
    -\binom{r-1}{2}+A+B &=& 0\\
    -(r-1)^2+B&=&0.
\end{eqnarray*}
The solutions are \begin{eqnarray*}
    A &=& \binom{r-1}{2}-(r-1)^2\\
    B &=& (r-1)^2.
\end{eqnarray*}
Hence,
\begin{align*}
&\sum_{\substack{
2<j<r\\
j\equiv 2 \pmod{p-1}
}}
\binom{r-1}{j}X^{r-j}Y^j
+\left(\binom{r-1}{2}-(r-1)^2\right)X^{r-2p}Y^{2p}
+(r-1)^2X^{r-p-1}Y^{p+1}
\in V_r^{(3)},
\end{align*}
and therefore
\begin{align}
\label{l_1',l_2'}
-\binom{r-1}{2}X^{r-2}Y^2
&+\left(\binom{r-1}{2}-(r-1)^2\right)X^{r-2p}Y^{2p}
+(r-1)^2X^{r-p-1}Y^{p+1} 
&\in X_{r-1}+V_r^{(3)}.
\end{align}
Finally adding \eqref{l_1,l_2} and \eqref{l_1',l_2'} we have \begin{eqnarray*}
    (r-1)X^{r-2}Y^2 \equiv (r-1) X^{r-p-1}Y^{p+1} \pmod{X_{r-1}+V_r^{(3)}}. \qquad \qquad \qquad  \qedhere
\end{eqnarray*} 
%Since $p\mid r$, we have
%\[
%X^{r-2}Y^2
%\equiv
%X^{r-p-1}Y^{p+1}
%\pmod{X_{r-1}+V_r^{(3)}}.
%\]
\end{proof}
\begin{lemma}\label{lem:im 2}
Let $3p+2 \le r\equiv 2 \pmod{p-1}$ and $p \mid r$, then 
    \[\frac{1}{p}\sum_{\substack{
2 <j< r-2 \\
j \equiv 2 \pmod{p-1}
}}
 \biggl(\binom{r}{j}-\alpha_j\biggl) X^{r-j} Y^{j}-{A}X^{r-2p}Y^{2p}-{B}X^{r-p-1}Y^{p+1} \in V_r^{(3)},
 \]
 where $A= 1+ \frac{1}{p} \binom{r}{2}$, $B= \frac{-2}{p} \binom{r}{2}\ $ and $\alpha_j$ are integers as in Lemma~\ref{lem:2.7}.
  \end{lemma}
\begin{proof}
By Lemma~\ref{lem:2.1} with $(i,a)=(0,2)$, $(1,2)$ and $(2,p+1)$, respectively, we have

%\begin{comment} 
%   We know that \[
%  (p-1)\sum_{\substack{
%j \geq 0 \\
%j \equiv 2 \pmod{p-1}
%}}
% \binom{j}{m}\binom{r}{j} = \binom{r}{m} \sum_{\xi \in \mu'} \xi^{m-2}(1+\xi)^{r-m}.
%    \]
%    So, we have \begin{eqnarray*}
%       (p-1) \sum_{\substack{
%j \geq 0 \\
%j \equiv 2 \pmod{p-1}
%}}
% \binom{j}{m}\binom{r}{j} \equiv %\binom{r}{m} \sum_{\xi \in \mu'} %\xi^{m-2}.(1+\xi)^{2p-m} \pmod{p^2}.
%    \end{eqnarray*}
%\end{comment}

\begin{eqnarray*}
%       \item [(i)] For $m=0$ we have\begin{eqnarray*}
%       \begin{comment}
%           (p-1) \sum_{\substack{
%j \geq 0 \\
%j \equiv 2 \pmod{p-1}
%}}
% \binom{r}{j} &\equiv&  \sum_{\xi \in \mu'} \xi^{-2}(1+\xi)^{2p} \pmod{p^2}\\
% &\equiv& (p-1)\biggl(\binom{2p}{2}+\binom{2p}{p+1}+1\biggl) \pmod{p^2}\\
%  (p-1) \sum_{\substack{
%2<j<r-2  \\
%j \equiv 2 \pmod{p-1}
%}}
% \binom{r}{j} &\equiv& (p-%1)\biggl(\binom{2p}{2}+\binom{2p}{p+1}-\binom{r}{2}\biggl) %\pmod{p^2}\\
%  \end{comment}
 \frac{(p-1)}{p} \sum_{\substack{
2<j<r-2  \\
j \equiv 2 \pmod{p-1}
}}
 \binom{r}{j} &\equiv& (p-1) \biggl(1-\frac{1}{p}\binom{r}{2}\biggl) \pmod{p} \\
 %
 %\item[(ii)] For $m=1$ we have 
 %\begin{eqnarray*}
  %      
 %       \begin{comment}
  %          (p-1) \sum_{\substack{
%j \geq 0 \\
%j \equiv 2 \pmod{p-1}
%}}
%j \binom{r}{j} &\equiv&  r\sum_{\xi \in \mu'} \xi^{-1}(1+\xi)^{2p-1} \pmod{p^2}\\
%&\equiv& r(p-1)\biggl( 2p-1 + %\binom{2p-1}{p}+1\biggl) %\pmod{p^2}\\
% (p-1) \sum_{\substack{
%2<j<r-2 \\
%j \equiv 2 \pmod{p-1}
%}}
%j \binom{r}{j} &\equiv&  r(p-%1)\biggl( 2p-1 + \binom{2p-1}%{p}\biggl)-2(p-1)\binom{r}{2 } %\pmod{p^2}\\
%\end{comment}
\frac{(p-1)}{p} \sum_{\substack{
2<j<r-2 \\
j \equiv 2 \pmod{p-1}
}}
j \binom{r}{j} &\equiv& -2\frac{(p-1)}{p} \binom{r}{2} \pmod{p} \\
% \item[(iii)] For $m=2$ we have 
 \frac{(p-1)}{p} \sum_{\substack{
2<j<r-2 \\
j \equiv 2 \pmod{p-1}
}}
\binom{j}{2} \binom{r}{j} &\equiv& 0 \pmod{p}.
 \end{eqnarray*}
 Consider \[ 
  H(X,Y)=\frac{1}{p}\sum_{\substack{
2 <j< r-2 \\
j \equiv 2 \pmod{p-1}
}}
\biggl( \binom{r}{j}-\alpha_j \biggl) X^{r-j} Y^{j}-AX^{r-2p}Y^{2p}-BX^{r-p-1}Y^{p+1}.\]
We determine constants \(A,B\) such that $H(X,Y)\in V_r^{(3)}$. Note that $X^3$ and $Y^3$ divide $H(X,Y)$. By the congruences above, we must solve:
 \begin{eqnarray*}
     1-\frac{1}{p}\binom{r}{2}-A-B &=& 0\\
     -\frac{2}{p}\binom{r}{2}-B &=& 0.
\end{eqnarray*}
Hence, we have \begin{eqnarray*}
    A &=& 1+\frac{1}{p}\binom{r}{2}\\
    B &=& -\frac{2}{p}\binom{r}{2},
\end{eqnarray*}
proving the lemma.
\end{proof}
\begin{lemma}\label{lem: im 3}
Let $3p+2 \le r \equiv 2 \pmod{p-1}$ and $p \mid r-1$, then \[
\frac{1}{p}\sum_{\substack{
2 <j< r-2 \\
j \equiv 2 \pmod{p-1}
}}
 \biggl(\binom{r}{j}-\alpha_j\biggl) X^{r-j} Y^{j}-AX^{r-2p}Y^{2p}-BX^{r-p-1}Y^{p+1} \in V_r^{(3)},
\]   
where $A=-\frac{1}{2}+\frac{1}{p} \binom{r}{2}$, $B= 1-\frac{2}{p} \binom{r}{2}\ $ and $\alpha_j$ are integers as in Lemma~\ref{lem:2.7}.
\end{lemma}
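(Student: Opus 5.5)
The plan is to mirror the proof of Lemma~\ref{lem:im 2} exactly, changing only the input congruences to reflect $p \mid r-1$ instead of $p\mid r$. The membership criterion is the one used throughout (via \cite[Lemma 2.7]{GV22} and its $\theta^2$-analogue). A polynomial $H$ whose monomials $X^{r-j}Y^j$ all have $j \equiv 2 \pmod{p-1}$ and $3 \le j \le r-3$ is divisible by $X^3$ and $Y^3$. Such an $H$ lies in $V_r^{(3)}$ as soon as its coefficient sums $\sum c_j$, $\sum j c_j$ and $\sum \binom{j}{2} c_j$ vanish mod $p$. So, setting
\[
H(X,Y)=\frac{1}{p}\sum_{\substack{2<j<r-2\\ j\equiv 2 \pmod{p-1}}}\Bigl(\binom{r}{j}-\alpha_j\Bigr)X^{r-j}Y^j - AX^{r-2p}Y^{2p}-BX^{r-p-1}Y^{p+1},
\]
the $A,B$ are forced by the first two conditions. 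The remaining work is to compute the three sums $\frac{1}{p}\sum \binom{j}{m}\bigl(\binom rj-\alpha_j\bigr)$ mod $p$ for $m=0,1,2$.

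\textbf{Step 1: the $\alpha_j$ contributions.} Lemma~\ref{lem:2.7} requires $p\mid\binom r2$, which holds since $p\mid r-1$. It gives $\sum_j \binom{j}{m}\alpha_j \equiv 0 \pmod{p^{4-m}}$. After dividing by $p$, these terms drop out mod $p$ for $m=0,1,2$. The range $p+1\le j<r$ of Lemma~\ref{lem:2.7} differs from $2<j<r-2$ only at $j=r-p+1$ (the $j=r$ term is absent there). I will check that $\alpha_{r-p+1}$ either sits harmlessly in the sum or is divisible by $p$, since $p\mid\binom{r}{p-1}$ when $p\mid r-1$.

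\textbf{Step 2: the binomial sums.} I apply Lemma~\ref{lem:2.1} with $(i,a)=(0,2),(1,2),(2,p+1)$ to $\sum_{i<j<r}\binom ji\binom rj$ mod $p^2$. From these I subtract the boundary terms: the $j=2$ term $\binom{2}{m}\binom r2$, and the $j=r-p+1$ term $\binom{r-p+1}{m}\binom{r}{p-1}$, which is $O(p)$. For $m=0$ this gives $p\cdot\frac{2-r}{2}-\binom r2+(\text{boundary})$. Since $r\equiv1\pmod p$, $\frac{2-r}{2}\equiv\frac12$, which should produce the $-\frac12$ in $A$ in place of the $1$ in Lemma~\ref{lem:im 2}. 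For $m=1$ I get $p\,r\frac{2-r}{1}-2\binom r2$, where the first term is $\equiv p \pmod{p^2}$; this yields the extra $1$ in $B$. For $m=2$ I get $0$ mod $p^2$.

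\textbf{Step 3: solving for $A,B$.} The equations $\frac1p S_0 - A - B\equiv0$ and $\frac1p S_1 - pA-(p+1)B\equiv0$ (read mod $p$ as $\frac1pS_1-B\equiv 0$) then give $B=1-\frac2p\binom r2$ and $A=-\frac12+\frac1p\binom r2$. The $\binom j2$ condition holds automatically mod $p$: $\binom{2p}{2},\binom{p+1}{2}\equiv 0$, and the $m=2$ sum vanishes.

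The main obstacle is Step 2. I must track the $O(p)$ corrections exactly mod $p^2$, namely the factor $\frac{a-r}{a-i}$ in Lemma~\ref{lem:2.1} and the boundary term at $j=r-p+1$, i.e.\ $\binom{r}{p-1}$ mod $p^2$ via Lucas/Kummer. The signs of the $\frac12$ and the $1$ hinge on these. If the boundary term does not vanish to order $p^2$, I would absorb it into the $\alpha_{r-p+1}$ adjustment rather than into $A,B$.
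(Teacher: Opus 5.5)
Your route is the paper's: Lemma~\ref{lem:2.1} with $(i,a)=(0,2),(1,2),(2,p+1)$, removal of the $j=2$ term, Lemma~\ref{lem:2.7}(2) to kill the $\alpha_j$-sums after dividing by $p$, and the two linear conditions from \cite[Lemma 2.7]{GV22} to fix $A$ and $B$. The congruences you reach are $\frac1p\sum\binom rj\equiv\frac12-\frac1p\binom r2$, $\frac1p\sum j\binom rj\equiv 1-\frac2p\binom r2$ and $\frac1p\sum\binom j2\binom rj\equiv 0 \pmod p$. These, and hence your $A$ and $B$, agree with the paper.

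The one real flaw is the ``boundary term at $j=r-p+1$'' that you call the main obstacle. It does not exist, and the step you propose for it would break the computation.

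\emph{The ranges coincide.} For $p\ge 5$ we have $r-p+1\le r-4<r-2$. So $\{2<j<r-2:\ j\equiv 2\}$ and the range $\{p+1\le j<r:\ j\equiv 2\}$ of Lemma~\ref{lem:2.7} are the same set $\{p+1,2p,\dots,r-p+1\}$. This set differs from the range $i<j<r$ of Lemma~\ref{lem:2.1} only by $j=2$ when $i=0,1$, and not at all when $i=2$. In particular there is no $j=2$ subtraction for $m=2$; your stated value $0$ there is nevertheless correct.

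\emph{The proposed subtraction would give wrong constants.} For $r\equiv 1\pmod p$ one has $\binom{r}{p-1}\equiv\frac{r-1}{2}\pmod{p^2}$. Hence $\frac1p\binom{r}{p-1}$ is a unit unless $p^2\mid r-1$. Subtracting $\binom{r-p+1}{m}\binom{r}{p-1}$ would therefore shift both $\frac1p S_0$ and $\frac1p S_1$, and so change both $A$ and $B$.

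\emph{The fallback is unavailable.} You cannot absorb the term into $\alpha_{r-p+1}$. The $\alpha_j$ are the fixed integers of Lemma~\ref{lem:2.7}, and their moment conditions are exactly what you already use in Step~1.

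Delete that boundary discussion, and your argument is exactly the paper's proof.
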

\begin{proof}
By Lemma~\ref{lem:2.1} with $(i,a)=(0,2)$, $(1,2)$ and $(2,p+1)$, respectively, we have 
        %We know that \[
  %(p-1)\sum_{\substack{
%j \geq 0 \\
%j \equiv 2 \pmod{p-1}
%}}
 %\binom{j}{m}\binom{r}{j} = \binom{r}{m} \sum_{\xi \in \mu'} \xi^{m-2}.(1+\xi)^{r-m}.
 %   \]
       % So, we have \begin{eqnarray*}
   %    (p-1) \sum_{\substack{
%j \geq 0 \\
%j \equiv 2 \pmod{p-1}
%}}
% \binom{j}{m}\binom{r}{j} \equiv \binom{r}{m} \sum_{\xi \in \mu'} \xi^{m-2}.(1+\xi)^{p+1-m} \pmod{p^2}.
   % \end{eqnarray*}
    \begin{eqnarray}
              % \item [(i)] For $m=0$ we have \begin{eqnarray*}
       % (p-1) \sum_{\substack{
%j \geq 0 \\
%j \equiv 2 \pmod{p-1}
%}}
 %\binom{r}{j} &\equiv&  \sum_{\xi \in \mu'} \xi^{-2}.(1+\xi)^{p+1} \pmod{p^2}\\
 %&\equiv& (p-1)\biggl(\binom{p+1}{2}+1\biggl) \pmod{p^2}\\
 % (p-1) \sum_{\substack{
%2<j<r-2  \\
%j \equiv 2 \pmod{p-1}
%}}
% \binom{r}{j} &\equiv& (p-1)\biggl(\binom{p+1}{2}-\binom{r}{2}\biggl) \pmod{p^2}\\
\label{lem: needed for a=3 cl 1}
 \frac{p-1}{p} \sum_{\substack{
2<j<r-2  \\
j \equiv 2 \pmod{p-1}
}}
 \binom{r}{j} &\equiv& (p-1) \biggl(\frac{1}{2}-\frac{1}{p}\binom{r}{2}\biggl) \pmod{p}\\
 %\item[(ii)] For $m=1$ we have \begin{eqnarray*}
    %    (p-1) \sum_{\substack{
%j \geq 0 \\
%j \equiv 2 \pmod{p-1}
%}}
%j \binom{r}{j} &\equiv&  r\sum_{\xi \in \mu'} \xi^{-1}.(1+\xi)^{p} \pmod{p^2}\\
%&\equiv& r(p-1)( p+1) \pmod{p^2}\\
 %(p-1) \sum_{\substack{
%2<j<r-2 \\
%j \equiv 2 \pmod{p-1}
%}} 
%j \binom{r}{j} &\equiv&  r(p-1) p-2(p-1)\binom{r}{2 } \pmod{p^2}\\
\label{lem: needed for a=3 cl 2}
\frac{p-1}{p} \sum_{\substack{
2<j<r-2 \\
j \equiv 2 \pmod{p-1}
}}
j \binom{r}{j} &\equiv& (p-1)-2\frac{(p-1)}{p} \binom{r}{2} \pmod{p}\\
 %\item[(iii)] For $m=2$ we have \[
 \frac{p-1}{p} \sum_{\substack{
2<j<r-2 \\
j \equiv 2 \pmod{p-1}
}}
\binom{j}{2} \binom{r}{j} &\equiv& 0 \pmod{p} \nonumber.
 \end{eqnarray}
   Consider \[ 
  H(X,Y)=\frac{1}{p}\sum_{\substack{
2 <j< r-2 \\
j \equiv 2 \pmod{p-1}
}}
\biggl( \binom{r}{j}-\alpha_j\biggl) X^{r-j} Y^{j}-AX^{r-2p}Y^{2p}-BX^{r-p-1}Y^{p+1}.\]
We determine constants \(A,B\) such that $H(X,Y)\in V_r^{(3)}$. Note that $X^3$ and $Y^3$ divide $H(X,Y)$. By the congruences above, we must solve:
 \begin{eqnarray*}
     \frac{1}{2}-\frac{1}{p}\binom{r}{2}-A-B &=& 0\\
     1-\frac{2}{p}\binom{r}{2}-B &=& 0.
\end{eqnarray*}
Hence, we have \begin{eqnarray*}
    A &=& -\frac{1}{2}+\frac{1}{p}\binom{r}{2}\\
    B &=& 1-\frac{2}{p}\binom{r}{2},
\end{eqnarray*}
proving the lemma.
\end{proof}

%\begin{lemma}\label{lem: important for r-2p}

%\end{proof}

\begin{prop}\label{lem:G_1}
Let $3p+2 \le r \equiv 2 \pmod{p-1}$ and $v_p(a_p)=2$, then $F_1=0$.
\end{prop}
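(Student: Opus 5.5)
The plan is to follow the template of Proposition~\ref{prop:A_3}(i), taking $a=2$. By Proposition~\ref{prop:3.1}(i), $V_r/V_r^*$ is a non-split extension of $J_1=V_{p-3}\otimes D^2$ by $V_2$. Lemma~\ref{lem:2.10} tells us that $X^{r-2}Y^2$ maps to a nonzero multiple of $X^{p-3}$ in $J_1$. So it suffices to produce $f\in\mathrm{ind}_{KZ}^G\mathrm{Sym}^r\overline{\mathbb{Q}}_p^{\,2}$ such that $(T-a_p)f$ is integral and its image in $\mathrm{ind}_{KZ}^GJ_1$ is a unit multiple of $[\mathrm{Id},X^{p-3}]$. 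That element generates $\mathrm{ind}_{KZ}^GJ_1$ as a $G$-module, which forces $F_1=0$.

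We take $f=f_1+f_0$ with
\[
f_1=\sum_{\lambda\in\mathbb{F}_p}\Bigl[g^0_{1,[\lambda]},\frac{Y^r-X^{r-2}Y^2}{p}\Bigr],\qquad
f_0=\Bigl[\mathrm{Id},\frac{p-1}{p\,a_p}\sum_{\substack{0<j<r\\ j\equiv 2 \pmod{p-1}}}\alpha_j X^{r-j}Y^j\Bigr].
\]
No $f_{-1}$ term is needed, since $a\neq p-1$. The integers $\alpha_j$ must satisfy $\alpha_j\equiv\binom{r}{j}\pmod p$ together with $\sum_j\binom{j}{n}\alpha_j\equiv0\pmod{p^{4-n}}$ for $n=0,1,2,3$. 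These make $T^+f_0$ and $T^-f_0$ vanish mod $p$; note $v(p\,a_p)=3$.

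Lemma~\ref{lem:2.2} does not cover $a=2$, so the $\alpha_j$ have to be built by hand. We run the same roots-of-unity argument with the three free unknowns $\delta_2,\delta_{p+1},\delta_{2p}$. The $m=0,1,2$ sums over $2\le j<r$ vanish mod $p$: indeed $\sum_{\xi\in\mu'}\xi^{m-2}(1+\xi)^{2-m}$ equals $p-1$ for $m=0,1,2$, and this is cancelled exactly by the $j=r$ term. The $m=3$ condition mod $p$ follows from $\alpha_j\equiv\binom{r}{j}$ and the vanishing of $\sum_{\xi}\xi(1+\xi)^{-1}$-type sums, as in Lemma~\ref{lem:2.7}(iii).

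Next we compute the Hecke action. $T^+f_1\equiv0\pmod p$ because $v(p^{j-1})\ge1$ for $j\ge2$. The term $-a_pf_1$ is divisible by $p$. Finally
\[
T^-f_1=\Bigl[\mathrm{Id},\frac{p-1}{p}\sum_{\substack{0<j<r\\ j\equiv2}}\binom{r}{j}X^{r-j}Y^j\Bigr].
\]
Hence
\[
T^-f_1-a_pf_0=\Bigl[\mathrm{Id},\frac{p-1}{p}\sum_j\Bigl(\binom{r}{j}-\alpha_j\Bigr)X^{r-j}Y^j\Bigr].
\]
Since $X^{r-j}Y^j\equiv X^{r-2}Y^2\pmod{V_r^*}$ for $j\equiv2$ with $0<j<r$, this reduces mod $V_r^*$ to
\[
\frac{p-1}{p}\Bigl(\sum_{0<j<r}\binom{r}{j}-\sum_j\alpha_j\Bigr)X^{r-2}Y^2.
\]
By Lemma~\ref{lem:2.1} with $i=0$, $a=2$, the first sum is $\equiv p\frac{2-r}{2}\pmod{p^2}$, and $\sum_j\alpha_j\equiv0\pmod{p^4}$. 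The coefficient is therefore $\equiv\frac{r-2}{2}\pmod p$. When $p\nmid r-2$, the image in $\mathrm{ind}_{KZ}^GJ_1$ is a unit multiple of $[\mathrm{Id},X^{p-3}]$, and we are done.

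The main obstacle is that the statement makes no hypothesis on $r$, whereas the argument above fails when $p\mid r-2$. In that case I would rescale, replacing $p$ by $p^2$ in the denominators of $f_1$ and $f_0$, with $f_1$ weighted as in Proposition~\ref{prop:A_5}(i). The construction then needs $\alpha_j\equiv\binom{r}{j}\pmod{p^2}$, in the manner of Lemma~\ref{lem:2.3}, and uses Remark~\ref{rmk:2.1}-type control mod $p^3$ to extract the next-order coefficient, iterating if $p^2\mid r-2$. If this also degenerates, I would instead show directly that $X_{r-1}$ surjects onto $V_r/V_r^*$, in the spirit of Proposition~\ref{prop:B_6}(i) via \cite[Lemma 4.3]{GV22}: the element $\sum_{j\equiv1}\binom{r-1}{j}X^{r-j}Y^j\in X_{r-1}$ has an image in $J_1$ that can be checked to be nonzero. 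In fact I would try this structural argument first, since it would give $J_1=0$ inside $P$ uniformly, with no Hecke computation at all.
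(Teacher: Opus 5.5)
Your proposal has a genuine gap. It lies in the construction of the $\alpha_j$, and it occurs in the generic case, not when $p\mid r-2$.

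For $m=2$ your root-of-unity sum is $\sum_{\xi\in\mu'}1=|\mu'|=p-2$, not $p-1$. For $m=0,1$ the factor $(1+\xi)^{2-m}$ vanishes at $\xi=-1$, so omitting that root is harmless; for $m=2$ it is not. Consequently $\sum_{2\le j<r,\ j\equiv 2}\binom{j}{2}\binom{r}{j}\equiv\binom{r}{2}\pmod p$; for instance, with $p=5$ and $r=18$ both sides are $\equiv 3$.

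This has three consequences when $p\nmid r(r-1)$:
\begin{itemize}
\item No integers $\alpha_j\equiv\binom{r}{j}\pmod p$ can satisfy your $n=2$ condition.
\item The $\lambda=0$ summand of $T^+f_0$ contains $\frac{(p-1)p}{a_p}\alpha_2X^{r-2}Y^2$, which has valuation $-1$.
\item Dropping $j=2$ from $f_0$ does not help: it only moves the non-integral term $\frac{p-1}{p}\binom{r}{2}X^{r-2}Y^2$ into $T^-f_1-a_pf_0$.
\end{itemize}
This is exactly why Lemma~\ref{lem:2.7} assumes $p\mid\binom{r}{2}$. So your template proves $F_1=0$ only when $p\mid r(r-1)$. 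In that case, using the $\alpha_j$ of Lemma~\ref{lem:2.7}, your computation does produce the unit $\frac{r-2}{2}$. The main case $p\nmid r(r-1)$ is left open, and it contains the case $p\mid r-2$. The rescaling you sketch for $p\mid r-2$ is not carried out, and the $j=2$ term obstructs it in the same way.

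Your structural fallback is false. By Lemma~\ref{lem:2.10} with $a'=2$, the monomial $X^{r-1}Y$ (index $i=1<a'$) maps to $0$ in $J_1$. Hence the image of $X_{r-1}$ in $V_r/V_r^*$ lies in the subrepresentation $V_2$. Every element of $X_{r-1}$, including $\sum_{j\equiv 1}\binom{r-1}{j}X^{r-j}Y^j$, therefore dies in $J_1$, and $J_1\cong V_{p-3}\otimes D^2$ survives in $P$. This is where $a=2$ differs from $a=1$ in Proposition~\ref{prop:B_6}.

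The missing idea, which the paper uses, is to go one step down the $\theta$-filtration and exploit that $a_p/p^2$ is a unit. Take $f_0=\bigl[\mathrm{Id},p^{-2}(\theta^2X^{r-3p-1}Y^{p-1}-\theta^2X^{r-2p-2})\bigr]$.
\begin{itemize}
\item We have $\theta(X,-[\lambda]X+pY)\equiv pX^pY\pmod{p^2}$, while $(-[\lambda]X+pY)^{p-1}$ is $\equiv X^{p-1}$ for $\lambda\neq 0$ and $\equiv 0$ for $\lambda=0$ modulo $p$. So the two terms cancel for $\lambda\neq0$, and $T^+f_0\equiv-[g^0_{1,0},X^{r-2}Y^2]\pmod p$.
\item $T^-f_0\equiv 0$ because $r\ge 3p+2$.
\item $-a_pf_0$ is integral with values in $V_r^{**}$, so it dies in $W_1/W_2$.
\end{itemize}
Thus $(T-a_p)f_0$ is integral, and its image in $\mathrm{ind}_{KZ}^GJ_1$ is $-[g^0_{1,0},X^{p-3}]$, a generator. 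This works with no condition on $r$ modulo $p$.
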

\begin{proof}
 Consider $f_0 \in \mathrm{ind}_{KZ}^G \mathrm{Sym}^r \bar{\mathbb{Q}}_p^2$ defined by 
     \begin{eqnarray*}
         f_0=\biggl[\mathrm{Id},\frac{\theta^2X^{r-3p-1}Y^{p-1}-\theta^2X^{r-2p-2}}{p^2}\bigg].
    \end{eqnarray*}
     Clearly $T^-f_0$ dies mod $p$. We have 
     \begin{eqnarray*}
          T^+f_0= \biggl[g^0_{1,0}, -X^{r-2}Y^2\biggl].
     \end{eqnarray*}
     Hence we have \begin{eqnarray*}
         (T-a_p)f_0=-\biggl[g^0_{1,0},X^{r-2}Y^2\biggl]-\frac{a_p}{p^2}\biggl[\mathrm{Id}, \theta^2 X^{r-3p-1}Y^{p-1}-\theta^2X^{r-2p-2}\biggl].
     \end{eqnarray*}
     So $(T-a_p)f$ is integral and maps to $- \biggl[g^0_{1,0},X^{p-3}\biggl]$ in $\mathrm{ind}_{KZ}^G J_1$ by Lemma~\ref{lem:2.10}. This is a generator of $\mathrm{ind}_{KZ}^G J_1$ as $G$-module. Hence, $F_1=0$.
     \end{proof}

\begin{prop}\label{lem:G_2}
    Let $2p+1 \le r \equiv 2 \pmod{p-1}$ and $p \nmid r(r-1)$. Then \begin{itemize}
        \item [(i)] $F_2=0$, $F_3=0$.
        \item[(ii)] $F_4=0$. 
    \end{itemize}
   \end{prop}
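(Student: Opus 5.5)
Since $p\nmid r(r-1)$, there is a plan for each of the two parts.

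For part (i), $F_2=F_3=0$, I would prove a submodule statement about $X_{r-1}$ and then pass to the $F_i$. The target is
\[
X_{r-1}+V_r^{**}\supseteq V_r^{*},
\]
that is, $\frac{X_{r-1}^{*}}{X_{r-1}^{**}}\cong \frac{V_r^*}{V_r^{**}}$. Then $W_2/W_3=0$, hence $U_2/U_3=0$, so $F_2=F_3=0$. To prove this I would cite the structural results of \cite{GV22} describing $X_{r-1}^*/X_{r-1}^{**}$, which are the analogues of \cite[Lemma 4.34]{GV22} and \cite[Proposition 4.18]{GV22} used above. When $a=2$, Proposition~\ref{prop:3.1}(ii) gives the split extension $V_{p-1}\otimes D\oplus V_0\otimes D$. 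So it suffices to show that $X_{r-1}$ contains, modulo $V_r^{**}$, lifts of generators of both summands.

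For the summand $V_0\otimes D$, the natural candidate is $\theta X^{r-p-1}$ together with its $\Gamma$-translates. For the summand $V_{p-1}\otimes D$, the candidate is $\theta Y^{r-p-1}$, or $\theta X^{r-2p}Y^{p-1}$. I would produce them explicitly, as in Lemma~\ref{lem:cong 2 f_1}, by taking the elements
\[
\sum_{j\equiv 2}\binom{r}{j}X^{r-j}Y^j\in X_{r-1}\quad\text{and}\quad \sum_{j\equiv 2}\binom{r-1}{j}X^{r-j}Y^j\in X_{r-1}
\]
(and the analogous sums over $j\equiv 1$). Using \cite[Lemma 2.7, 2.12, 2.14]{GV22}, I would reduce them modulo $V_r^{**}$ to multiples of $\theta X^{r-p-1}$ and $\theta Y^{r-p-1}$. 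The coefficients that appear should be $\binom{r}{2}$, $r$, and $r-1$ up to units. These are all nonzero mod $p$ exactly because $p\nmid r(r-1)$, and $p\ne 2$ handles the factor $2$ in $\binom{r}{2}$.

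For part (ii), $F_4=0$, I would use Lemma~\ref{lem:cong 2 f_1}(i). Because $p\nmid\binom r2$, it gives $\theta^2X^{r-2p-2}\in X_{r-1}+V_r^{(3)}$. By definition $J_4$ is generated by the image of $\theta^2X^{r-2p-2}$, so $J_4=0$ in $W_3$. Hence $\mathrm{ind}_{KZ}^GJ_4=0$ and $F_4=0$ follows immediately, with no Hecke computation needed. For both parts, note that the hypothesis $r\ge 2p+1$ must be strengthened to $r\ge 2p+3$ wherever Lemma~\ref{lem:cong 2 f_1} is invoked. The theorem's range $r\ge 3p+2$ covers this.

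The main obstacle is part (i). The issue is showing that $X_{r-1}$ surjects onto both split summands of $V_r^*/V_r^{**}$. In a non-split case, hitting the cosocle would be enough, but here that shortcut is unavailable. If \cite{GV22} does not directly state $X_{r-1}^*/X_{r-1}^{**}\cong V_r^*/V_r^{**}$ for $a=2$, $p\nmid r(r-1)$, I would instead do the explicit binomial-sum elimination above. I would first find which combination of the two sums isolates $\theta X^{r-p-1}$, mirroring the subtraction trick in Lemma~\ref{lem:3.3}. Then I would check that its coefficient is a unit times $r(r-1)$.
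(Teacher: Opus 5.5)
Your proposal is correct and matches the paper. For (i), the paper simply cites \cite[Proposition 4.22]{GV22} (with $\mathcal{J}(2,1)=\{0,1\}$) to get $X_{r-1}^{*}/X_{r-1}^{**}\cong V_r^{*}/V_r^{**}$; for (ii), it kills the socle $J_4$ via \cite[Lemma 4.4, Proposition 4.8]{GV22} or, alternatively, by exactly your use of Lemma~\ref{lem:cong 2 f_1}(i). The only slip is in your unneeded fallback for (i), where you have the summands swapped: by Lemma~\ref{lem:2.10}, $\theta X^{r-p-1}$ and $\theta Y^{r-p-1}$ lie in the $V_{p-1}\otimes D$ piece, while $\theta X^{r-2p}Y^{p-1}$ is the element that maps onto $V_0\otimes D$; also, Lemma~\ref{lem:cong 2 f_1}(ii), i.e.\ $(r-1)\theta X^{r-p-2}Y\in X_{r-1}+V_r^{(3)}$, already disposes of the $V_{p-1}\otimes D$ summand when $p\nmid r-1$.
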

   \begin{proof}\label{lem:G_3}
       (i) By \cite[Proposition $4.22$]{GV22}, we have $\frac{X_{r-1}^{*}}{X_{r-1}^{**}} \cong \frac{V_r^{*}}{V_r^{**}}$, noting that ${\mathcal J}(2,1) = \{0, 1\}$ there.

       (ii) By  \cite[Lemma 4.4]{GV22}, we have $\frac{X_{r-1}^{**}}{X_{r-1}^{***}} =\frac{X_{r}^{**}}{X_{r}^{***}} $. Now by  \cite[Proposition 4.8]{GV22}, we have $\frac{X_{r}^{**}}{X_{r}^{***}} \cong V_{p-3} \otimes D^2$. So the socle of $\frac{V_r^{**}}{V_r^{***}}$ dies. Alternatively, use Lemma~\ref{lem:cong 2 f_1},  part (i). 
   \end{proof}

\begin{prop}\label{prop: alpha 1}
    Let $3p+2 \le r \equiv2 \pmod{p-1}$. If $p\mid r(r-1)$, then $F_5=0$. %and \begin{itemize}
       % \item [(i)] if $p \mid r$ and $p^2 \mid r-2p$, then $U_3=0$
       % \item[(ii)] if $p \mid r-1$ and $p^2 \mid r-p-1$, then $U_3=0$. 
    %\end{itemize}
\end{prop}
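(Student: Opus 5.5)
We imitate Proposition~\ref{prop:A_1}(i): we write down one test function whose image under $T-a_p$ is integral, lies in $\mathrm{ind}_{KZ}^G W_3$, and maps to a generator of $\mathrm{ind}_{KZ}^G J_5$. For $a=2$ we have $a'=[a-4]=p-3$ and $r'=r-2p-2$. So by Lemma~\ref{lem:2.10} the cosocle $J_5\cong V_2$ is detected by $\theta^2X^{r'-(p-3)}Y^{p-3}=\theta^2X^{r-3p+1}Y^{p-3}$, which maps to $\pm X^{2}\neq 0$. We therefore take
\[
f_0=\Bigl[\mathrm{Id},\frac{\theta^2X^{r-3p+1}Y^{p-3}}{p^2}\Bigr]\in \mathrm{ind}_{KZ}^G\mathrm{Sym}^r\overline{\mathbb{Q}}_p^{\,2},
\]
which replaces the polynomial $\theta^2X^{r-a-2p+2}Y^{a-4}$ of Proposition~\ref{prop:A_1} (meaningless when $a=2$).

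First, $T^-f_0\equiv 0\pmod p$. Indeed $\theta^2=X^2Y^2(X^{p-1}-Y^{p-1})^2$, so every monomial $X^{r-i}Y^i$ in the numerator has $r-i\ge r-3p+3\ge 5$. Hence each term acquires a factor $p^{r-i}$ that beats the $p^{-2}$.

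Second, we compute $T^+f_0$ by direct substitution $Y\mapsto -[\lambda]X+pY$ rather than through the binomial formula. Since $[\lambda]^p=[\lambda]$, we have $\theta(X,-[\lambda]X+pY)=pX^pY+O(p^p)$. Thus the $\theta^2$ supplies exactly $p^2$, and we get
\[
T^+f_0\equiv \sum_{\lambda\in\mathbb{F}_p^\times}\bigl[g^0_{1,[\lambda]},(-[\lambda])^{p-3}X^{r-2}Y^2\bigr]\pmod p .
\]
Hence $(T-a_p)f_0$ is integral. Modulo $p$ it equals this sum minus $\overline{a_p/p^2}\,[\mathrm{Id},\theta^2X^{r-3p+1}Y^{p-3}]$, and $\overline{a_p/p^2}$ is a unit.

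The key step, which we expect to be the main obstacle, is to show that when $p\mid r(r-1)$ the monomial $X^{r-2}Y^2$ lies in $X_{r-1}+V_r^{**}$. Moreover, its image in $W_3$ should have trivial projection to $J_5$, for instance by showing it is congruent to a multiple of $\theta^2X^{r-2p-2}$ modulo $X_{r-1}+V_r^{(3)}$. Since $\theta^2X^{r-2p-2}$ corresponds to $i=0<a'$ in Lemma~\ref{lem:2.10}, it maps to $0$ in $J_5$.

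We treat the two cases separately, using the two relations produced in the proof of Lemma~\ref{lem:cong 2 f_1} (from the sums $\sum\binom{r}{j}X^{r-j}Y^j$ and $\sum\binom{r-1}{j}X^{r-j}Y^j$, both in $X_{r-1}$), together with the identity $\theta^2X^{r-2p-2}=X^{r-2}Y^2-2X^{r-p-1}Y^{p+1}+X^{r-2p}Y^{2p}$.
\begin{itemize}
\item If $p\mid r$, then $r-1$ is a unit, and part (ii) of that lemma gives $X^{r-2}Y^2\equiv X^{r-p-1}Y^{p+1}$. The relation for $\binom{r-1}{j}$ has coefficients $-\binom{r-1}{2}\equiv -1$, $\binom{r-1}{2}-(r-1)^2\equiv 0$ and $(r-1)^2\equiv 1$ modulo $p$. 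Feeding this back should express $X^{r-2}Y^2$ as a multiple of $\theta^2X^{r-2p-2}$.
\item If $p\mid r-1$, we use instead the $\binom{r}{j}$-relation, now with $\binom{r}{2}\equiv 0$. We redo the $A,B$ computation of Lemma~\ref{lem:cong 2 f_1} keeping the $j=r$ term, and solve for $X^{r-2}Y^2$.
\end{itemize}
If one case resists, the fallback is to modify $f_0$ by an auxiliary term at $g^0_{1,0}$, in the style of Proposition~\ref{lem:G_1}, chosen to cancel $X^{r-2}Y^2$ directly.

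Granting this, the image of $\overline{(T-a_p)f_0}$ lies in $\mathrm{ind}_{KZ}^G W_3$ and projects in $\mathrm{ind}_{KZ}^G J_5$ to $-\overline{a_p/p^2}\,[\mathrm{Id},\pm X^{2}]$. This generates $\mathrm{ind}_{KZ}^G J_5$ as a $G$-module, so the surjection $\mathrm{ind}_{KZ}^G J_5\twoheadrightarrow F_5$ is zero, i.e.\ $F_5=0$.
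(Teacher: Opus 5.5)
There is a genuine gap: your key step is false. For $a=2$ the monomial $X^{r-2}Y^2$ does not lie in $X_{r-1}+V_r^{*}$, let alone in $X_{r-1}+V_r^{**}$, and this holds regardless of whether $p$ divides $r(r-1)$.

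To see this, apply Lemma~\ref{lem:2.10} with $m=0$ and $a'=2$. The generator $X^{r-1}Y$ of $X_{r-1}$ has index $i=1<a'$, so it maps to $0$ in the cosocle $V_{p-3}\otimes D^2$ of $V_r/V_r^*$. Hence $X_{r-1}$ covers only the socle $V_2$, and $J_1=V_r/(X_{r-1}+V_r^*)\cong V_{p-3}\otimes D^2$. But $X^{r-2}Y^2$ has index $i=a'$, so it maps to $X^{p-3}\neq 0$ in $J_1$. This is exactly the fact used in Proposition~\ref{lem:G_1} to kill $F_1$.

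Consequently $\overline{(T-a_p)f_0}$ has the nonzero component $\sum_{\lambda\in\mathbb{F}_p^\times}[g^0_{1,[\lambda]},\lambda^{p-3}X^{p-3}]$ in $\mathrm{ind}_{KZ}^G J_1$. It does not lie in $\mathrm{ind}_{KZ}^G W_3$, and nothing follows about $F_5$. The relations of Lemma~\ref{lem:cong 2 f_1} cannot change this. For $p\mid r$, the $\binom{r-1}{j}$-relation collapses to $X^{r-2}Y^2\equiv X^{r-p-1}Y^{p+1}$, two monomials with the same image in $J_1$. For $p\mid r-1$, the coefficient $\binom r2$ of $X^{r-2}Y^2$ vanishes, so there is nothing to solve for.

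So everything rests on your fallback, and a single correction term at level $1$ does not suffice. Note first that your bad terms sit at $g^0_{1,[\lambda]}$ with $\lambda\neq 0$, not at $g^0_{1,0}$. More seriously, suppose $h=[g^0_{1,\mu},a_p^{-1}u]$ with $u$ integral has $T^+h$ and $T^-h$ integral. Then the $X^r$-coefficients of $T^+h$ and the $Y^r$-coefficient of $T^-h$ force $\bar u$ to vanish on all of $\mathbb{P}^1(\mathbb{F}_p)$. So $\theta\mid\bar u$, and $-a_ph$ is invisible in $J_1$. The cancellation has to come from $T^-$ of a level-$2$ function.

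This is what the paper does.
\begin{enumerate}
\item It takes $f_0=[\mathrm{Id},p^{-2}(\theta^2X^{r-2p-2}-\theta^2X^{r-3p-1}Y^{p-1})]$. This still hits the generator $Y^2$ of $J_5$, but its $T^+$ is the single term $[g^0_{1,0},X^{r-2}Y^2]$.
\item It adds $f_2$ at level $2$ with values $p^{-1}(Y^r-X^{r-2p}Y^{2p})$ (resp.\ $p^{-1}(Y^r-X^{r-p-1}Y^{p+1})$).
\item It adds $f_1=[g^0_{1,0},\frac{p-1}{pa_p}\sum_j\alpha_jX^{r-j}Y^j]$, with the $\alpha_j$ from Lemma~\ref{lem:2.7}.
\end{enumerate}
Lemma~\ref{lem:im 2} (resp.\ Lemma~\ref{lem: im 3}) and Lemma~\ref{lem:cong 2 f_1}(ii) then give $T^-f_2-a_pf_1+T^+f_0\equiv[g^0_{1,0},c\,\theta^2X^{r-2p-2}]$ modulo $X_{r-1}+V_r^{(3)}$. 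Here $c=\frac{r-2p}{2p}$ (resp.\ $\frac{p+1-r}{2p}$ after halving $f_0$). This is integral and its image in $W_3$ lies in $J_4$.

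The hypothesis $p\mid r(r-1)$ therefore enters through $p\mid\binom r2$. That is needed for Lemma~\ref{lem:2.7} and for the integrality of $\frac1p\binom r2$. It does not work by making $X^{r-2}Y^2$ vanish.

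A minor slip: $\theta(X,-[\lambda]X+pY)=pX^pY(1-p[\lambda]^{p-1})+O(p^3)$, not $pX^pY+O(p^p)$. This is harmless after dividing $\theta^2$ by $p^2$.
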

\begin{proof}
 (i)  Let $p \mid r$ and consider $f_0,f_1,f_2 \in \mathrm{ind}_{KZ}^G \mathrm{Sym}^r \bar{\mathbb{Q}}_p^2$ defined by
    \begin{eqnarray*}
        f_2 &=& \sum_{\mu \in {\mathbb{F}_p^\times}} \biggl[g^0_{2,p\left[\mu \right]}, \frac{Y^r-X^{r-2p}Y^{2p}}{p}\biggl]+(1-p)\biggl[g^0_{2,0}, \frac{Y^r-X^{r-2p}Y^{2p}}{p}\biggl]\\
         f_1 &=& \biggl[g^0_{1,0},\frac{p-1}{p\cdot a_p} \sum_{\substack{
2<j<r \\
j \equiv 2 \pmod{p-1}
}}
 \alpha_j \cdot X^{r-j} Y^{j}\biggl]\\
 f_0 &=& \biggl[\mathrm{Id},\frac{\theta^2 X^{r-2p-2}-\theta^2 X^{r-3p-1}Y^{p-1}}{p^2}\biggl],
    \end{eqnarray*}
    where $\alpha_j$ are integers as in Lemma~\ref{lem:2.7}.  Clearly $T^+f_2$, $-a_pf_2$, $T^+f_1$, $T^-f_1$ and $T^-f_0$ die in mod $p$. We compute that \begin{eqnarray*}
        T^-f_2&=&  \biggl[g^0_{1,0},\frac{p-1}{p} \sum_{\substack{
0<j<r \\
j \equiv 2 \pmod{p-1}
}}
 \binom{r}{j} X^{r-j} Y^{j}\biggl], \text{ so}\\
 T^-f_2-a_pf_1 &=&\biggl[g^0_{1,0},\frac{p-1}{p} \binom{r}{2}X^{r-2}Y^2 +\frac{p-1}{p} \sum_{\substack{
2<j<r \\
j \equiv 2 \pmod{p-1}
}}
 \biggl(\binom{r}{j}-\alpha_j\biggl) X^{r-j} Y^{j}\biggl]\\
  &=&  \biggl[g^0_{1,0},\frac{p-1}{p} \binom{r}{2}X^{r-2}Y^2 + \biggl(\frac{p-1}{p} \sum_{\substack{
2<j<r \\ 
j \equiv 2 \pmod{p-1}
}}
 \biggl(\binom{r}{j}-\alpha_j\biggl) X^{r-j} Y^{j}\\
 && \qquad -{(p-1)}\biggl(1+\frac{1}{p}\binom{r}{2}\biggl)X^{r-2p}Y^{2p}+\frac{2(p-1) }{p}\binom{r}{2} X^{r-p-1}Y^{p+1}\biggl)\\
 && \qquad + {(p-1)}\biggl(1+\frac{1}{p}\binom{r}{2}\biggl)X^{r-2p}Y^{2p}-\frac{2(p-1) }{p}\binom{r}{2} X^{r-p-1}Y^{p+1}\biggl]\\
 &=& \biggl[g^0_{1,0},(p-1)X^{r-2p}Y^{2p}+ \frac{p-1}{p}\binom{r}{2}\theta^2 X^{r-2p-2}\biggl] \pmod{V_r^{(3)}},
 \end{eqnarray*}
since the term in parantheses dies by Lemma~\ref{lem:im 2}.
 We have \[
 T^+f_0= \biggl[g^0_{1,0}, X^{r-2}Y^2\biggl].
 \]
 Hence, we have \begin{eqnarray*}
     T^-f_2-a_pf_1+T^+f_0  &\equiv& \biggl[g^0_{1,0},(p-1)X^{r-2p}Y^{2p}-(p-1)X^{r-2}Y^2+\frac{(p-1)}{p}\binom{r}{2}\theta^2X^{r-2p-2}\biggl]\\
     &=& \biggl[g^0_{1,0}, (p-1)(X^{r-2p}Y^{2p}-2X^{r-2}Y^2+X^{r-2}Y^2)+\frac{(p-1)}{p}\binom{r}{2}\theta^2X^{r-2p-2}\biggl]\\
     &\equiv& \biggl[g^0_{1,0}, (p-1)(X^{r-2p}Y^{2p}-2X^{r-p-1}Y^{p+1}+X^{r-2}Y^2)+\frac{(p-1)}{p}\binom{r}{2}\theta^2X^{r-2p-2}\biggl]\\
     %&=& \biggl[g^0_{1,0}, (p-1)\theta^2 X^{r-2p-2}+\frac{(p-1)}{p}\binom{r}{2}\theta^2X^{r-2p-2}\biggl]\\
     %&=& \biggl[g^0_{1,0}, (p-1)\biggl(1+\frac{1}{p}\binom{r}{2}\biggl)\theta^2 X^{r-2p-2}\biggl]\\
      &=& \biggl[g^0_{1,0},\frac{r-2p}{2p}\theta^2X^{r-2p-2}\biggl] \pmod{X_{r-1}+V_r^{(3)}},
     \end{eqnarray*}
     where in the third line we have used Lemma~\ref{lem:cong 2 f_1} part $(ii)$.
     Finally we have 
     \begin{eqnarray*}
         (T-a_p)(f_0+f_1+f_2)= \frac{r-2p}{2p} \biggl[g^0_{1,0},\theta^2X^{r-2p-2}\biggl]-\frac{a_p}{p^2}\biggl[\mathrm{Id}, \theta^2X^{r-2p-2}-\theta^2 X^{r-3p-1}Y^{p-1}\biggl].
    \end{eqnarray*}
     So $(T-a_p)(f_0+f_1+f_2)$ is integral and maps to $\frac{a_p}{p^2}\biggl[\mathrm{Id}, Y^2\biggl]$ in $\mathrm{ind}_{KZ}^G J_5$ which generates this as $G$-module. Hence $F_5=0$.

(ii) Now let $p \mid r-1$ and consider $f_0,f_1,f_2\in \mathrm{ind}_{KZ}^G \mathrm{Sym}^r \bar{\mathbb{Q}}_p^2$ defined by \begin{eqnarray*}
        f_2 &=& \sum_{\mu \in {\mathbb{F}_p^\times}} \biggl[g^0_{2,p\left[\mu \right]}, \frac{Y^r-X^{r-p-1}Y^{p+1}}{p}\biggl]+(1-p)\biggl[g^0_{2,0}, \frac{Y^r-X^{r-p-1}Y^{p+1}}{p}\biggl]\\
         f_1 &=&  \biggl[g^0_{1,0},\frac{p-1}{p \cdot a_p} \sum_{\substack{
         2<j<r \\
         j \equiv 2 \pmod{p-1}
         }}
         \alpha_j \cdot X^{r-j} Y^{j}\biggl]\\
        f_0 &=& \biggl[\mathrm{Id},\frac{\theta^2 X^{r-2p-2}-\theta^2 X^{r-3p-1}Y^{p-1}}{2p^2}\biggl],
    \end{eqnarray*}
where $\alpha_j$ are integers as in Lemma~\ref{lem:2.7}.  Clearly $T^+f_2$, $-a_pf_2$, $T^+f_1$, $T^-f_1$ and $T^-f_0$ die mod $p$. We have 
\begin{eqnarray*}
T^-f_2&=&  \biggl[g^0_{1,0},\frac{p-1}{p} \sum_{\substack{
0<j<r \\
j \equiv 2 \pmod{p-1}
}}
 \binom{r}{j} X^{r-j} Y^{j}\biggl], \text{ so}\\
 T^-f_2-a_pf_1 &=&  \biggl[g^0_{1,0},\frac{p-1}{p} \binom{r}{2}X^{r-2}Y^2 +\frac{p-1}{p} \sum_{\substack{
2<j<r \\
j \equiv 2 \pmod{p-1}
}}
 \biggl(\binom{r}{j}-\alpha_j\biggl) X^{r-j} Y^{j}\biggl]\\
   &=& \biggl[g^0_{1,0},\frac{p-1}{p} \binom{r}{2}X^{r-2}Y^2 +\biggl( \frac{p-1}{p} \sum_{\substack{
2<j<r \\
j \equiv 2 \pmod{p-1}
}}
 \biggl(\binom{r}{j}-\alpha_j\biggl) X^{r-j} Y^{j}\\
 && \qquad -(p-1)\biggl(\frac{1}{p}\binom{r}{2}-\frac{1}{2}\biggl)X^{r-2p}Y^{2p}-(p-1)\biggl(1-\frac{2}{p}\binom{r}{2}\biggl)X^{r-p-1}Y^{p+1}\biggl)\\
 && \qquad +(p-1)\biggl(\frac{1}{p}\binom{r}{2}-\frac{1}{2}\biggl)X^{r-2p}Y^{2p}+(p-1)\biggl(1-\frac{2}{p}\binom{r}{2}\biggl)X^{r-p-1}Y^{p+1}\biggl]\\
 &=& \biggl[g^0_{1,0},\frac{p-1}{p}\binom{r}{2} \theta^2 X^{r-2p-2}-\frac{p-1}{2}X^{r-2p}Y^{2p}+(p-1)X^{r-p-1}Y^{p+1} \biggl],
\end{eqnarray*}
since the term in parantheses dies by Lemma~\ref{lem: im 3}. We have \[
T^+f_0 = \frac{1}{2}\biggl[g^0_{1,0}, X^{r-2}Y^2\biggl].
\]
Hence, we get \begin{eqnarray*}
    T^-f_2-a_pf_1+T^+f_0 &=& \biggl[g^0_{1,0},\frac{p-1}{p}\binom{r}{2} \theta^2 X^{r-2p-2}-\frac{p-1}{2}\theta^2X^{r-2p-2} \biggl]\\
    &=&  \biggl[g^0_{1,0},\frac{p+1-r}{2p}\theta^2X^{r-2p-2} \biggl].
\end{eqnarray*}
Combining these, we have \begin{eqnarray*}
    (T-a_p)(f_0+f_1+f_2)=\frac{p+1-r}{2p}\biggl[g^0_{1,0},\theta^2X^{r-2p-2} \biggl]-\frac{a_p}{2p^2}\biggl[\mathrm{Id},{\theta^2 X^{r-2p-2}-\theta^2X^{r-3p-1}Y^{p-1}}\biggl].
\end{eqnarray*}
%Hence we have \begin{eqnarray*}
         %(T-a_p)f_0=-\biggl[g^0_{1,0},X^{r-2}Y^2\biggl]-\frac{a_p}{p^2}\biggl[\mathrm{Id}, \theta^2 X^{r-3p-1}Y^{p-1}-\theta^2X^{r-2p-2}\biggl].
        % \end{eqnarray*}
        
So $(T-a_p)(f_0+f_1+f_2)$ is integral and maps to $\frac{a_p}{2p^2}\biggl[\mathrm{Id}, Y^2\biggl]$ in $\mathrm{ind}_{KZ}^G J_5$ which generates this as a $G$-module. Hence $F_5=0$.
\end{proof}

\begin{prop}\label{lem:G_4}
  Let $3p+2 \le r \equiv 2 \pmod{p-1}$, assume that $v_p(a_p)=2$ and $p \mid r$. %and $p \mid r$ but $p^2 \nmid r-2p$, so that $u\coloneqq \frac{2p}{r-2p}\cdot \frac{a_p}{p^2}$ is a $p$-adic unit. 
  Then \begin{itemize}
      \item [(i)] if $p^2 \mid r-2p$, then $F_4=0$ and if $p^2\nmid r-2p$, then $F_4$ is a quotient of $\pi(p-3,\bar{u},\omega^2)$ 
      \item[(ii)] if $p^2\nmid r-2p$, then $F_3$ is a quotient of $\pi(0,\bar{u}^{-1},\omega)$, 
  \end{itemize}
  where $u \coloneqq \frac{2p}{r-2p} \cdot \frac{a_p}{p^2}$.
\end{prop}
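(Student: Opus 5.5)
The plan is to treat both parts by the method of Propositions~\ref{prop:A_2} and~\ref{prop: alpha 1}. In each part we build an explicit $f \in \mathrm{ind}_{KZ}^G \mathrm{Sym}^r \overline{\mathbb{Q}}_p^{\,2}$ such that $(T-a_p)f$ is integral. We then compute its image in $\mathrm{ind}_{KZ}^G J_4$ (respectively $\mathrm{ind}_{KZ}^G J_3$). Throughout, $r \equiv 2 \bmod (p-1)$ and $p \mid r$. Hence $p \mid r-2p$, so $\frac{r-2p}{2p}$ is $p$-integral, and it is a unit exactly when $p^2 \nmid r-2p$. Recall that here $J_4 = V_{p-3}\otimes D^2$, $J_5 = V_2$, $J_2 = V_{p-1}\otimes D$ and $J_3 = V_0\otimes D$.

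\emph{Part (i).} I will reuse the functions of Proposition~\ref{prop: alpha 1}(i), with two changes.
\begin{itemize}
\item Take $f_0 = [\mathrm{Id}, \theta^2 X^{r-2p-2}/p^2]$, without the $\theta^2X^{r-3p-1}Y^{p-1}$ term.
\item Translate $f_1$ and $f_2$ so that they live at all the vertices $g^0_{1,[\lambda]}$, $\lambda\in\mathbb F_p$, rather than only at $g^0_{1,0}$. So $f_2$ is summed over $g^0_{2,[\lambda]+p[\mu]}$ with the same $(1-p)$-weighting at $\mu=0$, and $f_1$ is summed over $g^0_{1,[\lambda]}$. The $\alpha_j$ are those of Lemma~\ref{lem:2.7}.
\end{itemize}
Then $T^+f_0 = \sum_\lambda [g^0_{1,[\lambda]}, X^{r-2}Y^2]$. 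The computation of Proposition~\ref{prop: alpha 1}(i) goes through verbatim at each vertex, using Lemma~\ref{lem:im 2} and Lemma~\ref{lem:cong 2 f_1}(ii); the latter applies because $p\nmid r-1$. Modulo $X_{r-1}+V_r^{(3)}$ this gives
\[
(T-a_p)f \equiv \frac{r-2p}{2p}\sum_{\lambda}\bigl[g^0_{1,[\lambda]},\theta^2X^{r-2p-2}\bigr]-\frac{a_p}{p^2}\bigl[\mathrm{Id},\theta^2X^{r-2p-2}\bigr].
\]
The element $\theta^2X^{r-2p-2}$ lies in $J_4$ and corresponds to $X^{p-3}$. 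By \cite[(4.2)]{Glo78} and \cite[Lemma~5.3]{Bre03b}, the right-hand side therefore maps to $\overline{\tfrac{r-2p}{2p}}\,(T-\bar u)[\mathrm{Id},X^{p-3}]$ when $p^2\nmid r-2p$. This shows that $F_4$ is a quotient of $\pi(p-3,\bar u,\omega^2)$.

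If instead $p^2 \mid r-2p$, the first term vanishes mod $p$. The image is then the unit multiple $-\overline{a_p/p^2}\,[\mathrm{Id},X^{p-3}]$ of a generator of $\mathrm{ind}_{KZ}^G J_4$, so $F_4=0$.

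\emph{Part (ii).} By Lemma~\ref{lem:2.10} with $m=1$ and $a'=p-1$, the element $\theta X^{r-2p}Y^{p-1}$ maps to a generator of $J_3=V_0\otimes D$. I mimic Proposition~\ref{prop:A_2}(ii) with the following functions.
\begin{itemize}
\item $f_1=\sum_\lambda[g^0_{1,[\lambda]},\,p^{-2}(XY^{r-1}-X^pY^{r-p}+c\,(X^{r-p+1}Y^{p-1}-X^{r-2p+2}Y^{2p-2}))]$, with $c$ to be fixed. The constant $c$ should be chosen so that $T^+f_1$ dies modulo $X_{r-1}+V_r^{(3)}$; this is where Lemma~\ref{lem:cong 2 f_1}(ii) is used.
\item $f_0=[\mathrm{Id},\frac{p-1}{p\,a_p}\sum_{j\equiv1}\alpha_jX^{r-j}Y^j]$, with the $\alpha_j$ of Lemma~\ref{lem:2.8}. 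This lemma is tailored to exactly this situation: class $1$, $p\mid r$, and the shift $-2$ in $\alpha_p$.
\end{itemize}
Lemma~\ref{lem:2.8} makes $T^\pm f_0$ vanish mod $p$. We then compute $T^-f_1-a_pf_0$ modulo $V_r^{**}$, using \cite[Lemma~2.12]{GV22} to write $X^{r-j}Y^j-X^pY^{r-p}$ as a multiple of $\theta X^{r-2p}Y^{p-1}$. Lemma~\ref{lem:2.1} with $i=0,1$ evaluates the resulting coefficient sums. The target is a coefficient $\frac{r-2p}{2p}$ on $\theta X^{r-2p}Y^{p-1}$ at $\mathrm{Id}$, while $-a_pf_1$ contributes $\frac{a_p}{p^2}\sum_\lambda[g^0_{1,[\lambda]},\theta X^{r-2p}Y^{p-1}]$. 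Together the image in $\mathrm{ind}_{KZ}^G J_3$ would be $\overline{a_p/p^2}\,(T-\bar u^{-1})[\mathrm{Id},1]$, which gives $F_3$ as a quotient of $\pi(0,\bar u^{-1},\omega)$.

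The main obstacle is getting the exact constant $\frac{r-2p}{2p}$ in part (ii). This means tracking the $-2$ correction in $\alpha_p$ from Lemma~\ref{lem:2.8}, choosing $c$ correctly in $f_1$, and checking that the leftover $X^pY^{r-p}$ terms reduce to $\theta Y^{r-p-1}$ with no contribution to $J_3$. I would first try $c=1$, as in Proposition~\ref{prop:A_2}, and adjust it if the $X^{r-2}Y^2$ terms do not cancel.
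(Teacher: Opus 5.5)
Your part (i) is exactly the paper's argument: the same $f_0,f_1,f_2$ (translated to all $g^0_{1,[\lambda]}$), the same use of Lemma~\ref{lem:im 2} and Lemma~\ref{lem:cong 2 f_1}(ii), and the same image in $\mathrm{ind}_{KZ}^G J_4$.

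Part (ii), however, does not go through as proposed. There are three separate failures.
\begin{itemize}
\item Your $f_1$ has leading term $XY^{r-1}/p^2$, so $T^-f_1=[\mathrm{Id},\frac{p-1}{p}\sum_{j\equiv 1}\binom{r-1}{j}X^{r-j}Y^j]+(\text{integral})$. Its coefficients are $\binom{r-1}{j}$, not $\binom{r}{j}$. Hence $T^-f_1-a_pf_0$, with the $\alpha_j$ of Lemma~\ref{lem:2.8}, is not integral. Already at $j=p$ one gets $\binom{r-1}{p}-\alpha_p\equiv 2-\binom{r-1}{p-1}\equiv 1 \pmod p$.
\item Your correcting pair $X^{r-p+1}Y^{p-1}-X^{r-2p+2}Y^{2p-2}$ lies in the wrong class mod $p-1$. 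At $g^0_{2,[\lambda]+p[\mu]}$ with $\mu\neq0$, the $X^{r-1}Y$-coefficient of $T^+f_1$ is $\frac{p-1}{p}(1+c[\mu]^{-1})$, which is non-integral for every constant $c$. The paper instead uses $XY^{r-1}-X^{r-2p+1}Y^{2p-1}$, whose $j=1$ coefficient $\frac{r-2p}{p}$ is integral because $p\mid r$.
\item Since $J_3=V_0\otimes D$ is one-dimensional, $T[\mathrm{Id},1]=\sum_\lambda[g^0_{1,[\lambda]},1]+[\alpha,1]$, because the factor $p^s$ equals $1$ for $s=0$. None of your functions produces a term at $\alpha$, so the image can never have the form $(T-\bar u^{-1})[\mathrm{Id},1]$.
\end{itemize}

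The deeper problem is that neither natural family yields an integral $(T-a_p)f$ on its own. Sum over $p\le j<r-1$ with $j\equiv 1 \bmod (p-1)$. Then $\sum\binom{r}{j}\equiv 2$ and $\sum\binom{r-1}{j}\equiv 1 \pmod p$ are both nonzero. This is exactly why Lemmas~\ref{lem:2.8} and~\ref{lem:2.9} must shift $\alpha_p$ by $-2$ and by $+(r-1)$ respectively. As a result:
\begin{itemize}
\item the $Y^r$-family with Lemma~\ref{lem:2.8} leaves the residue $\frac{2(p-1)}{p}X^{r-p}Y^p$;
\item the $XY^{r-1}$-family with Lemma~\ref{lem:2.9} leaves the residue $\frac{(p-1)(r-1)}{p}(X^{r-1}Y-X^{r-p}Y^p)$.
\end{itemize}
The two families also play different roles. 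The coefficient $\frac{r-2p}{p}$ on $\theta X^{r-2p}Y^{p-1}$ comes only from the Lemma~\ref{lem:2.8} family, via $\sum_j j\bigl(\binom{r}{j}-2\delta_{j,p}\bigr)\equiv r-2p \pmod{p^2}$. The Lemma~\ref{lem:2.9} family contributes $0$ there, but it supplies the $-a_p\cdot p^{-2}(\cdots)$ term that becomes $T$.

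The missing idea is the paper's combination, centred at $g^0_{1,0}$:
\[
f=(r-1)(f_1+f_2)+2(f_1'+f_2')-2(r-1)(p-1)f_0'.
\]
The weights $r-1$ and $2$ cancel the $\frac1p X^{r-p}Y^p$ residues. The auxiliary function $f_0'=[\mathrm{Id},X^{r-3p+2}Y(X^{p-1}-Y^{p-1})^3/p^2]$ satisfies $T^+f_0'\equiv[g^0_{1,0},X^{r-1}Y/p]$, which cancels the leftover $\frac1p X^{r-1}Y$. Its term $-a_pf_0'$ also provides the missing $(p+1)$-st neighbour term. Only then is $(T-a_p)f$ integral, with image $-\frac{2a_p}{p^2}(T-\bar u^{-1})[g^0_{1,0},1]$ in $\mathrm{ind}_{KZ}^G J_3$.
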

\begin{proof}
(i)    Let us consider $f_0,f_1,f_2 \in \mathrm{ind}_{KZ}^G \mathrm{Sym}^r \bar{\mathbb{Q}}_p^2$ defined by
    \begin{eqnarray*}
        f_2 &=& \sum_{{\lambda \in \mathbb{F}_p},\,\, \mu \in {\mathbb{F}_p^\times}} \biggl[g^0_{2,\left [\lambda \right]+p\left[\mu \right]}, \frac{Y^r-X^{r-2p}Y^{2p}}{p}\biggl]+(1-p)\sum_{\lambda \in \mathbb{F}_p}\biggl[g^0_{2,\left [\lambda \right]}, \frac{Y^r-X^{r-2p}Y^{2p}}{p}\biggl]\\
         f_1 &=& \sum_{\lambda \in {\mathbb{F}_p}} \biggl[g^0_{1,\left[\lambda \right]},\frac{p-1}{p \cdot a_p} \sum_{\substack{
2<j<r \\
j \equiv 2 \pmod{p-1}
}}
 \alpha_j \cdot X^{r-j} Y^{j}\biggl]\\
 f_0 &=& \biggl[\mathrm{Id},\frac{\theta^2 X^{r-2p-2}}{p^2}\biggl],
    \end{eqnarray*}
     where $\alpha_j$ are integers as in Lemma~\ref{lem:2.7}.
    %Clearly $T^+f_2$, $-a_pf_2$, $T^+f_1$, $T^-f_1$ and $T^-f_0$ die mod $p$. 
    As in the proof of Proposition~\ref{prop: alpha 1} part $(i)$, we obtain 
    \begin{eqnarray*}
    %We compute that \begin{eqnarray*}
        %T^-f_2&=& \sum_{\lambda \in {\mathbb{F}_p}} \biggl[g^0_{1,\left[\lambda \right]},\frac{p-1}{p} \sum_{\substack{
%0<j<r \\
%j \equiv 2 \pmod{p-1}
%}}
% \binom{r}{j} X^{r-j} Y^{j}\biggl]\\
 T^-f_2-a_pf_1  
 %\sum_{\lambda \in {\mathbb{F}_p}} \biggl[g^0_{1,\left[\lambda \right]},\frac{p-1}{p} \binom{r}{2}X^{r-2}Y^2 +\frac{p-1}{p} \sum_{\substack{
%2<j<r \\
%j \equiv 2 \pmod{p-1}
%}}
 %\biggl(\binom{r}{j}-\alpha_j\biggl) X^{r-j} Y^{j}\biggl]\\
 %&=& \sum_{\lambda \in {\mathbb{F}_p}} \biggl[g^0_{1,\left[\lambda \right]},\frac{p-1}{p} \binom{r}{2}X^{r-2}Y^2 + \biggl(\frac{p-1}{p} \sum_{\substack{
%2<j<r \\ 
%j \equiv 2 \pmod{p-1}
%}}
% \biggl(\binom{r}{j}-\alpha_j\biggl) X^{r-j} Y^{j}\\
% &-&{(p-1)}\biggl(1+\frac{1}{p}\binom{r}{2}\biggl)X^{r-2p}Y^{2p}+{(p-1)}\biggl(\frac{2}{p} \binom{r}{2}\biggl) X^{r-p-1}Y^{p+1}\biggl)\\
% &+& {(p-1)}\biggl(1+\frac{1}{p}\binom{r}{2}\biggl)X^{r-2p}Y^{2p}-{(p-1)}\biggl(\frac{2 }{p}\binom{r}{2}\biggl) X^{r-p-1}Y^{p+1}\biggl]\\
 &=& \sum_{\lambda \in {\mathbb{F}_p}} \biggl[g^0_{1,\left[\lambda \right]},(p-1)X^{r-2p}Y^{2p}+ \frac{p-1}{p}\binom{r}{2}\theta^2 X^{r-2p-2}\biggl].
 \end{eqnarray*}
%since the term in parantheses dies by Lemma~\ref{lem:im 2}.
 We have \[
 T^+f_0= \sum_{\lambda \in {\mathbb{F}_p}} \biggl[g^0_{1,\left[\lambda \right]}, X^{r-2}Y^2\biggl].
 \]
 Again, we obtain \begin{eqnarray*}
     T^-f_2-a_pf_1+T^+f_0   %\sum_{\lambda \in {\mathbb{F}_p}} \biggl[g^0_{1,\left[\lambda \right]},(p-1)X^{r-2p}Y^{2p}-(p-1)X^{r-2}Y^2+\frac{(p-1)}{p}\binom{r}{2}\theta^2X^{r-2p-2}\biggl]\\
    % &=& \sum_{\lambda \in {\mathbb{F}_p}} \biggl[g^0_{1,\left[\lambda \right]}, (p-1)(X^{r-2p}Y^{2p}-2X^{r-2}Y^2+X^{r-2}Y^2)+\frac{(p-1)}{p}\binom{r}{2}\theta^2X^{r-2p-2}\biggl]\\
    % &=& \sum_{\lambda \in {\mathbb{F}_p}} \biggl[g^0_{1,\left[\lambda \right]}, (p-1)(X^{r-2p}Y^{2p}-2X^{r-p-1}Y^{p+1}+X^{r-2}Y^2)+\frac{(p-1)}{p}\binom{r}{2}\theta^2X^{r-2p-2}\biggl]\\
     %&=&  \sum_{\lambda \in {\mathbb{F}_p}} \biggl[g^0_{1,\left[\lambda \right]}, (p-1)\theta^2 X^{r-2p-2}+\frac{(p-1)}{p}\binom{r}{2}\theta^2X^{r-2p-2}\biggl]\\
    % &=& \sum_{\lambda \in {\mathbb{F}_p}} \biggl[g^0_{1,\left[\lambda \right]}, (p-1)\biggl(1+\frac{1}{p}\binom{r}{2}\biggl)\theta^2 X^{r-2p-2}\biggl]\\
      &=& \sum_{\lambda \in {\mathbb{F}_p}} \biggl[g^0_{1,\left[\lambda \right]},\frac{r-2p}{2p}\theta^2X^{r-2p-2}\biggl].
     \end{eqnarray*}
      %where in the third line we have used Lemma~\ref{lem:cong 2 f_1} part $(ii)$.
     Combining these we have \[
     (T-a_p)(f_0+f_1+f_2)= \frac{r-2p}{2p}\sum_{\lambda \in {\mathbb{F}_p}} \biggl[g^0_{1,\left[\lambda \right]},\theta^2X^{r-2p-2}\biggl]-\frac{a_p}{p^2}\biggl[\mathrm{Id}, \theta^2X^{r-2p-2}\biggl],
     \]
     which is integral.
     Hence, if $p^2\mid r-2p$, then ${(T-a_p)(f_0+f_1+f_2)}$ maps to $ -\overline{\frac{a_p}{p^2}}\biggl[\mathrm{Id}, X^{p-3}\biggl]$ in $\mathrm{ind}_{KZ}^G J_4$. In particular $F_4=0$. If $p^2 \nmid r-2p$, then ${(T-a_p)(f_0+f_1+f_2)}$ maps to $\overline{\frac{r-2p}{2p}}(T-\bar{u})\biggl[\mathrm{Id}, X^{p-3}\biggl]$ in $\mathrm{ind}_{KZ}^G J_4$. 

    (ii) Let us consider $f_1,f_2\in \mathrm{ind}_{KZ}^G \mathrm{Sym}^r \bar{\mathbb{Q}}_p^2$ defined by
    \begin{eqnarray*}
        f_2 &=& \sum_{\lambda\in \mathbb{F}_p^{\times}} \biggl[g^0_{2,p\left[\lambda\right]},\frac{\left[\lambda\right]^{p-2}(Y^r-X^{r-2p}Y^{2p})}{p}\biggl]%+\,{r(1-p)}\biggl[g^0_{2,0},\frac{XY^{r-1}-X^{r-2p+1}Y^{2p-1}}{p^2}\biggl]\\
        \\
        f_1 &=& \biggl[g^0_{1,{0}},\frac{p-1}{p \cdot a_p} 
\sum_{\substack{
1<j<r-1 \\
j \equiv 1 \pmod{p-1}
}}
 \alpha_j \cdot  X^{r-j} Y^{j} \biggl],
    \end{eqnarray*}
    where $\alpha_j$ are integers in Lemma~\ref{lem:2.8}.
    It is clear that $T^+f_2$, $-a_pf_2$, $T^+f_1$ and $T^-f_1$ die mod $p$. We compute that \begin{eqnarray*}
         T^-f_2&=& \biggl[g^0_{1,{0}}, \frac{p-1}{p} 
         \sum_{\substack{
         1<j<r-1 \\
         j \equiv 1 \pmod{p-1}
          }}
         \binom{r}{j} X^{r-j} Y^{j} \biggl], \text{ so}\\
         T^-f_2-a_pf_1&=& \biggl[g^0_{1,{0}},\frac{2(p-1)}{p}X^{r-p}Y^p+\frac{p-1}{p} 
         \sum_{\substack{
         1<j<r-1 \\
         j \equiv 1 \pmod{p-1}
         }}
        \biggl(\binom{r}{j}''- \alpha_j\biggl) X^{r-j} Y^{j} \biggl],
    \end{eqnarray*}
    where $\binom{r}{p}''=\binom{r}{p}-2$ and $\binom{r}{j}''=\binom{r}{j}$ for $j>p$.

Now let us consider $f_1',f_2',f_0\in \mathrm{ind}_{KZ}^G \mathrm{Sym}^r \bar{\mathbb{Q}}_p^2$ defined by
 \begin{eqnarray*}
     f_2' &=& \sum_{\lambda\in \mathbb{F}_p^\times} \biggl[g^0_{2,p\left[\lambda\right]},\frac{XY^{r-1}-X^{r-2p+1}Y^{2p-1}}{p^2}\biggl]+(1-p)\biggl[g^0_{2,0},\frac{XY^{r-1}-X^{r-2p+1}Y^{2p-1}}{p^2}\biggl]\\
     f_1' &=& \biggl[g^0_{1,{0}},\frac{p-1}{p \cdot a_p} 
\sum_{\substack{
1<j<r-1 \\
j \equiv 1 \pmod{p-1}
}}
 \alpha_j'\cdot X^{r-j} Y^{j} \biggl]\\
 f_0'&=& \biggl[\mathrm{Id}, \frac{X^{r-1}Y-3X^{r-p}Y^p+3X^{r-2p+1}Y^{2p-1}-X^{r-3p+2}Y^{3p-2}}{p^2}\biggl],
 \end{eqnarray*}
 where $\alpha_j'$ are integers in Lemma~\ref{lem:2.9}.
 Clearly $T^+f_2'$, $T^+f_1'$, $T^-f_1'$ and $T^-f_0'$ die mod $p$. We have 
 \begin{eqnarray*}
     T^-f_2' &=& \biggl[g^0_{1,{0}}, \frac{p-1}{p} 
\sum_{\substack{
1\leq j<r-1 \\
j \equiv 1 \pmod{p-1}
}}
 \binom{r-1}{j} X^{r-j} Y^{j} \biggl]\\
  &=& \biggl[g^0_{1,{0}},\frac{(p-1)(r-1)}{p} X^{r-1}Y+ \frac{p-1}{p} 
\sum_{\substack{
1< j<r-1 \\
j \equiv 1 \pmod{p-1}
}}
 \binom{r-1}{j} X^{r-j} Y^{j} \biggl]\\
   &=& \biggl[g^0_{1,{0}},\frac{(p-1)(r-1)}{p} X^{r-1}Y-\frac{(p-1)(r-1)}{p}X^{r-p}Y^p\\
  && \qquad + \frac{p-1}{p} 
\sum_{\substack{
1< j<r-1 \\
j \equiv 1 \pmod{p-1}
}}
 \binom{r-1}{j}'' X^{r-j} Y^{j} \biggl].
 \end{eqnarray*}
 So \begin{eqnarray*}
 T^-f_2'-a_pf_1' &=& \biggl[g^0_{1,{0}},\frac{(p-1)(r-1)}{p} X^{r-1}Y-\frac{(p-1)(r-1)}{p}X^{r-p}Y^p\\
  && \qquad + \frac{p-1}{p} 
\sum_{\substack{
1< j<r-1 \\
j \equiv 1 \pmod{p-1}
}}
 \biggl(\binom{r-1}{j}''-\alpha_j'\biggl) X^{r-j} Y^{j} \biggl],
   \end{eqnarray*}
   where $\binom{r-1}{p}''=\binom{r-1}{p}+r-1$ and $\binom{r-1}{j}'' =\binom{r-1}{j}$ for $j>p$.
 On the other hand, we have
 \begin{eqnarray*}
     T^+f_0'-a_pf_0'&=& \biggl[g^0_{1,0}, \frac{X^{r-1}Y}{p}\biggl]-\frac{a_p}{p^2}\biggl[\mathrm{Id},{X^{r-1}Y-3X^{r-p}Y^p+3X^{r-2p+1}Y^{2p-1}-X^{r-3p+2}Y^{3p-2}}\biggl].
 \end{eqnarray*}
 Let us define $f=(r-1)(f_1+f_2)+2(f_1'+f_2')-2(r-1)(p-1)f_0'$. Combining these results, we have  \begin{eqnarray*}
    (T-a_p)f &=& \biggl[g^0_{1,{0}},\frac{(r-1)(p-1)}{p} 
\sum_{\substack{
1<j<r-1 \\
j \equiv 1 \pmod{p-1}
}}
\biggl(\binom{r}{j}''- \alpha_j\biggl) X^{r-j} Y^{j}\\
&& \qquad + \frac{2(p-1)}{p} 
\sum_{\substack{
1< j<r-1 \\
j \equiv 1 \pmod{p-1}
}}
 \biggl(\binom{r-1}{j}''-\alpha_j'\biggl) X^{r-j} Y^{j}\biggl]\\
 && \qquad + \frac{2(p-1)(r-1)a_p}{p^2}\biggl[\mathrm{Id},{X^{r-1}Y-3X^{r-p}Y^p+3X^{r-2p+1}Y^{2p-1}-X^{r-3p+2}Y^{3p-2}}\biggl]\\
 && \qquad -\frac{2a_p}{p^2}\sum_{\lambda\in \mathbb{F}_p} \biggl[g^0_{2,p\left[\lambda\right]},XY^{r-1}-X^{r-2p+1}Y^{2p-1}\biggl].
 \end{eqnarray*}
 Adding and subtracting, we get \begin{eqnarray*}
 (T-a_p)f &=&  \biggl[g^0_{1,{0}},\frac{(r-1)(p-1)}{p} 
\sum_{\substack{
1<j<r-1 \\
j \equiv 1 \pmod{p-1}
}}
\biggl(\binom{r}{j}''- \alpha_j\biggl) (X^{r-j} Y^{j}-X^{r-p}Y^p)\\
&& \qquad +\frac{(r-1)(p-1)}{p} 
\sum_{\substack{
1<j<r-1 \\
j \equiv 1 \pmod{p-1}
}}
\biggl(\binom{r}{j}''- \alpha_j\biggl) X^{r-p} Y^{p}\\
&& \qquad + \frac{2(p-1)}{p} 
\sum_{\substack{
1< j<r-1 \\
j \equiv 1 \pmod{p-1}
}}
 \biggl(\binom{r-1}{j}''-\alpha_j'\biggl) (X^{r-j} Y^{j}-X^{r-p}Y^p)\\
 && \qquad +\frac{2(p-1)}{p} 
\sum_{\substack{
1< j<r-1 \\
j \equiv 1 \pmod{p-1}
}}
 \biggl(\binom{r-1}{j}''-\alpha_j'\biggl) X^{r-p} Y^{p}\biggl]\\
 && \qquad +  \frac{2(p-1)(r-1)a_p}{p^2}\biggl[\mathrm{Id},{X^{r-1}Y-3X^{r-p}Y^p+3X^{r-2p+1}Y^{2p-1}-X^{r-3p+2}Y^{3p-2}}\biggl]\\
 && \qquad -\frac{2a_p}{p^2}\sum_{\lambda\in \mathbb{F}_p} \biggl[g^0_{2,p\left[\lambda\right]},XY^{r-1}-X^{r-2p+1}Y^{2p-1}\biggl].
 \end{eqnarray*}
 We have $X^{r-j}Y^j-X^{r-p}Y^p \equiv j \theta X^{r-2p}Y^{p-1} \pmod{V_r^{(2)}}$ by \cite[Lemma 2.12]{GV22}.
 %, and \begin{eqnarray*}
     %\frac{p-1}{p}\sum_{\substack{
%1<j<r-1 \\
%j \equiv 1 \pmod{p-1}
%}}
%j\biggl(\binom{r}{j}''- \alpha_j\biggl) &\equiv& \frac{(r-2p)(p-1)}{p} \pmod{p} \\
%\frac{p-1}{p}\sum_{\substack{
%1<j<r-1 \\
%j \equiv 1 \pmod{p-1}
%}}
%j\biggl(\binom{r-1}{j}''- \alpha_j'\biggl) &\equiv& 0 \pmod{p}.
%\end{eqnarray*}  
  %We note that $X^{r-j}Y^j-X^{r-p}Y^p \equiv j \theta X^{r-2p}Y^{p-1} \pmod{V_r^{(2)}}$. 
  We claim that 
    \begin{eqnarray*}
     {(p-1)}\sum_{\substack{
1<j<r-1 \\
j \equiv 1 \pmod{p-1}
}}
j\binom{r}{j} &\equiv& {r(p-1)} \pmod{p^2} .
\end{eqnarray*}
Indeed, we have \begin{eqnarray*}
    (p-1) \sum_{\substack{
    j \geq 0 \\
    j \equiv 1 \pmod{p-1}
}}
j\binom{r}{j}    &\equiv& r\sum_{\xi \in \mu'}  (1+\xi)^{2p-1} \pmod{p^2} \\
&\equiv& (p-1)r\biggl(1+\binom{2p-1}{p-1}+2p-1\biggl)\pmod{p^2} \\
&\equiv& (p-1)r\pmod{p^2}, \text{ so}\\
(p-1) \sum_{\substack{
    p\le j <r-1 \\
    j \equiv 1 \pmod{p-1}
}}
j\binom{r}{j} &\equiv&  (p-1)r \pmod{p^2},
\end{eqnarray*} 
proving the claim. It follows that
\begin{eqnarray*}
     \frac{p-1}{p}\sum_{\substack{
1<j<r-1 \\
j \equiv 1 \pmod{p-1}
}}
j\biggl(\binom{r}{j}''- \alpha_j\biggl) &\equiv& \frac{(r-2p)(p-1)}{p} \pmod{p}.
\end{eqnarray*}
Similarly, by Lemma~\ref{lem:2.1}, we have
\begin{eqnarray*}
\frac{p-1}{p}\sum_{\substack{
1<j<r-1 
j \equiv 1 \pmod{p-1}
}}
j\biggl(\binom{r-1}{j}''- \alpha_j\biggl) &\equiv& 0 \pmod{p}.
\end{eqnarray*} So, by 
\cite[Lemma 2.12]{GV22} applied twice, we obtain
\begin{eqnarray*}
 (T-a_p)f&=& \biggl[g^0_{1,0},\frac{r-2p}{p} \theta X^{r-2p}Y^{p-1}+\frac{(r-1)(p-1)}{p} 
\sum_{\substack{
1<j<r-1 \\
j \equiv 1 \pmod{p-1}
}}
\biggl(\binom{r}{j}''- \alpha_j\biggl) X^{r-p} Y^{p}\\
&&\qquad+\frac{2(p-1)}{p} 
\sum_{\substack{
1< j<r-1 \\
j \equiv 1 \pmod{p-1}
}}
 \biggl(\binom{r-1}{j}''-\alpha_j'\biggl) X^{r-p} Y^{p}\biggl]\\
 &&\qquad-\frac{2a_p}{p^2}\biggl[\mathrm{Id}, \theta X^{r-2p}Y^{p-1}-\theta X^{r-p-1}\biggl]- \frac{2a_p}{p^2}\sum_{\lambda\in \mathbb{F}_p} \biggl[g^0_{2,p\left[\lambda\right]},\theta X^{r-2p}Y^{p-1}-\theta Y^{r-p-1}\biggl].
 \end{eqnarray*}
Subracting $X^{r-1}Y$ from $X^{r-p}Y^p$ twice, by \cite[Lemma 2.12]{GV22} we obtain $(T-a_p)f$ is integral and maps to $-\frac{2a_p}{p^2}(T-\bar{u}^{-1})\left[g^0_{1,0},1\right]$ in $\mathrm{ind}_{KZ}^G J_3$.
\end{proof}
\begin{prop}\label{lem:G_5}
Let $3p+2 \le r \equiv 2 \pmod{p-1}$, $v_p(a_p)=2$ and $p \mid r-1$ but $p^2 \nmid r-p-1$, so that $u' \coloneqq \frac{p}{p+1-r} \cdot \frac{a_p}{p^2}$ is a $p$-adic unit. Then \begin{enumerate}
    \item[(i)] $F_2$ is a quotient of $\pi(p-1, \bar{u'}^{-1},\omega)$. 
    \item [(ii)] $F_4$ is a quotient of $\pi(p-3, \bar{u'},\omega^2)$.
\end{enumerate}
Moreover, if $p^2 \mid r-p-1$ then $F_4=0$.
\end{prop}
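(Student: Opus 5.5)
The plan is to mirror the proofs of Proposition~\ref{prop: alpha 1}(ii) and Proposition~\ref{lem:G_4}, now in the case $p\mid r-1$. In each part I will build an explicit $f\in\mathrm{ind}_{KZ}^G\mathrm{Sym}^r\overline{\mathbb Q}_p^{\,2}$ such that $(T-a_p)f$ is integral and its image in $\mathrm{ind}_{KZ}^G J_i$ is a unit multiple of $(T-c)[g,v]$, where $[g,v]$ generates $\mathrm{ind}_{KZ}^G J_i$. Note that $p\mid r-1$ forces $p\mid\binom r2$, so Lemma~\ref{lem:2.7} and Lemma~\ref{lem: im 3} are available.

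\emph{Part (ii) and the final assertion.} I will take the functions of Proposition~\ref{prop: alpha 1}(ii), with two changes: the level-$2$ and level-$1$ terms are summed over all $\lambda\in\mathbb F_p$, as in Proposition~\ref{lem:G_4}(i), and $f_0$ is replaced by $\bigl[\mathrm{Id},\theta^2X^{r-2p-2}/(2p^2)\bigr]$. Explicitly,
\[
f_2=\sum_{\lambda,\,\mu\neq0}\Bigl[g^0_{2,[\lambda]+p[\mu]},\tfrac{Y^r-X^{r-p-1}Y^{p+1}}{p}\Bigr]+(1-p)\sum_{\lambda}\Bigl[g^0_{2,[\lambda]},\tfrac{Y^r-X^{r-p-1}Y^{p+1}}{p}\Bigr],
\]
and $f_1=\sum_\lambda\bigl[g^0_{1,[\lambda]},\tfrac{p-1}{pa_p}\sum\alpha_jX^{r-j}Y^j\bigr]$, with the $\alpha_j$ taken from Lemma~\ref{lem:2.7}. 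The computation of Proposition~\ref{prop: alpha 1}(ii) then goes through verbatim. It uses Lemma~\ref{lem: im 3} to dispose of the bracketed term, and Lemma~\ref{lem:cong 2 f_1}(ii) is not needed. The result is
\[
(T-a_p)f=\tfrac{p+1-r}{2p}\sum_{\lambda}\bigl[g^0_{1,[\lambda]},\theta^2X^{r-2p-2}\bigr]-\tfrac{a_p}{2p^2}\bigl[\mathrm{Id},\theta^2X^{r-2p-2}\bigr]
\]
modulo $X_{r-1}+V_r^{(3)}$. By \cite[(4.2)]{Glo78} and \cite[Lemma 5.3]{Bre03b}, the element $\theta^2X^{r-2p-2}$ maps to $X^{p-3}$ in $J_4=V_{p-3}\otimes D^2$. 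Hence the image in $\mathrm{ind}_{KZ}^G J_4$ is $\overline{\tfrac{p+1-r}{2p}}\,(T-\bar u')[\mathrm{Id},X^{p-3}]$, because
\[
\frac{a_p}{2p^2}\Big/\frac{p+1-r}{2p}=u'.
\]
This shows that $F_4$ is a quotient of $\pi(p-3,\bar u',\omega^2)$. If instead $p^2\mid r-p-1$, the first coefficient vanishes mod $p$. The image is then $-\overline{a_p/(2p^2)}[\mathrm{Id},X^{p-3}]$, which is a generator, so $F_4=0$.

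\emph{Part (i).} For $a=2$ we have $J_2=V_{p-1}\otimes D$, and $\theta X^{r-p-1}\mapsto X^{p-1}$ there. The target is an element whose image is a unit multiple of
\[
\overline{\tfrac{a_p}{p^2}}\sum_\lambda\bigl[g^0_{1,[\lambda]},X^{p-1}\bigr]-\overline{\tfrac{p+1-r}{p}}\bigl[\mathrm{Id},X^{p-1}\bigr].
\]
Here the $a_p$-coefficient sits on the $T$-side, as in Proposition~\ref{prop:A_2}(ii) and Proposition~\ref{prop:B_2}(ii), rather than on the $\mathrm{Id}$-side as in Proposition~\ref{prop:A_5}(ii).

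I will build $f$ in the same three layers. The top layer is a sum over $g^0_{1,[\lambda]}$ of $\tfrac1{p^2}$ times a $\theta$-multiple such as $XY^{r-1}-X^pY^{r-p}$ (possibly plus a correcting monomial), chosen so that $-a_pf_{\mathrm{top}}$ yields the $T$-term and $T^+f_{\mathrm{top}}$ dies modulo $X_{r-1}+V_r^{**}$. The middle layer is $\bigl[\mathrm{Id},\tfrac{p-1}{pa_p}\sum_{j\equiv1}\alpha_jX^{r-j}Y^j\bigr]$, with the $\alpha_j$ from Lemma~\ref{lem:2.9}, or a variant modelled on Lemma~\ref{lem:2.8}. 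The bottom layer is an $\alpha$-type or $[\mathrm{Id},X^r-\cdots]$ correction, as in Proposition~\ref{prop:B_2}(ii).

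Then $T^-f_{\mathrm{top}}-a_pf_{\mathrm{mid}}$ equals $\tfrac{p-1}{p}\sum(\binom{r-1}{j}''-\alpha_j)X^{r-j}Y^j$. I will rewrite it using $X^{r-j}Y^j-X^{r-p}Y^p\equiv j\,\theta X^{r-2p}Y^{p-1}$ modulo $V_r^{**}$ \cite[Lemma 2.12]{GV22}, and the surviving $\mathrm{Id}$-coefficient should come out as $\tfrac{p+1-r}{p}$.

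\emph{Main obstacle.} The hard step is pinning down this $\mathrm{Id}$-coefficient. It needs a mod $p^2$ evaluation of $\sum_{j\equiv1}j\binom{r-1}{j}$, coming from Lemma~\ref{lem:2.1} with $p\mid r-1$. I expect to have to redo the roots-of-unity computation of Lemma~\ref{lem:2.1} to one further $p$-adic digit, as was done for the claim inside Proposition~\ref{lem:G_4}(ii).

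I will also need to check that the leftover $X^{r-p}Y^p$ terms die modulo $X_{r-1}$, by subtracting $X^{r-1}Y$ as at the end of Proposition~\ref{lem:G_4}(ii). If the single family of $\alpha_j$ does not cancel everything, I will take a linear combination of two families, as with $f$ and $f'$ in Proposition~\ref{lem:G_4}(ii).
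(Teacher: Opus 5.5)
Your part (ii) and the final assertion are exactly the paper's argument. It uses the same $f_2$, the same $f_1$ (summed over all $\lambda$, with the $\alpha_j$ of Lemma~\ref{lem:2.7}) and $f_0=[\mathrm{Id},\theta^2X^{r-2p-2}/(2p^2)]$, disposes of the bracketed term with Lemma~\ref{lem: im 3}, and the ratio of the two coefficients gives $\bar{u'}$, or a generator when $p^2\mid r-p-1$.

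Part (i), however, is only a plan, and the one concrete ingredient it offers fails. You suggest the top layer $\frac{1}{p^2}(XY^{r-1}-X^pY^{r-p})$ at the vertices $g^0_{1,[\lambda]}$. But $XY^{r-1}-X^pY^{r-p}=-\theta Y^{r-p-1}$, whose image in $J_2=V_{p-1}\otimes D$ is a nonzero multiple of $Y^{p-1}$. The level-one components of $T[\mathrm{Id},v]$ are $v(X,-\lambda X)=v(1,-\lambda)X^{p-1}$, always multiples of $X^{p-1}$. So $-a_pf_{\mathrm{top}}$ would not produce $T[\mathrm{Id},X^{p-1}]=\sum_\lambda[g^0_{1,[\lambda]},X^{p-1}]$.

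The monomial you need is $X^{r-p}Y^p$. The paper takes $f_1=\sum_{\lambda}\bigl[g^0_{1,[\lambda]},(XY^{r-1}-X^{r-p}Y^p)/p^2\bigr]$, which is $\equiv\theta X^{r-p-1}$ modulo $X_{r-1}$. Its $T^+$ dies modulo $p$ and $X_{r-1}$ precisely because $p\mid r-1$: the $X^{r-1}Y$-coefficient is integral and lies in $X_{r-1}$, and the $X^{r-2}Y^2$-coefficient is divisible by $p$ since $p$ divides both $\binom{r-1}{2}$ and $\binom{p}{2}$.

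With this $f_1$, one further function finishes the job: $f_0=\bigl[\mathrm{Id},\frac{p-1}{pa_p}\sum_{1<j<r-1,\ j\equiv 1}\alpha_jX^{r-j}Y^j\bigr]$, with the $\alpha_j$ of Lemma~\ref{lem:2.9}. Since $p\mid r-1$, the correction $r-1$ at $j=p$ vanishes mod $p$, so $\alpha_j\equiv\binom{r-1}{j}\pmod p$ for every $j$. Hence you do not need modified binomials $\binom{r-1}{j}''$, a second family $\alpha_j'$, or a layer at $\alpha$. There is no $j=0$ term, and the $j=1$ and $j=r-1$ terms of $T^-f_1$ are integral and lie in $X_{r-1}$.

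The obstacle you anticipate is also not there. Lemma~\ref{lem:2.1} as stated is already a mod $p^2$ statement. Applied to $r-1$ with $(i,a)=(0,1)$ and $(1,p)$, it gives, modulo $p^2$,
\[
\sum_{\substack{1<j<r-1\\ j\equiv 1}}\binom{r-1}{j}\equiv p(2-r)-(r-1)\equiv p+1-r,
\qquad
\sum_{\substack{1<j<r-1\\ j\equiv 1}}j\binom{r-1}{j}\equiv p(r-1)\frac{p+1-r}{p-1}\equiv 0 .
\]
Together with Lemma~\ref{lem:2.9}(2) and \cite[Lemma 2.12]{GV22}, this gives $T^-f_1-a_pf_0\equiv\bigl[\mathrm{Id},\frac{(p-1)(p+1-r)}{p}X^{r-p}Y^p\bigr]$. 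Since $X^{r-p}Y^p\equiv-\theta X^{r-p-1}$ modulo $X_{r-1}$, the image of $(T-a_p)(f_0+f_1)$ in $\mathrm{ind}_{KZ}^GJ_2$ is $-\overline{a_p/p^2}\,(T-\bar{u'}^{-1})[\mathrm{Id},X^{p-1}]$.
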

\begin{proof}
     (i) Let us consider $f_0,f_1\in \mathrm{ind}_{KZ}^G \mathrm{Sym}^r \bar{\mathbb{Q}}_p^2$ defined by\begin{eqnarray*}
  f_1 &=& \sum_{\lambda\in \mathbb{F}_p} \biggl[g^0_{1,\left[\lambda\right]}, \frac{XY^{r-1}-X^{r-p}Y^p}{p^2}\biggl]\\
         f_0 &=& \biggl[\mathrm{Id}, \frac{p-1}{p \cdot a_p} \sum_{\substack{
1<j<r-1 \\
j \equiv 1 \pmod{p-1}
}}
 \alpha_j \cdot X^{r-j} Y^{j}\biggl],
         \end{eqnarray*}
         where $\alpha_j$ are integers as in Lemma~\ref{lem:2.9}. 
         Since $p \mid r-1$, we have
         $\alpha_j \equiv {r-1 \choose j} \mod{p}$ for al1 $1 < j  < r-1$. Now it is clear that $T^+f_1$, $T^+f_0$ and $T^-f_0$ die mod $p$. We compute
         \begin{eqnarray*}
             T^-f_1 &=& \biggl[\mathrm{Id}, \frac{p-1}{p}\sum_{\substack{
1<j<r-1 \\
j \equiv 1 \pmod{p-1}
}}
 \binom{r-1}{j} X^{r-j} Y^{j}\biggl] \pmod{X_{r-1}}, \text{ so}\\
T^-f_1-a_pf_0 &=& \biggl[\mathrm{Id}, \frac{p-1}{p}\sum_{\substack{
1<j<r-1 \\
j \equiv 1 \pmod{p-1}
}}
 \biggl(\binom{r-1}{j}-\alpha_j\biggl) X^{r-j} Y^{j}\biggl].
 \end{eqnarray*}
 Now as before $X^{r-j}Y^j-X^{r-p}Y^p \equiv j \theta X^{r-2p}Y^{p-1} \pmod{V_r^{(2)}}$. So by Lemma~\ref{lem:2.1} applied twice, we obtain \begin{eqnarray*}
 T^-f_1-a_pf_0&=& \biggl[\mathrm{Id}, \frac{p-1}{p}\sum_{\substack{
1<j<r-1 \\
j \equiv 1 \pmod{p-1}
}}
 \biggl(\binom{r-1}{j}-\alpha_j\biggl) (X^{r-j} Y^{j}-X^{r-p}Y^p)\\
 && \qquad +\frac{p-1}{p}\sum_{\substack{
1<j<r-1 \\
j \equiv 1 \pmod{p-1}
}}
 \biggl(\binom{r-1}{j}-\alpha_j\biggl)X^{r-p}Y^p\biggl]\\
 &=& \biggl[\mathrm{Id}, \frac{(p-1)(p+1-r)}{p} X^{r-p} Y^{p}\biggl].
 \end{eqnarray*}
 Hence, we have \begin{eqnarray*}
     (T-a_p)(f_0+f_1) &=& -\frac{a_p}{p^2}\sum_{\lambda\in \mathbb{F}_p} \biggl[g^0_{1,\left[\lambda\right]}, \theta X^{r-p-1}\biggl]+\frac{(p+1-r)}{p}\biggl[\mathrm{Id}, \theta X^{r-p-1}\biggl] \pmod{X_{r-1}}.
  \end{eqnarray*}
  Hence, $(T-a_p)(f_0+f_1)$ is integral and maps to $-\overline{\frac{a_p}{p^2}} (T-\bar{u'}^{-1})\biggl[\mathrm{Id},X^{p-1}\biggl]$ in \(\mathrm{ind}_{KZ}^G J_2\).

    (ii) Let us consider $f_0,f_1,f_2\in \mathrm{ind}_{KZ}^G \mathrm{Sym}^r \bar{\mathbb{Q}}_p^2$ defined by \begin{eqnarray*}
        f_2 &=& \sum_{{\lambda \in \mathbb{F}_p},\,\mu \in {\mathbb{F}_p^\times}} \biggl[g^0_{2,\left [\lambda \right]+p\left[\mu \right]}, \frac{Y^r-X^{r-p-1}Y^{p+1}}{p}\biggl]+(1-p)\sum_{\lambda \in \mathbb{F}_p}\biggl[g^0_{2,\left [\lambda \right]}, \frac{Y^r-X^{r-p-1}Y^{p+1}}{p}\biggl]\\
         f_1 &=& \sum_{\lambda \in {\mathbb{F}_p}} \biggl[g^0_{1,\left[\lambda \right]},\frac{p-1}{p \cdot a_p} \sum_{\substack{
         2<j<r \\
         j \equiv 2 \pmod{p-1}
         }}
         \alpha_j \cdot X^{r-j} Y^{j}\biggl]\\
        f_0 &=& \biggl[\mathrm{Id},\frac{\theta^2 X^{r-2p-2}}{2p^2}\biggl],
    \end{eqnarray*}
    where $\alpha_j$ are integers as in Lemma~\ref{lem:2.7}.
   Clearly $T^+f_2$, $-a_pf_2$, $T^+f_1$, $T^-f_1$ and $T^-f_0$ die mod $p$. We have 
\begin{eqnarray*}
T^-f_2&=& \sum_{\lambda \in {\mathbb{F}_p}} \biggl[g^0_{1,\left[\lambda \right]},\frac{p-1}{p} \sum_{\substack{
0<j<r \\
j \equiv 2 \pmod{p-1}
}}
 \binom{r}{j} X^{r-j} Y^{j}\biggl], \text{ so}\\
 T^-f_2-a_pf_1 &=& \sum_{\lambda \in {\mathbb{F}_p}} \biggl[g^0_{1,\left[\lambda \right]},\frac{p-1}{p} \binom{r}{2}X^{r-2}Y^2 +\frac{p-1}{p} \sum_{\substack{
2<j<r \\
j \equiv 2 \pmod{p-1}
}}
 \biggl(\binom{r}{j}-\alpha_j\biggl) X^{r-j} Y^{j}\biggl].
\end{eqnarray*}
As in the proof of 
Propostion~\ref{prop: alpha 1} part ($ii$), we obtain
\begin{eqnarray*}
  T^-f_2-a_pf_1  &=& \sum_{\lambda \in {\mathbb{F}_p}} \biggl[g^0_{1,\left[\lambda \right]},\frac{p-1}{p} \binom{r}{2}X^{r-2}Y^2 + \frac{p-1}{p} \sum_{\substack{
2<j<r \\
j \equiv 2 \pmod{p-1}
}}
 \biggl(\binom{r}{j}-\alpha_j\biggl) X^{r-j} Y^{j}\\
 && \qquad -(p-1)\biggl(\frac{1}{p}\binom{r}{2}-\frac{1}{2}\biggl)X^{r-2p}Y^{2p}-(p-1)\biggl(1-\frac{2}{p}\binom{r}{2}\biggl)X^{r-p-1}Y^{p+1}\\
 && \qquad +(p-1)\biggl(\frac{1}{p}\binom{r}{2}-\frac{1}{2}\biggl)X^{r-2p}Y^{2p}+(p-1)\biggl(1-\frac{2}{p}\binom{r}{2}\biggl)X^{r-p-1}Y^{p+1}\biggl]\\
 &=& \sum_{\lambda \in {\mathbb{F}_p}} \biggl[g^0_{1,\left[\lambda \right]},\frac{p-1}{p}\binom{r}{2}\theta^2 X^{r-2p-2}-\frac{p-1}{2}X^{r-2p}Y^{2p}+(p-1)X^{r-p-1}Y^{p+1} \biggl],
\end{eqnarray*}
by Lemma~\ref{lem: im 3}.
We have \[
T^+f_0 = \frac{1}{2}\sum_{\lambda \in {\mathbb{F}_p}} \biggl[g^0_{1,\left[\lambda \right]}, X^{r-2}Y^2\biggl].
\]
Hence, we get \begin{eqnarray*}
    T^-f_2-a_pf_1+T^+f_0 &=& \sum_{\lambda \in {\mathbb{F}_p}} \biggl[g^0_{1,\left[\lambda \right]},\frac{p-1}{p}\binom{r}{2} \theta^2 X^{r-2p-2}-\frac{p-1}{2}\theta^2X^{r-2p-2} \biggl]\\
    &=& \sum_{\lambda \in {\mathbb{F}_p}} \biggl[g^0_{1,\left[\lambda \right]},\frac{p+1-r}{2p}\theta^2X^{r-2p-2} \biggl].
\end{eqnarray*}
Combining these we have \begin{eqnarray*}
    (T-a_p)(f_0+f_1+f_2)=\frac{p+1-r}{2p}\sum_{\lambda \in {\mathbb{F}_p}} \biggl[g^0_{1,\left[\lambda \right]},\theta^2X^{r-2p-2} \biggl]-\frac{a_p}{2p^2}\biggl[\mathrm{Id},{\theta^2 X^{r-2p-2}}\biggl].
\end{eqnarray*}
If $p^2 \nmid r-p-1$, then $(T-a_p)(f_0+f_1+f_2)$ is integral and maps to $\overline{\frac{p+1-r}{2p}}(T-\bar{u'})\left[\mathrm{Id}, X^{p-3}\right]$ in \(\mathrm{ind}_{KZ}^G J_4\).
 Moreover, if $p^2 \mid r-p-1$, then $(T-a_p)(f_0+f_1+f_2)$ maps to $-\overline{\frac{a_p}{2p^2}}\biggl[\mathrm{Id},X^{p-3}\biggl]$ in \(\mathrm{ind}_{KZ}^G J_4\).
\end{proof}
\begin{prop}\label{lem:G_6}
    Let $p \ge 3$ and $2p+1 \le r \equiv 2 \pmod{p-1}$. Then \begin{itemize}
        \item [(i)] if $p \mid r$, then $F_2=0$
        \item[(ii)] if $p \mid r-1$, then $F_3=0$. 
    \end{itemize}
    \end{prop}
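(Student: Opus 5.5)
Both parts will follow the pattern of Proposition~\ref{prop:B_6} and Proposition~\ref{lem:G_2}(i): rather than using the Hecke operator, I will show that the relevant Jordan--H\"older factor of $V_r^*/V_r^{**}$ already lies in the image of $X_{r-1}^*/X_{r-1}^{**}$. Then the corresponding $J_2$ or $J_3$ is zero, and so is $F_2$ or $F_3$. Recall that $W_2/W_3 = (X_{r-1}+V_r^*)/(X_{r-1}+V_r^{**})$, which is the quotient of $V_r^*/V_r^{**}$ by $X_{r-1}^*/X_{r-1}^{**}$. For $a=2$, Proposition~\ref{prop:3.1}(ii) gives that $V_r^*/V_r^{**}$ is the split sum $(V_{p-1}\otimes D)\oplus (V_0\otimes D)$. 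The task is therefore to identify the submodule $X_{r-1}^*/X_{r-1}^{**}$ in each of the two congruence cases.

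I will use the structure results of \cite{GV22} on $X_{r-1}^*/X_{r-1}^{**}$ for $a=2$, namely Proposition 4.22 and the companion statements in \S4 indexed by the set $\mathcal{J}(2,r_0)$, where $r\equiv r_0 \pmod p$. When $p\nmid r(r-1)$, Proposition~\ref{lem:G_2} used $\mathcal J(2,1)=\{0,1\}$ to conclude $X_{r-1}^*/X_{r-1}^{**}\cong V_r^*/V_r^{**}$. In the case $p\mid r$ (so $r_0=0$) I expect \cite{GV22} to give $X_{r-1}^*/X_{r-1}^{**}\supseteq V_{p-1}\otimes D$. That copy is exactly the factor spanned by the image of $\theta X^{r-p-1}$, which by definition is $J_2$. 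Hence $J_2=0$ and $F_2=0$. In the case $p\mid r-1$ (so $r_0=1$) I expect instead that $X_{r-1}^*/X_{r-1}^{**}$ surjects onto the cosocle $V_0\otimes D$. Then $J_3=0$ and $F_3=0$. This matches the pattern of Proposition~\ref{lem:G_5}(i), where $F_2$ survives when $p\mid r-1$, and of Proposition~\ref{lem:G_4}(ii), where $F_3$ survives when $p\mid r$.

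If the citation does not cover these cases cleanly, I will argue directly with explicit polynomials. For (i), I will start from $\sum_{j\equiv 1 \pmod{p-1}}\binom{r-1}{j}X^{r-j}Y^j\in X_{r-1}$, reduce it using \cite[Lemma 2.12]{GV22}, and show that the element $\theta X^{r-p-1}$ lies in $X_{r-1}+V_r^{**}$. For (ii), I will take $\sum_{j\equiv 2 \pmod{p-1}}\binom{r}{j}X^{r-j}Y^j\in X_r\subset X_{r-1}+\langle X^r\rangle$ and use the generator $X^{r-1}Y$. The aim is to show that an element of $X_{r-1}\cap V_r^*$ maps to a nonzero multiple of the generator of $J_3=V_0\otimes D$, for instance a nonzero multiple of $\theta Y^{r-p-1}$ read through Lemma~\ref{lem:2.10}.

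The main obstacle is the coefficient bookkeeping that produces these elements. One first subtracts a multiple of $X^{r-p}Y^p$ to land in $V_r^*$ by the $m=0$ sum congruence, and then reads off the $\theta$-coefficient from the $m=1$ sum $\sum j\binom{\cdot}{j}$ modulo $p$. This coefficient is where the hypotheses $p\mid r$ or $p\mid r-1$ enter, as the unit $-1$ or $r-1$ in the analogues of Lemmas~\ref{lem:2.8} and \ref{lem:2.9}. Showing that it is a unit is the crux of the argument.
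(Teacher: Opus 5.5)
Your main line is the paper's proof. The paper cites \cite[Proposition 4.22]{GV22}, which gives $X_{r-1}^{*}/X_{r-1}^{**}\cong V_{p-1}\otimes D$ when $p\mid r$ and $X_{r-1}^{*}/X_{r-1}^{**}\cong V_0\otimes D$ when $p\mid r-1$. Since $V_r^*/V_r^{**}=(V_{p-1}\otimes D)\oplus(V_0\otimes D)$, this forces $J_2=0$, respectively $J_3=0$, exactly as you argue.

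Your fallback for (ii) would not work as sketched, however. The polynomial $\sum_{j\equiv 2}\binom{r}{j}X^{r-j}Y^j$ lies in the torus-weight space of monomials with $j\equiv 2 \pmod{p-1}$. But $J_3=V_0\otimes D$ only receives the weight space of monomials with $j\equiv 1$, so that polynomial cannot map nontrivially to $J_3$. Your proposed target is also wrong: $\theta Y^{r-p-1}$ maps to $0$ in $J_3$ by Lemma~\ref{lem:2.10}, being the case $i=r'$.

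The element that works is
$X^{r-1}Y-XY^{r-1}=\theta\cdot\frac{X^{r-2}-Y^{r-2}}{X^{p-1}-Y^{p-1}}$, which lies in $X_{r-1}\cap V_r^*$. Modulo $V_r^{**}$ it is congruent to $\theta X^{r-p-1}+\theta Y^{r-p-1}-r\,\theta X^{r-2p}Y^{p-1}$. Hence it maps to $-r$ times the generator of $J_3$, which is nonzero when $p\mid r-1$.
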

    \begin{proof}
       (i) By \cite[Proposition $4.22$]{GV22}, we have $\frac{X_{r-1}^{*}}{X_{r-1}^{**}} \cong V_{p-1} \otimes D$. Hence, $F_2=0$.

       (ii) Again by  \cite[Proposition $4.22$]{GV22}, we have $\frac{X_{r-1}^{*}}{X_{r-1}^{**}} \cong V_{0} \otimes D$. Hence, $F_3=0$.
    \end{proof}
\begin{proof}[Proof of Theorem~\ref{thm:a=2}] 
The above propositions may be summarized by the following table:

\begin{table}[h]
\centering
\begin{tabular}{|>{\centering\arraybackslash}m{5cm}|
                >{\centering\arraybackslash}m{1.3cm}|
                >{\centering\arraybackslash}m{1.3cm}|
                >{\centering\arraybackslash}m{1.3cm}|
                >{\centering\arraybackslash}m{1.3cm}|
                >{\centering\arraybackslash}m{1.3cm}|}
                  
\hline
& $F_5$ & $F_4$ & $F_3$ & $F_2$ & $F_1$ \\ \hline
$p\nmid r(r-1)$
& $\checkmark$& &  & & \\ \hline

$p\mid r$,\; $p^2 \nmid r-2p$
& & $\checkmark$&$\checkmark$ & & \\ \hline

$p^2\mid r-2p$
& & &$\checkmark$ & &\\ \hline

$p\mid r-1$,\; $p^2 \nmid r-p-1$
& &$\checkmark$  & &$\checkmark$  & \\ \hline

$p^2 \mid r-p-1$
& & & &  $\checkmark$& \\ \hline
\end{tabular}
\end{table}

In particular, we have:
\begin{itemize}
    \item [(1)] If $p \nmid r(r-1)$, then by Proposition~\ref{lem:G_1} and Proposition~\ref{lem:G_2} we get $\overline{\Theta}^{\,{ss}} \cong F_5 \cong \pi(2,0,1)$. 
    \item [(2)] If $p \mid r$ but $p^2 \nmid r-2p$, then by Proposition~\ref{lem:G_1}, Proposition~\ref{prop: alpha 1} and Proposition~\ref{lem:G_6}, we get $\overline{\Theta}^{\,{ss}} \cong (F_3 \oplus F_4)^{\,{ss}}$. Moreover, Proposition~\ref{lem:G_4} tells us that we have surjections $\pi(p-3,\bar{u},\omega^2) \twoheadrightarrow F_4$ and $\pi(0,\bar{u}^{-1},\omega) \twoheadrightarrow F_3$. Hence, we have $\overline{\Theta}^{\,{ss}} \cong \pi(p-3,\bar{u},\omega^2)^{\,{ss}} \oplus \pi(0,\bar{u}^{-1},\omega)^{\,{ss}}$.
    \item[(3)]  If $p^2 \mid r-2p$, then by Proposition~\ref{lem:G_1}, Proposition~\ref{prop: alpha 1}, Proposition~\ref{lem:G_4} and Proposition~\ref{lem:G_6}, we have $\overline{\Theta}^{\,{ss}} \cong F_3 \cong \pi(0,0,\omega)$.
    \item [(4)] If $p \mid r-1$ but $p^2 \nmid r-p-1$, then by    Proposition~\ref{lem:G_1}, Proposition~\ref{prop: alpha 1} and Proposition~\ref{lem:G_6}, we get  $\overline{\Theta}^{\,{ss}} \cong (F_2 \oplus F_4)^{\,{ss}}$. Moreover, by Proposition~\ref{lem:G_5}, we have surjections $\pi(p-3,\bar{u'},\omega^2) \twoheadrightarrow F_4$ and $\pi(p-1,\bar{u'}^{-1},\omega) \twoheadrightarrow F_2$. Hence, we have $\overline{\Theta}^{\,{ss}} \cong \pi(p-3,\bar{u'},\omega^2)^{\,{ss}} \oplus \pi(p-1,\bar{u'}^{-1},\omega)^{\,{ss}}$.
    \item [(5)] If $p^2 \mid r-p-1$, then by Proposition~\ref{lem:G_1}, Proposition~\ref{prop: alpha 1}, Proposition~\ref{lem:G_5} and Proposition~\ref{lem:G_6}, we have $\overline{\Theta}^{\,{ss}} \cong F_2 \cong \pi(p-1,0,\omega)$.
\end{itemize}
Theorem~\ref{thm:a=2} now follows in all cases from the mod $p$ LLC.
\end{proof}
\section{The case $a=3$}\label{sec: a=3}
 In this section we will discuss the case $3p+2 \le r \equiv 3 \pmod{p-1}$.
 \begin{theorem}\label{thm:a=3}
     Let $3p+2 \le r \equiv 3 \pmod{p-1}$ and $v_p(a_p)=2$. Then \begin{enumerate}
     \item[(1)] If $p \nmid \binom{r}{4}$, then 
         $\overline{V}^{\,{ss}}_{k,a_p} \cong \mu_{\lambda}\cdot \omega\oplus \mu_{\lambda^{-1}} \cdot \omega^3$, where $$\lambda =\overline{\frac{r-3}{3\binom{r}{3}}\cdot\frac{a_p}{p^2}} \in \overline{\mathbb{F}}_p^\times.$$
         \item [(2)] If $p \mid r$, then $\overline{V}^{\,{ss}}_{k,a_p}\cong \mathrm{ind}\biggl(\omega_2^{p+3}\biggl)$.
          \item[(3)] If $p \mid r-1$, then $\overline{V}^{\,{ss}}_{k,a_p} \cong \mu_{\lambda}\cdot \omega^2 \oplus \mu_{\lambda^{-1}} \cdot \omega^2$, where $$ \lambda+\lambda^{-1}=\overline{\frac{a_p^2-p^3(\frac{r-2p-1}{2})}{p^2a_p}} \in \overline{\mathbb{F}}_p.$$
           \item[(4)] If $p \mid r-2$, then 
         $\overline{V}^{\,{ss}}_{k,a_p}\cong \mathrm{ind} \biggl(\omega_2^{3p+1}\biggl)$.
         \item[(5)] If $p \mid r-3$, then $\overline{V}^{\,{ss}}_{k,a_p}\cong \mathrm{ind} \biggl(\omega_2^{4p}\biggl)$.

     \end{enumerate}
 \end{theorem}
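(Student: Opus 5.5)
The proof follows the same template as Theorems~\ref{thm:3.1}, \ref{thm: con 1} and \ref{thm:a=2}. For each congruence condition on $r$, I would decide which of the six subquotients $F_0,\dots,F_5$ of $\overline{\Theta}_{k,a_p}$ survive. For the survivors I would exhibit explicit functions $f\in\mathrm{ind}_{KZ}^G\mathrm{Sym}^r\overline{\mathbb{Q}}_p^{\,2}$ with $(T-a_p)f$ integral, whose images give Hecke relations. Then I would read off $\overline{V}^{ss}_{k,a_p}$ from the mod $p$ LLC.

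\textbf{Weights and targets.} For $a=3$, Lemma~\ref{lem:2.10} and Proposition~\ref{prop:3.1} give the weights $J_0\subset V_3$, $J_1=V_{p-4}\otimes D^3$, $J_2=V_1\otimes D$, $J_3=V_{p-2}\otimes D^2$, $J_4=V_{p-2}\otimes D^2$ and $J_5=V_1\otimes D$. Matching against the LLC:
\begin{itemize}
\item Case (1), $\mu_\lambda\omega\oplus\mu_{\lambda^{-1}}\omega^3$, corresponds to $\pi(1,\bar u,\omega)\oplus\pi(p-4,\bar u^{-1},\omega^3)$. So generically the survivors should be $F_5$ (or $F_2$) together with $F_1$; this is the new ``$F_1$ and $F_5$'' phenomenon announced in the introduction.
\item Cases (2) and (4) give the same supersingular module $\pi(1,0,\omega)$, since $\mathrm{ind}(\omega_2^{p+3})\cong\mathrm{ind}(\omega_2^{3p+1})$. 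I expect the survivor there to be a single copy of $V_1\otimes D$, namely $F_5$ for one condition and $F_2$ for the other.
\item Case (3) corresponds to two copies of $V_{p-2}\otimes D^2$. That is, $F_3$ and $F_4$ both survive, linked by a quadratic relation $T^2-cT+1$, exactly as in Proposition~\ref{prop:B_3}.
\item Case (5) should be governed by the $V_3$-type weight coming from $V_r/V_r^*$ when $p\mid r-3$.
\end{itemize}

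\textbf{Elimination step.} First I would kill the non-surviving pieces using the structure of $X_{r-1}$ from \cite{GV22}, i.e.\ Lemmas 4.3, 4.4 and Propositions 4.8, 4.18, 4.22 there, which compute $X_{r-1}/X_{r-1}^*$, $X_{r-1}^*/X_{r-1}^{**}$ and $X_{r-1}^{**}/X_{r-1}^{***}$ depending on $r\bmod p$. This should dispose of $F_2$ or $F_3$ in most subcases, and of the socle $J_4$ whenever $X_{r-1}^{**}/X_{r-1}^{***}\neq0$. Second, I would prove an analogue of Lemma~\ref{lem:3.3} and Lemma~\ref{lem:cong 2 f_1} expressing $X^{r-2}Y^2$ modulo $X_{r-1}+V_r^{(3)}$ in terms of $\theta^2X^{r-2p-2}$ and $\theta^2X^{r-3p-1}Y^{p-1}$. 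The coefficients should involve $\binom r3$ and $\binom r4$, which is where the condition $p\nmid\binom r4$ enters. Third, with the test functions $f_0=[\mathrm{Id},\theta^2(\cdots)/p^2]$ as in Propositions~\ref{prop:A_1} and \ref{prop: alpha 1}, I would kill whichever of $F_4,F_5$ should die in each case.

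\textbf{Hecke relations for survivors.} For the surviving pieces I would use three-level functions $f_0+f_1+f_2$ of the type in Propositions~\ref{prop:A_2}, \ref{prop:A_5} and \ref{prop:B_3}. The middle term is built from the integers $\alpha_j$ of Lemma~\ref{lem:2.2}, whose part (3) for $a=3$ supplies the extra term $\binom r3$. That term should produce the constant $\frac{r-3}{3\binom r3}\frac{a_p}{p^2}$ in case (1), via Lemma~\ref{lem:2.1} with $i=0,1,2,3$. In case (1) the $F_1$ relation comes from a function supported at $g^0_{1,\cdot}$ of the form $(Y^r-X^{r-3}Y^3)/p^2$, paired with an $F_5$ relation $(T-\bar u)[\mathrm{Id},X]$. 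In case (3) I would imitate Proposition~\ref{prop:B_3} with functions supported on $g^0_{2,\cdot}$, arriving at $T^2-cT+1$ with $c=\frac{a_p^2-p^3(r-2p-1)/2}{p^2a_p}$.

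\textbf{Main obstacle.} I expect the hardest step to be the $F_1$/$F_5$ interaction in case (1) and the quadratic relation in case (3). In both, cross terms between different layers of the $\theta$-filtration must cancel to higher $p$-adic order than before. This needs new integer families $\alpha_j$ with congruences modulo $p^{5-n}$, analogous to Lemmas~\ref{lem:2.4} and \ref{lem:2.6}. It also needs the second-order term $p^3(r-2p-1)/2$ to be tracked through the roots-of-unity computation. I would first derive these congruences from $\sum_{\xi\in\mu'}\xi^{m-3}(1+\xi)^{r-m}$ modulo $p^3$, and only then fix the coefficients of the test functions. Once the relations are in hand, the case analysis concludes as in the earlier proofs: surjections from the $\pi(\cdot)$ plus the fact that $\overline{\Theta}^{ss}$ lies in the image of the LLC force equality.
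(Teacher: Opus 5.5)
Your outline (kill subquotients, find Hecke relations on the survivors, conclude by the mod $p$ LLC) is the paper's. Your LLC targets are also right, including $\pi(1,0,\omega)\cong\pi(p-2,0,\omega^2)$ for cases (2) and (4). The gap is that the survivors you commit to are wrong in cases (3)--(5), so the plan as written would fail there.

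\textbf{Which subquotients survive.} $F_0=0$ always, since $X^r\in X_{r-1}$. $F_2=0$ in every case. $F_3$ is not removed by the structure of $X_{r-1}$ but by a Hecke computation (Proposition~\ref{Gh Ma 2}). Take $f_2=[g^0_{2,0},(\theta X^{r-2p-1}Y^p-\theta Y^{r-p-1})/p]$ and $f_0=[\mathrm{Id},(\theta^2X^{r-3p-1}Y^{p-1}-\theta^2X^{r-2p-2})/p^2]$. Then $(T-a_p)(f_0+f_2)\equiv[g^0_{1,0},(r-2)\theta X^{r-p-2}Y-\theta Y^{r-p-1}]$ modulo $V_r^{(2)}$, and this kills $F_2$ and $F_3$ as soon as $p\nmid r-2$. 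Consequences case by case:
\begin{itemize}
\item Case (3): $F_3=0$, $F_5=0$ (Proposition~\ref{Gh Ma 3}) and $F_1=0$ (Proposition~\ref{Gh Ma 6}(i)). The only survivor is $F_4$, and $T^2-cT+1$ is a relation on $F_4$ alone. Two subquotients ``linked'' by a quadratic relation is not a coherent picture. Each $F_i$ is a quotient of $\mathrm{ind}_{KZ}^G J_i$, and a quadratic relation on one weight already accounts for both Jordan--H\"older factors; a nonzero $F_3$ on top would exceed what the LLC allows.
\item Case (4): there is no surviving $V_1\otimes D$. $F_4=F_5=0$ because $X^{r-2}Y^2\in X_{r-1}+V_r^{(3)}$ when $p\mid r-2$. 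The survivor is $F_3$, a quotient of $\pi(p-2,0,\omega^2)$. It comes from $\sum_\lambda[g^0_{1,[\lambda]},(XY^{r-1}-X^{r-p-1}Y^{p+1})/p^2]$ together with the $\alpha_j$ of Lemma~\ref{lem:2.7} applied to $r-1$ (Proposition~\ref{Gh Ma 5}).
\item Case (5): the survivor is $F_1$, of weight $V_{p-4}\otimes D^3$, not a $V_3$-piece.
\end{itemize}

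\textbf{Missing constructions.} In case (1) the new ingredient is the $F_5$-relation. For $f_1=\sum_\lambda[g^0_{1,[\lambda]},(\theta^2X^{p-1}Y^{r-3p-1}-\theta^2Y^{r-2p-2})/p^2]$, $T^-f_1$ reduces to $[\mathrm{Id},-(r-2)X^{r-p}Y^p]$ modulo $X_{r-1}+V_r^{(3)}$. One then needs $-(r-2)X^{r-p}Y^p\equiv\frac{3\binom r3}{r-3}(\theta^2X^{p-1}Y^{r-3p-1}-\theta^2Y^{r-2p-2})$ modulo $X_{r-1}+V_r^{(3)}$. The paper gets this from three ingredients:
\begin{itemize}
\item the mod $p$ values of $\sum_{j\equiv 2}\binom jm\binom rj$ for $m=0,1,2$;
\item two auxiliary polynomials differing by $(3-r)X^pY^{r-p}$, one lying in $X_{r-1}+V_r^{(3)}$ and one in $V_r^{(2)}$, the latter reduced via \cite[Lemma 2.12]{GV22};
\item the Weyl involution.
\end{itemize}
This is where $p\nmid r-3$ enters. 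Your $F_1$-function should have denominator $p$, not $p^2$. The factor $\frac{r-3}{3}$ comes from Lemma~\ref{lem:2.1} with $i=0$, and $\binom r3$ comes from $T^+f_0$ via Lemma~\ref{lem:2.2}(3).

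In case (3) you also need two further ingredients. First, $X^{r-p-1}Y^{p+1}\in X_{r-1}+V_r^{(3)}$ when $p\mid r-1$. Second, an extra level-one term $\frac{a_p}{p^2}\sum_\lambda[g^0_{1,[\lambda]},\theta^2X^{r-2p-2}/p^2]$: its $T^+$ supplies the level-two part of the relation, and $-a_p$ times it produces the $a_p^2/p^4$ in $c$.

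Conversely, the obstacle you foresee does not arise. No new $\alpha_j$ families modulo $p^{5-n}$ and no mod $p^3$ root-of-unity sums are needed. The term $\frac{r-2p-1}{2p}$ is just $-\frac1p\binom{r-1}{2}-1$ reduced mod $p$ using $p\mid r-1$.
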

\begin{lemma}\label{lem: 5.3 for r-1}
Let $3p+2 \le r\equiv 3 \pmod{p-1}$ and $p \mid r-1$, then 
    \[\frac{1}{p}\sum_{\substack{
2 <j< r-2 \\
j \equiv 2 \pmod{p-1}
}}
 \biggl(\binom{r-1}{j}-\alpha_j\biggl) X^{r-j} Y^{j}-{A}X^{r-2p}Y^{2p}-{B}X^{r-p-1}Y^{p+1} \in V_r^{(3)},
 \]
 where $A= 1+ \frac{1}{p} \binom{r-1}{2}$, $B= \frac{-2}{p} \binom{r-1}{2}\ $ and $\alpha_j$ are integers as in Lemma~\ref{lem:2.7} (applied with $r-1$ instead of $r$).
  \end{lemma}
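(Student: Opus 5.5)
The plan is to mirror the proof of Lemma~\ref{lem:im 2}, with $r$ replaced by $r' := r-1$. Note that $r' \equiv 2 \pmod{p-1}$ and $p \mid r'$. The $\alpha_j$ come from Lemma~\ref{lem:2.7} applied to $r'$; this is legitimate since $p \mid \binom{r'}{2}$ because $p\mid r'$. They satisfy $\alpha_j \equiv \binom{r'}{j} \pmod p$, so every coefficient $\frac1p\bigl(\binom{r-1}{j}-\alpha_j\bigr)$ is integral and $H(X,Y)$ makes sense over $\mathbb{Z}_p$ (reduced mod $p$). The criterion for $H\in V_r^{(3)}$ is the one used in Lemmas~\ref{lem:cong 2 f_1}, \ref{lem:im 2} and \ref{lem: im 3}, namely \cite[Lemma 2.7]{GV22}: a polynomial divisible by $X^3$ and $Y^3$ lies in $V_r^{(3)}$ once the sums $\sum_j c_j$, $\sum_j j c_j$ and $\sum_j \binom{j}{2} c_j$ of its coefficients $c_j$ (supported on $j\equiv 2 \bmod (p-1)$) vanish mod $p$. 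The divisibility is clear, since all exponents $j$ with $2<j<r-2$ and $j\in\{p+1,2p\}$ have $j\ge 3$ and $r-j\ge 3$ (using $r\ge 3p+2$).

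The key step is to compute, via Lemma~\ref{lem:2.1} with $r'$ in place of $r$, the three congruences
\[
\frac{p-1}{p}\sum_{\substack{2<j<r-2\\ j\equiv 2 \ (p-1)}}\binom{j}{m}\binom{r-1}{j} \pmod p,\qquad m=0,1,2.
\]
I would take $(i,a)=(0,2)$, $(1,2)$ and $(2,p+1)$ exactly as in Lemma~\ref{lem:im 2}. Lemma~\ref{lem:2.1} gives a sum over $i<j<r'$, i.e.\ up to $j<r-1$. The range in the statement is $2<j<r-2$, so one must account for the terms $j\le 2$ and for the top term $j=r-1$. The latter is excluded in Lemma~\ref{lem:2.1} anyway, and $j=r-2$ does not have $j\equiv2$ since $r-2\equiv 1$. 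For $m=0$ one has to subtract the $j=2$ term $\binom{r'}{2}$. Lemma~\ref{lem:2.1} gives $p\frac{2-r'}{2}\equiv p \pmod{p^2}$, since $p\mid r'$; after dividing by $p$ and multiplying by $p-1$, this yields $(p-1)\bigl(1-\frac1p\binom{r-1}{2}\bigr)$. Similarly, $m=1$ gives $-2\frac{p-1}{p}\binom{r-1}{2}$, because the main term $p\, r'\frac{2-r'}{1}$ is $\equiv 0 \pmod{p^2}$. Finally $m=2$ gives $0$, because the main term carries the factor $\binom{r'}{2}\equiv 0 \pmod p$.

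Since the $\alpha_j$ satisfy $\sum_j\binom{j}{n}\alpha_j\equiv 0 \pmod{p^{4-n}}$, subtracting them does not change these three values mod $p$. The remaining $\alpha$ contributions from $j\le 2$ vanish, since Lemma~\ref{lem:2.7} only has $j\ge p+1$. The linear system for $A,B$ is then
\[
1-\tfrac1p\tbinom{r-1}{2}-A-B=0,\qquad -\tfrac2p\tbinom{r-1}{2}-B=0,
\]
and the $m=2$ equation holds automatically because $\binom{p+1}{2}\equiv\binom{2p}{2}\equiv 0\pmod p$. Solving gives the stated $A=1+\frac1p\binom{r-1}{2}$ and $B=-\frac2p\binom{r-1}{2}$.

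The main obstacle is the bookkeeping. I must check that the sum ranges match Lemma~\ref{lem:2.1} after the boundary terms are removed, and verify that the $p\mid r'$ hypothesis yields exactly the mod $p^2$ values claimed. This is where this lemma differs from Lemma~\ref{lem:im 2} only by the shift $r\mapsto r-1$.
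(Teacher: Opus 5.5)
Your proposal is correct and is essentially the paper's proof: apply Lemma~\ref{lem:2.1} to $r-1$ with $(i,a)=(0,2),(1,2),(2,p+1)$, remove the $j=2$ boundary terms, use that the $\alpha_j$ from Lemma~\ref{lem:2.7} leave the three sums unchanged mod $p$, and solve the resulting two linear equations for $A$ and $B$. The $\binom{j}{2}$-condition is automatic since $p$ divides $\binom{p+1}{2}$ and $\binom{2p}{2}$; you merely make explicit the boundary bookkeeping and the $\alpha_j$-contributions that the paper leaves implicit.
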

\begin{proof}
By Lemma~\ref{lem:2.1} with $r-1$ instead of $r$ and $(i,a)=(0,2)$, $(1,2)$ and $(2,p+1)$, respectively, we have

%\begin{comment} 
%   We know that \[
%  (p-1)\sum_{\substack{
%j \geq 0 \\
%j \equiv 2 \pmod{p-1}
%}}
% \binom{j}{m}\binom{r}{j} = \binom{r}{m} \sum_{\xi \in \mu'} \xi^{m-2}(1+\xi)^{r-m}.
%    \]
%    So, we have \begin{eqnarray*}
%       (p-1) \sum_{\substack{
%j \geq 0 \\
%j \equiv 2 \pmod{p-1}
%}}
% \binom{j}{m}\binom{r}{j} \equiv %\binom{r}{m} \sum_{\xi \in \mu'} %\xi^{m-2}.(1+\xi)^{2p-m} \pmod{p^2}.
%    \end{eqnarray*}
%\end{comment}

\begin{eqnarray*}
%       \item [(i)] For $m=0$ we have\begin{eqnarray*}
%       \begin{comment}
%           (p-1) \sum_{\substack{
%j \geq 0 \\
%j \equiv 2 \pmod{p-1}
%}}
% \binom{r}{j} &\equiv&  \sum_{\xi \in \mu'} \xi^{-2}(1+\xi)^{2p} \pmod{p^2}\\
% &\equiv& (p-1)\biggl(\binom{2p}{2}+\binom{2p}{p+1}+1\biggl) \pmod{p^2}\\
%  (p-1) \sum_{\substack{
%2<j<r-2  \\
%j \equiv 2 \pmod{p-1}
%}}
% \binom{r}{j} &\equiv& (p-%1)\biggl(\binom{2p}{2}+\binom{2p}{p+1}-\binom{r}{2}\biggl) %\pmod{p^2}\\
%  \end{comment}
 \frac{1}{p} \sum_{\substack{
2<j<r-2  \\
j \equiv 2 \pmod{p-1}
}}
 \binom{r-1}{j} &\equiv&  1-\frac{1}{p}\binom{r-1}{2}\pmod{p} \\
 %
 %\item[(ii)] For $m=1$ we have 
 %\begin{eqnarray*}
  %      
 %       \begin{comment}
  %          (p-1) \sum_{\substack{
%j \geq 0 \\
%j \equiv 2 \pmod{p-1}
%}}
%j \binom{r}{j} &\equiv&  r\sum_{\xi \in \mu'} \xi^{-1}(1+\xi)^{2p-1} \pmod{p^2}\\
%&\equiv& r(p-1)\biggl( 2p-1 + %\binom{2p-1}{p}+1\biggl) %\pmod{p^2}\\
% (p-1) \sum_{\substack{
%2<j<r-2 \\
%j \equiv 2 \pmod{p-1}
%}}
%j \binom{r}{j} &\equiv&  r(p-%1)\biggl( 2p-1 + \binom{2p-1}%{p}\biggl)-2(p-1)\binom{r}{2 } %\pmod{p^2}\\
%\end{comment}
\frac{1}{p} \sum_{\substack{
2<j<r-2 \\
j \equiv 2 \pmod{p-1}
}}
j \binom{r-1}{j} &\equiv& -\frac{2}{p} \binom{r-1}{2} \pmod{p} \\
% \item[(iii)] For $m=2$ we have 
 \frac{1}{p} \sum_{\substack{
2<j<r-2 \\
j \equiv 2 \pmod{p-1}
}}
\binom{j}{2} \binom{r-1}{j} &\equiv& 0 \pmod{p}.
 \end{eqnarray*}
 Consider \[ 
  H(X,Y)=\frac{1}{p}\sum_{\substack{
2 <j< r-2 \\
j \equiv 2 \pmod{p-1}
}}
\biggl( \binom{r-1}{j}-\alpha_j \biggl) X^{r-j} Y^{j}-AX^{r-2p}Y^{2p}-BX^{r-p-1}Y^{p+1}.\]
We determine constants \(A,B\) such that $H(X,Y)\in V_r^{(3)}$. Note that $X^3$ and $Y^3$ divide $H(X,Y)$. By the congruences above, we must solve:
 \begin{eqnarray*}
     1-\frac{1}{p}\binom{r-1}{2}-A-B &=& 0\\
     -\frac{2}{p}\binom{r-1}{2}-B &=& 0.
\end{eqnarray*}
Hence, we have \begin{eqnarray*}
    A &=& 1+\frac{1}{p}\binom{r-1}{2}\\
    B &=& -\frac{2}{p}\binom{r-1}{2},
\end{eqnarray*}
proving the lemma.
\end{proof}
\begin{lemma}
    Let $3p+2 \le r \equiv 3 \pmod{p-1}$. Then \[
    \binom{r}{2}X^{r-2}Y^2
+
AX^{r-2p}Y^{2p}
+
BX^{r-p-1}Y^{p+1}
+
CX^{r-3p+1}Y^{3p-1}
\in X_{r-1}+V_r^{(3)},
    \]
    where $C=-r\binom{r-1}{2}$, $B-C=2r-2r(r-1)$, $A+B+C=3-r-\binom{r}{2}$.
\end{lemma}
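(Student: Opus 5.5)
We follow the template of Lemma~\ref{lem:3.3} and Lemma~\ref{lem:cong 2 f_1}(i), now with $a=3$, which forces one more correction monomial. The starting point is that $\sum_{j\equiv 2 \bmod (p-1)}\binom{r}{j}X^{r-j}Y^j$ lies in $X_r\subset X_{r-1}$. Since $r\equiv 3$, the exponent $j=r$ is not in the class $2 \bmod (p-1)$, and neither is $j=0$. The extreme terms that do occur are $j=2$, giving $\binom{r}{2}X^{r-2}Y^2$, and $j=r-1$, giving $rXY^{r-1}$. The latter is itself in $X_{r-1}$, being $\begin{pmatrix}0&1\\1&0\end{pmatrix}X^{r-1}Y$. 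Hence
\[
\binom{r}{2}X^{r-2}Y^2+\sum_{\substack{2<j<r-1\\ j\equiv 2 \bmod (p-1)}}\binom{r}{j}X^{r-j}Y^j\in X_{r-1}.
\]

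Next, set
\[
F(X,Y)=\sum_{2<j<r-1,\ j\equiv 2}\binom{r}{j}X^{r-j}Y^j+A'X^{r-2p}Y^{2p}+B'X^{r-p-1}Y^{p+1}+C'X^{r-3p+1}Y^{3p-1}.
\]
We choose $A',B',C'$ so that $F\in V_r^{(3)}$; the constants in the statement are then $A=-A'$, $B=-B'$, $C=-C'$. Every monomial in $F$ has $X$-degree and $Y$-degree at least $3$, since $r\ge 3p+2$. By the criterion used in the earlier lemmas (via \cite[Lemma 2.7]{GV22} and its $\theta^3$ analogue), membership in $V_r^{(3)}$ then reduces to three congruences mod $p$:
\[
\sum_j c_j\equiv 0,\qquad \sum_j j\,c_j\equiv 0,\qquad \sum_j\binom{j}{2}c_j\equiv 0,
\]
where $c_j$ is the coefficient of $X^{r-j}Y^j$. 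This is exactly why a third free monomial is needed. In the $a=2$ case the $\binom{j}{2}$-sum vanished automatically, whereas here it need not. The three new exponents $2p,\ p+1,\ 3p-1$ are all $\equiv 2 \pmod{p-1}$ and are reduced mod $p$ to $0,1,-1$. They give a Vandermonde-type system with nonzero determinant mod $p$, so the system is solvable.

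The required sums come from \cite[Lemma 2.14]{GV22}, applied with $a=3$ and the class $2$:
\[
\sum_{j\equiv 2}\binom{j}{m}\binom{r}{j}\equiv \binom{r}{m}\binom{[3-m]}{[2-m]}+\binom{r}{m}\delta_{p-1,[2-m]} \pmod p.
\]
For $m=0$ this gives $3\binom{r}{0}=3$, for $m=1$ it gives $2r$, and for $m=2$ it gives $\binom{r}{2}(1+1)=2\binom{r}{2}$, since $[0]=p-1$ contributes the $\delta$ term. From these we subtract the excluded terms $j=2$ and $j=r-1$. The $j=r-1$ term contributes $r$, $r(r-1)$ and $r\binom{r-1}{2}$ to the three sums respectively.

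Writing out the three linear equations gives:
\begin{itemize}
\item from the $\binom{j}{2}$ equation, using $\binom{2p}{2}\equiv 0$, $\binom{p+1}{2}\equiv 0$ and $\binom{3p-1}{2}\equiv 1$, the value $C'$ is pinned down, matching $C=-r\binom{r-1}{2}$;
\item the $j$-equation gives $B'-C'$, matching $B-C=2r-2r(r-1)$, where the $2r$ comes from Glover's count and $-2r(r-1)$ from the excluded terms;
\item the $j^0$-equation gives $A'+B'+C'$, matching $A+B+C=3-r-\binom{r}{2}$.
\end{itemize}
Subtracting $F$ from the displayed element of $X_{r-1}$ then yields the claim.

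The main obstacle is the careful bookkeeping of the excluded boundary terms. One must check that $j=2$ and $j=r-1$ are the only such terms, that $j=r-1$ contributes $rXY^{r-1}\in X_{r-1}$, and that the $\delta$ correction at $m=2$ is accounted for. An off-by-one in these contributions is what would spoil the stated relations among $A$, $B$ and $C$, so I would verify each sum numerically for a small case such as $p=5$, $r=19$.
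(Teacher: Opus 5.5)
Your route is exactly the paper's. You start from $\sum_{j\equiv 2}\binom{r}{j}X^{r-j}Y^j\in X_r\subset X_{r-1}$ and drop the $j=r-1$ term $rXY^{r-1}$; your justification that $XY^{r-1}$ is a $\Gamma$-translate of $X^{r-1}Y$ fills in a step the paper leaves implicit. You then add the correction monomials $X^{r-2p}Y^{2p}$, $X^{r-p-1}Y^{p+1}$, $X^{r-3p+1}Y^{3p-1}$ and solve the triangular system coming from \cite[Lemma 2.14]{GV22}.

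Your $m=0$ and $m=1$ inputs ($3$ and $2r$) are correct, but the $m=2$ input is not. In $\binom{r}{2}\binom{[1]}{[0]}+\binom{r}{2}\delta_{p-1,[0]}$ the symbol $[0]$ equals $p-1$ in \emph{both} terms. So $\binom{[1]}{[0]}=\binom{1}{p-1}=0$, and the full sum is $\binom{r}{2}$, not $2\binom{r}{2}$. You can also see this directly: $\sum_{j\equiv 2}\binom{j}{2}\binom{r}{j}=\binom{r}{2}\sum_{i\equiv 0}\binom{r-2}{i}\equiv\binom{r}{2}\pmod p$, because $r-2\equiv 1 \pmod{p-1}$. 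For your test case $p=5$, $r=19$, the sum is $\equiv 1\equiv\binom{19}{2}$, not $2$.

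This error is not harmless. With your value, the restricted sum over $2<j<r-1$ would be $\binom{r}{2}-r\binom{r-1}{2}$. The $\binom{j}{2}$-equation would then force $C=\binom{r}{2}-r\binom{r-1}{2}$, which differs from the stated $C=-r\binom{r-1}{2}$ unless $p\mid\binom{r}{2}$. So your first bullet, asserting that $C'$ matches the statement, does not follow from the numbers you wrote down.

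With the corrected value, the restricted sum is $\binom{r}{2}-\binom{r}{2}-r\binom{r-1}{2}=-r\binom{r-1}{2}$. The only correction monomial with $\binom{j}{2}\not\equiv 0$ is $X^{r-3p+1}Y^{3p-1}$, where $\binom{3p-1}{2}\equiv 1$, so you get $C=-r\binom{r-1}{2}$. The other two relations are as you say, and the argument then coincides with the paper's proof.
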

\begin{proof}
    We know that 
\begin{eqnarray}
\label{useful for last one}
    \sum_{\substack{
0\le j\le r  \nonumber \\
j \equiv 2 \pmod{p-1}
}} \binom{r}{j} X^{r-j}Y^j \in X_{r} \subset X_{r-1}, \text{ so}\\ 
\binom{r}{2} X^{r-2} Y^{2} + \sum_{\substack{
2<j<r-1 \\
j \equiv 2 \pmod{p-1}
}} 
\binom{r}{j}X^{r-j} Y^{j} \in X_{r-1}.
\end{eqnarray}
 By \cite[Lemma 2.14]{GV22}, we have 
\[
\sum_{\substack{
0\le j\le r\\
j\equiv 2 \!\!\!\pmod{p-1}
}}
\binom{r}{j}\binom{j}{m}
\equiv
\binom{r}{m}
\binom{[3-m]}{[2-m]}
+
\binom{r}{m}\delta_{p-1,[2-m]}
\pmod p.
\]
 Hence, we have  
\begin{align}
\label{important GM 1}\sum_{\substack{
2<j<r-1\\
j\equiv 2 \!\!\!\pmod{p-1}
}}
\binom{r}{j}
&\equiv
3-r-\binom{r}{2}
\pmod p,\\
\label{important GM 2}
\sum_{\substack{
2<j<r-1\\
j\equiv 2 \!\!\!\pmod{p-1}
}}
j\binom{r}{j}
&\equiv
2r-2r(r-1)
\pmod p,\\
\label{important GM 3}
\sum_{\substack{
2<j<r-1\\
j\equiv 2 \!\!\!\pmod{p-1}
}}
\binom{j}{2}\binom{r}{j}
&\equiv
-r\binom{r-1}{2}
\pmod p.
\end{align}
 Let us consider
\[
I(X,Y) \coloneqq \sum_{\substack{
2<j<r-1\\
j\equiv 2 \!\!\!\pmod{p-1}
}}
\binom{r}{j}X^{r-j}Y^j
-
AX^{r-2p}Y^{2p}
-
BX^{r-p-1}Y^{p+1}
-
CX^{r-3p+1}Y^{3p-1}.
\]
We determine constants \(A,B,C\) such that $I(X,Y)\in V_r^{(3)}$. Note that $X^3$ and $Y^3$ divide $I(X,Y)$.
By the congruences above, we must solve:
\begin{align*}
A+B+C &= 3-r-\binom{r}{2},\\
B-C &= 2r-2r(r-1),\\
C &= -r\binom{r-1}{2},
\end{align*}
which is clearly possible. 
%This proves the lemma.
Hence, we have \[\binom{r}{2}X^{r-2}Y^2
+
AX^{r-2p}Y^{2p}
+
BX^{r-p-1}Y^{p+1}
+
CX^{r-3p+1}Y^{3p-1}
\in X_{r-1}+V_r^{(3)},\]
proving the lemma.
\end{proof}
\begin{cor}\label{lem: required for p | r-2}
    Let $3p+2 \le r \equiv 3 \pmod{p-1}$ and $p \mid r-2$. Then $X^{r-2}Y^2\in X_{r-1}+V_r^{(3)}$.
    
\end{cor}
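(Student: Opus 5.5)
The plan is to specialize the preceding lemma to the case $r\equiv 2 \pmod p$. That lemma gives, for any $3p+2\le r\equiv 3 \pmod{p-1}$,
\[
\binom{r}{2}X^{r-2}Y^2+AX^{r-2p}Y^{2p}+BX^{r-p-1}Y^{p+1}+CX^{r-3p+1}Y^{3p-1}\in X_{r-1}+V_r^{(3)}.
\]
Here $A,B,C$ are the solutions of the linear system
\[
C=-r\binom{r-1}{2},\qquad B-C=2r-2r(r-1),\qquad A+B+C=3-r-\binom{r}{2}.
\]
These are congruences mod $p$, which is all that matters since we work over $\overline{\mathbb F}_p$.

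I will reduce each quantity modulo $p$ using $r\equiv 2 \pmod p$.

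First, $C\equiv -2\binom{1}{2}=0$.

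Next, $B-C\equiv 4-2\cdot 2\cdot 1=0$, so $B\equiv 0$.

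Then $A+B+C\equiv 3-2-\binom{2}{2}=0$, so $A\equiv 0$.

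Finally, the leading coefficient is $\binom{r}{2}=\frac{r(r-1)}{2}\equiv 1 \pmod p$.

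Substituting these values, the element furnished by the preceding lemma is exactly $X^{r-2}Y^2$ (mod $p$). Hence $X^{r-2}Y^2\in X_{r-1}+V_r^{(3)}$, as claimed.

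The only point requiring care is to confirm that the preceding lemma's conclusion is meant mod $p$, i.e. in $V_r$ over $\overline{\mathbb F}_p$. The congruences \eqref{important GM 1}--\eqref{important GM 3} it rests on are themselves mod $p$, and the lemma only requires solving for $A,B,C$ in $\mathbb F_p$. So the specialization is legitimate, and no further computation is needed.
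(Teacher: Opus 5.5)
Your proposal is correct and follows the paper's proof: specialize the preceding lemma to $r\equiv 2 \pmod p$, where $A\equiv B\equiv C\equiv 0$ and $\binom{r}{2}\equiv 1$ is a unit, so the element the lemma produces is just $X^{r-2}Y^2$. The paper leaves the vanishing of $A,B,C$ implicit, and your explicit mod $p$ check of each coefficient is exactly the verification that step requires.
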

\begin{proof}
    As $p \mid r-2$, we have \[\binom{r}{2}X^{r-2}Y^2\in X_{r-1}+V_r^{(3)}.\] 
Hence, we have $X^{r-2}Y^2\in X_{r-1}+V_r^{(3)}$.
\end{proof}
\begin{cor}\label{lem: X^{r-p-1Y^{p+1}}}
    Let $3p+2 \le r \equiv 3 \pmod{p-1}$ and $p \mid r-1$. Then $X^{r-p-1}Y^{p+1} \in X_{r-1}+V_r^{(3)}$.
\end{cor}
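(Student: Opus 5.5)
The plan is to specialize the preceding lemma (the one asserting that
$\binom{r}{2}X^{r-2}Y^2 + AX^{r-2p}Y^{2p} + BX^{r-p-1}Y^{p+1} + CX^{r-3p+1}Y^{3p-1} \in X_{r-1}+V_r^{(3)}$)
to the case $p \mid r-1$, exactly as Corollary~\ref{lem: required for p | r-2} specialized it to $p\mid r-2$. All the coefficients there are only determined mod $p$, and we work over $\overline{\mathbb F}_p$, so it suffices to reduce each of $\binom{r}{2}, A, B, C$ modulo $p$ under the hypothesis $r\equiv 1 \pmod p$.

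The reductions go as follows.
\begin{itemize}
\item Since $p\mid r-1$, we have $\binom{r}{2}=\frac{r(r-1)}{2}\equiv 0 \pmod p$ (recall $p$ is odd), so the $X^{r-2}Y^2$ term vanishes.
\item Since $\binom{r-1}{2}=\frac{(r-1)(r-2)}{2}\equiv 0$, we get $C=-r\binom{r-1}{2}\equiv 0 \pmod p$, so the $X^{r-3p+1}Y^{3p-1}$ term vanishes.
\item From $B-C\equiv 2r-2r(r-1)\equiv 2\cdot 1-2\cdot 1\cdot 0 = 2$, we get $B\equiv 2 \pmod p$.
\item From $A+B+C\equiv 3-r-\binom{r}{2}\equiv 3-1-0=2$, we get $A\equiv 0 \pmod p$.
\end{itemize}

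Substituting these reductions, the element of the lemma reduces to $2X^{r-p-1}Y^{p+1}\in X_{r-1}+V_r^{(3)}$. Since $p\ge 5$, $2$ is invertible, and the corollary follows.

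There is no real obstacle here. The only point requiring care is that the constants $A,B,C$ in the lemma are fixed only by congruences mod $p$. This is harmless, since any integer lift gives the same element of $V_r$ over $\overline{\mathbb F}_p$. The identities for $B-C$ and $A+B+C$ must also be evaluated with $r\equiv 1$ rather than with the residue $a=3$.
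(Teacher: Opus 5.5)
Your proposal is correct and is the paper's argument: specialize the preceding lemma to $r\equiv 1 \pmod p$, where $A\equiv C\equiv 0$ and $B\equiv 2$, so $2X^{r-p-1}Y^{p+1}\in X_{r-1}+V_r^{(3)}$. The paper writes the specialized element as $\binom{r}{2}X^{r-2}Y^2+2X^{r-p-1}Y^{p+1}$ and leaves $\binom{r}{2}\equiv 0 \pmod p$ implicit, whereas you state it explicitly.
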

\begin{proof}
       As $p \mid r-1 $, we have \begin{eqnarray*}
           \binom{r}{2}X^{r-2}Y^2+2X^{r-p-1}Y^{p+1} \in X_{r-1}+V_r^{(3)}.
\end{eqnarray*}
Hence, we have $X^{r-p-1}Y^{p+1} \in X_{r-1}+V_r^{(3)}$.
\end{proof}
\begin{prop}\label{Gh Ma 1}
    Let $3p+2 \le r \equiv 3 \pmod{p-1}$. Then \begin{itemize}
        \item [(i)] If $p \nmid (r-1)(r-2)(r-3)$, then $F_4=0$.
        \item[(ii)] If $p \mid r-3$, then $F_4=0$, $F_5=0$. 
    \end{itemize}
\end{prop}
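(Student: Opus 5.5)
The plan is to work inside $W_3=(X_{r-1}+V_r^{**})/(X_{r-1}+V_r^{***})$. For $a=3$, Proposition~\ref{prop:3.1}(iii) gives $J_4\cong V_{p-2}\otimes D^2$ (socle, generated by the image of $\theta^2X^{r-2p-2}$) and $J_5\cong V_1\otimes D$ (cosocle). In part (i) I would aim to show that $\theta^2X^{r-2p-2}$ already lies in $X_{r-1}+V_r^{(3)}$. Then $J_4$ is zero in $P$ itself, so $\mathrm{ind}_{KZ}^GJ_4\twoheadrightarrow F_4$ is the zero map and $F_4=0$ follows without any Hecke computation, exactly as in the alternative argument for Proposition~\ref{lem:G_2}(ii).

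To get this membership, I would combine two relations.
\begin{itemize}
\item The first is the lemma just proved: $\binom{r}{2}X^{r-2}Y^2+AX^{r-2p}Y^{2p}+BX^{r-p-1}Y^{p+1}+CX^{r-3p+1}Y^{3p-1}\in X_{r-1}+V_r^{(3)}$.
\item The second is its analogue obtained from $\sum_{j\equiv 2}\binom{r-1}{j}X^{r-j}Y^j\in X_{r-1}$. Its coefficients $A',B',C'$ come from \cite[Lemma 2.14]{GV22} applied to $r-1$, where now $[a-1-m]$ with $a-1=2$ replaces $[3-m]$, just as in the proof of Lemma~\ref{lem:cong 2 f_1}(ii).
\end{itemize}
Each relation lies in the span of the four monomials $X^{r-2}Y^2$, $X^{r-p-1}Y^{p+1}$, $X^{r-2p}Y^{2p}$, $X^{r-3p+1}Y^{3p-1}$ modulo $V_r^{(3)}$. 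A linear combination lies in $V_r^{(2)}$ exactly when the $m=0,1$ moment conditions of \cite[Lemma 2.7]{GV22} hold. Inside $V_r^{(2)}$ modulo $V_r^{(3)}$, such combinations are spanned by $\theta^2X^{r-2p-2}$ and $\theta^2X^{r-3p-1}Y^{p-1}$.

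I would take the combination of the two relations that kills the coefficient of $X^{r-2}Y^2$, or alternatively the one that makes the result divisible by $\theta^2$. Then I would use $\theta^2X^{r-2p-2}=X^{r-2}Y^2-2X^{r-p-1}Y^{p+1}+X^{r-2p}Y^{2p}$ and its shift by $X^{-p+1}Y^{p-1}$ to rewrite the result as $c_1\theta^2X^{r-2p-2}+c_2\theta^2X^{r-3p-1}Y^{p-1}$. I expect $c_1$ to be, up to a unit, $(r-1)(r-2)(r-3)$, and $c_2$ to vanish or be proportional to $c_1$. Lemma~\ref{lem:2.10} (with $a'=p-2$, $r'=r-2p-2$) shows that $\theta^2X^{r-3p-1}Y^{p-1}$ has nonzero image in $J_5$. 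So if $c_2\neq 0$, I would instead conclude that the submodule generated by the relation maps onto $J_4$ modulo a complement, and use the non-split structure to extract $F_4=0$. This coefficient bookkeeping, in particular pinning down the exact polynomial in $r$ mod $p$, is the main obstacle. I would first try it in the simplest normalization, taking $r\equiv 3\pmod{p(p-1)}$ as a sanity check against Corollaries~\ref{lem: required for p | r-2} and~\ref{lem: X^{r-p-1Y^{p+1}}}.

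For part (ii), with $p\mid r-3$, the same computation should show that $c_1$ degenerates. Here I would instead argue by a Hecke computation as in Proposition~\ref{prop:A_1}. Take $f_0=[\mathrm{Id},\theta^2X^{r-2p-2}/p^2]$ together with its companion $[\mathrm{Id},\theta^2X^{r-3p-1}Y^{p-1}/p^2]$. Since $r\ge 3p+2$, $T^-f_0\equiv 0\pmod p$, and $T^+f_0$ yields $\sum_\lambda[g^0_{1,[\lambda]},(\ast)X^{r-2}Y^2]$. When $p\mid r-3$ we have $\binom{r}{2}\equiv 3$, and the relation above should express $X^{r-2}Y^2$ modulo $X_{r-1}+V_r^{(3)}$ through monomials whose image in $W_3$ vanishes. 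Then $(T-a_p)f$ is integral with image $-\overline{a_p/p^2}\,[\mathrm{Id},\theta^2X^{r-3p-1}Y^{p-1}]$, possibly plus a multiple of $[\mathrm{Id},\theta^2X^{r-2p-2}]$. This vector maps to a nonzero element of $J_5$, hence generates $W_3$ because $V_r^{**}/V_r^{***}$ is non-split. This gives $U_3=0$, so $F_4=F_5=0$. If a stray $T^+$ term survives, I would cancel it with auxiliary $f_1,f_2$ built from the integers of Lemma~\ref{lem:2.7} (applied to $r-1$) and Lemma~\ref{lem: 5.3 for r-1}, in the same way as in Proposition~\ref{prop: alpha 1}.
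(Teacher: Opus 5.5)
Your structural route for (i) works, and in effect it proves by hand the part of \cite[Proposition 4.18]{GV22} that the paper simply quotes. The bookkeeping you anticipate is wrong, though. Do not combine the two relations to kill $X^{r-2}Y^2$. The relation of the preceding lemma has zeroth moment $\binom{r}{2}+A+B+C\equiv 3-r$, while the $r-1$ relation has zeroth moment $0$. So any combination that uses the former leaves $V_r^{(1)}$ when $p\nmid r-3$. The $r-1$ relation alone suffices. Since $r-1\equiv 2\pmod{p-1}$, \cite[Lemma 2.14]{GV22} gives interior moments $-\binom{r-1}{2}$, $-2\binom{r-1}{2}$, $0$ for $n=0,1,2$. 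Hence $C'=0$, $B'=-2\binom{r-1}{2}$, $A'=\binom{r-1}{2}$, and the relation reads $\binom{r-1}{2}\,\theta^2X^{r-2p-2}\in X_{r-1}+V_r^{(3)}$, exactly as in Lemma~\ref{lem:cong 2 f_1}(i). So $J_4$ dies in $P$ whenever $p\nmid(r-1)(r-2)$. There is no factor $r-3$ and no $\theta^2X^{r-3p-1}Y^{p-1}$ term, and $c_1=\binom{r-1}{2}\equiv 1$ does not degenerate when $p\mid r-3$.

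Part (ii) has a genuine gap. For $f=[\mathrm{Id},(\theta^2X^{r-3p-1}Y^{p-1}-\theta^2X^{r-2p-2})/p^2]$ the $T^+$-term is $-[g^0_{1,0},X^{r-2}Y^2]$; for $f_0$ alone it is $\sum_\lambda[g^0_{1,[\lambda]},X^{r-2}Y^2]$. When $p\mid r-3$, the monomial $X^{r-2}Y^2$ is not even in $X_{r-1}+V_r^{(2)}$. To see this, work on the torus eigenspace $j\equiv 2\pmod{p-1}$. There $X_{r-1}$ is spanned by $XY^{r-1}$, $\sum_{j\equiv 2}\binom{r-1}{j}X^{r-j}Y^j$ and $\sum_{j\equiv 2}\binom{r-1}{j-1}X^{r-j}Y^j$. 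By \cite[Lemma 2.7]{GV22}, a polynomial there lies in $V_r^{(2)}$ iff its $XY^{r-1}$-coefficient and the moments $\sum_{2\le j\le r-2}c_j$, $\sum_{2\le j\le r-2}jc_j$ all vanish. For the two sums these moments are $(0,0)$ and $(3-r,\,r(3-r))\equiv(0,0)$, but for $X^{r-2}Y^2$ they are $(1,2)$. So $\overline{(T-a_p)f}$ has a nonzero component in $\mathrm{ind}_{KZ}^G(W_2/W_3)$ and says nothing about $U_3$.

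Your phrase about expressing $X^{r-2}Y^2$ ``through monomials whose image in $W_3$ vanishes'' has no meaning here, since $X^{r-p-1}Y^{p+1}$, $X^{r-2p}Y^{2p}$ and $X^{r-3p+1}Y^{3p-1}$ do not lie in $V_r^{**}$. Also, Lemma~\ref{lem: 5.3 for r-1}, which you propose for the repair, requires $p\mid r-1$.

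The fix is to stay structural. For $r\equiv 3\pmod p$ one has $\binom{r}{2}\equiv 3$, $C\equiv -3$, $B\equiv -9$, $A\equiv 9$, so the preceding lemma says precisely $3(\theta^2X^{r-2p-2}-\theta^2X^{r-3p-1}Y^{p-1})\in X_{r-1}+V_r^{(3)}$. By Lemma~\ref{lem:2.10}, $\theta^2X^{r-3p-1}Y^{p-1}$ maps to $-Y\neq 0$ in $J_5$. By non-splitness, this element therefore generates $V_r^{**}/V_r^{***}$. Hence $W_3=0$ and $F_4=F_5=0$, with no Hecke operator and no use of $a_p$. This is the isomorphism $X_{r-1}^{**}/X_{r-1}^{***}\cong V_r^{**}/V_r^{***}$ that the paper imports from \cite[Proposition 4.18]{GV22}.
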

\begin{proof}
  (i)  By \cite[Proposition $4.18$]{GV22}, we have $\frac{X_{r-1}^{**}}{X_{r-1}^{***}} \cong V_{p-2} \otimes D^2$, noting that $\mathcal{I}(3,2)=\{1,2,3\}$. Hence, $F_4=0$.

  (ii) By \cite[Proposition $4.18$]{GV22}, we have $\frac{X_{r-1}^{**}}{X_{r-1}^{***}} \cong \frac{V_r^{**}}{V_r^{***}}$. Hence, $F_4=0$ and $F_5=0$.
\end{proof} 
 \begin{prop}\label{Gh Ma 2}
    Let $3p+2 \le r \equiv 3 \pmod{p-1}$. Assume that $v_p(a_p)=2$ and $p \nmid r-2 $. Then $F_2=0=F_3$.
 \end{prop}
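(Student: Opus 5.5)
The plan is to show that the whole middle graded piece $W_2/W_3=\frac{X_{r-1}+V_r^{*}}{X_{r-1}+V_r^{**}}$ has zero image in $U_2/U_3$. Then $F_2$ and $F_3$ vanish simultaneously. For $a=3$ the quotient $V_r^*/V_r^{**}$ is a non-split extension of $J_3=V_{p-2}\otimes D^{2}$ by $J_2=V_1\otimes D$ (Proposition~\ref{prop:3.1}(ii)). So it suffices to build one integral element $(T-a_p)f$ whose image in $\mathrm{ind}_{KZ}^G(W_2/W_3)$ is $[g,v]$, where $v$ has nonzero image in the cosocle $J_3$. Such a $v$ generates $V_r^*/V_r^{**}$, and hence $[g,v]$ generates $\mathrm{ind}_{KZ}^G(W_2/W_3)$. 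This is the same mechanism as in Proposition~\ref{prop:A_4}(ii), and I will model the functions on that proof with $a=3$.

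I will take
\[
f_1=\sum_{\lambda\in\mathbb F_p^\times}\Bigl[g^0_{1,[\lambda]},\tfrac{[\lambda]^{p-2}(Y^r-X^{r-3}Y^3)}{p}\Bigr]+\Bigl[g^0_{1,0},\tfrac{r(1-p)(XY^{r-1}-X^{r-2}Y^{2})}{p^2}\Bigr],
\]
\[
f_0=\Bigl[\mathrm{Id},\tfrac{p-1}{p\,a_p}\textstyle\sum_{j\equiv 2\,(p-1)}\alpha_jX^{r-j}Y^j\Bigr].
\]
The $\alpha_j$ will be chosen so that $T^\pm f_0\equiv 0\pmod p$ and $T^-f_1-a_pf_0\equiv 0$ modulo $X_{r-1}+V_r^{**}$, up to a controlled term. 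The natural source of such integers is a variant of Lemma~\ref{lem:2.5} or Lemma~\ref{lem:2.7} for the class $a-1=2$. The condition needed there is $\sum_j(\binom rj-\alpha_j)\equiv 0\pmod{p^2}$ and the analogous first-moment condition, and these depend on the $m=0,1$ root-of-unity sums. Those sums are exactly what Lemma~\ref{lem:2.1} computes with $(i,a)=(0,2)$ and $(1,2)$. The hypothesis $p\nmid r-2$ should enter precisely here: the leftover term will carry a factor like $(r-2)$ or $\binom r2$, and it must be a unit for the output to be a generator.

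Next I compute $(T-a_p)(f_0+f_1)$ modulo $X_{r-1}+V_r^{**}$. The terms $T^+f_1$, $T^{\pm}f_0$ and $-a_p$ times the $\lambda\neq0$ part of $f_1$ die mod $p$. The surviving term is
\[
-\tfrac{r a_p}{p^2}\bigl[g^0_{1,0},\,XY^{r-1}-X^{r-2}Y^2\bigr].
\]
By \cite[Lemma 2.12]{GV22}, $XY^{r-1}-X^{r-2}Y^2$ lies in $V_r^*$ and is congruent mod $V_r^{**}$ to a multiple of $\theta X^{r-p-2}Y$-type terms. Its image in $J_3$ is, via Lemma~\ref{lem:2.10} with $m=1$, $a'=1$, a nonzero multiple of $X^{p-2}$. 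The $T^-f_1-a_pf_0$ contribution is then reduced with Lemma~\ref{lem:cong 2 f_1}-type identities, or via \cite[Lemma 2.7]{GV22} as in Proposition~\ref{prop:A_4}. The aim is to show it either vanishes mod $p$ or lands in $J_2$ without cancelling the $J_3$-component. If $p\mid r$, the prefactor $r$ vanishes, so I would use $XY^{r-1}$ with coefficient $1/p^2$ and absorb the difference with $\binom{r}{j}$ versus $\binom{r-1}{j}$, as in the proof of Proposition~\ref{lem:G_4}(ii).

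An alternative for part of the statement is a structural check in the spirit of Proposition~\ref{prop:A_3}(ii). By \cite[Proposition 4.18]{GV22}, when $r\not\equiv 3\pmod p$ one expects $X_{r-1}^*/X_{r-1}^{**}\cong V_{1}\otimes D$, which kills $F_2$ without any Hecke computation. The hard step is then only killing $F_3$.

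The main obstacle is the congruence bookkeeping for $T^-f_1-a_pf_0$ at the $p^{-1}$ level. One must verify that the $m=0,1$ sums of $\binom rj-\alpha_j$ are correct modulo $p^2$, so that their image in $V_r^*/V_r^{**}$ does not cancel the unit coefficient $-\overline{r a_p/p^2}$ on $J_3$. This also requires a case split between $p\mid r$ and $p\nmid r$ for the choice of the $g^0_{1,0}$ term.
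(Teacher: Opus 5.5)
Your overall mechanism is the paper's: $V_r^*/V_r^{**}$ is a non-split extension of $J_3=V_{p-2}\otimes D^2$ by $J_2=V_1\otimes D$. So one integral $(T-a_p)f$ whose image in $\mathrm{ind}_{KZ}^G(W_2/W_3)$ is $[g,v]$, with $v\neq 0$ in $J_3$, kills $F_2$ and $F_3$ together. The gap is in the test functions. The template of Proposition~\ref{prop:A_4} works only because there $p\mid r-a$; transplanted to $a=3$ it needs $p\mid r-3$, which is not assumed.

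\textbf{Non-integrality of $T^+f_1$.} The coefficient of $X^{r-1}Y$ in $X(-[\mu]X+pY)^{r-1}-X^{r-2}(-[\mu]X+pY)^{2}$ is $(3-r)[\mu]p$. So $T^+$ of the $g^0_{1,0}$-component of your $f_1$ contains $\frac{r(1-p)(3-r)}{p}\sum_{\mu\neq 0}[\mu]\,[g^0_{2,p[\mu]},X^{r-1}Y]$. This is not integral unless $p\mid r(r-3)$. That $X^{r-1}Y\in X_{r-1}$ does not help, since only integral elements can be reduced.

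\textbf{The $\alpha_j$ do not exist.} Integrality of $T^-f_1-a_pf_0=[\mathrm{Id},\frac{p-1}{p}\sum_{2\le j<r-1,\,j\equiv 2}(\binom{r}{j}-\alpha_j)X^{r-j}Y^j]$ forces $\alpha_j\equiv\binom{r}{j}\pmod p$. But $\sum_{2\le j<r-1,\,j\equiv 2}\binom{r}{j}\equiv 3-r\pmod p$: use $\sum_\xi\xi^{-2}(1+\xi)^3=3(p-1)$ and remove the $j=r-1$ term $r$. Hence the $X^r$-coefficient of $T^+f_0$ has valuation $-3$. Lemmas~\ref{lem:2.5} and~\ref{lem:2.7} do not cover this residue class. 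Even when $p\mid r-3$, the third moment $\sum_j\binom{j}{3}\alpha_j\equiv-(r-2)\binom{r}{3}$ is a unit. Then $T^+f_0$ acquires an $X^{r-3}Y^3$ term living in $J_1$, outside $W_2$.

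\textbf{Further symptoms.} The stray factor $r$ in your $J_3$-coefficient forces an unneeded case $p\mid r$. Your fallback via \cite[Proposition 4.18]{GV22} only gives $F_2=0$ for $p\nmid r-3$, and leaves $F_3$ untouched.

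\textbf{The missing idea.} Use $\theta$-divisible vectors, which make all moment bookkeeping unnecessary. The paper takes
$f_0=[\mathrm{Id},(\theta^2X^{r-3p-1}Y^{p-1}-\theta^2X^{r-2p-2})/p^2]$ and $f_2=[g^0_{2,0},(\theta X^{r-2p-1}Y^{p}-\theta Y^{r-p-1})/p]$.
\begin{itemize}
\item Since $\theta(X,-[\lambda]X+pY)\equiv pX^pY\pmod{p^2}$, and $(-[\lambda]X+pY)^{p-1}-X^{p-1}\equiv 0\pmod p$ for $\lambda\neq 0$, one gets $T^+f_0\equiv-[g^0_{1,0},X^{r-2}Y^2]$.
\item Directly, $T^-f_2\equiv[g^0_{1,0},XY^{r-1}]$.
\item The terms $T^-f_0$, $T^+f_2$ and $-a_pf_2$ vanish mod $p$, and $-a_pf_0$ takes values in $V_r^{**}$.
\end{itemize}
Hence $(T-a_p)(f_0+f_2)\equiv[g^0_{1,0},XY^{r-1}-X^{r-2}Y^2]\equiv[g^0_{1,0},(r-2)\theta X^{r-p-2}Y-\theta Y^{r-p-1}]\pmod{V_r^{**}}$ by \cite[Lemma 2.12]{GV22}. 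By Lemma~\ref{lem:2.10}, its image in $\mathrm{ind}_{KZ}^GJ_3$ is $(r-2)[g^0_{1,0},X^{p-2}]$. This is a generator exactly when $p\nmid r-2$, with no case split on $p\mid r$ or $p\mid r-3$.
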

\begin{proof}
    Let us consider $f_0,f_2\in \mathrm{ind}_{KZ}^G \mathrm{Sym}^r \bar{\mathbb{Q}}_p^2$ defined by \begin{eqnarray*}
    f_2 &=& \biggl[g^0_{2,0}, \frac{\theta X^{r-2p-1}Y^p-\theta Y^{r-p-1}}{p}\biggl]\\
        f_0 &=& \biggl[\mathrm{Id}, \frac{\theta^2 X^{r-3p-1}Y^{p-1}-\theta^2 X^{r-2p-2}}{p^2}\biggl].\\
        \end{eqnarray*}
    Clearly $T^+f_2 $, $-a_pf_2$, $T^-f_0$ die mod $p$ and $-a_pf_0$ dies mod $V_r^{(2)}$. Moreover, we have \[
    T^-f_2 = \biggl[g^0_{1,0},XY^{r-1}\biggl].
    \]
    Finally, we have \[
    T^+f_0=-\biggl[g^0_{1,0}, X^{r-2}Y^2\biggl].
    \]
    Combining all these together we have \begin{eqnarray*}
        (T-a_p)(f_0+f_2) &\equiv& -\biggl[g^0_{1,0}, X^{r-2}Y^2-XY^{r-1}\biggl] \\
        &\equiv & \biggl[g^0_{1,0},(r-2)\theta X^{r-p-2}Y-\theta Y^{r-p-1}\biggl]\pmod{V_r^{(2)}},
    \end{eqnarray*}
    by \cite[Lemma 2.12]{GV22}. Note that $(r-2)\theta X^{r-p-2}Y-\theta Y^{r-p-1}$ in $V_r^{*}/V_r^{**}$.
    The short exact sequence  
\[
0 \longrightarrow V_{1}\otimes D
\longrightarrow \frac{V_r^{*}}{V_r^{**}} 
\longrightarrow J_3 = V_{p-2}\otimes D^2
\longrightarrow 0
\]
is non-split. The reduction $\overline {(T-a_p)(f_0+f_2)}$  maps to $(r-2)\left[g^0_{1,0}, X^{p-2} \right] \in \mathrm{ind}_{KZ}^G 
    J_3$ which since $p \nmid r-2$ is nonzero and generates it as a $G$-module.
    %(V_r^{*}/V_r^{**})$. Now $v$ maps to $X^{p-2}\in J_3$. 
    %$v$ generates the whole module $V_r^{*}/V_r^{**}$ as a $G$-module, so the element $\left[g^0_{1,0},v\right]$ generates $\mathrm{ind}_{KZ}^G (V_r^{*}/V_r^{**})$. 
    Thus $F_2=0$ and $F_3=0$.
\end{proof}
\begin{prop}\label{Gh Ma 3}
  Let $3p+2 \le r \equiv 3 \pmod{p-1}$. Assume that $v_p(a_p)=2$ and $p \mid r-1$. Then $F_5=0$.
\end{prop}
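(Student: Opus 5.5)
We would follow the template of Proposition~\ref{prop: alpha 1}(ii), which handles the analogous situation for $a=2$, $p\mid r-1$. The aim is to build $f=f_0+f_1+f_2\in \mathrm{ind}_{KZ}^G\mathrm{Sym}^r\overline{\mathbb{Q}}_p^{\,2}$ with two properties: $(T-a_p)f$ is integral, and its image in $\mathrm{ind}_{KZ}^G J_5$ is a unit multiple of a generator. For $a=3$, Proposition~\ref{prop:3.1}(iii) gives $J_5=V_1\otimes D$. By Lemma~\ref{lem:2.10} with $m=2$, $a'=[3-4]=p-2$ and $r'=r-2p-2$, a polynomial $\theta^2X^{r'-i}Y^i$ with $i\in\{p-2,p-1\}$ has nonzero image in $J_5$. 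So the term $-a_pf_0$ should be $\frac{a_p}{p^2}[\mathrm{Id},\theta^2 Q]$, where $Q$ is a combination of monomials $X^{r'-i}Y^i$ involving such an $i$. Natural first choices are $Q=X^{r-3p}Y^{p-2}$, or a difference such as $X^{r-3p-1}Y^{p-1}-X^{r-2p-2}$ as in Proposition~\ref{Gh Ma 2}.

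\emph{Step 1: the source term $f_0$.} Take $f_0=[\mathrm{Id},\theta^2Q/p^2]$. Since $r\ge 3p+2$, $T^-f_0\equiv 0 \pmod p$. A direct computation, as in Proposition~\ref{prop:A_1}(i), gives $T^+f_0$ as a sum over $\lambda$ (or only the $g^0_{1,0}$ term) of $[g^0_{1,[\lambda]},c_\lambda X^{r-2}Y^2]$. In the cases $a\ge 5$ and $a=1$, the monomial $X^{r-2}Y^2$ was traded for a multiple of $\theta^2X^{r-2p-2}$ via Lemma~\ref{lem:3.3}. Here that fails: the lemma preceding Corollary~\ref{lem: required for p | r-2} only gives $\binom{r}{2}X^{r-2}Y^2$ modulo $X_{r-1}+V_r^{(3)}$, and $\binom r2\equiv 0$ when $p\mid r-1$. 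So $X^{r-2}Y^2$ must instead be cancelled by correction terms.

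\emph{Step 2: the correction terms $f_1,f_2$.} We take
\[
f_2=\sum_{\mu\in\mathbb{F}_p^\times}\Bigl[g^0_{2,p[\mu]},\tfrac{XY^{r-1}-X^{r-p}Y^{p}}{p}\Bigr]+(1-p)\Bigl[g^0_{2,0},\tfrac{XY^{r-1}-X^{r-p}Y^{p}}{p}\Bigr],
\]
or a variant with exponents congruent to $1 \bmod (p-1)$ so that $T^-$ produces the right congruence class. Then $T^-f_2=[g^0_{1,0},\tfrac{p-1}{p}\sum_{j\equiv 2}\binom{r-1}{j}X^{r-j}Y^j]$. Next take $f_1=[g^0_{1,0},\tfrac{p-1}{p\,a_p}\sum\alpha_jX^{r-j}Y^j]$, with the $\alpha_j$ supplied by Lemma~\ref{lem:2.7} applied to $r-1$. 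This is legitimate because $r-1\equiv 2 \pmod{p-1}$ and $p\mid\binom{r-1}{2}$. With these choices, $T^\pm f_1$, $T^+f_2$ and $a_pf_2$ die mod $p$.

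\emph{Step 3: reduce $T^-f_2-a_pf_1$.} By Lemma~\ref{lem: 5.3 for r-1}, $T^-f_2-a_pf_1$ is congruent modulo $V_r^{(3)}$ to
\[
[g^0_{1,0},\ \tfrac{p-1}{p}\tbinom{r-1}{2}X^{r-2}Y^2+(p-1)(AX^{r-2p}Y^{2p}+BX^{r-p-1}Y^{p+1})].
\]
Corollary~\ref{lem: X^{r-p-1Y^{p+1}}} kills $X^{r-p-1}Y^{p+1}$ modulo $X_{r-1}+V_r^{(3)}$, leaving the terms in $X^{r-2p}Y^{2p}$ and $X^{r-2}Y^2$. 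We then scale $f_0$ by the constant (here $\frac{p-1}{p}\binom{r-1}{2}$ or $1$) chosen so that the $X^{r-2}Y^2$ contributions combine with $X^{r-2p}Y^{2p}$ into a multiple of $\theta^2X^{r-2p-2}=X^{r-2}Y^2-2X^{r-p-1}Y^{p+1}+X^{r-2p}Y^{2p}$. By Lemma~\ref{lem:2.10}, $\theta^2X^{r-2p-2}$ (index $i=0<a'$) maps to $0$ in $J_5$.

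\emph{Conclusion and main obstacle.} The image of $(T-a_p)f$ in $\mathrm{ind}_{KZ}^GJ_5$ is then $\overline{a_p/p^2}\,[\mathrm{Id},\bar Q]$, a generator of $\mathrm{ind}_{KZ}^GJ_5$, so $F_5=0$. The main obstacle is Step 3. The coefficients must match exactly: the $X^{r-2}Y^2$ term coming from $T^+f_0$ has to be absorbed into a pure $\theta^2X^{r-2p-2}$ term, using only the relations available when $p\mid r-1$. If the direct match fails, we would first try adding a second copy of the construction with $\binom{r}{j}$ in place of $\binom{r-1}{j}$, built from $Y^r-X^{r-p-1}Y^{p+1}$ as in Proposition~\ref{prop: alpha 1}(ii), and take a linear combination of the two to cancel both stray monomials.
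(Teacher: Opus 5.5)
Your Step~1 and Step~3 match the paper: $f_0$ with $Q=X^{r-3p-1}Y^{p-1}-X^{r-2p-2}$ and scaling constant $1$, $f_1$ built from Lemma~\ref{lem:2.7} applied to $r-1$, Lemma~\ref{lem: 5.3 for r-1}, and the corollary $X^{r-p-1}Y^{p+1}\in X_{r-1}+V_r^{(3)}$. Step~2, however, has a genuine gap.

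First, the normalization of $f_2$ is wrong. In $T^-$ the coefficient of $X^{r-i}Y^i$ is multiplied by $p^{r-i}$, so the leading monomial $XY^{r-1}$ gains only one factor of $p$. With denominator $p$, $T^-f_2$ is integral and cannot cancel the non-integral term $-a_pf_1=-[g^0_{1,0},\tfrac{p-1}{p}\sum\alpha_jX^{r-j}Y^j]$, so you need denominator $p^2$.

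Second, the subtracted monomial must have $Y$-degree $\equiv r-1\equiv 2 \pmod{p-1}$. With your $X^{r-p}Y^p$, the $X^r$-coefficient of $T^+f_2$ is $p^{-2}\bigl((-[\lambda])^{r-1}-(-[\lambda])^{p}\bigr)$, which is non-integral. So you are forced to take $f_2=\sum_{\lambda}[g^0_{2,p[\lambda]},(XY^{r-1}-X^{r-2p}Y^{2p})/p^2]$, as in the paper.

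But then $a_pf_2$ does \emph{not} die mod $p$. Modulo $X_{r-1}$,
\[
-a_pf_2\equiv \frac{a_p}{p^2}\sum_{\lambda\in\mathbb{F}_p}\bigl[g^0_{2,p[\lambda]},X^{r-2p}Y^{2p}\bigr],
\]
which is a unit multiple of a level-two term. Nothing in your construction cancels it, so you cannot conclude that the image of $(T-a_p)f$ in $\mathrm{ind}_{KZ}^GJ_5$ is a unit times $[\mathrm{Id},Y]$.

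The paper's remedy is an extra correction at level one:
\[
f_1'=\frac{a_p}{p^2}\Bigl[g^0_{1,0},\frac{\theta^2X^{r-2p-2}}{p^2}\Bigr].
\]
Since $g^0_{1,0}g^0_{1,[\lambda]}=g^0_{2,p[\lambda]}$, one gets $T^+f_1'\equiv\frac{a_p}{p^2}\sum_\lambda[g^0_{2,p[\lambda]},X^{r-2}Y^2]$, which lands exactly on the support of $f_2$. By the corollary, $X^{r-2}Y^2+X^{r-2p}Y^{2p}\equiv\theta^2X^{r-2p-2}$ modulo $X_{r-1}+V_r^{(3)}$, and this maps to $0$ in $J_5$. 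The new term $-a_pf_1'=-\frac{a_p^2}{p^4}[g^0_{1,0},\theta^2X^{r-2p-2}]$ again lies over $J_4$, and $T^-f_1'\equiv 0$.

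Now take $f=f_0+f_1+f_1'+f_2$. Every term of $(T-a_p)f$ modulo $X_{r-1}+V_r^{(3)}$ is a multiple of $\theta^2X^{r-2p-2}$, except $-\frac{a_p}{p^2}[\mathrm{Id},\theta^2X^{r-3p-1}Y^{p-1}]$. Since $\theta^2X^{r-3p-1}Y^{p-1}$ maps to $-Y$ in $J_5$, the image in $\mathrm{ind}_{KZ}^GJ_5$ is $\overline{a_p/p^2}\,[\mathrm{Id},Y]$, a generator.

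The idea your proposal is missing is this two-level bootstrapping: the $a_p$-multiple of one correction term is absorbed by $T^+$ of another.
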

\begin{proof}
    Let us consider $f_0,f_1,f'_1,f_2\in \mathrm{ind}_{KZ}^G \mathrm{Sym}^r \bar{\mathbb{Q}}_p^2$ defined by 
\begin{eqnarray*}
f_2 &=& \sum_{\lambda \in \mathbb{F}_p}\biggl[g^0_{2,p\left[\lambda\right]},\frac{XY^{r-1}-X^{r-2p}Y^{2p}}{p^2}\biggl]\\
        f_1 &=& \biggl[g^0_{1,0}, \frac{p-1}{p\cdot a_p} \sum_{\substack{
2<j<r-1 \\
j \equiv 2 \pmod{p-1}
}}
 \alpha_j\cdot X^{r-j} Y^{j} \biggl]\\
 f_1' &=& \frac{a_p}{p^2}\biggl[g^0_{1,0}, \frac{\theta^2 X^{r-2p-2}}{p^2}\biggl]\\
 f_0 &=& \biggl[\mathrm{Id}, \frac{\theta^2X^{r-3p-1}Y^{p-1}-\theta^2X^{r-2p-2}}{p^2}\biggl],
    \end{eqnarray*}
    where $\alpha_j$ are integers as in Lemma~\ref{lem:2.7} (applied with $r-1$ instead of $r$).
    Clearly $T^+f_2$, $T^+f_1$, $T^-f_1$, $T^-f_1'$ and $T^-f_0$ die mod $p$ and mod $X_{r-1}$. We compute    \begin{eqnarray*}
        T^-f_2-a_pf_1 &=& \biggl[g^0_{1,0}, \frac{p-1}{p} \binom{r-1}{2} X^{r-2}Y^2+\frac{p-1}{p}\sum_{\substack{
2<j<r-1 \\
j \equiv 2 \pmod{p-1}
}}
 \biggl(\binom{r-1}{j}-\alpha_j\biggl) X^{r-j} Y^{j}\biggl]\\
 &=& \biggl[g^0_{1,0},\frac{p-1}{p} \binom{r-1}{2} X^{r-2}Y^2+ (p-1) \biggl(1+\frac{1}{p}\binom{r-1}{2}\biggl) X^{r-2p}Y^{2p}\\
 && \qquad -\frac{2(p-1)}{p}\binom{r-1}{2}X^{r-p-1}Y^{p+1}\biggl]\\
 &=& \biggl[g^0_{1,0}, -\frac{1}{p}\binom{r-1}{2} \theta^2 X^{r-2p-2}-X^{r-2p}Y^{2p}\biggl],
 \end{eqnarray*}
 where the second equality follows from Lemma~\ref{lem: 5.3 for r-1}.  Moreover, we have \begin{eqnarray*}
         T^+f_1' &=& \frac{a_p}{p^2}\sum_{\lambda\in \mathbb{F}_p} \biggl[g^0_{2,p\left[\lambda\right]}, X^{r-2}Y^2\biggl].
          \end{eqnarray*}
 On the other hand, we have \begin{eqnarray*}
     (T-a_p)f_0 &=& -\biggl[g^0_{1,0}, X^{r-2}Y^2\biggl]-\frac{a_p}{p^2}\biggl[\mathrm{Id}, \theta^2X^{r-3p-1}Y^{p-1}-\theta^2X^{r-2p-2}\biggl].
     \end{eqnarray*}
    Let $f=f_0+f_1+f_1'+f_2$, then we have \begin{eqnarray*}
         (T-a_p)f& \equiv & \biggl[g^0_{1,0}, -\frac{1}{p}\binom{r-1}{2}\theta^2X^{r-2p-2}-X^{r-2}Y^2-X^{r-2p}Y^{2p}\biggl]-\frac{a_p^2}{p^4}\biggl[g^0_{1,0},\theta^2 X^{r-2p-2}\biggl]\\
         && \qquad -\frac{a_p}{p^2}\biggl[\mathrm{Id},\theta^2X^{r-3p-1}Y^{p-1}-\theta^2X^{r-2p-2}\biggl]+ \frac{a_p}{p^2} \sum_{\lambda\in \mathbb{F}_p} \biggl[g^0_{2,p\left[\lambda\right]}, X^{r-2}Y^2+X^{r-2p}Y^{2p}\biggl]\\
         &\equiv &\biggl[g^0_{1,0}, -\frac{1}{p}\binom{r-1}{2}\theta^2X^{r-2p-2}-\theta^2X^{r-2p-2} -\frac{a_p^2}{p^4}\theta^2 X^{r-2p-2}\biggl]\\ 
         && \qquad-\frac{a_p}{p^2}\biggl[\mathrm{Id},\theta^2X^{r-3p-1}Y^{p-1}-\theta^2X^{r-2p-2}\biggl] + \frac{a_p}{p^2} \sum_{\lambda\in \mathbb{F}_p} \biggl[g^0_{2,p\left[\lambda\right]}, \theta^2 X^{r-2p-2}\biggl] \pmod{X_{r-1}+V_r^{(3)}},
      \end{eqnarray*}
      by Corollary~\ref{lem: X^{r-p-1Y^{p+1}}}.
      By Lemma~\ref{lem:2.10},  we see that $\theta^2 X^{r-3p-1}Y^{p-1}$ maps to $-Y$ in $J_5$. Hence, 
      %$(T-a_p)f$ is integral and 
      $(\overline{T-a_p)f}$ maps to $\overline{\frac{a_p}{p^2}}\left[\mathrm{Id},Y\right]$ in \(\mathrm{ind}_{KZ}^G J_5\).
\end{proof}
\begin{prop}\label{Gh Ma 4}
Let $3p+2 \le r\equiv 3 \pmod{p-1}$. Assume that $v_p(a_p)=2$ and $p \mid r-1$. Then $F_4$ is a quotient of $\frac{\mathrm{ind}_{KZ}^G (V_{p-2} \otimes D^2)}{T^2-cT+1} $, where $c= \overline{\frac{a_p^2-p^3(\frac{r-2p-1}{2})}{p^2a_p}} \in \overline{\mathbb{F}}_p$.
\end{prop}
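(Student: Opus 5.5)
The plan is to imitate the proof of Proposition~\ref{prop:B_3}, which produced a quadratic Hecke polynomial, using the level-one and level-two functions already appearing in the proof of Proposition~\ref{Gh Ma 3}. Here $J_4 = V_{p-2}\otimes D^2$ is the socle of $V_r^{**}/V_r^{***}$, and by Lemma~\ref{lem:2.10} the polynomial $\theta^2X^{r-2p-2}$ is identified with $X^{p-2}$ in $J_4$. So the goal is an $f\in \mathrm{ind}_{KZ}^G\mathrm{Sym}^r\overline{\mathbb{Q}}_p^{\,2}$ such that $(T-a_p)f$ is integral and, modulo $X_{r-1}+V_r^{(3)}$, is a unit multiple of
\[
\sum_{\text{level }2}\bigl[g,\theta^2X^{r-2p-2}\bigr]-c\sum_{\lambda}\bigl[g^0_{1,[\lambda]},\theta^2X^{r-2p-2}\bigr]+\bigl[\mathrm{Id},\theta^2X^{r-2p-2}\bigr].
\]
Its image in $\mathrm{ind}_{KZ}^GJ_4$ would then be a unit times $(T^2-cT+1)[\mathrm{Id},X^{p-2}]$. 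Since $[\mathrm{Id},X^{p-2}]$ generates $\mathrm{ind}_{KZ}^GJ_4$, the surjection onto $F_4$ factors through the stated quotient.

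I would build $f$ from four pieces.
\begin{itemize}
\item[(a)] $f_0=[\mathrm{Id},\theta^2X^{r-2p-2}/p^2]$. Here $T^-f_0\equiv 0$, $T^+f_0=\sum_\lambda[g^0_{1,[\lambda]},X^{r-2}Y^2]$, and $-a_pf_0$ supplies the level-$0$ term with coefficient $-a_p/p^2$.
\item[(b)] $f_1=\sum_\lambda[g^0_{1,[\lambda]},\tfrac{p-1}{pa_p}\sum\alpha_jX^{r-j}Y^j]$, with the $\alpha_j$ from Lemma~\ref{lem:2.7} applied to $r-1$.
\item[(c)] $f_2=\sum_{\lambda,\mu}[g^0_{2,[\lambda]+p[\mu]},(XY^{r-1}-X^{r-2p}Y^{2p})/p^2]$, summed over all level-two vertices above the level-one vertices in (b). Then $T^-f_2-a_pf_1$ is computed exactly as in Proposition~\ref{Gh Ma 3}, using Lemma~\ref{lem: 5.3 for r-1}, and equals $\sum_\lambda[g^0_{1,[\lambda]},-\tfrac1p\binom{r-1}{2}\theta^2X^{r-2p-2}-X^{r-2p}Y^{2p}]$.
\item[(d)] $f_1'=\tfrac{a_p}{p^2}\sum_\lambda[g^0_{1,[\lambda]},\theta^2X^{r-2p-2}/p^2]$. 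Its $T^+$ part gives $\tfrac{a_p}{p^2}\sum_{\text{level }2}[\cdot,X^{r-2}Y^2]$, and $-a_pf_1'$ contributes $-\tfrac{a_p^2}{p^4}\theta^2X^{r-2p-2}$ at level one.
\end{itemize}
The level-two parts of $-a_pf_2$ and $T^+f_1'$ add to $\tfrac{a_p}{p^2}(X^{r-2}Y^2+X^{r-2p}Y^{2p})$. Corollary~\ref{lem: X^{r-p-1Y^{p+1}}} kills $X^{r-p-1}Y^{p+1}$ modulo $X_{r-1}+V_r^{(3)}$, so this is $\tfrac{a_p}{p^2}\theta^2X^{r-2p-2}$. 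The same identity turns $-X^{r-2p}Y^{2p}+X^{r-2}Y^2$ at level one into $\theta^2X^{r-2p-2}$.

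Collecting terms, the level-one coefficient of $\theta^2X^{r-2p-2}$ is $1-\tfrac1p\binom{r-1}{2}-\tfrac{a_p^2}{p^4}$, up to sign conventions that I will fix when writing out the details. Dividing by $a_p/p^2$ gives $c$. Using $p\mid r-1$, so $r-2\equiv -1\pmod p$, one gets $1-\tfrac{(r-1)(r-2)}{2p}\equiv -\tfrac{r-2p-1}{2p}\pmod p$. Multiplying by $p^2/a_p$ yields $c=\tfrac{a_p}{p^2}-\tfrac{p(r-2p-1)}{2a_p}=\tfrac{a_p^2-p^3\frac{r-2p-1}{2}}{p^2a_p}$, as claimed.

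I expect the main obstacle to be getting the level-$0$ and level-$2$ terms into exactly the shape of $T^2$ acting on $[\mathrm{Id},X^{p-2}]$ in $\mathrm{ind}_{KZ}^GJ_4$. Under the identification of \cite[(4.2)]{Glo78} and \cite[Lemma~5.3]{Bre03b}, $T^2$ produces the full sum over level-two vertices together with a multiple of the identity term, and the relative signs and multiplicities must match "$+1$". The sign of the $[\mathrm{Id}]$ coefficient relative to the level-two sum is where I am least sure. If the signs do not match, I would first change $f_0$ to the difference $\theta^2X^{r-2p-2}-\theta^2X^{r-3p-1}Y^{p-1}$ used in Proposition~\ref{Gh Ma 3}, or rescale $f_0$ and $f_1'$ by $(1-p)$ on the $\mu=0$ branch as in Proposition~\ref{prop:B_3}, so that the leftover piece lands in $J_5$ or dies.

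A secondary check is that $T^-f_1$, $T^+f_1$, $T^-f_1'$, $T^+f_2$ and $T^-$ of the level-one part vanish modulo $p$ and modulo $X_{r-1}$. This should follow from $r\ge 3p+2$ and the congruences in Lemma~\ref{lem:2.7}.
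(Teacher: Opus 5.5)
Your four functions are the paper's $f_2$, $f_1$, $f_1'$ and $f_0$ except for the sign of $f_0$. As written, that sign breaks the argument in three places.

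First, with $f_0=+[\mathrm{Id},\theta^2X^{r-2p-2}/p^2]$ the level-one terms are
\[
-\tfrac1p\tbinom{r-1}{2}\theta^2X^{r-2p-2}-X^{r-2p}Y^{2p}+X^{r-2}Y^2-\tfrac{a_p^2}{p^4}\theta^2X^{r-2p-2}.
\]
Your claim that the same identity turns $X^{r-2}Y^2-X^{r-2p}Y^{2p}$ into $\theta^2X^{r-2p-2}$ is false. Since $\theta^2X^{r-2p-2}=X^{r-2}Y^2-2X^{r-p-1}Y^{p+1}+X^{r-2p}Y^{2p}$, killing $X^{r-p-1}Y^{p+1}$ only gives $X^{r-2}Y^2+X^{r-2p}Y^{2p}\equiv\theta^2X^{r-2p-2}$. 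You are left with a stray $-2X^{r-2p}Y^{2p}$ at level one that nothing in your setup disposes of.

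Second, $-a_pf_0$ then contributes $-\frac{a_p}{p^2}[\mathrm{Id},\theta^2X^{r-2p-2}]$, against $+\frac{a_p}{p^2}$ at level two. That is the shape $T^2-c'T-1$, not $T^2-cT+1$. Recall that $T^2[\mathrm{Id},X^{p-2}]$ is just the sum over level-two vertices, because $T^-$ kills $X^{p-2}$.

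Third, the congruence you use to land on $c$ is wrong. Writing $r-1=pm$, one has $1-\frac{(r-1)(r-2)}{2p}\equiv 1+\frac m2$, whereas $-\frac{r-2p-1}{2p}=1-\frac m2$.

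The fix is to take $f_0=-[\mathrm{Id},\theta^2X^{r-2p-2}/p^2]$, which is what the paper does. Then:
\begin{itemize}
\item $T^+f_0$ contributes $-X^{r-2}Y^2$, which pairs with the $-X^{r-2p}Y^{2p}$ from $T^-f_2-a_pf_1$ to give $-\theta^2X^{r-2p-2}$.
\item The level-one coefficient becomes $-\bigl(1+\frac1p\binom{r-1}{2}+\frac{a_p^2}{p^4}\bigr)$. The correct congruence $1+\frac1p\binom{r-1}{2}\equiv-\frac{r-2p-1}{2p}\pmod p$ turns this into exactly $-\frac{a_p}{p^2}c$.
\item $-a_pf_0=+\frac{a_p}{p^2}[\mathrm{Id},\theta^2X^{r-2p-2}]$ supplies the $+1$.
\end{itemize}
So $(T-a_p)f$ maps to $\overline{a_p/p^2}\,(T^2-cT+1)[\mathrm{Id},X^{p-2}]$ in $\mathrm{ind}_{KZ}^GJ_4$.

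Neither of your fallback remedies achieves this. A factor $(1-p)$ cannot flip a sign modulo $p$. Replacing $f_0$ by the difference $\theta^2X^{r-2p-2}-\theta^2X^{r-3p-1}Y^{p-1}$ makes $T^+f_0$ supported only at $g^0_{1,0}$, which destroys the uniformity in $\lambda$ needed for the $T$-term. It also leaves a level-zero term whose image in $J_5$ is nonzero, so the element no longer lies in $\mathrm{ind}_{KZ}^GJ_4$.
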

\begin{proof}
    Let us consider $f_0,f_1,f'_1,f_2\in \mathrm{ind}_{KZ}^G \mathrm{Sym}^r \bar{\mathbb{Q}}_p^2$ defined by \begin{eqnarray*}
        f_2 &=& \sum_{\lambda,\mu \in \mathbb{F}_p}\biggl[g^0_{2,\left[\lambda\right]+p\left[\mu\right]},\frac{XY^{r-1}-X^{r-2p}Y^{2p}}{p^2}\biggl]\\
        f_1 &=& \sum_{\lambda \in {\mathbb{F}_p}} \biggl[g^0_{1,\left[\lambda\right]}, \frac{p-1}{p \cdot a_p} \sum_{\substack{
2<j<r-1 \\
j \equiv 2 \pmod{p-1}
}}
 \alpha_j \cdot X^{r-j} Y^{j} \biggl]\\
 f_1' &=& \frac{a_p}{p^2}\sum_{\lambda \in {\mathbb{F}_p}} \biggl[g^0_{1,\left[\lambda\right]}, \frac{\theta^2 X^{r-2p-2}}{p^2}\biggl]\\
 f_0 &=& -\biggl[\mathrm{Id}, \frac{\theta^2X^{r-2p-2}}{p^2}\biggl],
 \end{eqnarray*}
  where $\alpha_j$ are integers as in Lemma~\ref{lem:2.7} (applied with $r-1$ instead of $r$). Clearly $T^+f_2$, $T^+f_1$, $T^-f_1$, $T^-f_1'$ and $T^-f_0$ die mod $p$. As before, 
 \begin{eqnarray*}
     %T^-f_2 &=& \sum_{\lambda \in {\mathbb{F}_p}} \biggl[g^0_{1,\left[\lambda\right]}, \frac{p-1}{p} \sum_{\substack{
%2\le j<r-1 \\
%j \equiv 2 \pmod{p-1}
%}}
 %\binom{r-1}{2} X^{r-j} Y^{j} \biggl]\\
 T^-f_2-a_pf_1 &=& \sum_{\lambda \in {\mathbb{F}_p}} \biggl[g^0_{1,\left[\lambda\right]}, \frac{p-1}{p}\binom{r-1}{2}X^{r-2}Y^2+\frac{p-1}{p} \sum_{\substack{
2<j<r-1 \\
j \equiv 2 \pmod{p-1}
}}
\biggl(\binom{r-1}{2}- \alpha_j\biggl) X^{r-j} Y^{j} \biggl]\\
&=& \sum_{\lambda \in {\mathbb{F}_p}} \biggl[g^0_{1,\left[\lambda\right]}, -\frac{1}{p}\binom{r-1}{2} \theta^2X^{r-2p-2}-X^{r-2p}Y^{2p}\biggl].
 \end{eqnarray*}
 Moreover, we have \begin{eqnarray*}
     T^+f_1'= \frac{a_p}{p^2}\sum_{\lambda,\mu \in \mathbb{F}_p}\biggl[g^0_{2,\left[\lambda\right]+p\left[\mu\right]}, X^{r-2}Y^2 \biggl].
 \end{eqnarray*}
 On the other hand, we have \begin{eqnarray*}
 T^+f_0-a_pf_0&=&-\sum_{\lambda \in {\mathbb{F}_p}} \biggl[g^0_{1,\left[\lambda\right]}, X^{r-2}Y^2 \biggl]+\frac{a_p}{p^2}\biggl[\mathrm{Id},\theta^2 X^{r-2p-2}\biggl].
 \end{eqnarray*}
 Hence, by Corollary~\ref{lem: X^{r-p-1Y^{p+1}}} we have, \begin{eqnarray*}
     T^-f_2-a_pf_1+T^+f_0-a_pf_0 &\equiv& \sum_{\lambda \in {\mathbb{F}_p}} \biggl[g^0_{1,\left[\lambda\right]},-\frac{1}{p}\binom{r-1}{2}\theta^2 X^{r-2p-2}-\theta^2 X^{r-2p-2}\biggl]\\
     && \quad \qquad +\frac{a_p}{p^2}\biggl[\mathrm{Id},\theta^2 X^{r-2p-2}\biggl] \pmod{X_{r-1}+V_r^{(3)}},
     \end{eqnarray*}
     Finally, let $f=f_0+f_1+f_1'+f_2$, then again by Corollary~\ref{lem: X^{r-p-1Y^{p+1}}}, we have \begin{eqnarray*}
     (T-a_p)f &=&  \frac{a_p}{p^2}\sum_{\lambda,\mu \in \mathbb{F}_p}\biggl[g^0_{2,\left[\lambda\right]+p\left[\lambda\right]},\theta^2X^{r-2p-2}\biggl]-\biggl(\frac{a_p^2}{p^4}-\frac{r-2p-1}{2p}\biggl)\sum_{\lambda \in \mathbb{F}_p}\biggl[g^0_{1,\left[\lambda \right]},\theta^2 X^{r-2p-2}\biggl]\\
     && \qquad +\frac{a_p}{p^2}\biggl[\mathrm{Id}, \theta^2X^{r-2p-2}\biggl] \pmod{X_{r-1}+V_r^{(3)}}.
          \end{eqnarray*}
          Hence, we have $(T-a_p)f$ is integral and $\overline{(T-a_p)f}$
          has image  $\overline{\frac{a_p}{p^2}}(T^2-cT+1)\left[\mathrm{Id}, X^{p-2}\right] \in \mathrm{ind}_{KZ}^G J_4$, where $c=\overline{\frac{p^2}{a_p}\biggl(\frac{a_p^2}{p^4}-\frac{r-2p-1}{2p}\biggl)} \in \overline{\mathbb{F}}_p$.
\end{proof}
\begin{prop}\label{Gh Ma 5}
Let $3p+2 \le r \equiv 3 \pmod{p-1}$. Assume that $v_p(a_p)=2$ and $p \mid r-2$. Then $F_3$ is a quotient of $\pi(p-2,0,\omega^2)$.
\end{prop}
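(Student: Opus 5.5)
We need to show that when $p\mid r-2$, the image of $(T-a_p)f$ in $\mathrm{ind}_{KZ}^G J_3$ is a nonzero multiple of $T[\mathrm{Id},X^{p-2}]$ for some integral $f$. Here $J_3=V_{p-2}\otimes D^2$ is the cosocle of $V_r^*/V_r^{**}$ for $a=3$. The natural template is the proof of Proposition~\ref{Gh Ma 2}. There we set
\[
f_2=\Bigl[g^0_{2,0},\tfrac{\theta X^{r-2p-1}Y^p-\theta Y^{r-p-1}}{p}\Bigr],\qquad
f_0=\Bigl[\mathrm{Id},\tfrac{\theta^2X^{r-3p-1}Y^{p-1}-\theta^2X^{r-2p-2}}{p^2}\Bigr],
\]
and the image in $\mathrm{ind}_{KZ}^G J_3$ came out as $(r-2)[g^0_{1,0},X^{p-2}]$. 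When $p\mid r-2$ this vanishes, so the function must be rescaled by an extra $1/p$ and supplemented by correction terms.

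The plan is to take a function built around $\bigl[\mathrm{Id},\frac{\theta X^{r-p-2}Y-\theta Y^{r-p-1}}{p^2}\bigr]$, or an analogue centred at $\mathrm{Id}$, with the following three properties.
\begin{itemize}
\item $T^-$ dies mod $p$, since $r\geq 3p+2$.
\item $-a_p f$ yields $\overline{a_p/p^2}$ times the relevant polynomial at $\mathrm{Id}$. In the cosocle $J_3$ this polynomial is zero, or is killed, because the coefficient $r-2$ from Lemma~\ref{lem:2.10} (equivalently \cite[Lemma 2.12]{GV22}) vanishes mod $p$.
\item $T^+$ produces $\sum_{\lambda}[g^0_{1,[\lambda]},\,\cdot\,]$. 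After reducing mod $X_{r-1}+V_r^{(2)}$, it should map to $c\sum_\lambda[g^0_{1,[\lambda]},X^{p-2}]$ with $c$ a unit.
\end{itemize}
Using \cite[(4.2)]{Glo78} and \cite[Lemma 5.3]{Bre03b}, this sum is $c\,T[\mathrm{Id},X^{p-2}]$. Since $[\mathrm{Id},X^{p-2}]$ generates $\mathrm{ind}_{KZ}^G J_3$, the surjection $\mathrm{ind}_{KZ}^G J_3\twoheadrightarrow F_3$ factors through $\pi(p-2,0,\omega^2)$.

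To make the $T^+$ term integral and nonzero, I would add a layer-$1$ correction built from the integers $\alpha_j$. Concretely, I would use a function $\bigl[\mathrm{Id},\frac{p-1}{p\,a_p}\sum_{j\equiv 2}\alpha_j X^{r-j}Y^j\bigr]$ together with a layer-$1$ term of the form $\sum_\lambda[g^0_{1,[\lambda]},(XY^{r-1}-X^{r-2p}Y^{2p})/p^2]$, as in Proposition~\ref{Gh Ma 3}. Here Lemma~\ref{lem:2.7} is applied to $r-1$, which is allowed because $p\mid r-2$ gives $p\mid\binom{r-1}{2}$. The roles of layers $0$ and $1$ are swapped relative to Proposition~\ref{Gh Ma 3}, so that the $T^-$ output lands at $\mathrm{Id}$ and the $-a_p$ output lands at $g^0_{1,[\lambda]}$.

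Corollary~\ref{lem: required for p | r-2} gives $X^{r-2}Y^2\in X_{r-1}+V_r^{(3)}$, so the $\theta^2$-level contributions vanish in $P$. The remaining terms would then be reduced mod $V_r^{(2)}$ with \cite[Lemma 2.12]{GV22}, and the power-sum congruences would come from Lemma~\ref{lem:2.1} with $i=0,1$. Two outcomes need to be confirmed. First, the coefficient at $\mathrm{Id}$ should vanish mod $p$; I expect it to be proportional to $(r-2)$ or to a similar factor divisible by $p$. Second, the coefficient at the $g^0_{1,[\lambda]}$ should be the unit $\overline{a_p/p^2}$.

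The main obstacle is exactly this bookkeeping. The $\mathrm{Id}$-component in $J_3$ must come out as $0$ (rather than a unit, which would kill $F_3$, or a nonzero eigenvalue), while the neighbouring component is a unit. This requires the mod $p^2$ sums $\frac1p\sum(\binom{r-1}{j}-\alpha_j)$ and $\frac1p\sum j(\binom{r-1}{j}-\alpha_j)$ to be computed precisely when $p\mid r-2$. I would first compute these via the roots-of-unity formula, reducing $(1+\xi)^{r-1}$ modulo $p^2$ using $r-1\equiv p+1$. If the $\mathrm{Id}$-coefficient turns out not to vanish, I would add a multiple of the Proposition~\ref{Gh Ma 2} function scaled by $1/p$ to cancel it.
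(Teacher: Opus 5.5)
There is a genuine gap. Your second construction has the right architecture, and it is essentially the paper's. It takes $f_0=\bigl[\mathrm{Id},\frac{p-1}{p\,a_p}\sum\alpha_jX^{r-j}Y^j\bigr]$, with the $\alpha_j$ of Lemma~\ref{lem:2.7} for $r-1$, plus a layer-one term $f_1$ whose $-a_pf_1$ supplies the unit coefficient at the $g^0_{1,[\lambda]}$.

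\textbf{The layer-one polynomial breaks integrality.} The polynomial you copy from Proposition~\ref{Gh Ma 3} does not work here. Take $v=(XY^{r-1}-X^{r-2p}Y^{2p})/p^2$. Since $r-2\equiv 2p-1\equiv 1 \pmod{p-1}$, the $X^{r-1}Y$-coefficient of $T^+[g^0_{1,[\lambda]},v]$ at $g^0_{2,[\lambda]+p[\mu]}$ is
\[
\tfrac{1}{p}\bigl((r-1)(-[\mu])^{r-2}-2p(-[\mu])^{2p-1}\bigr)=\tfrac{r-1-2p}{p}(-[\mu]).
\]
In Proposition~\ref{Gh Ma 3} this is integral because $p\mid r-1$. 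Here $r-1-2p\equiv 1\pmod p$, and nothing else in your $f$ lives at level two to cancel it. So $(T-a_p)f$ is not integral. (Your $-a_pf_1$ would still give a unit in $J_3$; integrality is the only defect.)

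The paper instead takes $f_1=\sum_\lambda\bigl[g^0_{1,[\lambda]},\frac{XY^{r-1}-X^{r-p-1}Y^{p+1}}{p^2}\bigr]$. This works for three reasons:
\begin{enumerate}
\item The same coefficient becomes $\frac{r-p-2}{p}(-[\mu])$. It is integral exactly because $p\mid r-2$, and then it dies since $X^{r-1}Y\in X_{r-1}$.
\item The $X^{r-2}Y^2$-coefficient $\binom{r-1}{2}-\binom{p+1}{2}$ is divisible by $p$.
\item Since $r-p-1\equiv 1\pmod p$, \cite[Lemma 2.12]{GV22} gives $XY^{r-1}-X^{r-p-1}Y^{p+1}\equiv\theta X^{r-p-2}Y-\theta Y^{r-p-1}\pmod{V_r^{**}}$. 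So $-a_pf_1$ maps to $-\overline{a_p/p^2}\,T[\mathrm{Id},X^{p-2}]$ in $\mathrm{ind}_{KZ}^GJ_3$.
\end{enumerate}

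\textbf{Your account of the $\mathrm{Id}$-component is off.} The polynomial $\theta X^{r-p-2}Y-\theta Y^{r-p-1}$ does not vanish in $J_3$: by Lemma~\ref{lem:2.10} it maps to $X^{p-2}$. The factor $r-2$ in Proposition~\ref{Gh Ma 2} comes from reducing $X^{r-2}Y^2-XY^{r-1}$, not from Lemma~\ref{lem:2.10}. Also, for your first candidate $[\mathrm{Id},(\theta X^{r-p-2}Y-\theta Y^{r-p-1})/p^2]$, $T^-$ does not die: the monomial $XY^{r-1}/p^2$ contributes $\frac1p[\alpha,XY^{r-1}]$.

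Here is what actually happens in $T^-f_1-a_pf_0$:
\begin{enumerate}
\item The $j=2$ term $\frac{p-1}{p}\binom{r-1}{2}X^{r-2}Y^2$ is integral and dies, because $X^{r-2}Y^2\in X_{r-1}+V_r^{(3)}$ when $p\mid r-2$.
\item Write $X^{r-j}Y^j=(X^{r-j}Y^j-X^pY^{r-p})+X^pY^{r-p}$, where $X^{r-j}Y^j-X^pY^{r-p}\equiv(j-r)\theta X^{r-p-2}Y$. The two congruences in the proof of Lemma~\ref{lem: im 3}, with $r-1$ in place of $r$, make the $\theta X^{r-p-2}Y$-coefficient
\[
\equiv(p-1)\bigl[(1-\tfrac2p\tbinom{r-1}{2})-r(\tfrac12-\tfrac1p\tbinom{r-1}{2})\bigr]\equiv 0,
\]
since $r\equiv 2\pmod p$.
\item What survives is $\frac{r-p-2}{2p}X^pY^{r-p}\equiv\frac{r-p-2}{2p}\theta Y^{r-p-1}\pmod{X_{r-1}}$. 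This lies in the kernel of $V_r^*/V_r^{**}\to J_3$, by the case $i=r'$ of Lemma~\ref{lem:2.10}.
\end{enumerate}
So the $\mathrm{Id}$-part dies because it lands on $\theta Y^{r-p-1}$, not because of a factor $r-2$. No correction from Proposition~\ref{Gh Ma 2} is needed. With these two changes, your $f_0+f_1$ becomes exactly the paper's function.
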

\begin{proof}
     Let us consider $f_0,f_1\in \mathrm{ind}_{KZ}^G \mathrm{Sym}^r \bar{\mathbb{Q}}_p^2$ defined by \begin{eqnarray*}
         f_1 &=& \sum_{\lambda \in \mathbb{F}_p} \biggl[g^0_{1,\left[\lambda\right]},\frac{XY^{r-1}-X^{r-p-1}Y^{p+1}}{p^2}\biggl]\\
         f_0 &=&  \biggl[\mathrm{Id}, \frac{p-1}{p \cdot a_p} \sum_{\substack{
2<j<r-1 \\
j \equiv 2 \pmod{p-1}
}}
 \alpha_j \cdot X^{r-j} Y^{j} \biggl],
     \end{eqnarray*}
      where $\alpha_j$ are integers as in Lemma~\ref{lem:2.7} (applied with $r-1$ instead of $r$). Clearly $T^+f_1$, $T^+f_0$ and $T^-f_0$ die mod $p$ and mod $X_{r-1}$. We have \begin{eqnarray*}
         T^-f_1 &=& \biggl[ \mathrm{Id}, \frac{p-1}{p}\sum_{\substack{
2\le j<r-1 \\
j \equiv 2 \pmod{p-1}
}}
\binom{r-1}{j} X^{r-j} Y^{j}  \biggl].
\end{eqnarray*}
Since $p \mid r-2$, the $j=2$ term is integral and may be dropped by Corollary~\ref{lem: required for p | r-2} modulo $X_{r-1}+V_r^{(3)}$, hence, we have
\begin{eqnarray*}
T^-f_1-a_pf_0 &\equiv& \biggl[ \mathrm{Id},\frac{p-1}{p} \sum_{\substack{
2 < j<r-1 \\
j \equiv 2 \pmod{p-1}
}}
\biggl(\binom{r-1}{j}-\alpha_j\biggl) X^{r-j} Y^{j}\biggl]\\
&\equiv&  \biggl[ \mathrm{Id},\frac{p-1}{p} \sum_{\substack{
2 < j<r-1 \\
j \equiv 2 \pmod{p-1}
}}
\biggl(\binom{r-1}{j}-\alpha_j\biggl) (X^{r-j} Y^{j}-X^pY^{r-p})\\
&& \qquad +\frac{p-1}{p} \sum_{\substack{
2 < j<r-1 \\
j \equiv 2 \pmod{p-1}
}}
\biggl(\binom{r-1}{j}-\alpha_j\biggl) X^{p} Y^{r-p}\biggl],
     \end{eqnarray*}
    As usual by \cite[Lemma 2.12]{GV22}, we have $X^{r-j}Y^j-X^pY^{r-p} \equiv (j-r)\theta X^{r-p-2}Y \pmod{V_r^{(2)}}$ and by the congruences \eqref{lem: needed for a=3 cl 1}, \eqref{lem: needed for a=3 cl 2} for $r-1$ instead of $r$ the first sum vanishes. So we obtain\begin{eqnarray*}
         T^-f_1-a_pf_0 &\equiv & \biggl[ \mathrm{Id},\frac{p-1}{p} \sum_{\substack{
2 < j<r-1 \\
j \equiv 2 \pmod{p-1}
}}
\biggl(\binom{r-1}{j}-\alpha_j\biggl) X^{p} Y^{r-p} \biggl] \pmod{X_{r-1}+V_r^{(2)}}.
     \end{eqnarray*}
     %which is integral and dies when mapped to $\mathrm{ind}_{KZ}^G J_3$ since $X^pY^{r-p}= XY^{r-1}+\theta Y^{r-p-1}$.
    Since $X^pY^{r-p} = XY^{r-1}+\theta Y^{r-p-1}
    $, by \cite[Lemma 2.12]{GV22} and \eqref{lem: needed for a=3 cl 2}, we have \begin{eqnarray*}
     (T-a_p)(f_0+f_1)&\equiv& -a_p f_1+T^-f_1-a_pf_0 \\
     &\equiv&  -\frac{a_p}{p^2}\sum_{\lambda \in \mathbb{F}_p}\biggl[g^0_{1,\left[\lambda\right]},\theta X^{r-p-2}Y-\theta Y^{r-p-1}\biggl] +\frac{r-p-2}{2p}\biggl[\mathrm{Id}, \theta Y^{r-p-1}\biggl]
\pmod {X_{r-1}+V_r^{(2)}}.
     \end{eqnarray*}
     So $(T-a_p)(f_0+f_1)$ is integral and $\overline{(T-a_p)(f_0+f_1)}$ maps to $-\overline{\frac{a_p}{p^2}}T\biggl[\mathrm{Id}, X^{p-2}\biggl]$ in $\mathrm{ind}_{KZ}^G J_3$.
\end{proof}
\begin{prop}\label{Gh Ma 6}
Let $3p+2 \le r \equiv 3 \pmod{p-1}$. Assume that $v_p(a_p)=2$ and $p \nmid r-3$. 
    \begin{enumerate}
        \item [(i)] If $p\mid \binom{r}{3}$, then $F_1=0$.
        \item [(ii)] If $p\nmid \binom{r}{3}$, then $F_1$ is a quotient of  $\pi(p-4,{\bar{\lambda}},\omega^3)$, where $\lambda \coloneqq \frac{r-3}{3 \binom{r}{3}} \cdot \frac{a_p}{p^2}$. 
    \end{enumerate}
\end{prop}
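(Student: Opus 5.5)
The plan is to imitate the proofs of Proposition~\ref{prop:A_3}(i) and Proposition~\ref{prop:A_5}(i), now with $a=3$. Here $J_1$ is the cosocle $V_{p-4}\otimes D^3$ of $V_r/V_r^{*}$, and by Lemma~\ref{lem:2.10} the monomial $X^{r-3}Y^3$ maps to (a unit multiple of) $X^{p-4}$ in $J_1$. The essential new feature is part~(3) of Lemma~\ref{lem:2.2}: for $a=3$ the sum $\sum_j\binom{j}{3}\alpha_j\equiv\binom{r}{3}\pmod p$ is \emph{not} zero. Consequently $T^+f_0$ no longer dies modulo $p$; it contributes a term $X^{r-3}Y^3$ on the neighbouring vertices $g^0_{1,[\lambda]}$. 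This contribution is what produces the Hecke eigenvalue $\bar\lambda$.

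Concretely, I will take
\[
f_1=\sum_{\lambda\in\mathbb F_p}\Bigl[g^0_{1,[\lambda]},\tfrac{Y^r-X^{r-3}Y^3}{p}\Bigr],\qquad
f_0=\Bigl[\mathrm{Id},\tfrac{p-1}{p\,a_p}\sum_{\substack{0<j<r\\ j\equiv 3 \bmod (p-1)}}\alpha_j X^{r-j}Y^j\Bigr],
\]
with the $\alpha_j$ from Lemma~\ref{lem:2.2}. Since $a=3\neq p-1$, no correction term $f_{-1}$ is needed. I will carry out the following steps in order.

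\begin{enumerate}
\item Show that $-a_pf_1$, $T^+f_1$ (modulo $X_{r-1}$) and $T^-f_0$ die modulo $p$. For $T^-f_0$ this uses $r\ge 3p+2$.
\item Compute $T^-f_1-a_pf_0$ as in Proposition~\ref{prop:A_3}. Using $X^{r-j}Y^j\equiv X^{r-3}Y^3 \pmod{V_r^{*}}$ together with Lemma~\ref{lem:2.1} (with $i=0$) and Lemma~\ref{lem:2.2}, this becomes $\bigl[\mathrm{Id},\tfrac{r-3}{3}X^{r-3}Y^3\bigr]$ modulo $V_r^{*}$, up to a sign.
\item Expand $T^+f_0$ by the explicit formula for $T^+$. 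The coefficient of $X^{r-n}Y^n$ is $\tfrac{(p-1)p^{n}}{p\,a_p}\sum_j\binom{j}{n}\alpha_j(-[\lambda])^{j-n}$, and $(-[\lambda])^{j-n}$ is independent of $j$ because $j\equiv 3 \pmod{p-1}$.
\begin{itemize}
\item For $n=0,1,2$ these coefficients vanish modulo $p$ by Lemma~\ref{lem:2.2}(2).
\item For $n\ge4$ they are divisible by $p^{n-3}$.
\item For $n=3$ the coefficient is $\equiv -\binom{r}{3}\tfrac{p^2}{a_p}(-[\lambda])^{0}$, a unit multiple.
\end{itemize}
This yields $-\overline{\binom{r}{3}p^2/a_p}\sum_\lambda[g^0_{1,[\lambda]},X^{r-3}Y^3]$.
\end{enumerate}

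Adding the three contributions and projecting to $\mathrm{ind}_{KZ}^GJ_1$ gives
\[
\overline{\tfrac{r-3}{3}}\,[\mathrm{Id},X^{p-4}]-\overline{\binom{r}{3}\tfrac{p^2}{a_p}}\sum_\lambda[g^0_{1,[\lambda]},X^{p-4}],
\]
up to a global sign. This expression is a nonzero multiple of $(T-\bar\lambda)[\mathrm{Id},X^{p-4}]$, where $\lambda=\frac{r-3}{3\binom{r}{3}}\cdot\frac{a_p}{p^2}$, the neighbouring sum being identified with $T$ via \cite[(4.2)]{Glo78} and \cite[Lemma~5.3]{Bre03b}. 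Since $[\mathrm{Id},X^{p-4}]$ generates $\mathrm{ind}_{KZ}^GJ_1$, this proves (ii).

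For (i), if $p\mid\binom{r}{3}$ the neighbouring term vanishes. What remains is $\overline{\tfrac{r-3}{3}}[\mathrm{Id},X^{p-4}]$, which is a generator because $p\nmid r-3$, so $F_1=0$.

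I expect the main obstacle to be the bookkeeping in step~3. One must check that the terms coming from $T^+f_0$ in degrees $n\ge4$, and the non-$X^{r-3}Y^3$ parts, genuinely land in $V_r^{*}+X_{r-1}$ modulo $p$. One must also check the exact normalisation (the signs and the factor $p-1\equiv-1$) so that the eigenvalue comes out as $\bar\lambda$ rather than $\bar\lambda^{-1}$. If the direct route gives the wrong power of $a_p/p^2$, I would instead rescale $f_1$ by $a_p/p^2$ and divide $f_0$ by an extra factor of $p$, as in Proposition~\ref{prop:A_5}. In that case I would also use the mod-$p^2$ refinement of Lemma~\ref{lem:2.1} to control the $\mathrm{Id}$-term.
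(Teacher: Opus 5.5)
Your proposal is correct and is essentially the paper's own proof. It uses the same $f_1$ and the same $f_0$ built from the $\alpha_j$ of Lemma~\ref{lem:2.2}, obtains the $\mathrm{Id}$-term $\tfrac{r-3}{3}X^{r-3}Y^3$ from Lemma~\ref{lem:2.1} with $i=0$, and uses Lemma~\ref{lem:2.2}(3) to get $T^+f_0\equiv -\tfrac{p^2}{a_p}\binom{r}{3}\sum_\lambda[g^0_{1,[\lambda]},X^{r-3}Y^3]$, giving $-\overline{\tfrac{p^2}{a_p}\binom{r}{3}}\,(T-\bar\lambda)[\mathrm{Id},X^{p-4}]$ in $\mathrm{ind}_{KZ}^GJ_1$ directly, so no fallback rescaling is needed. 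One small slip: $(-[\lambda])^{j-n}$ is not independent of $j$ when $\lambda=0$, but in that case only the $j=n$ term survives, and $\alpha_3\equiv\binom{r}{3}$ gives the same coefficient.
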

\begin{proof}
 Let us consider $f_0,f_1\in \mathrm{ind}_{KZ}^G \mathrm{Sym}^r \bar{\mathbb{Q}}_p^2$ defined by \begin{eqnarray*}
     f_1 &=& \sum_{\lambda \in \mathbb{F}_p}\biggl[g^0_{1,\left[\lambda\right]}, \frac{Y^r-X^{r-3}Y^3}{p}\biggl]\\
     f_0 &=& \biggl[\mathrm{Id}, \frac{p-1}{p \cdot a_p}\sum_{\substack{
3\le j<r \\
j \equiv 3 \pmod{p-1}
}}
\alpha_j \cdot X^{r-j} Y^{j} \biggl],
 \end{eqnarray*}    
 where $\alpha_j$ are integers as in Lemma~\ref{lem:2.2}. Clearly $T^+f_1$, $-a_pf_1$ and $T^-f_0$ die mod $p$ and mod $X_{r-1}$.
 We have \begin{eqnarray*}
     T^-f_1 &=& \biggl[\mathrm{Id}, \frac{p-1}{p}\sum_{\substack{
3\le j<r \\
j \equiv 3 \pmod{p-1}
}}
\binom{r}{j} X^{r-j} Y^{j} \biggl], \text{ so}\\
T^-f_1-a_pf_0 &=& \biggl[\mathrm{Id}, \frac{p-1}{p}\sum_{\substack{
3\le j<r \\
j \equiv 3 \pmod{p-1}
}}
\biggl(\binom{r}{j}-\alpha_j\biggl) X^{r-j} Y^{j} \biggl]\\
&=& \biggl[\mathrm{Id}, \frac{r-3}{3}X^{r-3}Y^3 \biggl]
 \end{eqnarray*}
 since $X^{r-j}Y^j \equiv X^{r-3}Y^3 \pmod{V_r^{(1)}}$ and where we have used Lemma~\ref{lem:2.1}.
 Moreover, by Lemma~\ref{lem:2.2} part $(3)$,  we have \[
 T^+f_0 = \sum_{\lambda \in \mathbb{F}_p}\biggl[g^0_{1,\left[\lambda\right]}, \frac{-p^2}{a_p} \binom{r}{3} X^{r-3}Y^3 \biggl].
 \]
 Combining all these together, we have \[
 (T-a_p)(f_0+f_1)=  \biggl[\mathrm{Id}, \frac{r-3}{3}X^{r-3}Y^3 \biggl]-\frac{p^2}{a_p} \binom{r}{3}\sum_{\lambda \in \mathbb{F}_p}\biggl[g^0_{1,\left[\lambda\right]}, X^{r-3}Y^3 \biggl]. \]
 If $p\mid \binom{r}{3}$, then $(T-a_p)(f_0+f_1)$ is integral and $\overline{(T-a_p)(f_0+f_1)}$ maps to $\overline{\frac{r-3}{3}}\biggl[\mathrm{Id}, X^{p-4}\biggl]$ in $\mathrm{ind}_{KZ}^G J_1$. If $p \nmid \binom{r}{3}$, then $(T-a_p)(f_0+f_1)$ is integral and $\overline{(T-a_p)(f_0+f_1)}$ maps to $-\overline{{\frac{p^2}{a_p}}\binom{r}{3}}(T-\bar{\lambda})\biggl[\mathrm{Id}, X^{p-4}\biggl]$ in $\mathrm{ind}_{KZ}^G J_1$.
 \end{proof}
 \begin{prop}\label{Gh Ma 7}
      Let $2p+1 \le r \equiv 3 \pmod{p-1}$ and $p \nmid r-3$. Then $F_2=0$.
 \end{prop}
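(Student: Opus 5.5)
We want to show $F_2=0$ when $r\equiv 3 \pmod{p-1}$, $r\ge 2p+1$ and $p\nmid r-3$. Recall that $F_2$ is the image of $\mathrm{ind}_{KZ}^G J_2$. Here $J_2$ is the socle $V_{a-2}\otimes D = V_1\otimes D$ of $V_r^*/V_r^{**}$, viewed inside $W_2/W_3 = (X_{r-1}+V_r^*)/(X_{r-1}+V_r^{**})$. By Proposition~\ref{prop:3.1}(ii), with $a=3$, this socle is $V_1\otimes D$ and the cosocle is $V_{p-2}\otimes D^2$.

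So it suffices to show that $J_2=0$ already at the level of $\Gamma$-modules. Equivalently, the image of $X_{r-1}^*$ in $V_r^*/V_r^{**}$ must contain the socle $V_1\otimes D$. In that case $W_2/W_3$ is a quotient of $J_3$ only, and $\mathrm{ind}_{KZ}^G J_2=0$, so $F_2=0$. No Hecke computation and no hypothesis on $a_p$ is needed, which is consistent with the statement omitting $v_p(a_p)=2$.

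The plan is to quote the structure result for $X_{r-1}^*/X_{r-1}^{**}$ from \cite{GV22}, exactly as in Proposition~\ref{prop:A_3}(ii). That proposition used \cite[Proposition 4.18]{GV22}: if $r\equiv r_0 \pmod p$ with $r_0\neq a$, then $X_{r-1}^{(1)}/X_{r-1}^{(2)}\cong V_{a-2}\otimes D$. With $a=3$, the hypothesis $p\nmid r-3$ is precisely the condition $r_0\neq 3$. This gives $X_{r-1}^*/X_{r-1}^{**}\cong V_1\otimes D$, a nonzero submodule of the non-split extension $V_r^*/V_r^{**}$. Since that extension is non-split, its only nonzero proper submodule is the socle. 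Hence the image is exactly $V_1\otimes D = J_2$, so $J_2$ dies in $W_2/W_3$.

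The main obstacle is checking that \cite[Proposition 4.18]{GV22} really applies in the range $a=3$, $r\ge 2p+1$, given the index sets $\mathcal{I}(a,m)$ appearing there; compare the use of $\mathcal{I}(3,2)$ in Proposition~\ref{Gh Ma 1}. If it does not apply directly, the fallback is to exhibit explicitly an element of $X_{r-1}$ whose image in $V_r^*/V_r^{**}$ is a nonzero multiple of $\theta X^{r-p-2}Y$, which maps to $X$ in $J_2$.

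To build such an element, take the polynomials $\sum_{j\equiv 2 \pmod{p-1}}\binom{r-1}{j}X^{r-j}Y^j$ and $\sum_{j\equiv 2 \pmod{p-1}}\binom{r}{j}X^{r-j}Y^j$, both in $X_{r-1}$, and combine them with \cite[Lemma 2.12]{GV22}. Arguing as in Proposition~\ref{Gh Ma 2}, the coefficient that appears should be a unit multiple of $r-3$. It is nonzero exactly when $p\nmid r-3$, which gives $F_2=0$.
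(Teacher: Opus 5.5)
Your proposal is correct and is essentially the paper's proof. The paper also invokes \cite[4.18]{GV22} to obtain $X_{r-1}^{*}/X_{r-1}^{**}\cong V_1\otimes D$, and it settles the applicability check you flagged as the main obstacle by noting $\mathcal{I}(3,1)=\{3\}$, so the condition is exactly $p\nmid r-3$; your argument that this copy of $V_1\otimes D$ must be the socle of the non-split extension $V_r^*/V_r^{**}$, so that $W_2/W_3\cong J_3$ and $J_2=0$, is the step the paper leaves implicit, and the explicit fallback construction is not needed.
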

 \begin{proof}
     By \cite[Lemma 4.18]{GV22}, we have $\frac{X_{r-1}^{*}}{X_{r-1}^{**}} \cong V_1 \otimes D$, noting that $\mathcal{I}(3,1)=\{3\}$.
 \end{proof}
\begin{prop}\label{Gh Ma 8}
    Let $3p+2 \le r \equiv 3 \pmod{p-1}$. Assume that $v_p(a_p)=2$ and $p \nmid \binom{r}{4}$. Then $F_5$ is a quotient of $\pi(1,\bar{\lambda}^{-1},\omega)$, where $\lambda \coloneqq \frac{r-3}{3 \binom{r}{3}} \cdot \frac{a_p}{p^2}$.
    \end{prop}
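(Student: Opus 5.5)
We will produce an integral element of $\mathrm{ind}_{KZ}^G \mathrm{Sym}^r\overline{\mathbb{Q}}_p^2$ of the form $(T-a_p)f$ whose reduction, viewed in $\mathrm{ind}_{KZ}^G J_5$ with $J_5=V_1\otimes D$ for $a=3$, equals a unit times $(T-\bar\lambda^{-1})[\mathrm{Id},v]$, where $v\in J_5$ is a generator. The shape is dictated by the analogous argument in Proposition~\ref{prop:A_2}(ii) and Proposition~\ref{Gh Ma 6}. The first ingredient is a ``$\theta^2$ reduction'' of $X^{r-2}Y^2$ modulo $X_{r-1}+V_r^{(3)}$ for $a=3$. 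To get it, we combine the unnamed lemma preceding Corollary~\ref{lem: required for p | r-2}, which gives $\binom r2 X^{r-2}Y^2+AX^{r-2p}Y^{2p}+BX^{r-p-1}Y^{p+1}+CX^{r-3p+1}Y^{3p-1}\in X_{r-1}+V_r^{(3)}$, with the analogous relation coming from $\sum_j\binom{r-1}{j}X^{r-j}Y^j\in X_{r-1}$. Using $\theta^2X^{r-2p-2}=X^{r-2}Y^2-2X^{r-p-1}Y^{p+1}+X^{r-2p}Y^{2p}$, together with a corresponding expression for $\theta^2X^{r-3p-1}Y^{p-1}$, we expect to express $X^{r-2}Y^2$ modulo $X_{r-1}+V_r^{(3)}$ as $c_1\,\theta^2X^{r-2p-2}+c_2\,\theta^2X^{r-3p-1}Y^{p-1}$. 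Here the image in $J_5$ of the relevant combination should be a multiple of $Y$ (compare Proposition~\ref{Gh Ma 3}, where $\theta^2X^{r-3p-1}Y^{p-1}\mapsto -Y$), and the coefficient should be, up to a unit, $3\binom r3/(r-3)$. The hypothesis $p\nmid\binom r4$ is exactly what makes this coefficient a unit.

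Next we build $f=f_0+f_1$, imitating Proposition~\ref{Gh Ma 6}. For $f_0$ we take $[\mathrm{Id},\theta^2X^{r-3p-1}Y^{p-1}/p^2]$, or more precisely the combination with $\theta^2X^{r-2p-2}$ whose $J_5$-image is $Y$. Then $T^-f_0\equiv0$ since $r\ge3p+2$. The term $T^+f_0$ produces $\sum_\lambda[g^0_{1,[\lambda]},(\text{poly})X^{r-2}Y^2]$ plus lower terms. After the $\theta^2$ reduction above, this becomes $\frac{3\binom r3}{r-3}\sum_\lambda[g^0_{1,[\lambda]},\theta^2\cdots]$, which maps in $\mathrm{ind}_{KZ}^GJ_5$ to a multiple of $T[\mathrm{Id},Y]$ by \cite[(4.2)]{Glo78} and \cite[Lemma 5.3]{Bre03b}. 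Meanwhile $-a_pf_0$ contributes $-\overline{a_p/p^2}[\mathrm{Id},Y]$. If extra non-integral pieces arise, for instance $X^{r-p-1}Y^{p+1}$-type terms that do not vanish because $p\nmid r-1$, we correct them with auxiliary functions $f_1,f_2$ supported on $g^0_{1,*},g^0_{2,*}$. These would be built from the integers $\alpha_j$ of Lemma~\ref{lem:2.7} applied to $r-1$ and from Lemma~\ref{lem:2.2}, exactly as in Propositions~\ref{Gh Ma 3} and~\ref{Gh Ma 4}, so that $T^-f_2-a_pf_1$ cancels denominators via Lemma~\ref{lem:2.1} and the $A,B$-type lemmas.

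Finally, dividing by the unit $\frac{3\binom r3}{r-3}$ shows that $\overline{(T-a_p)f}$ maps to a unit times $(T-\bar\lambda^{-1})[\mathrm{Id},v]$, where $\lambda=\frac{r-3}{3\binom r3}\frac{a_p}{p^2}$. Since $[\mathrm{Id},v]$ generates $\mathrm{ind}_{KZ}^GJ_5$, the surjection onto $F_5$ factors through $\pi(1,\bar\lambda^{-1},\omega)$. The twist $\omega$ is read off from $J_5=V_1\otimes D$, consistent with $\lambda$ appearing in $F_1$ via Proposition~\ref{Gh Ma 6}(ii).

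The main obstacle will be the first step. It requires the precise mod $p$ (and possibly mod $p^2$) bookkeeping showing that $X^{r-2}Y^2$ reduces to $\theta^2$-terms with coefficient exactly $3\binom r3/(r-3)$ in $J_5$, and that the stray socle ($J_4$) components either vanish or are harmless. We would attack it by solving the linear system in $A,B,C$ of the lemma above for both $r$ and $r-1$ and eliminating the $X^{r-p-1}Y^{p+1}$ and $X^{r-2p}Y^{2p}$ terms, as in the proof of Lemma~\ref{lem:3.3}.
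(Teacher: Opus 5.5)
\textbf{There is a genuine gap: your first step is false under the hypotheses, and the rest of the construction rests on it.}

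\emph{Why the $\theta^2$-reduction of $X^{r-2}Y^2$ fails.} Since $p\nmid\binom{r}{4}$ gives $p\nmid (r-2)(r-3)$, the monomial $X^{r-2}Y^2$ is \emph{not} a combination of $\theta^2$-terms modulo $X_{r-1}+V_r^{(3)}$. Using $XY^{r-1}\in X_{r-1}$ and \cite[Lemma 2.12]{GV22}, modulo $X_{r-1}+V_r^{(2)}$ it equals $-(r-2)\theta X^{r-p-2}Y+\theta Y^{r-p-1}$. The image of this in $J_3=V_{p-2}\otimes D^2$ is $-(r-2)X^{p-2}\neq 0$. Moreover $J_3$ really survives in $W_2/W_3$, because $X_{r-1}^{*}/X_{r-1}^{**}\cong V_1\otimes D$ is only the socle (Proposition~\ref{Gh Ma 7}). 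This is exactly the computation in Proposition~\ref{Gh Ma 2}, where your element $f_0=[\mathrm{Id},(\theta^2X^{r-3p-1}Y^{p-1}-\theta^2X^{r-2p-2})/p^2]$ is used to kill $F_2$ and $F_3$. So $(T-a_p)f_0$ does not lie in $\mathrm{ind}_{KZ}^G W_3$ and gives no relation in $F_5$.

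\emph{The shapes would not match anyway.} $T^+f_0=-[g^0_{1,0},X^{r-2}Y^2]$ lives at a single vertex. Using $\theta^2X^{r-3p-1}Y^{p-1}$ alone instead gives the constant vector $X^{r-2}Y^2$ at the $g^0_{1,[\lambda]}$ with $\lambda\neq0$. By contrast, in $\mathrm{ind}_{KZ}^G(V_1\otimes D)$ one has $T[\mathrm{Id},Y]=\sum_{\lambda}[g^0_{1,[\lambda]},-[\lambda]X]+[\alpha,Y]$. The Glover--Breuil step of Proposition~\ref{prop:A_2}(i) works for the socle generator of $J_4$, which $T^-$ kills, not for a cosocle generator. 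You also cannot borrow the correction terms of Propositions~\ref{Gh Ma 3} and~\ref{Gh Ma 4}: they use $p\mid r-1$, and Lemma~\ref{lem:2.7} applied to $r-1$ requires $p\mid\binom{r-1}{2}$.

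\emph{The missing idea.} The eigenvalue should come from $T^-$ applied to a function on the sphere of radius one. Build it from the Weyl conjugate $v'=\theta^2X^{p-1}Y^{r-3p-1}-\theta^2Y^{r-2p-2}$, whose image in $J_5$ is $X$; then $T[\mathrm{Id},X]=\sum_\lambda[g^0_{1,[\lambda]},X]$ has no $\alpha$-term. Set $f_1=\sum_{\lambda\in\mathbb{F}_p}[g^0_{1,[\lambda]},v'/p^2]$. Then:
\begin{itemize}
\item $T^+f_1\equiv 0$;
\item $-a_pf_1$ maps to $-\overline{a_p/p^2}\,T[\mathrm{Id},X]$;
\item $T^-f_1=\sum_\lambda[\mathrm{Id},-X^2([\lambda]X+Y)^{r-2}]$ is integral.
\end{itemize}
Summing over \emph{all} $\lambda$ is what removes the $J_3$-obstruction; a single term $-X^2Y^{r-2}$ is just the Weyl conjugate of $X^{r-2}Y^2$. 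The sum leaves $\sum_{0<j<r-2,\,j\equiv1}\binom{r-2}{j}X^{r-j}Y^j\equiv-(r-2)X^{r-p}Y^p$ modulo $X_{r-1}+V_r^{(3)}$.

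So the congruence you need is for $X^{r-p}Y^p$, not $X^{r-2}Y^2$: namely $(3-r)X^{r-p}Y^p\equiv\binom{r}{2}v'$ modulo $X_{r-1}+V_r^{(3)}$. The paper obtains it as follows.
\begin{itemize}
\item Choose $b_1,b_2,b_3$ so that $-\binom{r}{2}X^{r-2}Y^2+b_1X^pY^{r-p}+b_2X^{2p-1}Y^{r-2p+1}+b_3X^{3p-2}Y^{r-3p+2}$ lies in $X_{r-1}+V_r^{(3)}$, using $\sum_{j\equiv 2}\binom{r}{j}X^{r-j}Y^j\in X_{r-1}$.
\item The same polynomial with $b_1$ replaced by $b_1+3-r$ lies in $V_r^{(2)}$, and is $\binom{r}{2}(\theta^2X^{r-3p-1}Y^{p-1}-\theta^2X^{r-2p-2})$ modulo $V_r^{(3)}$.
\item Subtracting the two and applying the Weyl involution gives the claim.
\end{itemize}
With $(r-2)\binom{r}{2}=3\binom{r}{3}$ and $p\nmid r-3$, this yields $\overline{(T-a_p)f_1}\mapsto-\overline{a_p/p^2}\,(T-\bar\lambda^{-1})[\mathrm{Id},X]$. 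Your constant $3\binom{r}{3}/(r-3)$ is the right one, but the mechanism you propose for producing it does not exist.
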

\begin{proof}
    Let us consider $f_1\in \mathrm{ind}_{KZ}^G \mathrm{Sym}^r \bar{\mathbb{Q}}_p^2$ defined by \[
    f_1 = \sum_{\lambda \in {\mathbb{F}_p}} \biggl[g^0_{1,\left[{\lambda}\right]}, \frac{(\theta^2 X^{p-1}Y^{r-3p-1}-\theta^2Y^{r-2p-2})}{p^2}\biggl].
    \]
    Then clearly $T^+f_1$ dies mod $p$. We compute \begin{eqnarray*}
        T^-f_1 &=& \sum_{\lambda \in \mathbb{F}_p} \biggl[g^0_{1,\left[{\lambda}\right]} \alpha, -X^2Y^{r-2} \biggl]\\
        &=& \sum_{\lambda \in \mathbb{F}_p}\biggl[\mathrm{Id},-X^2(\left[\lambda \right]X+Y)^{r-2}\biggl]\\
&=& \biggl[\mathrm{Id}, \sum_{\substack{
0<j<r-2 \\
j \equiv 1 \pmod{p-1}
}}
 \binom{r-2}{j} X^{r-j} Y^{j}\biggl]\\
 &\equiv& \biggl[\mathrm{Id}, -(r-2)X^{r-p}Y^{p}\biggl] \pmod{X_{r-1}+V_r^{(3)}},
    \end{eqnarray*}
    as  \begin{eqnarray*}
        \sum_{\substack{
1<j<r-2 \\
j \equiv 1 \pmod{p-1}
}}
 \binom{r-2}{j} X^{r-j} Y^{j}+(r-2)X^{r-p}Y^p \in V_r^{(3)},
    \end{eqnarray*}
    by \cite[Lemma 2.7, Lemma 2.14]{GV22}.
Now, let $a$, $b$ and $c$ be mod $p$ integers satisfying
\begin{align*}
3-r-\binom{r}{2}+a+b+c &= 0\\
2r(2-r)+ra+(r+1)b+(r+2)c &= 0\\
-r\binom{r-1}{2}
+\binom{r}{2}a
+\binom{r+1}{2}b
+\binom{r+2}{2}c &= 0.
\end{align*}
Then by \eqref{important GM 1}, \eqref{important GM 2} and \eqref{important GM 3} we have 
\begin{eqnarray*}
\sum_{\substack{
2<j<r-1\\
j\equiv 2 \!\!\!\pmod{p-1}
}}
\binom{r}{j}X^{r-j}Y^j+aX^pY^{r-p}
+bX^{2p-1}Y^{r-2p+1}
+cX^{3p-2}Y^{r-3p+2} &\in& V_r^{(3)}
\end{eqnarray*}
(note that the polynomial is divisible by $X^3$ and $Y^3$). Thus by \eqref{useful for last one}, we have 
\begin{eqnarray*}
J(X,Y) \coloneqq -\binom{r}{2}X^{r-2}Y^2
+aX^pY^{r-p}
+bX^{2p-1}Y^{r-2p+1}
+cX^{3p-2}Y^{r-3p+2}
&\in& V_r^{(3)}+X_{r-1},
\end{eqnarray*}
 Similarly, the polynomial
\[
F(X,Y)
\coloneqq
-\binom{r}{2}X^{r-2}Y^2
+(a+3-r)X^pY^{r-p}
+bX^{2p-1}Y^{r-2p+1}
+cX^{3p-2}Y^{r-3p+2} \>  \in \>  V_r^{(2)}. \]
By \cite[Lemma 2.12]{GV22}, we obtain
\begin{align*}
F(X,Y)
&\equiv
\biggl(
(a+3-r)\binom{r}{2}
+b\binom{r+1}{2}
+c\binom{r+2}{2}
\biggr)
\theta^2X^{r-3p-1}Y^{p-1}-\binom{r}{2}\theta^2 X^{r-2p-2}
\pmod{V_r^{(3)}}\\
&=
\binom{r}{2}(\theta^2X^{r-3p-1}Y^{p-1}-\theta^2 X^{r-2p-2})
\pmod{V_r^{(3)}},
\end{align*}
by the third relation satisfied by $a$, $b$, and $c$.
Subracting $J(X,Y)$ from $F(X,Y)$, we obtain
\[
(3-r)X^pY^{r-p}
\equiv
\binom{r}{2}(\theta^2X^{r-3p-1}Y^{p-1}-\theta^2 X^{r-2p-2})
\pmod{V_r^{(3)}+X_{r-1}}.
\]
Applying the Weyl involution 
$\left(\begin{smallmatrix}
0 & 1\\
1 & 0
\end{smallmatrix} \right)$,
we obtain
\[
(3-r)X^{r-p}Y^p
\equiv
\binom{r}{2}(\theta^2X^{p-1}Y^{r-3p-1}-\theta^2 Y^{r-2p-2})
\pmod{V_r^{(3)}+X_{r-1}}.
\]
Since $r\not\equiv 3 \pmod p$, %we  have %may divide by $3-r$ to deduce that
%\[
%(r-2)X^{r-p}Y^p
%\equiv
%\frac{\binom{r}{2}(r-2)}{3-r}
%(\theta^2X^{p-1}Y^{r-3p-1}-\theta^2 Y^{r-2p-2})
%\pmod{V_r^{(3)}+X_{r-1}}.
%\]
%Using the identity
%\[
%(r-2)\binom{r}{2}=3\binom{r}{3},
%\]
we have
\[
-(r-2)X^{r-p}Y^p
\equiv
\frac{3\binom{r}{3}}{r-3}
(\theta^2X^{p-1}Y^{r-3p-1}-\theta^2Y^{r-2p-2})
\pmod{V_r^{(3)}+X_{r-1}}.
\]
   Combining these, we have \begin{eqnarray*}
   T^-f_1-a_pf_1 &\equiv& -\frac{a_p}{p^2}\sum_{\lambda \in {\mathbb{F}_p}} \biggl[g^0_{1,\left[{\lambda}\right]}, {(\theta^2 X^{p-1}Y^{r-3p-1}-\theta^2Y^{r-2p-2})}\biggl]\\
   && \qquad +\biggl[\mathrm{Id},\frac{3\binom{r}{3}}{r-3}(\theta^2 X^{p-1}Y^{r-3p-1}-\theta^2Y^{r-2p-2}) \biggl]\pmod{V_r^{(3)}+X_{r-1}}.
   \end{eqnarray*}  
   Hence, $(T-a_p)f_1$ is integral and $\overline{(T-a_p)f_1}$ maps to $-\overline{\frac{a_p}{p^2}}(T-{\bar{\lambda}}^{-1})\left[\mathrm{Id}, X\right]$ in $\mathrm{ind}_{KZ}^G J_5$,
by (applying the Weyl involution to) Lemma~\ref{lem:2.10}.
\end{proof}
\begin{prop}\label{Gh Ma 9}
 Let $3p+2 \le r \equiv 3 \pmod{p-1}$. Assume that $v_p(a_p)=2$ and $p \mid r-2$. Then \begin{itemize}
     \item [(i)] $F_5=0$
     \item[(ii)] $F_4=0$. 
 \end{itemize}
\end{prop}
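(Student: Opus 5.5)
Both parts come down to exhibiting an explicit function $f\in\mathrm{ind}_{KZ}^G\mathrm{Sym}^r\overline{\mathbb{Q}}_p^{\,2}$ such that $(T-a_p)f$ is integral and its reduction generates the relevant induced weight. This is the pattern of Proposition~\ref{Gh Ma 3} for $F_5$ and of Proposition~\ref{Gh Ma 4} for $F_4$. The essential new input is Corollary~\ref{lem: required for p | r-2}: when $p\mid r-2$ we have $X^{r-2}Y^2\in X_{r-1}+V_r^{(3)}$.

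For (i), I take
\[
f_0=\biggl[\mathrm{Id},\frac{\theta^2X^{r-3p-1}Y^{p-1}-\theta^2X^{r-2p-2}}{p^2}\biggr].
\]
Since $r\ge 3p+2$, $T^-f_0$ vanishes mod $p$. As in Proposition~\ref{lem:G_1}, $T^+f_0=-\bigl[g^0_{1,0},X^{r-2}Y^2\bigr]$ modulo $p$. By Corollary~\ref{lem: required for p | r-2} this term vanishes in $\mathrm{ind}_{KZ}^GP$. Hence $(T-a_p)f_0$ is integral and its image in $\mathrm{ind}_{KZ}^GP$ is
\[
-\overline{a_p/p^2}\,\bigl[\mathrm{Id},\theta^2X^{r-3p-1}Y^{p-1}-\theta^2X^{r-2p-2}\bigr].
\]
By Lemma~\ref{lem:2.10} (as used in Proposition~\ref{Gh Ma 3}), $\theta^2X^{r-3p-1}Y^{p-1}$ maps to $-Y$ in $J_5=V_1\otimes D$, while $\theta^2X^{r-2p-2}$ maps to $0$ there. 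So the image in $\mathrm{ind}_{KZ}^GJ_5$ is the nonzero multiple $\overline{a_p/p^2}[\mathrm{Id},Y]$ of a generator, and therefore $F_5=0$.

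For (ii), I use $f_0=[\mathrm{Id},\theta^2X^{r-2p-2}/p^2]$, exactly as in Proposition~\ref{prop:A_2}(i). Again $T^-f_0\equiv0$ mod $p$. We have $T^+f_0=\sum_{\lambda}[g^0_{1,[\lambda]},X^{r-2}Y^2]$, and each term dies in $\mathrm{ind}_{KZ}^GP$ by Corollary~\ref{lem: required for p | r-2}. So $(T-a_p)f_0$ is integral with image $-\overline{a_p/p^2}[\mathrm{Id},\theta^2X^{r-2p-2}]$. This element lies in the socle $J_4=V_{p-2}\otimes D^2$, where it corresponds to $X^{p-2}$, a generator of $J_4$ as a $\Gamma$-module. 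Hence its image generates $\mathrm{ind}_{KZ}^GJ_4$, and so $F_4=0$. In fact, since (i) already gives $F_5=0$, this shows $U_3=0$.

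The main point to check is that Corollary~\ref{lem: required for p | r-2} applies as stated for the whole class $p\mid r-2$. In particular I need that $\theta^2X^{r-2p-2}$ is not itself killed, i.e.\ that its class in $W_3$ really is a generator of $J_4$ and not $0$. I also need to confirm that the image of $\theta^2X^{r-3p-1}Y^{p-1}$ in $J_5$ is computed by Lemma~\ref{lem:2.10} with $m=2$, $r'=r-2p-2$, $a'=[a-4]=p-2$, $i=p-1$, giving the coefficient $\pm1$ on $Y$. If one wants to be careful, one should check via \cite[Proposition 4.18]{GV22} that $X_{r-1}^{**}/X_{r-1}^{***}$ does not already contain $J_4$ with some other identification. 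Even so, the integrality argument above kills $J_4$ regardless, so nothing further should be needed.
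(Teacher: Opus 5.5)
Your proposal is correct and is essentially the paper's own proof. The paper uses the same two test functions, $f_0=\bigl[\mathrm{Id},(\theta^2X^{r-3p-1}Y^{p-1}-\theta^2X^{r-2p-2})/p^2\bigr]$ for (i) and $f_0=\bigl[\mathrm{Id},\theta^2X^{r-2p-2}/p^2\bigr]$ for (ii); in each case $T^+f_0$ is killed by the corollary $X^{r-2}Y^2\in X_{r-1}+V_r^{(3)}$ for $p\mid r-2$, so $(T-a_p)f_0$ reduces to the image of $-a_pf_0$, which is the generator $\overline{a_p/p^2}\,[\mathrm{Id},Y]$ of $\mathrm{ind}_{KZ}^GJ_5$, respectively $-\overline{a_p/p^2}\,[\mathrm{Id},X^{p-2}]$ of $\mathrm{ind}_{KZ}^GJ_4$ (and your worry about the class of $\theta^2X^{r-2p-2}$ is harmless, since $J_4$ is by definition generated by that class).
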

\begin{proof}
   (i)  Let us consider $f_0\in \mathrm{ind}_{KZ}^G \mathrm{Sym}^r \bar{\mathbb{Q}}_p^2$ defined by \begin{eqnarray*}
         f_0 = \biggl[\mathrm{Id}, \frac{\theta^2X^{r-3p-1}Y^{p-1}-\theta^2X^{r-2p-2}}{p^2}\biggl].
     \end{eqnarray*}
 Clearly $T^-f_0$ dies mod $p$. We compute that \begin{eqnarray*}
     T^+f_0 &=& -\biggl[g^0_{1,0}, X^{r-2}Y^2\biggl] \equiv 0 \pmod{X_{r-1}+V_r^{(3)}},
 \end{eqnarray*}    
by Corollary~\ref{lem: required for p | r-2}.
Hence, we have $(T-a_p)f_0=-\frac{a_p}{p^2}\biggl[\mathrm{Id}, \theta^2X^{r-3p-1}Y^{p-1}-\theta^2X^{r-2p-2}\biggl]$ is integral and maps to a generator
$\overline{\frac{a_p}{p^2}} \left[\mathrm{Id}, Y\right] \in \mathrm{ind}_{KZ}^G J_5$.

  (ii)   Let us consider $f_0\in \mathrm{ind}_{KZ}^G \mathrm{Sym}^r \bar{\mathbb{Q}}_p^2$ defined by \[
     f_0 = \biggl[\mathrm{Id}, \frac{\theta^2X^{r-2p-2}}{p^2}\biggl].
     \]
     Clearly $T^-f_0$ dies mod $p$. We compute \[
     T^+f_0= \sum_{\lambda \in \mathbb{F}_p}\biggl[g^0_{1,\left[\lambda \right]}, X^{r-2}Y^2\biggl]\equiv 0 \pmod{X_{r-1}+V_r^{(3)}},
     \]
     by Corollary~\ref{lem: required for p | r-2}.
     Hence, we have $(T-a_p)f_0= -\frac{a_p}{p^2}\biggl[\mathrm{Id}, \theta^2X^{r-2p-2}\biggl]$ is integral and maps to a generator
$-\overline{\frac{a_p}{p^2}} \left[\mathrm{Id}, X^{p-2}\right] \in \mathrm{ind}_{KZ}^G J_4$.
\end{proof}

\vspace{2cm}
{%\color{red}we have to write something

%we have to write something

%we have to write something}

\begin{proof}[Proof of Theorem~\ref{thm:a=3}] 
The above propositions may be summarized by the following table:
\begin{table}[h]
\centering
\begin{tabular}{|>{\centering\arraybackslash}m{5cm}|
                >{\centering\arraybackslash}m{1.3cm}|
                >{\centering\arraybackslash}m{1.3cm}|
                >{\centering\arraybackslash}m{1.3cm}|
                >{\centering\arraybackslash}m{1.3cm}|
                >{\centering\arraybackslash}m{1.3cm}|}
                  
\hline
& $F_5$ & $F_4$ & $F_3$ & $F_2$ & $F_1$ \\ \hline
$p\nmid \binom{r}{4}$
& $\checkmark$& &  &  &$\checkmark$ \\ \hline

$p\mid r$
&  $\checkmark$& & & & \\ \hline

$p\mid r-1$
& &$\checkmark$ & & &\\ \hline

$p\mid r-2$
& &  &$\checkmark$ & & \\ \hline

$p \mid r-3$
& & & &  &$\checkmark$ \\ \hline
\end{tabular}
\end{table}

In particular, we have:
\begin{itemize}
    \item [(1)] If $p \nmid \binom{r}{4}$, then by Proposition~\ref{Gh Ma 1} part $(i)$ and Proposition~\ref{Gh Ma 2} we get $\overline{\Theta}^{\,{ss}} \cong (F_5 \oplus F_1)^{\,{ss}}$. Moreover, Proposition~\ref{Gh Ma 6} part $(ii)$ and Proposition~\ref{Gh Ma 8} tells us that we have surjections $\pi(p-4,\bar{\lambda},\omega^3) \twoheadrightarrow F_1$ and $\pi(1,\bar{\lambda}^{-1},\omega) \twoheadrightarrow F_5$, where $\lambda= \frac{r-3}{3 \binom{r}{3}} \cdot \frac{a_p}{p^2}$. Hence, we have $\overline{\Theta}^{\,{ss}} \cong \pi(p-4,\bar{\lambda},\omega^3)^{\,{ss}} \oplus \pi(1,\bar{\lambda}^{-1},\omega)^{\,{ss}}$.
    \item [(2)] If $p \mid r$, then by Proposition~\ref{Gh Ma 1} part $(i)$, Proposition~\ref{Gh Ma 2} and Proposition~\ref{Gh Ma 6} part $(i)$, we get $\overline{\Theta}^{\,{ss}} \cong F_5 \cong \pi(1,0,\omega)$. 
    \item[(3)]  If $p \mid r-1$, then by Proposition~\ref{Gh Ma 2}, Proposition~\ref{Gh Ma 3}, Proposition~\ref{Gh Ma 6} part $(i)$ and Proposition~\ref{Gh Ma 4} we have $\overline{\Theta}^{\,{ss}}  \cong \pi(p-2,{\lambda},\omega^2)^{\,{ss}}\oplus\pi(p-2,{\lambda}^{-1},\omega^2)^{\,{ss}} $, where $\lambda+{\lambda}^{-1}=  \overline{\frac{a_p^2-p^3(\frac{r-2p-1}{2})}{p^2a_p}}$.
    \item [(4)] If $p \mid r-2$, then by Proposition~\ref{Gh Ma 5}, Proposition~\ref{Gh Ma 6} part $(i)$, Proposition~\ref{Gh Ma 7} and Proposition~\ref{Gh Ma 9}, we get  $\overline{\Theta}^{\,{ss}} \cong F_3 \cong \pi(p-2,0,\omega^2)$. 
    \item [(5)] If $p \mid r-3$, then by Proposition~\ref{Gh Ma 1} part $(ii)$, Proposition~\ref{Gh Ma 2}, we get $\overline{\Theta}^{\,{ss}} \cong F_1 \cong \pi(p-4,0,\omega^3)$.
\end{itemize}
Theorem~\ref{thm:a=3} now follows in all cases from the mod $p$ LLC.
\end{proof}
\section{Examples}\label{examples}
In this section, we give some examples illustrating Theorem~\ref{main theorem} and also discuss in more detail the particular cases of the local mod $7$ and mod $3$ reductions of the Galois representations attached to $\Delta_{26}$ and $\Delta$ mentioned in the introduction.

\subsection{Table}

The following table was populated using the SageMath program of Rozensztajn \cite{Roz18}. 
 For simplicity, we assume $a_p=p^2$ in each 
 case. The table has 18 rows corresponding
 to the 18 possibilities in Theorem~\ref{main theorem}.
\begin{table}[h]
\centering
\caption{Reductions of crystalline representations of slope $2$}
\renewcommand{\arraystretch}{1.5}
\begin{tabular}{|>{\centering\arraybackslash}m{2cm}|
                >{\centering\arraybackslash}m{2cm}|
                >{\centering\arraybackslash}m{2cm}|
                >{\centering\arraybackslash}m{4cm}|}
                \hline
$r$& $p$& $a$ & $\overline{V}^{\,{ss}}_{k,a_p}$ \\ \hline % Row 1

$21$& $5$ & $1$ &$ \omega^2 \oplus  1$\\ \hline % Row 2
$29$& $5$ & $1$ &$\mathrm{ind}\left(\omega_2^{2}\right)$ \\ \hline % Row 3
$25$& $5$ & $1$ &$\mathrm{ind}\left(\omega_2^{2}\right)$\\ \hline % Row 4
$105$& $5$ &$1$ &  \\ \hline % Row 5
$22$& $5$ &$2$ & $\mathrm{ind}\left(\omega_2^{3}\right)$ \\ \hline % Row 6
$30$& $5$ &$2$ & $\mu_{3}\cdot \omega \oplus \mu_{2}\cdot \omega^2 $ \\ \hline % Row 7
$110$& $5$&$2$ &  \\ \hline % Row 8
$26$& $5$ & $2$& $ \omega \oplus \omega^2$\\ \hline % Row 9
$106$ & $5$ & $2$& \\ \hline % Row 10
$19$& $5$ & $3$&$\mu_{3}\cdot \omega \oplus \mu_{2}\cdot \omega^3$ \\ \hline % Row 11
$35$& $5$ &$3$ & $\mathrm{ind}\left(\omega_2^{2}\right) \otimes\, \omega$\\ \hline % Row 12
$31$& $5$ & $3$&$\mu_{\lambda}\cdot \omega^2 \oplus \mu_{\lambda^{-1}}\cdot \omega^2$, $\lambda+{\lambda^{-1}}=4$ \\ \hline % Row 13
 $27$ & $5$ &$3$ & $\mathrm{ind}\left(\omega_2^{4}\right) \otimes\, \omega^2$ \\ \hline % Row 14
$23$& $5$ & $3$& $\mathrm{ind}\left(\omega_2^{2}\right) \otimes\, \omega^3$\\ \hline % Row 15
$23$& $7$ & $5$& $\mu_{2} \cdot \omega^4 \oplus \mu_{4} \cdot \omega^2$ \\ \hline % Row 16
$53$& $7$ & $5$&$\mathrm{ind}\left(\omega_2^{4}\right)\otimes\, \omega^4$\\ \hline % Row 17
$47$& $7$ &$5$ & $\mu_{5}\cdot \omega^5 \oplus \mu_{3}\cdot \omega$\\ \hline % Row 18
$299$& $7$ & $5$&\\ \hline % Row 19
\end{tabular}
\end{table}

\vspace{7cm}

\noindent The final column matches with the results in Theorem~\ref{main theorem} (some entries
are left blank if the program does not terminate in a reasonable amount of time).

\subsection{$\Delta_{26}$ and $p = 7$} \label{example Delta 26}

Recall that $\Delta_{26} \coloneqq \Delta \cdot E_{14}$ is the unique normalized cuspidal newform of level $1$ and weight $k = 26$. Its $7$-th Fourier coefficient  is $a_7=39080597192$, which  has $7$-adic valuation $2$ as $39080597192=7^2 \times 797563208$. Since $\Delta_{26}$ has weight $k=26$ and $r = k-2=24 \ge 3 \cdot 7+2$ and $r \equiv 6 \neq 4 \pmod{6}$, 
Theorem~\ref{main theorem} applies.  By the first part of the last case of the theorem, %Theorem~\ref{main theorem}
the semisimplification of the global mod $7$ Galois representation  $\bar{\rho}_{\Delta_{26}}$ attached to $\Delta_{26}$ restricted to $G_{\mathbb{Q}_p}$ satisfies $${(\overline \rho_{{\Delta_{26}} |_{G_{\mathbb{Q}_p}}  }})^{ss} \cong \overline{V}_{26, a_7}^{\,ss} \cong \omega^2 \oplus \omega^5,$$ because $7 \nmid 18 \cdot 19 $ and $\lambda = \frac{18 \cdot 19 }{5 \cdot 4} \cdot \frac{a_7}{49} \equiv 1 \pmod{7}$. 

On the other hand, 
 %Also we have $E_{14} \equiv E_2 \pmod{7}$ as $n^{13} \equiv n \pmod{7}$. Hence, its $n$-th Fourier coefficient $b_n$ satisfies 
%$$b_n \equiv \sigma_1(n) \pmod{7}.$$ A short computation shows that the convolution $$a_n= \sum_{k=0}^{n} \tau(k)b_{n-k} 
%\equiv  n\sigma_3(n)+4\sum_{k=1}^{n-1}k\sigma_3(k)\sigma_1(n-k) \equiv n^2\sigma_3(n) \pmod{7}.$$
if
\begin{eqnarray*}
    E_2  =  1 -24 \sum_{n=1}^\infty \sigma_1(n) q^n & \text{and} & 
    E_{14} =  1 - 24 \sum_{n=1}^\infty \sigma_{13}(n) q^n
\end{eqnarray*}
and $q \frac{d}{dq}$ is the Ramanujan theta operator, then $q \frac{d}{dq} (\Delta)=\Delta \cdot E_2$ (take  the logarithmic derivative of $\Delta= q\prod_{n=1}^{\infty}(1-q^n)^{24}$ and multiply both sides by $q \cdot \Delta$), but since $E_{14} \equiv E_{2} \pmod{7}$ as $n^{13} \equiv n \pmod{7}$, we have $\Delta_{26} = \Delta \cdot E_{14} \equiv q \frac{d}{dq}(\Delta)\pmod{7}$,  so the $n$-th Fourier coefficient $a_n$ of $\Delta_{26}$ satisfies $$a_n \equiv n \cdot \tau(n) \equiv n^2\sigma_3(n)\pmod{7},$$
by a famous congruence of  Ramanujan which says that $\tau(n) \equiv n \sigma_3(n) \pmod{7}$.
Thus, $a_{\ell} \equiv \ell^2 +\ell^5 \pmod{7}$ for all prime $\ell$. Hence,  $\bar{\rho}_{\Delta_{26}}$  has semisimplification 
$$\bar\rho^{\,ss}_{\Delta_{26}}  \cong \omega^5 \oplus \omega^2,$$ so certainly its
restriction to $G_{\mathbb{Q}_p}$ has the same shape, corroborating 
the conclusion of Theorem~\ref{main theorem} in this case.

\subsection{ $\Delta$ and $p=3$}

In this section, we will give a separate treatment of the case of $\Delta$ and $p = 3$ using the techniques of this paper, because $\Delta$ is the first cusp form and so is of special interest.
Note that the slope is $2$ as $\tau(3)=252 = 9 \times 28$. 
However, note that  the prime $p =3$ (and $p = 2$) is generally avoided in this paper.
Moreover $r=10 \not\geq 3p+2$ so Theorem~\ref{main theorem} again does not apply. 
%Note that $r \equiv 2 \pmod{2}$ and  $r=10 \equiv 1 \pmod{9}$.

The following result has been known for some time (see \cite{Ser73}, it follows from the congruence $\tau(n) \equiv n \sigma_1(n) \pmod{3}$), but we give an alternative proof using the methods of this paper.

\begin{prop} We have 
    $ (\bar \rho_{{\Delta}}|_{G_{\mathbb{Q}_3}})^{ss} \cong 1 \oplus \omega$.
\end{prop}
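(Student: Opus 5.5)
The plan is to run the machinery of the paper with $p=3$, $r=10$, $a_p=\tau(3)=252$, so $v(a_p)=2$. Since $\Theta_{12,a_3}$ surjects from $\mathrm{ind}_{KZ}^G P$ with $P=V_{10}/(X_{9}+V_{10}^{***})$, compatibility with the mod $p$ Local Langlands Correspondence applies. It therefore suffices to show that $\overline{\Theta}^{\,ss}\cong LL(1\oplus\omega)$.

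For $p=3$, the representation $\mu_\lambda\omega^{r+1}\oplus\mu_{\lambda^{-1}}$ with $r=0$, $\lambda=1$, twisted by $\omega$, gives $\omega^2\oplus\omega=1\oplus\omega$ on $G_{\mathbb{Q}_3}$, since $\omega^2=1$ there. So the target is $\pi(0,1,\omega)^{ss}\oplus\pi(0,1,\omega)^{ss}$. This is the reducible case $(r,\lambda)=(0,\pm1)$, so we must produce the right Jordan--Hölder factors and not merely an irreducible $\pi$.

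First, I will describe $V_{10}/V_{10}^{***}$ over $\mathbb{F}_3$ explicitly. Here $\dim V_{10}=11$ and $\theta=X^3Y-XY^3$, so $\theta^3$ has degree $12>10$ and $V_{10}^{***}=0$. The layers $V_{10}/V_{10}^{*}$, $V_{10}^{*}/V_{10}^{**}$ and $V_{10}^{**}=\theta^2 V_2$ have dimensions $4,4,3$. Using Lemma~\ref{lem:2.10} where its hypothesis $m(p+1)+p\le r$ holds, and a direct computation otherwise (for $m=2$, $2\cdot4+3=11>10$, so the top layer $\theta^2V_2\cong V_2\otimes D^2$ must be handled by hand), I will list the weights $J_0,\dots,J_5$. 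Next I will compute $X_9$, the $\Gamma$-span of $X^9Y$, by hand; this is a finite computation for $\mathrm{GL}_2(\mathbb{F}_3)$. This determines which of the $J_i$ already vanish in $P$, playing the role of the \cite{GV22} inputs used in the earlier sections. Since $r=10\equiv 0\pmod 2$ and $10\equiv1\pmod 3$, I expect this to resemble the case $a=2$, $p\mid r-1$ of Section~\ref{sec: a=2}. Part of the structure should be killed by $X_9$, as in Proposition~\ref{lem:G_6}(ii).

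Second, I will kill the remaining unwanted factors with explicit test functions $f$, modelled on Propositions~\ref{lem:G_1}, \ref{prop: alpha 1}(ii) and \ref{lem:G_5}, and compute $(T-a_p)f$ directly with $r=10$. The binomial sums over $j\equiv a\pmod 2$ are now short, so the integer corrections $\alpha_j$ can be chosen by hand instead of via Lemmas~\ref{lem:2.2}--\ref{lem:2.9}. One must check that $p+1-r=-6$ has $3$-adic valuation exactly $1$, so $p^2\nmid r-p-1$. The analogue of Proposition~\ref{lem:G_5} then says the two surviving factors $F_2,F_4$ are quotients of $\pi(\cdot,\bar u'^{\pm1},\cdot)$ with
\[
u'=\frac{p}{p+1-r}\cdot\frac{a_p}{p^2}=\frac{3}{-6}\cdot 28=-14 .
\]
Since $-14\equiv 1\pmod 3$, this gives $\bar u'=1$.

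Finally, I will identify the semisimplification with $\pi(0,1,\omega)^{ss}$ twice, using that $\overline{\Theta}$ lies in the image of LL, and read off $1\oplus\omega$. I expect the main obstacle to be the low-weight degeneracy. For $p=3$ the weights are $V_0,V_1,V_2$, and since $p-3=0$ the dimension-$p$ and $(r,\lambda)=(0,\pm1)$ subtleties appear simultaneously. The hypotheses $r\ge 3p+2$ and $p\ge5$ used in the $T^-$ vanishing estimates also fail: with $r=10<11$, terms such as $p^{r-i}c_i$ in $T^-$ no longer die mod $p$ automatically. I would therefore verify each $T^-$ contribution explicitly, and fall back on a direct $\mathrm{GL}_2(\mathbb{F}_3)$ computation of $P$ if the general lemmas break.
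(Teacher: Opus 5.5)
Your overall route is the paper's: $F_3$ dies by Proposition~\ref{lem:G_6}(ii) since $3\mid r-1$, $F_1$ is killed by an explicit test function, and the surviving pieces $F_2,F_4$ are shown to be quotients of principal series with Hecke eigenvalue $\bar u'=1$. However, the target you fix at the start is wrong, and as a result your final step cannot be carried out as stated.

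In Breuil's correspondence the second summand carries the twist $\eta\omega^{r+1}$. So with $r=0$, $\lambda=1$, $\eta=\omega$ one gets $LL(1\oplus\omega)=\pi(0,1,\omega)^{ss}\oplus\pi(0,1,\omega^{2})^{ss}=\pi(0,1,\omega)^{ss}\oplus\pi(0,1,1)^{ss}$, not $\pi(0,1,\omega)^{ss}$ twice. In fact $\pi(0,1,\omega)^{ss}\oplus\pi(0,1,\omega)^{ss}$ is not in the image of $LL$ at all. For $p=3$, whenever an image of $LL$ has one-dimensional constituents, they are exactly $\chi\circ\det$ and $\chi\omega\circ\det$ for some $\chi$, and $\omega\neq 1$. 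So the step ``identify $\overline{\Theta}^{\,ss}$ with $\pi(0,1,\omega)^{ss}$ twice and read off $1\oplus\omega$'' is not a valid deduction.

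What the computation actually gives, and what the paper proves, is the following. $F_2$ is a quotient of $\pi(2,1,\omega)$, coming from $J_2=V_2\otimes D$. $F_4$ is a quotient of $\pi(2,1,1)$, coming from $W_3=\theta^2V_2\cong V_2\otimes D^2$, where $D^2$ is trivial on $\mathrm{GL}_2(\mathbb{F}_3)$. Then $\pi(p-1,\lambda,\eta)^{ss}\cong\pi(0,\lambda,\eta)^{ss}$ gives exactly $LL(1\oplus\omega)$.

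Two of the obstacles you anticipate are real and require changing your models, not just rechecking them.

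First, for $r=10$ the layer $W_3$ is irreducible of dimension $p$. So there is no $F_5$, and $F_4$ has weight $V_{p-1}$ rather than the $V_{p-3}$ of Proposition~\ref{lem:G_5}.

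Second, the test function of Proposition~\ref{lem:G_1} does not work for $F_1$ here. One computes $T^-\bigl[\mathrm{Id},(\theta^2Y^2-\theta^2X^2)/p^2\bigr]\equiv[\alpha,X^2Y^8]\pmod p$. This survives in $\mathrm{ind}_{KZ}^G P$, because $X_9=\langle X^{10},X^9Y,XY^9,Y^{10}\rangle$ and $V_{10}^{***}=0$. The paper instead uses $\sum_{\lambda}\bigl[g^0_{1,[\lambda]},(Y^r-X^{r-2}Y^2)/p\bigr]+(1-p)\bigl[\alpha,(X^r-X^{p+1}Y^{r-p-1})/p\bigr]$, in the spirit of the $\delta_{a,p-1}$ term of Proposition~\ref{prop:A_3}. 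For $F_4$ it adds an analogous correction term supported on $g^0_{1,[\lambda]}\alpha$.
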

\begin{proof}
Throughout the proof $r=10$ and $p=3$.
    Let us consider the functions $f_1\in \mathrm{ind}_{KZ}^G \mathrm{Sym}^r \bar{\mathbb{Q}}_p^2$ defined by \begin{eqnarray*}
        f_1 &=& \sum_{\lambda \in \mathbb{F}_p} \biggl[g^0_{1, \left[ \lambda \right]}, \frac{Y^r-X^{r-2}Y^2}{p}\biggl]\\
        f_{-1} &=& (1-p)\biggl[\alpha, \frac{X^r-X^{p+1}Y^{r-p-1}}{p}\biggl].
     \end{eqnarray*}
      Clearly $T^+f_1$, $-a_pf_1$, $T^-f_{-1}$ and $-a_pf_{-1}$ die mod $p $ and mod $X_{r-1}$.  We compute that \begin{eqnarray*}
         T^+f_{-1}=(1-p
         )\biggl[\mathrm{Id},\frac{X^r}{p}\biggl].
     \end{eqnarray*}
     Hence, we have 
     \begin{eqnarray*}
         T^-f_1 +T^+f_{-1}&\equiv & \biggl[\mathrm{Id}, \frac{p-1}{p}\sum_{\substack{
2\leq j<r \\
j \equiv 2 \pmod{p-1}
}}
 \binom{r}{j} X^{r-j} Y^{j}  \biggl]\pmod{X_{r}}\\
 %&=& \biggl[\mathrm{Id}, \frac{p-1}{p} \binom{r}{2} X^{r-2}Y^2+ \frac{p-1}{p}\sum_{\substack{
%2< j<r \\
%j \equiv 2 \pmod{p-1}
%}}
 %\binom{r}{j} X^{r-j} Y^{j}  \biggl]\\
 &\equiv& \biggl[\mathrm{Id}, \frac{2}{3}\biggl(\binom{10}{4}X^{6}Y^4+\binom{10}{6}X^{4}Y^6\biggl)\biggl]\\
&=& \biggl[\mathrm{Id},X^{8}Y^2 \biggl],
 \end{eqnarray*} 
 as $X^{r-j}Y^j \equiv X^{r-2}Y^2 \pmod{V_r^{(1)}}$ and $2\cdot \frac{2}{3} \cdot \binom{10}{4} \equiv 1 \pmod{3}$.
Hence, $F_1$ dies.

Moreover, $F_3$ dies by Proposition~\ref{lem:G_6} part (ii).

Let us consider $f_1\in \mathrm{ind}_{KZ}^G \mathrm{Sym}^r \bar{\mathbb{Q}}_p^2$ defined by \begin{eqnarray*}
  f_1 &=& \sum_{\lambda\in \mathbb{F}_p} \biggl[g^0_{1,\left[\lambda\right]}, \frac{XY^{r-1}-X^{r-p}Y^p}{p^2}\biggl].
         %f_0 &=& \biggl[\mathrm{Id}, \frac{p-1}{p \cdot a_p} \sum_{\substack{
%1<j<r-1 \\
%j \equiv 1 \pmod{p-1}
%}}
 %\alpha_j \cdot X^{r-j} Y^{j}\biggl],
         \end{eqnarray*}
         %where $\alpha_j$ are integers as in Lemma~\ref{ramanujan delta r-1}. Now it is clear that $T^+f_1$, $T^+f_0$ and $T^-f_0$ die mod $p$. 
         Clearly $T^+f_1$ dies mod $p$.
         We compute 
         \begin{eqnarray*}
             T^-f_1 &=& \biggl[\mathrm{Id}, \frac{p-1}{p}\sum_{\substack{
1 \le j<r-1 \\
j \equiv 1 \pmod{p-1}
}}
 \binom{r-1}{j} X^{r-j} Y^{j}\biggl] \pmod{X_{r-1}}\\
 &\equiv& \biggl[\mathrm{Id}, \frac{2}{3} \biggl(\binom{9}{3} X^{7}Y^3 %+\binom{9}{5}X^{5}Y^5+ \binom{9}{7}X^{3}Y^7\biggl)
 \biggl]\\
 %T^-f_1-a_pf_0 &=& \biggl[\mathrm{Id}, \frac{p-1}{p}\sum_{\substack{
%1<j<r-1 \\
%j \equiv 1 \pmod{p-1}
%}}
 %\biggl(\binom{r-1}{j}-\alpha_j\biggl) X^{r-j} Y^{j}\biggl].
 %\end{eqnarray*}
 %Now as before $X^{r-j}Y^j-X^{r-p}Y^p \equiv j \theta X^{r-2p}Y^{p-1} \pmod{V_r^{(2)}}$. So by Lemma~\ref{lem:2.1} applied twice, we obtain 
 %\begin{eqnarray*}
 %&=& \biggl[\mathrm{Id}, \frac{p-1}{p}\sum_{\substack{
%1<j<r-1 \\
%j \equiv 1 \pmod{p-1}
%}}
 %\biggl(\binom{r-1}{j}-\alpha_j\biggl) (X^{r-j} Y^{j}-X^{r-p}Y^p)\\
 %&& \qquad +\frac{p-1}{p}\sum_{\substack{
%1<j<r-1 \\
%j \equiv 1 \pmod{p-1}
%}}
 %\biggl(\binom{r-1}{j}-\alpha_j\biggl)X^{r-p}Y^p\biggl]\\
 &\equiv& \biggl[\mathrm{Id}, -X^{7} Y^{3}\biggl].
 \end{eqnarray*}
 Hence, we have \begin{eqnarray*}
     (T-a_p)f_1 &=& -\frac{a_p}{p^2}\sum_{\lambda\in \mathbb{F}_p} \biggl[g^0_{1,\left[\lambda\right]}, \theta X^{r-p-1}\biggl]+\biggl[\mathrm{Id}, \theta X^{r-p-1}\biggl] \pmod{X_{r-1}}.
  \end{eqnarray*}
  We note that $\frac{\tau(3)}{9} \equiv 1 \pmod{3}$.
  Hence, $(T-a_p)f_1$ is integral and maps to $- (T-1)\biggl[\mathrm{Id},X^{p-1}\biggl]$ in \(\mathrm{ind}_{KZ}^G J_2\).

Let us consider $f_0, f_0', f_2\in \mathrm{ind}_{KZ}^G \mathrm{Sym}^r \bar{\mathbb{Q}}_p^2$ defined by \begin{eqnarray*}
        f_2 &=& \sum_{{\lambda \in \mathbb{F}_p},\,\mu \in {\mathbb{F}_p^\times}} \biggl[g^0_{2,\left [\lambda \right]+p\left[\mu \right]}, \frac{Y^r-X^{r-p-1}Y^{p+1}}{p}\biggl]+(1-p)\sum_{\lambda \in \mathbb{F}_p}\biggl[g^0_{2,\left [\lambda \right]}, \frac{Y^r-X^{r-p-1}Y^{p+1}}{p}\biggl]\\
         %f_1 &=& \sum_{\lambda \in {\mathbb{F}_p}} \biggl[g^0_{1,\left[\lambda \right]},\frac{p-1}{p \cdot a_p} \sum_{\substack{
         %2<j<r \\
         %j \equiv 2 \pmod{p-1}
         %}}
         %\alpha_j \cdot X^{r-j} %Y^{j}\biggl]\\
         f_0' &=& \sum_{\lambda \in {\mathbb{F}_p}} \biggl[g^0_{1,\left[\lambda \right]}\alpha,\frac{X^r-X^{p+1}Y^{r-p-1}}{p}\biggl]\\
        f_0 &=& \biggl[\mathrm{Id},\frac{\theta^2 X^{r-2p-2}}{2p^2}\biggl].
    \end{eqnarray*}
      Clearly $T^+f_2$, $-a_pf_2$, $T^-f_0$, $-a_pf_0'$ and $T^-f_0'$ die mod $p$. We compute \begin{eqnarray*}
          T^+f_0' &=& \sum_{\lambda \in {\mathbb{F}_p}} \biggl[g^0_{1,\left[\lambda \right]}, \frac{X^r}{p} \biggl].
      \end{eqnarray*} We have 
\begin{eqnarray*}
T^-f_2+T^+f_0'&=& \sum_{\lambda \in {\mathbb{F}_p}} \biggl[g^0_{1,\left[\lambda \right]},\frac{p-1}{p} \sum_{\substack{
0<j<r \\
j \equiv 2 \pmod{p-1}
}}
 \binom{r}{j} X^{r-j} Y^{j}\biggl]\\
 &=& \sum_{\lambda \in {\mathbb{F}_3}} \biggl[g^0_{1,\left[\lambda \right]},-X^{4}Y^{6}+2X^{6}Y^{4} \biggl].
\end{eqnarray*}
We have \[
T^+f_0 = \frac{1}{2}\sum_{\lambda \in {\mathbb{F}_p}} \biggl[g^0_{1,\left[\lambda \right]}, X^{r-2}Y^2\biggl]=-\sum_{\lambda \in {\mathbb{F}_3}} \biggl[g^0_{1,\left[\lambda \right]}, X^{8}Y^2\biggl].
\]
Hence, we get \begin{eqnarray*}
    T^-f_2+T^+f_0'+T^+f_0 &=& \sum_{\lambda \in {\mathbb{F}_3}} \biggl[g^0_{1,\left[\lambda \right]},-\theta^2X^{2} \biggl].
\end{eqnarray*}
Combining these we have \begin{eqnarray*}
    (T-a_p)(f_0+f_0'+f_2)=-\sum_{\lambda \in {\mathbb{F}_p}} \biggl[g^0_{1,\left[\lambda \right]},\theta^2X^{r-2p-2} \biggl]-\frac{a_p}{2p^2}\biggl[\mathrm{Id},{\theta^2 X^{r-2p-2}}\biggl].
\end{eqnarray*}
Hence, $(T-a_p)(f_0+f_0'+f_2)$ is integral and maps to $- (T-1)\biggl[\mathrm{Id},X^{p-1}\biggl]$ in \(\mathrm{ind}_{KZ}^G J_4\).

We note that $V_r^{(3)}=0$ and $V_r^{(2)} \cong V_{p-1}$. 

Combining all these together we have surjections $\pi(2,1,\omega) \twoheadrightarrow F_2$ and $\pi(2,1,1) \twoheadrightarrow F_4$, hence, we have $\overline{\Theta}^{\,{ss}} \cong \pi(2,1,\omega)^{\,ss} \oplus \pi(2,1,1)^{\,ss}$. It is well known that $\pi(p-1,\lambda,\eta)^{\,ss} \cong  \pi(0,\lambda,\eta)^{\,ss}$. Hence, by applying the  mod $p$ LLC, we have $ (\bar \rho_{{\Delta}}|_{G_{\mathbb{Q}_3}})^{ss} \cong 1 \oplus \omega$. 
\end{proof}

\begin{rmk}
    The conclusion of the proposition 
    matches with the fourth part of the $a = 2$ case of Theorem~\ref{main theorem} from the introduction, showing that the theorem is true in this particular case even though $p = 3$.
\end{rmk}

{\noindent \bf Acknowledgements: }
    The first author acknowledges the support received as the Visiting Libra Professor at UMaine during Fall 2026.
    The second author would like to thank his advisor (the first author) for suggesting this problem and for his guidance, as well as the School of Mathematics, TIFR for its support. Both authors acknowledge the support of project 1303/9/2025-R\&D-II-DAE/TIFR-17312. No AI tools were used at any stage in this paper.

\vspace{1cm}

\noindent
\textsc{School of Mathematics, Tata Institute of Fundamental Research, Homi Bhabha Road, Mumbai 400005, India.}\\ $\>$ \\
\textit{Email:} \texttt{eghate@math.tifr.res.in}\\
\textit{Email:} \texttt{sudipta@math.tifr.res.in}


\medskip






\begin{thebibliography}{[BGR18]}

\bibitem[BL94]{BL94}
L. Barthel and R. Livn\'e, 
\textit{Irreducible modular representations of ${\rm GL}_2$ of a local field}, 
Duke Math. J. \textbf{75} (1994), no.~2, 261--292.



\bibitem[BL95]{BL95}
L. Barthel and R. Livn\'e, 
\textit{Modular representations of ${\rm GL}_2$ of a local field: the ordinary, unramified case}, 
J. Number Theory \textbf{55} (1995), no.~1, 1--27.

\bibitem[Ber10]{Ber10}
L. Berger, 
\textit{Représentations modulaires de ${\rm GL}_2({\bf Q}_p)$ et représentations galoisiennes de dimension 2}, 
Astérisque (2010), no.~330, 263--279.


\bibitem[Ber11]{Ber11}
L. Berger, 
\textit{La correspondance de Langlands locale $p$-adique pour ${\rm GL}_2({\bf Q}_p)$}, 
Ast\'erisque (2011), no.~339, S\'eminaire Bourbaki, vol. 2009/2010, Exp. No. 1017, 157--180.

\bibitem[BL22]{BL22}
J. Bergdall and B. Levin, 
\textit{Reductions of some two-dimensional crystalline representations via Kisin modules}, 
Int. Math. Res. Not. IMRN (2022), no.~4, 3170--3197.

\bibitem[BLZ04]{BLZ04}
L. Berger, H. Li and H. J. Zhu, 
\textit{Construction of some families of 2-dimensional crystalline representations}, 
Math. Ann. \textbf{329} (2004), no.~2, 365--377.



\bibitem[BG15]{BG15}
S. Bhattacharya and E. Ghate, 
\textit{Reductions of Galois representations for slopes in $(1,2)$}, 
Doc. Math. \textbf{20} (2015), 943--987.

\bibitem[BGR18]{BGR18}
S. Bhattacharya, E. Ghate and S. Rozensztajn, 
\textit{Reductions of Galois representations of slope 1}, 
J. Algebra \textbf{508} (2018), 98--156.

\bibitem[Bre03a]{Bre03a}
C. Breuil, 
\textit{Sur quelques repr\'esentations modulaires et $p$-adiques de ${\rm GL}_2(\mathbf{Q}_p)$. I}, 
Compositio Math. \textbf{138} (2003), no.~2, 165--188.

\bibitem[Bre03b]{Bre03b}
C. Breuil, 
\textit{Sur quelques repr\'esentations modulaires et $p$-adiques de ${\rm GL}_2(\mathbf{Q}_p)$. II}, 
J. Inst. Math. Jussieu \textbf{2} (2003), no.~1, 23--58.

\bibitem[BG09]{BG09}
K. Buzzard and T. Gee, 
\textit{Explicit reduction modulo $p$ of certain two-dimensional crystalline representations}, 
Int. Math. Res. Not. IMRN (2009), no.~12, 2303--2317.

\bibitem[BG13]{BG13}
K. Buzzard and T. Gee, 
\textit{Explicit reduction modulo $p$ of certain 2-dimensional crystalline representations, II}, 
Bull. Lond. Math. Soc. \textbf{45} (2013), no.~4, 779--788.



\bibitem[Edi92]{Edi92}
B. Edixhoven, 
\textit{The weight in Serre's conjectures on modular forms}, 
Invent. Math. \textbf{109} (1992), no.~3, 563--594.



\bibitem[GG15]{GG15}
A. Ganguli and E. Ghate, 
\textit{Reductions of Galois representations via the ${\rm mod}\,p$ local Langlands correspondence}, 
J. Number Theory \textbf{147} (2015), 250--286.

\bibitem[GV22]{GV22}
E. Ghate and R. Vangala, 
\textit{The monomial lattice in modular symmetric power representations}, 
Algebr. Represent. Theory \textbf{25} (2022), no.~1, 121--185.




\bibitem[GR25]{GR25}
E. Ghate and V. Rai, 
\textit{Reductions of Galois representations of slope $\frac{3}{2}$}, 
Kyoto J. Math. \textbf{65} (2025), no.~3, 595--636.




\bibitem[Glo78]{Glo78}
D. J. Glover, 
\textit{A study of certain modular representations}, 
J. Algebra \textbf{51} (1978), no.~2, 425--475.


\bibitem[NP25]{NP25}
E. Nagel and A. Pande, 
\textit{Reductions of modular Galois representations of slope (2,3)}, 
Ramanujan J. \textbf{67} (2025), no.~3, Paper No. 70, 62 pp.

\bibitem[Roz18]{Roz18}
S. Rozensztajn, 
\textit{An algorithm for computing the reduction of 2-dimensional crystalline representations of ${\rm Gal}(\overline{\mathbf{Q}}_p/\mathbf{Q}_p)$}, 
Int. J. Number Theory \textbf{14} (2018), no.~7, 1857--1894.

\bibitem[Ser73]{Ser73}
J-P. Serre, 
\textit{Congruences et formes modulaires}, 
S\'eminaire Bourbaki. Vol. 1971/1972, Expos\'es 400--417, Exp. No. 416, Lecture Notes in Math., vol. 317, Springer, Berlin, 1973, pp. 319--338.

\end{thebibliography}
\end{document}